\def\wh{\widehat}
\def\wt{\widetilde}
\def\R{\mathbb R}
\def\C{\mathbb C}
\def\O{\mathcal O}
\def\x{\mathbf x}
\def\u{\mathbf u}
\def\p{\mathbf p}
\def\w{\mathbf w}
\def\z{\mathbf z}
\def\VV{\mathbf V}
\def\D{\mathbf D}
\def\FF{\mathbf F}
\def\1{\mathbf 1}
\def\O{\mathcal O}
\def\bxi{\boldsymbol \xi}
\def\eeta{\boldsymbol \eta}
\def\bPhi{\boldsymbol \Phi}
\def\1{\bold 1}
\def\eps{\varepsilon}
\def\leq{\leqslant}
\def\le{\leqslant}
\def\ge{\geqslant}
\theoremstyle{theorem}
\newtheorem{theorem}{Theorem}[section]
\newtheorem{proposition}[theorem]{Proposition}
\newtheorem{lemma}[theorem]{Lemma}
\newtheorem{condition}[theorem]{Condition}
\newtheorem{remark}[theorem]{Remark}
\newtheorem{corollary}[theorem]{Corollary}
\numberwithin{equation}{section}
\theoremstyle{plain}
\newtoks\thehProclaim
\newtheorem*{Proclaim}{\the\thehProclaim}
\begin{document}

\title[Homogenization of the Neumann problem]{Homogenization of the Neumann problem for~higher-order elliptic equations \\with periodic coefficients}

\author{T.~A.~Suslina}

\thanks{Supported by Russian Science Foundation (grant no.~17-11-01069).}

\keywords{Periodic differential operators, higher-order elliptic equations, Neumann problem, homogenization, operator error estimates.}

 \address{St.~Petersburg State University, 7/9 Universitetskaya nab., St.~Petersburg, 199034, Russia}

\email{t.suslina@spbu.ru}

\subjclass[2000]{Primary 35B27}

\begin{abstract}
Let $\mathcal{O}\subset\mathbb{R}^d$ be a bounded domain of class $C^{2p}$.
In $L_2(\mathcal{O};\mathbb{C}^n)$, we study a selfadjoint strongly elliptic operator $A_{N,\varepsilon}$ of order $2p$ given
by the expression $b(\D)^* g(\x/\eps) b(\D)$, $\eps >0$, with the Neumann boundary conditions.
Here $g(\x)$ is a bounded and positive definite $(m\times m)$-matrix-valued function in $\R^d$, periodic with respect to some lattice;
$b(\D)=\sum_{|\alpha|=p} b_\alpha \D^\alpha$ is a differential operator of order $p$ with constant coefficients; $b_\alpha$ are constant
$(m\times n)$-matrices. It is assumed that $m\geqslant n$ and that the symbol $b({\boldsymbol \xi})$ has maximal rank for any
$0 \ne {\boldsymbol \xi}\in \C^d$.
We find approximations for the resolvent $\left(A_{N,\varepsilon}-\zeta I \right)^{-1}$
in the  $L_2(\mathcal{O};\mathbb{C}^n)$-operator norm and in the norm of operators acting from $L_2(\mathcal{O};\mathbb{C}^n)$ to the Sobolev space $H^p(\mathcal{O};\mathbb{C}^n)$, with error estimates depending on $\varepsilon$ and $\zeta$.
\end{abstract}
\maketitle

\section*{Introduction}
\setcounter{section}{0}
\setcounter{equation}{0}

 A broad literature is devoted to homogenization problems for differential operators (DOs) with rapidly oscillating coefficients.
First of all, we mention the books \cite{BeLPa}, \cite{BaPan}, \cite{ZhKO}.

\subsection{Operator error estimates for homogenization problems in $\R^d$}
In a series of papers \cite{BSu0, BSu1, BSu2, BSu3} by Birman and Suslina an operator-theoretic approach
to homogenization problems was suggested and developed. This approach was applied to a wide class
of matrix selfadjoint strongly elliptic second order DOs acting in $L_2(\R^d;\C^n)$ and admitting a factorization of the form
\begin{equation}
\label{0.1}
{\mathcal A}_\eps = b(\D)^* g(\x/\eps) b(\D),\quad \eps >0.
\end{equation}
Here  an $(m\times m)$-matrix-valued function $g(\x)$ is bounded, uniformly positive definite, and periodic with respect to
some lattice $\Gamma \subset \R^d$. Next, $b(\D)$ is a first order DO of the form $b(\D)= \sum_{j=1}^d b_j D_j$, where $b_j$
are constant $(m\times n)$-matrices. It is assumed that $m\geqslant n$ and that the symbol $b(\bxi)$ has rank $n$ for any
$0 \ne \bxi \in \R^d$. The simplest example of the operator \eqref{0.1} is the acoustics operator $-{\rm div}\,g(\x/\eps) \nabla$;
the operator of elasticity theory also can be written in the required form. These and other examples are considered in \cite{BSu1}
in details.

In \cite{BSu0, BSu1}, it was shown that, as $\eps \to 0$, the resolvent $({\mathcal A}_\eps +I)^{-1}$ converges in the
$L_2(\R^d;\C^n)$-operator norm to the resolvent of the \textit{effective operator} ${\mathcal A}^0=b(\D)^* g^0 b(\D)$.
Here $g^0$ is a constant \textit{effective matrix}. It was proved that
\begin{equation}
\label{0.2}
\| ({\mathcal A}_\eps +I)^{-1} - ({\mathcal A}^0 +I)^{-1} \|_{L_2(\R^d) \to L_2(\R^d)} \leqslant C \eps.
\end{equation}
In \cite{BSu2}, a more accurate approximation for the resolvent  $({\mathcal A}_\eps +I)^{-1}$ in the $L_2(\R^d;\C^n)$-operator norm with error  $O(\eps^2)$ was found. In \cite{BSu3}, an approximation for the resolvent  $({\mathcal A}_\eps +I)^{-1}$ in the norm of operators acting from
$L_2(\R^d;\C^n)$ to the Sobolev space $H^1(\R^d;\C^n)$ was obtained. It was proved that
\begin{equation}
\label{0.3}
\| ({\mathcal A}_\eps +I)^{-1} - ({\mathcal A}^0 +I)^{-1} - \eps {\mathcal K}(\eps) \|_{L_2(\R^d) \to H^1(\R^d)} \leqslant C \eps.
\end{equation}
Here ${\mathcal K}(\eps)$ is the so-called \textit{corrector}. The operator ${\mathcal K}(\eps)$ contains rapidly oscillating
factors, and so depends on $\eps$; herewith,  $\|{\mathcal K}(\eps)\|_{L_2 \to H^1}= O(\eps^{-1})$.

Estimates of the form \eqref{0.2}, \eqref{0.3} are called \textit{operator error estimates}. They are order-sharp.
The method of \cite{BSu0, BSu1, BSu2, BSu3} is based on the scaling transformation, the Floquet-Bloch theory, and the analytic perturbation theory.

We also mention the recent paper \cite{Su}, where two-parametric analogs of estimates \eqref{0.2} and \eqref{0.3}
(depending on $\eps$ and $\zeta$) for the resolvent \hbox{$({\mathcal A}_\eps - \zeta I)^{-1}$} at arbitrary point $\zeta \in \C \setminus \R_+$
were obtained. In the presence of the lower order terms, similar results were obtained in~\cite{MSu1}.

A different approach to operator error estimates (\textit{the modified method of first-order approximation} or \textit{the shift method}) was
suggested by Zhikov.   In \cite{Zh} and \cite{ZhPas}, by this method, estimates of the form \eqref{0.2} and \eqref{0.3}
for the acoustics operator and the elasticity operator were obtained.
Concerning further results of Zhikov and Pastukhova, see the recent survey  \cite{ZhPas_survey} and references therein.

A homogenization problem for periodic elliptic DOs of  \textit{high even order} is of separate interest.
The operator-theoretic approach of Birman and Suslina was adapted for such operators in the paper
\cite{V} by Veniaminov and in the recent paper \cite{KuSu} by Kukushkin and Suslina.

In \cite{V}, the operator of the form ${\mathcal B}_\eps = (\D^p)^* g(\x/\eps) \D^p$ was studied.
Here $g(\x)$ is a symmetric positive definite and bounded tensor of order $2p$, periodic with respect to some lattice $\Gamma$.
Such an operator with $p=2$ arises in the theory of elastic plates (see \cite{ZhKO}).
The effective operator is given by  ${\mathcal B}^0 = (\D^p)^* g^0 \D^p$, where $g^0$ is the effective tensor.
In \cite{V}, the following analog of estimate \eqref{0.2} was proved:
\begin{equation*}
\| ({\mathcal B}_\eps +I)^{-1} - ({\mathcal B}^0 +I)^{-1} \|_{L_2(\R^d) \to L_2(\R^d)} \leqslant C \eps.
\end{equation*}

In \cite{KuSu}, a more general class of higher-order elliptic DOs, acting in $L_2(\R^d;\C^n)$ and admitting a factorization of the form
\begin{equation}
\label{0.5}
{A}_\eps = b(\D)^* g(\x/\eps) b(\D),
\end{equation}
was studied. Here $g(\x)$ is a bounded and uniformly positive definite $(m\times m)$-matrix-valued function, periodic with respect to the lattice $\Gamma$.
The operator $b(\D)$ of order $p \geqslant 2$ is of the form $b(\D)= \sum_{|\alpha|=p} b_\alpha \D^\alpha$, where $b_\alpha$
are constant $(m\times n)$-matrices. It is assumed that $m\geqslant n$ and that the symbol $b(\bxi)$ has rank $n$ for any
$0 \ne \bxi \in \R^d$. The main results of \cite{KuSu} are approximations of the resolvent
$({A}_\eps - \zeta I)^{-1}$, where $\zeta \in \C \setminus \R_+$, in various operator norms with two-parametric error estimates
(depending on $\eps$ and $\zeta$).
It is shown that the resolvent  $({A}_\eps - \zeta I)^{-1}$
converges in the $L_2(\R^d;\C^n)$-operator norm to the resolvent of the effective operator $A^0= b(\D)^* g^0 b(\D)$ (here $g^0$ is the
constant effective matrix), and
\begin{equation}
\label{0.6}
\| ({A}_\eps - \zeta I)^{-1} - ({A}^0 - \zeta I)^{-1} \|_{L_2(\R^d) \to L_2(\R^d)} \leqslant C_1(\zeta) \eps.
\end{equation}
Approximation for the resolvent in the  ,,energy'' norm (i.~e., in the norm of operators acting from $L_2(\R^d;\C^n)$ to $H^p(\R^d;\C^n)$),
with the corrector taken into account, is obtained:
\begin{equation}
\label{0.7}
\| ({A}_\eps - \zeta I)^{-1} - ({A}^0 -\zeta I)^{-1} - \eps^p {K}(\zeta;\eps)
\|_{L_2(\R^d) \to H^p(\R^d)} \leqslant C_2(\zeta) \eps.
\end{equation}
The corrector ${K}(\zeta;\eps)$ contains rapidly oscillating factors; herewith, $\|{K}(\zeta;\eps)\|_{L_2 \to H^p} = O(\eps^{-p})$.
The dependence of $C_1(\zeta)$ and $C_2(\zeta)$ on $\zeta$ is searched out.

Similar results on homogenization of higher-order elliptic operators in $\R^d$ have been independently
obtained in the recent papers \cite{Pas1,Pas2} by Pastukhova
with the help of the shift method (in these papers, estimates are one-parametric, it is assumed that $\zeta=-1$).

\subsection{Operator error estimates for homogenization problems in a bounded domain}
Operator error estimates have been also studied for second order elliptic operators
with rapidly oscillating coefficients in a bounded domain $\O \subset \R^d$ with sufficiently smooth boundary.
In \cite{Zh, ZhPas}, the acoustics operator and the operator of elasticity theory with the Dirichlet or Neumann boundary conditions
were studied; the analogs of estimates \eqref{0.2} and \eqref{0.3}, but with error estimates of order $O(\eps^{1/2})$, were obtained.
The error deteriorates because of the boundary influence. (In the case of the Dirichlet problem for the acoustics operator, the $(L_2 \to L_2)$-estimate
was improved in \cite{ZhPas}, but the order was not sharp.)

Similar results for the operator $-{\rm div}\,g(\x/\eps) \nabla$ in a bounded domain with the Dirichlet or Neumann conditions were obtained
in the papers \cite{Gr1,Gr2} by Griso with the help of the unfolding method. In \cite{Gr2}, the sharp order estimate of the form
\eqref{0.9} (see below) was obtained for the first time.

For the second order matrix operators ${\mathcal A}_{D,\eps}$ and ${\mathcal A}_{N,\eps}$ given by expression \eqref{0.1}
with the Dirichlet or Neumann conditions, respectively, the operator error estimates were obtained in the papers \cite{PSu, Su02,Su03}.
In \cite{PSu}, the Dirichlet problem was studied and the following estimate was proved:
\begin{equation}
\label{0.8}
\| {\mathcal A}_{D,\eps}^{-1} - ({\mathcal A}_D^0)^{-1} - \eps {\mathcal K}_D(\eps)
\|_{L_2(\O) \to H^1(\O)} \leqslant C \eps^{1/2}.
\end{equation}
Here ${\mathcal A}_D^0$ is the effective operator with the Dirichlet condition, and ${\mathcal K}_D(\eps)$ is the corresponding corrector.
In \cite{Su02}, a sharp-order estimate in the $L_2(\O;\C^n)$-operator norm was established:
\begin{equation}
\label{0.9}
\| {\mathcal A}_{D,\eps}^{-1} - ({\mathcal A}_D^0)^{-1}\|_{L_2(\O) \to L_2(\O)} \leqslant C \eps.
\end{equation}
Similar results for the Neumann problem were obtained in~\cite{Su03}. The method of \cite{PSu, Su02,Su03} was based on using the results
for the problem in $\R^d$, introduction of the boundary layer correction term, and estimation of this term in $H^1(\O;\C^n)$ and in $L_2(\O;\C^n)$. Some technical tricks were borrowed from \cite{ZhPas}.

In \cite{Su}, approximations for the resolvents $({\mathcal A}_{D,\eps} - \zeta I)^{-1}$
and $({\mathcal A}_{N,\eps} - \zeta I)^{-1}$ at an arbitrary point $\zeta \in \C \setminus \R_+$ (two-parametric analogs of estimates \eqref{0.8}
and \eqref{0.9}) were obtained. In the presence of the lower order terms, similar results were proved in~\cite{MSu2} in the case of the Dirichlet
condition.

Independently, estimate of the form \eqref{0.9} for uniformly elliptic second order systems  with the Dirichlet or Neumann conditions,
under some regularity assumptions on the coefficients, was obtained by a different method in the paper \cite{KeLiS} by Kenig, Lin, and Shen.

Finally, in the recent paper \cite{Su2017},  in a bounded domain $\O$ of class $C^{2p}$ the higher-order operator $A_{D,\eps}$
given in a factorized form \eqref{0.5} with the Dirichlet conditions was studied; approximations for the resolvent $(A_{D,\eps} - \zeta I)^{-1}$ at a regular point $\zeta$  with error estimates depending on $\eps$ and $\zeta$ were found.

\subsection{Main results}
In the present paper, we study the operator $A_{N,\eps}$ of order $2p$ in a bounded domain $\O$ of class $C^{2p}$.
This operator is given in a factorized form \eqref{0.5} under the Neumann conditions on the boundary $\partial \O$.
 \textit{Our goal} is to find approximations for the resolvent \hbox{$(A_{N,\eps} - \zeta I)^{-1}$} at a regular point $\zeta$
with error estimates depending on $\eps$ and $\zeta$.

Now we describe the main results. Let $\zeta = |\zeta| e^{i\varphi} \in \C \setminus \R_+$ and $|\zeta| \geqslant 1$.
It is proved that
\begin{equation}
\label{0.10}
\| ({A}_{N,\eps}-\zeta I)^{-1} - (A_N^0 - \zeta I)^{-1}\|_{L_2(\O) \to L_2(\O)} \leqslant {\mathcal C}_1(\varphi) \eps |\zeta|^{-1+1/2p},
\end{equation}
\begin{equation}
\begin{aligned}
\label{0.11}
\| ({A}_{N,\eps}-\zeta I)^{-1} - (A_N^0 - \zeta I)^{-1} - \eps^p K_N(\zeta;\eps)\|_{L_2(\O) \to H^p(\O)}
\leqslant {\mathcal C}_2(\varphi)\bigl( \eps^{1/2} |\zeta|^{-1/2+1/4p} + \eps^{p}\bigr),
\end{aligned}
\end{equation}
for $0< \eps \leqslant \eps_1$ (where $\eps_1$ is a sufficiently small number depending on the domain $\O$ and the lattice $\Gamma$).
Here $A_N^0$ is the effective operator given by the expression $b(\D)^* g^0 b(\D)$ with the Neumann conditions.
The corrector $K_N(\zeta;\eps)$ involves rapidly oscillating factors, herewith,
$\| K_N(\zeta;\eps)\|_{L_2 \to H^p} = O(\eps^{-p})$. The dependence of the constants
${\mathcal C}_1(\varphi)$ and ${\mathcal C}_2(\varphi)$ on $\varphi$ is traced; estimates \eqref{0.10} and \eqref{0.11}
are uniform with respect to $\varphi$ in any sector $\varphi \in [\varphi_0, 2\pi - \varphi_0]$ with arbitrarily small $\varphi_0 >0$.
For fixed $\zeta$, estimate  \eqref{0.10} is of sharp order $O(\eps)$ (the order is the same as in $\R^d$),
while estimate \eqref{0.11} is of order $O(\eps^{1/2})$
(the order deteriorates because of the boundary influence). Estimates \eqref{0.10} and \eqref{0.11} show that the error becomes smaller, as
$|\zeta|$ grows.

In the general case, the corrector $K_N(\zeta;\eps)$ involves an auxiliary smoothing operator.
We distinguish an additional condition under which the standard corrector (without smoothing) can be used.

Besides approximation for the resolvent, we find approximation for the operator $g(\x/\eps) b(\D)({A}_{N,\eps}-\zeta I)^{-1}$
(corresponding to the "flux") in the \hbox{$(L_2 \to L_2)$}-operator norm.

We also find approximations for the resolvent $({A}_{N,\eps}-\zeta I)^{-1}$ in a larger set of the parameter $\zeta$;
however, the character of dependence of the right-hand sides in estimates on $\zeta$ is different.
Let us describe these results. The point $\lambda=0$ is the minimal eigenvalue of both operators $A_{N,\eps}$ and $A_N^0$,
moreover, $\operatorname{Ker} A_{N,\eps}= \operatorname{Ker} A_{N}^0$. Let $\lambda_{\eps}$ (respectively, $\lambda^0$)
be the first non-zero eigenvalue of the operator $A_{N,\eps}$ (respectively, $A_N^0$). Let $c_\flat >0$ be their common lower bound,
i.~e., $c_\flat \le \min \{ \lambda_\eps,\lambda^0\}$. Suppose that
$\zeta \in \C \setminus [c_\flat,\infty)$, $\zeta \ne 0$. We put $\zeta - c_\flat = |\zeta - c_\flat | e^{i\psi}$.
For $0< \eps \leqslant \eps_1$ we have
\begin{align}
\label{0.12}
&\| ({A}_{N,\eps}-\zeta I)^{-1} - (A_N^0 - \zeta I)^{-1}\|_{L_2(\O) \to L_2(\O)} \leqslant {\mathfrak C}(\zeta) \eps,
\\
\label{0.13}
&\| ({A}_{N,\eps}-\zeta I)^{-1} - (A_N^0 - \zeta I)^{-1} - \eps^p \widehat{K}_N(\zeta;\eps)\|_{L_2(\O) \to H^p(\O)}
\leqslant  \bigl({\mathfrak C}_1(\zeta)  \eps \bigr)^{1/2} + {\mathfrak C}_2(\zeta) \eps.
\end{align}
Near the point $c_\flat$ the values ${\mathfrak C}(\zeta)$, ${\mathfrak C}_1(\zeta)$, and ${\mathfrak C}_2(\zeta)$  behave as
$C(\psi) |\zeta - c_\flat|^{-2}$.
The dependence of $C(\psi)$ on $\psi$ is traced.
Estimates \eqref{0.12} and \eqref{0.13} are uniform with respect to $\psi$ in any sector $\psi \in [\psi_0,2\pi - \psi_0]$
with arbitrarily small $\psi_0>0$.

\subsection{Method} We rely on the results for operator \eqref{0.5} of order $2p$ in $L_2(\R^d;\C^n)$ obtained in
\cite{KuSu} and \cite{Su2017} (estimates \eqref{0.6} and \eqref{0.7}).

The method of investigation of the operator $A_{N,\eps}$ is similar to the case of the second order operators
and the case of the higher-order operator with the Dirichlet conditions.
It is based on consideration of the associated problem in $\R^d$, introduction of the boundary layer correction term, and a careful analysis of
this term. An important technical role is played by using the Steklov smoothing (borrowed from \cite{ZhPas})
and estimates in the \hbox{$\eps$-neighborhood} of the boundary. First, estimate \eqref{0.11} is proved. Next, we prove estimate~\eqref{0.10}, using the already proved inequality \eqref{0.11} and the duality arguments.

Estimates \eqref{0.12} and \eqref{0.13} are deduced (in a relatively simple way) from the already proved estimates at
the point $\zeta=-1$ and suitable identities for the resolvents.

Two-parametric error estimates in approximation for the resolvent can be applied to homogenization of the parabolic initial boundary value
problems. This application is based on the following representation of the operator exponential:
 \begin{equation*}
e^{-A_{N,\varepsilon}t}=-\frac{1}{2\pi i}\int _\gamma e^{-\zeta t}(A_{N,\varepsilon}-\zeta I)^{-1}\,d\zeta,
\end{equation*}
where $\gamma\subset\mathbb{C}$ is a suitable contour.
For second order operators, parabolic problems have been studied by this method in~\cite{MSu16}.
The author plans to devote a separate paper to application of the results of \cite{Su2017} and the present paper to
parabolic problems (for higher-order operators).

\subsection{Plan of the paper} The paper consists of eight sections.
Section~1 is devoted to the problem in $\R^d$.
Here the class of operators $A_\eps$ in $L_2(\R^d;\C^n)$ is introduced, the effective operator $A^0$ is described,
the smoothing operator is introduced, and the results on homogenization problem in $\R^d$ (from the papers \cite{KuSu} and \cite{Su2017}) are given.
 In Section~2, the operator $A_{N,\eps}$ in a bounded domain with the Neumann conditions is defined and
the effective operator is described.  Section~3 contains auxiliary statements.
In Section~4, the main results for the Neumann problem, namely, estimates \eqref{0.10} and \eqref{0.11}, are formulated
(see Theorems \ref{th3.1} and \ref{th3.2}).
The first two steps of the proof are given: the associated problem in $\R^d$ is considered, the boundary layer correction term
$\w_\eps$ is introduced, and the problem is reduced to estimation of the correction term in $H^p(\O;\C^n)$ and in $L_2(\O;\C^n)$.
In Section~5, the required estimates for the correction term are found,  and the proof of Theorems~\ref{th3.1} and~\ref{th3.2} is completed.
In Section~6, we distinguish the case where the smoothing operator can be removed and the standard corrector can be used.
Some special cases are considered. In Section~7, approximation for the resolvent~$( A_{N,\eps} - \zeta I)^{-1}$ for
$\zeta \in \C \setminus \R_+$, $|\zeta| \le 1$, is obtained. In Section~8, the resolvent for $\zeta \in \C \setminus [c_\flat,\infty)$, $\zeta \ne 0$,
is considered; estimates \eqref{0.12} and \eqref{0.13} are obtained.

\subsection{Notation}
Let $\mathfrak{H}$ and $\mathfrak{G}$ be complex separable Hilbert spaces. The symbols $\left\Vert \cdot\right\Vert _{\mathfrak{H}}$
and $\left(\cdot,\cdot\right)_{\mathfrak{H}}$ stand for the norm and the inner product in $\mathfrak{H}$, respectively;
the symbol $\left\Vert \cdot\right\Vert _{\mathfrak{H} \to \mathfrak{G}}$ denotes the norm of a continuous linear operator acting from
$\mathfrak{H}$ to $\mathfrak{G}$.

The inner product and the norm in $\mathbb{C}^n$ are denoted by $\langle \cdot ,\cdot \rangle$ and $\vert \cdot \vert$, respectively.
Next, $\mathbf{1}=\mathbf{1}_n$ stands for the unit $(n\times n)$-matrix.
If $a$ is a matrix of size $m\times n$, then $\vert a\vert$ denotes the norm of the matrix $a$ viewed as an operator from
$\mathbb{C}^n$ to $\mathbb{C}^m$. The classes $L_{q}$ of $\mathbb{C}^{n}$-valued functions in a domain $\mathcal{O}\subset\mathbb{R}^{d}$
are denoted by $L_{q}(\mathcal{O};\mathbb{C}^{n})$, $1\leqslant q\leqslant\infty$. The Sobolev classes of $\mathbb{C}^{n}$-valued functions in a domain $\mathcal{O}\subseteq\mathbb{R}^{d}$ are denoted by $H^{s}(\mathcal{O};\mathbb{C}^{n})$, $s\in\mathbb{R}$.
If $n=1$, we write simply $L_{q}({\mathcal O})$ and $H^{s}(\mathcal{O})$, but sometimes we use
such simplified notation also for the spaces of vector-valued or matrix-valued functions.

The vectors are denoted by the bold font. We denote
$\x = (x_1,\dots,x_d)\in \R^d$, $iD_j = \partial_j = {\partial}/{\partial x_j}$,
$j=1,\dots,d$, $\mathbf{D}=-i{\nabla}= (D_1,\dots,D_d)$.
Next, if $\alpha=(\alpha_1,\dots,\alpha_d) \in {\mathbb Z}_+^d$ is a multiindex, then $|\alpha|= \sum_{j=1}^d \alpha_j$
and  $\D^\alpha = D_1^{\alpha_1} \cdots D_d^{\alpha_d}$.
For two multiindices $\alpha$ and $\beta$, we write $\beta \leqslant \alpha$ if
$\beta_j \leqslant \alpha_j$, $j=1,\dots,d$; the binomial coefficients are denoted by
$C_\alpha^\beta= C_{\alpha_1}^{\beta_1}\cdots C_{\alpha_d}^{\beta_d}$.

We use the notation $\R_+ = [0,\infty)$. By $C$, $c$, $\mathfrak c$, $\mathcal C$, $\mathfrak C$ (possibly, with indices and marks) we denote
various constants in estimates.

\section{Homogenization problem in $\R^d$}\label{sec1}

\subsection{Lattices in $\mathbb{R}^{d}$}
Let $\Gamma$ be a lattice in $\mathbb{R}^{d}$ generated by the basis $\mathbf{n}_{1},\dots,\mathbf{n}_{d}$:
$$
\Gamma=\left\{ \mathbf{n}\in\mathbb{R}^{d}:\,\mathbf{n}=\sum_{i=1}^{d}l_{i}\mathbf{n}_{i},\, l_{i}\in\mathbb{Z}\right\} ,
$$
and let $\Omega$ be the elementary cell of the lattice $\Gamma$:
$$
\Omega=\left\{ \mathbf{x}\in\mathbb{R}^{d}:\,\mathbf{x}=\sum_{i=1}^{d}t_{i}\mathbf{n}_{i},\, -\frac{1}{2}<t_{i}<\frac{1}{2}\right\} .
$$
The basis $\mathbf{s}_{1},\dots,\mathbf{s}_{d}$ in $\mathbb{R}^{d}$ dual to
the basis $\mathbf{n}_{1},\dots,\mathbf{n}_{d}$ is defined by the relations
$\left\langle \mathbf{s}_{i},\mathbf{n}_{j}\right\rangle _{\mathbb{R}^{d}}=2\pi\delta_{ij}$.
This basis generates a lattice \textit{$\widetilde{\Gamma}$ dual to the lattice $\Gamma$}.
We denote
$$
r_{0}=\frac{1}{2} \min_{0\neq\mathbf{s}\in\widetilde{\Gamma}}\left|\mathbf{s}\right|,
\quad
r_1 = \frac{1}{2} {\rm diam}\,\Omega.
$$

By $\widetilde{H}^{s}(\Omega;\mathbb{C}^{n})$ we denote the subspace of all functions in
$H^{s}(\Omega;\mathbb{C}^{n})$ whose $\Gamma$-periodic extension to $\mathbb{R}^{d}$ belongs to $H_{\mathrm{loc}}^{s}(\mathbb{R}^{d};\mathbb{C}^{n})$.
If $\varphi(\x)$ is a $\Gamma$-periodic function in $\R^d$, we denote
$$
\varphi^\eps(\x) := \varphi( \eps^{-1} \x),\quad \eps >0.
$$

\subsection{The class of operators}
In $L_{2}(\mathbb{R}^{d};\mathbb{C}^{n})$, consider a DO $A_\eps$ of order $2p$ formally given by the differential expression
\begin{equation}
\label{1.1}
A_\eps =b(\mathbf{D})^{*}g^\eps(\mathbf{x})b(\mathbf{D}), \quad \eps >0.
\end{equation}
Here $g(\mathbf{x})$ is a uniformly positive definite and bounded $(m\times m)$-matrix-valued function
(in general, $g(\mathbf{x})$ is Hermitian matrix with complex entries):
\begin{equation}
\label{1.2}
g,\, g^{-1}  \in L_{\infty}(\mathbb{R}^{d}); \quad
g(\mathbf{x}) >0.
\end{equation}
The operator $b(\mathbf{D})$ is given by
\begin{equation}
\label{1.3}
b(\mathbf{D})=\sum_{|\alpha|=p} b_{\alpha}\mathbf{D}^{\alpha},
\end{equation}
where $b_{\alpha}$ are constant $\left(m\times n\right)$-matrices (in general, with complex entries).
It is assumed that $m \geqslant n$ and that the symbol $b({\boldsymbol{\xi}})= \sum_{|\alpha|=p}b_{\alpha} {\boldsymbol \xi}^{\alpha}$
is such that
\begin{equation}
\label{1.3a}
\mathrm{rank}\, b(\boldsymbol{\xi}) =n,\quad 0\neq\boldsymbol{\xi}\in\mathbb{R}^{d}.
\end{equation}
This condition is equivalent to the inequalities
\begin{equation}
\label{1.4}
\begin{aligned}
  \alpha_{0}\mathbf{1}_{n}\leqslant b({\boldsymbol{\theta}})^{*}b(\boldsymbol{\theta})\leqslant\alpha_{1}\mathbf{1}_{n},\quad\boldsymbol{\theta}\in\mathbb{S}^{d-1}; \quad  0<\alpha_{0}\leqslant\alpha_{1}<\infty,
\end{aligned}
\end{equation}
for some positive constants $\alpha_0$ and $\alpha_1$.
Without loss of generality, we assume that
\begin{equation}
\label{1.5}
\left|b_{\alpha}\right|\leqslant\alpha_{1}^{1/2},\quad\left|\alpha\right|=p.
\end{equation}

The precise definition of the operator  $A_\eps$ is given in terms of the quadratic form
\begin{equation}
\label{1.6}
a_\eps[\u,\u] = \intop_{\R^d} \langle g^\eps(\x) b(\D) \u, b(\D)\u \rangle \,d\x,\quad \u \in H^p(\R^d;\C^n).
\end{equation}
Note that the following elementary inequalities hold:
\begin{equation}
\label{1.9}
 \sum_{|\alpha|=p} |\bxi^\alpha|^{2} \leqslant |\bxi|^{2p} \leqslant {\mathfrak c}_p \sum_{|\alpha|=p} |\bxi^\alpha|^{2},
\quad \bxi \in \R^d,
\end{equation}
where ${\mathfrak c}_p$ depends only on $d$ and $p$.
Using the Fourier transformation and relations \eqref{1.2}, \eqref{1.4}, and \eqref{1.9}, it is easy to check that
\begin{equation*}
 c_0 \intop_{\R^d} |\D^p \u|^2\,d\x \leqslant a_\eps \left[\mathbf{u},\mathbf{u}\right] \leqslant c_1 \intop_{\R^d} |\D^p \u|^2\,d\x,
 \quad  \u \in H^p(\R^d;\C^n),
\end{equation*}
where we denote $|\D^p \u|^2:= \sum_{|\alpha|=p} |\D^\alpha \u|^2$.
Here $c_0 :=   \alpha_0 \|g^{-1}\|^{-1}_{L_\infty}$ and $c_1 :=    {\mathfrak c}_p \alpha_1 \|g\|_{L_\infty}$.
Hence, the form \eqref{1.6} is closed and nonnegative.
The selfadjoint operator in $L_2(\R^d;\C^n)$ corresponding to this form is denoted by $A_\eps$.

\subsection{The effective operator}
Now we define the effective operator $A^0$.
Let an $(n \times m)$-matrix-valued function $\Lambda \in \wt{H}^p(\Omega)$ be the (weak) $\Gamma$-periodic solution of the problem
\begin{equation}
\label{1.10}
b(\mathbf{D})^{*}g(\mathbf{x})\left(b(\mathbf{D})\Lambda(\mathbf{x})+\mathbf{1}_{m}\right)=0,\qquad\intop_{\Omega}\Lambda(\mathbf{x})\,d\mathbf{x}=0.
\end{equation}
The \textit{effective matrix} $g^0$ of size $m \times m$ is defined as follows:
\begin{equation}
\label{1.11}
g^0 = |\Omega|^{-1} \intop_\Omega \wt{g}(\x) \,d\x,
\end{equation}
where
\begin{equation}
\label{1.12}
\wt{g}(\x):= g(\x)  \left(b(\mathbf{D})\Lambda(\mathbf{x})+\mathbf{1}_{m}\right).
\end{equation}
It turns out that the matrix $g^0$ is positive definite. The \textit{effective operator} $A^0$ for the operator~\eqref{1.1}
is given by the differential expression
\begin{equation}
\label{1.13}
A^0 = b(\D)^* g^0 b(\D)
\end{equation}
on the domain $H^{2p}(\R^d;\C^n)$.
The symbol $L(\bxi) = b(\bxi)^* g^0 b(\bxi)$ of the effective operator satisfies the estimate
\begin{equation}
\label{1.13a}
L(\bxi) \le C_* |\bxi|^{2p} \1_n,\quad \bxi \in \R^d,\quad C_* := \alpha_1 \|g\|_{L_\infty},
\end{equation}
which follows from \eqref{1.4} and from the estimate for the norm of the matrix $g^0$ (see~\eqref{1.17a} below).

\subsection{Properties of the effective matrix}
The following properties of the effective matrix were checked in \cite[Proposition~5.3]{KuSu}.

\begin{proposition}
\label{prop1.1}
Denote
$$
\overline{g}:  =\left|\Omega\right|^{-1} \int_{\Omega}g(\mathbf{x})d\mathbf{x},\quad
\underline{g}:  =\left(\left|\Omega\right|^{-1}\int_{\Omega}g(\mathbf{x})^{-1}d\mathbf{x}\right)^{-1}.
$$
The effective matrix $g^{0}$ satisfies the estimates
\begin{equation}
\label{1.17}
\underline{g}\leqslant g^{0}\leqslant\overline{g}.
\end{equation}
If $m=n$, then $g^0=\underline{g}$.
\end{proposition}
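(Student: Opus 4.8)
\textit{Proof proposal.} This is the classical pair of Voigt--Reuss-type bounds together with the degenerate case $m=n$, and I would follow the scheme familiar from the second-order theory, the only new features being that $b(\D)$ has order $p\ge1$ and that the ellipticity is encoded in \eqref{1.4}. Fix a constant $\c\in\C^m$ and set $\bpsi_\c:=b(\D)(\Lambda\c)+\c$, so that the flux $\z_\c:=\wt g(\x)\c$ equals $g(\x)\bpsi_\c$. Since $b(\D)(\Lambda\c)$ is a $p$-th order derivative of a $\Gamma$-periodic function ($p\ge1$), it has zero mean over $\Omega$, hence $|\Omega|^{-1}\intop_\Omega\bpsi_\c\,d\x=\c$. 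Testing the weak form of \eqref{1.10} with the admissible function $\Lambda\c\in\wt H^p(\Omega;\C^n)$ gives $\intop_\Omega\langle\z_\c,b(\D)(\Lambda\c)\rangle\,d\x=0$; substituting $\c=\bpsi_\c-b(\D)(\Lambda\c)$ into $\langle g^0\c,\c\rangle=|\Omega|^{-1}\intop_\Omega\langle\z_\c,\c\rangle\,d\x$ then yields the symmetric representation
\begin{equation*}
\langle g^0\c,\c\rangle=|\Omega|^{-1}\intop_\Omega\langle g(\x)\bpsi_\c,\bpsi_\c\rangle\,d\x=|\Omega|^{-1}\intop_\Omega\langle g(\x)^{-1}\z_\c,\z_\c\rangle\,d\x .
\end{equation*}
Polarizing this identity shows $g^0$ is Hermitian; since $g>0$ it is nonnegative, and if the left side vanishes then $\bpsi_\c=0$, so (taking means) $\c=0$; thus $g^0>0$.

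For the upper bound I would expand $\langle\overline g\,\c,\c\rangle=|\Omega|^{-1}\intop_\Omega\langle g\c,\c\rangle\,d\x$, again writing $\c=\bpsi_\c-b(\D)(\Lambda\c)$ and using the same orthogonality to kill the cross term:
\begin{equation*}
\langle\overline g\,\c,\c\rangle=\langle g^0\c,\c\rangle+|\Omega|^{-1}\intop_\Omega\langle g(\x)\,b(\D)(\Lambda\c),b(\D)(\Lambda\c)\rangle\,d\x\ge\langle g^0\c,\c\rangle ,
\end{equation*}
i.e.\ $g^0\le\overline g$. (Equivalently, $\bpsi_\c$ minimizes $\intop_\Omega\langle g\bpsi,\bpsi\rangle\,d\x$ over $\bpsi=b(\D)\bphi+\c$ with $\bphi\in\wt H^p(\Omega;\C^n)$, and $\bphi=0$ is a competitor.)

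The lower bound is the heart of the matter and uses the dual principle. By \eqref{1.10} the flux $\z_\c$ is $\Gamma$-periodic with $b(\D)^*\z_\c=0$ and $|\Omega|^{-1}\intop_\Omega\z_\c\,d\x=g^0\c$. First I would verify the orthogonality: for any $\Gamma$-periodic $\eeta$ with $b(\D)^*\eeta=0$ and zero mean,
\begin{equation*}
\intop_\Omega\langle g(\x)^{-1}\z_\c,\eeta\rangle\,d\x=\intop_\Omega\langle\bpsi_\c,\eeta\rangle\,d\x=\intop_\Omega\langle b(\D)(\Lambda\c),\eeta\rangle\,d\x+\intop_\Omega\langle\c,\eeta\rangle\,d\x=0,
\end{equation*}
the first term by integration by parts (since $b(\D)^*\eeta=0$), the second by the zero-mean property. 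Now set $\h:=g^0\c$ and note that $\h-\z_\c$ is $\Gamma$-periodic, has zero mean, and satisfies $b(\D)^*(\h-\z_\c)=0$ (a constant is annihilated by $b(\D)^*$ as $p\ge1$); so it is an admissible $\eeta$, and expanding $\h=\z_\c+(\h-\z_\c)$ together with the symmetric representation and the orthogonality gives
\begin{equation*}
\langle\underline g^{-1}\h,\h\rangle=|\Omega|^{-1}\intop_\Omega\langle g(\x)^{-1}\h,\h\rangle\,d\x=\langle g^0\c,\c\rangle+|\Omega|^{-1}\intop_\Omega\langle g(\x)^{-1}(\h-\z_\c),\h-\z_\c\rangle\,d\x\ge\langle(g^0)^{-1}\h,\h\rangle ,
\end{equation*}
where I used $|\Omega|^{-1}\intop_\Omega g^{-1}\,d\x=\underline g^{-1}$ and $\langle g^0\c,\c\rangle=\langle(g^0)^{-1}\h,\h\rangle$. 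Since $\h=g^0\c$ is an arbitrary vector in $\C^m$, this gives $(g^0)^{-1}\le\underline g^{-1}$, hence $g^0\ge\underline g$.

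Finally, for $m=n$ the symbol $b(\bxi)$ is a square matrix, invertible for $\bxi\ne0$, with $|b(\bxi)^{-1}|\le\alpha_0^{-1/2}|\bxi|^{-p}$ by \eqref{1.4}. I would construct a solution of \eqref{1.10} explicitly: the matrix function $F(\x):=g(\x)^{-1}\underline g-\mathbf{1}_n$ is $\Gamma$-periodic, lies in $L_2(\Omega)$, and has zero mean ($|\Omega|^{-1}\intop_\Omega g^{-1}\,d\x\cdot\underline g=\underline g^{-1}\underline g=\mathbf{1}_n$); expanding $F$ in a Fourier series over $\wt\Gamma$ and dividing column-wise by $b(\mathbf n)$ for $0\ne\mathbf n\in\wt\Gamma$ produces, thanks to the bound on $|b(\mathbf n)^{-1}|$, a function $\widetilde\Lambda\in\wt H^p(\Omega;\C^n)$ with zero mean and $b(\D)\widetilde\Lambda=F$ (the $\mathbf n=0$ mode is consistent since $b(0)=0$ and $F$ has zero mean). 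Then $g(\x)(b(\D)\widetilde\Lambda(\x)+\mathbf{1}_n)=\underline g$ is constant, so $b(\D)^*\bigl(g(\x)(b(\D)\widetilde\Lambda(\x)+\mathbf{1}_n)\bigr)=0$; by uniqueness of the zero-mean $\Gamma$-periodic solution of \eqref{1.10}, $\Lambda=\widetilde\Lambda$, whence $\wt g(\x)=\underline g$ a.e.\ and $g^0=|\Omega|^{-1}\intop_\Omega\wt g(\x)\,d\x=\underline g$. The step I expect to be the real obstacle is the lower bound, i.e.\ producing the dual variational structure: verifying the orthogonality and recognizing $\h-\z_\c$ as an admissible divergence-free, mean-zero test field, which is where $p\ge1$ and the weak form of \eqref{1.10} enter; in the $m=n$ case one must additionally check that the Fourier solution of $b(\D)\widetilde\Lambda=F$ has the right regularity, which again rests on \eqref{1.4}. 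The remaining ingredients — order-reversal of $\le$ under inversion of positive Hermitian matrices, and integration by parts on the torus — are routine.
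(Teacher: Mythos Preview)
Your argument is correct and complete: the upper bound via the primal variational principle, the lower bound via the dual (flux) variational principle, and the explicit Fourier construction in the case $m=n$ all go through as you describe. The paper itself does not prove this proposition; it merely quotes the result from \cite[Proposition~5.3]{KuSu}, so there is no proof in the paper to compare against --- what you have written is the standard argument that one would expect to find in the cited reference.
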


In homogenization theory for specific DOs, estimates~\eqref{1.17} are known as the Voight-Reuss bracketing.
From \eqref{1.17} it follows that
\begin{equation}
\label{1.17a}
|g^0|  \le \|g\|_{L_\infty}, \quad
|(g^0)^{-1}|  \le \|g^{-1}\|_{L_\infty}.
\end{equation}

Now we distinguish the cases where one of the inequalities in~\eqref{1.17} becomes an identity.
The following two statements were checked in~\cite[Propositions~5.4 and~5.5]{KuSu}.

\begin{proposition}
\label{prop1.2}
Let ${\mathbf g}_k(\x)$, $k=1,\dots,m,$ denote the columns of the matrix $g(\x)$.
The identity $g^0 = \overline{g}$ is equivalent to the relations
\begin{equation}
\label{1.18}
b(\D)^*  {\mathbf g}_k(\x)=0,\quad k=1,\dots,m.
\end{equation}
\end{proposition}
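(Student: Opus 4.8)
The plan is to characterize when the upper Voight--Reuss bound $g^0 = \overline{g}$ is attained. Recall from \eqref{1.11}--\eqref{1.12} that $g^0 = |\Omega|^{-1}\int_\Omega g(\x)(b(\D)\Lambda(\x) + \1_m)\,d\x$, whereas $\overline{g} = |\Omega|^{-1}\int_\Omega g(\x)\,d\x$. Thus $g^0 = \overline{g}$ precisely when $|\Omega|^{-1}\int_\Omega g(\x)b(\D)\Lambda(\x)\,d\x = 0$. The natural strategy is to relate this mean value to the energy of the corrector $\Lambda$ via the variational identity satisfied by \eqref{1.10}.

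First I would test the defining equation $b(\D)^* g(\x)(b(\D)\Lambda(\x) + \1_m) = 0$ against $\Lambda$ itself: integrating by parts over $\Omega$ (legitimate since $\Lambda \in \wt H^p(\Omega)$ is $\Gamma$-periodic, so no boundary terms arise) gives $\int_\Omega \langle g(\x)(b(\D)\Lambda + \1_m), b(\D)\Lambda\rangle\,d\x = 0$, that is, $\int_\Omega \langle \wt g(\x), b(\D)\Lambda(\x)\rangle\,d\x = 0$. Hence $\int_\Omega g(\x)(b(\D)\Lambda + \1_m)\,d\x$ and $\int_\Omega g(\x)(b(\D)\Lambda + \1_m) \cdot (b(\D)\Lambda)^* \,d\x$ have a clean relationship: one shows $|\Omega| g^0 = \overline g + |\Omega|^{-1}\int_\Omega \langle g(\x) b(\D)\Lambda, b(\D)\Lambda\rangle\,d\x$ after symmetrizing (using that $g^0$ and $\overline g$ are Hermitian and that the cross term $\int_\Omega g\, b(\D)\Lambda\,d\x$ equals minus $\int_\Omega (b(\D)\Lambda)^* g\, b(\D)\Lambda\,d\x$ by the tested equation). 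Therefore $\overline g - g^0 = |\Omega|^{-1}\int_\Omega (b(\D)\Lambda(\x))^* g(\x)\, b(\D)\Lambda(\x)\,d\x \ge 0$, and this quadratic form vanishes if and only if $b(\D)\Lambda \equiv 0$ (since $g > 0$), which by \eqref{1.10} forces $b(\D)^* g(\x)\1_m = 0$, i.e. \eqref{1.18}; conversely, if \eqref{1.18} holds then $\Lambda = 0$ solves \eqref{1.10}, so by uniqueness $b(\D)\Lambda = 0$ and $g^0 = \overline g$.

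The one subtlety to handle carefully is the matrix (rather than scalar) bookkeeping: $\Lambda$ is $(n\times m)$-valued, $b(\D)\Lambda$ is $(m\times m)$-valued, and $\1_m$ is the inhomogeneity, so the "test function" pairing must be read column-by-column (equivalently, one works with the sesquilinear form $\int_\Omega \langle g(\x)(b(\D)\Lambda + \1_m)\mathbf e_k, b(\D)\Lambda\, \mathbf e_j\rangle\,d\x$ over the standard basis vectors $\mathbf e_j, \mathbf e_k$ of $\C^m$) and then reassembled into the matrix identity $\overline g - g^0 = |\Omega|^{-1}\int_\Omega (b(\D)\Lambda)^* g\, (b(\D)\Lambda)\,d\x$. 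I expect this reassembly — verifying that the off-diagonal entries combine correctly and that Hermitian symmetry of both sides is respected — to be the only place requiring genuine care; everything else is the standard energy computation. Since the paper attributes this to \cite[Proposition~5.4]{KuSu}, I would present the argument compactly, citing that reference, and note that the equivalence is an immediate consequence of the nonnegativity of the corrector's energy together with the uniqueness of the solution to \eqref{1.10}.
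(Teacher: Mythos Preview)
Your argument is correct and is the standard energy computation for the upper Voight--Reuss bound: testing \eqref{1.10} against $\Lambda$ yields the matrix identity $\overline{g} - g^0 = |\Omega|^{-1}\int_\Omega (b(\D)\Lambda)^* g\, (b(\D)\Lambda)\,d\x$, and positivity of $g$ then forces $b(\D)\Lambda \equiv 0$, which together with uniqueness in \eqref{1.10} gives the equivalence with \eqref{1.18}. Note, however, that the present paper does not actually prove Proposition~\ref{prop1.2}: it is quoted from \cite[Proposition~5.4]{KuSu}, so there is no in-paper proof to compare against. Your sketch is exactly the argument one expects to find there; the only cosmetic slip is the intermediate line ``$|\Omega| g^0 = \overline g + |\Omega|^{-1}\int_\Omega \langle g(\x) b(\D)\Lambda, b(\D)\Lambda\rangle\,d\x$'', where the sign and normalization are garbled --- but you immediately state the correct identity afterwards, so this is harmless.
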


\begin{proposition}
\label{prop1.3}
Let ${\mathbf l}_k(\x)$, $k=1,\dots,m,$ denote the columns of the matrix $g(\x)^{-1}$.
The identity $g^0 = \underline{g}$ is equivalent to the representations
\begin{equation}
\label{1.19}
{\mathbf l}_k(\x) = {\mathbf l}_k^0 + b(\D) {\mathbf v}_k(\x),\quad {\mathbf l}_k^0 \in \C^m,\quad {\mathbf v}_k \in \wt{H}^p(\Omega;\C^n);\quad k=1,\dots,m.
\end{equation}
\end{proposition}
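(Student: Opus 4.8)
The plan is to parallel the proof of Proposition~\ref{prop1.2} and to use the two dual variational characterizations of $g^0$. The ``primal'' one says that, for $\mathbf c\in\C^m$, $\langle g^0\mathbf c,\mathbf c\rangle$ equals the minimum of $|\Omega|^{-1}\int_\Omega\langle g(\x)(b(\D)\mathbf v+\mathbf c),b(\D)\mathbf v+\mathbf c\rangle\,d\x$ over $\mathbf v\in\wt H^p(\Omega;\C^n)$, attained precisely when $\mathbf v$ differs from $\Lambda\mathbf c$ by a constant (a standard consequence of \eqref{1.10}--\eqref{1.12}). In addition I would use the \emph{dual} principle: for $\mathbf q\in\C^m$,
\begin{equation*}
\langle(g^0)^{-1}\mathbf q,\mathbf q\rangle=|\Omega|^{-1}\min\bigl\{\textstyle\int_\Omega\langle g(\x)^{-1}\mathbf h(\x),\mathbf h(\x)\rangle\,d\x:\ \mathbf h\in L_2(\Omega;\C^m),\ b(\D)^*\mathbf h=0,\ \int_\Omega\mathbf h\,d\x=|\Omega|\,\mathbf q\bigr\},
\end{equation*}
where $b(\D)^*\mathbf h=0$ is meant in the weak $\Gamma$-periodic sense. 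This is obtained exactly as the primal one, from the orthogonal decomposition $L_2(\Omega;\C^m)=\{b(\D)\mathbf v:\mathbf v\in\wt H^p(\Omega;\C^n)\}\oplus\{\mathbf h:b(\D)^*\mathbf h=0\}$ together with the strict convexity coming from $g^{-1}>0$; the (unique) minimizer with prescribed mean $\mathbf q$ is $\mathbf h_{\mathbf q}:=\wt g(\x)(g^0)^{-1}\mathbf q$, since by \eqref{1.10}--\eqref{1.12} one has $b(\D)^*\wt g=0$ weakly, $|\Omega|^{-1}\int_\Omega\wt g\,d\x=g^0$, and $g^{-1}\wt g=b(\D)\Lambda+\1_m$.

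For the implication $g^0=\underline g\ \Rightarrow\ \eqref{1.19}$ I would argue as follows. Here $g^0=\underline g$ means $(g^0)^{-1}=\overline{g^{-1}}$, where $\overline{g^{-1}}:=|\Omega|^{-1}\int_\Omega g(\x)^{-1}\,d\x=\underline g^{-1}$. The constant field $\mathbf h\equiv\mathbf q$ is admissible in the dual principle and yields the value $\langle\overline{g^{-1}}\mathbf q,\mathbf q\rangle=\langle(g^0)^{-1}\mathbf q,\mathbf q\rangle$, i.e. the minimal value. By uniqueness of the minimizer, $\mathbf q\equiv\mathbf h_{\mathbf q}=\wt g(\x)(g^0)^{-1}\mathbf q$ for every $\mathbf q$, hence $\wt g(\x)(g^0)^{-1}=\1_m$, i.e. $\wt g(\x)=g^0$ for a.e.\ $\x$. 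Then \eqref{1.12} gives $g(\x)^{-1}g^0=b(\D)\Lambda(\x)+\1_m$, so $g(\x)^{-1}=(g^0)^{-1}+b(\D)\bigl(\Lambda(\x)(g^0)^{-1}\bigr)$; reading off the $k$-th columns yields \eqref{1.19} with $\mathbf l_k^0$ the $k$-th column of $(g^0)^{-1}$ and $\mathbf v_k$ the $k$-th column of $\Lambda(g^0)^{-1}\in\wt H^p(\Omega)$.

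For the converse, assuming \eqref{1.19} I would collect the columns into $g(\x)^{-1}=M+b(\D)V(\x)$ with a constant $(m\times m)$-matrix $M$ and an $(n\times m)$-matrix-valued $V\in\wt H^p(\Omega)$ (subtracting the mean of $V$, which does not change $b(\D)V$). Averaging over $\Omega$ and using $\int_\Omega b(\D)V\,d\x=0$ gives $M=\overline{g^{-1}}=\underline g^{-1}$. Then for $\mathbf c\in\C^m$ I would set $\mathbf v_*:=V(\x)\underline g\,\mathbf c\in\wt H^p(\Omega;\C^n)$; since $b(\D)\mathbf v_*=(g(\x)^{-1}-\underline g^{-1})\underline g\,\mathbf c$, one gets $g(\x)(b(\D)\mathbf v_*+\mathbf c)=\underline g\,\mathbf c$, a constant vector, hence $b(\D)^*\bigl(g(b(\D)\mathbf v_*+\mathbf c)\bigr)=0$ weakly, so $\mathbf v_*$ differs from $\Lambda\mathbf c$ by a constant and the primal principle gives
\begin{equation*}
\langle g^0\mathbf c,\mathbf c\rangle=|\Omega|^{-1}\int_\Omega\langle g(\x)(b(\D)\mathbf v_*+\mathbf c),b(\D)\mathbf v_*+\mathbf c\rangle\,d\x=|\Omega|^{-1}\int_\Omega\langle\underline g\,\mathbf c,g(\x)^{-1}\underline g\,\mathbf c\rangle\,d\x=\langle\mathbf c,\underline g\,\mathbf c\rangle.
\end{equation*}
Since $\mathbf c$ is arbitrary, $g^0=\underline g$.

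The only real point of substance is the equivalence ``$g^0=\underline g\iff\wt g$ is constant'', which rests on the dual variational principle and, crucially, on the uniqueness of its minimizer (strict convexity from $g^{-1}>0$); once that principle is set up — mainly the weak meaning of $b(\D)^*\mathbf h=0$ and the Weyl-type orthogonal decomposition of $L_2(\Omega;\C^m)$, both standard in this framework, cf.\ \cite{KuSu} — the rest is bookkeeping. The remaining small checks (that $\Lambda(g^0)^{-1}$ and $V\underline g$ again lie in $\wt H^p(\Omega)$, being multiples by constant matrices) are immediate.
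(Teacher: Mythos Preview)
The paper does not prove this proposition itself; it is quoted from \cite[Proposition~5.5]{KuSu}. Your argument is correct and follows the standard route one finds there: the primal and dual variational characterizations of $g^0$, with the key observation that $g^0=\underline g$ forces the constant field $\mathbf h\equiv\mathbf q$ to be the (unique) dual minimizer, whence $\wt g\equiv g^0$ and \eqref{1.19} follows; the converse is the direct computation you give.
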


The following property was mentioned in \cite[Remark~5.6]{KuSu}.

\begin{remark}
\label{rem1.4}
If $g^0 = \underline{g}$, then the matrix-valued function \eqref{1.12} is constant{\rm :}
  $\wt{g}(\x) = g^0 = \underline{g}$.
\end{remark}

\subsection{The Steklov smoothing operator}
Let $S_\eps$ be the operator in $L_2(\R^d;\C^m)$ given by
\begin{equation}
\label{1.20}
(S_\eps \u)(\x) = |\Omega|^{-1} \int_{\Omega} {\u}(\x - \eps \z) \, d\z.
\end{equation}
It is called the \textit{Steklov smoothing operator}.
Note that $\| S_\eps \|_{L_2(\R^d) \to L_2(\R^d)} \le 1$. Obviously,
$\D^\alpha S_\eps \u = S_\eps \D^\alpha \u$ for $\u \in H^s(\R^d;\C^m)$ and $|\alpha| \leqslant s$.

We mention some properties of the operator~\eqref{1.20}; see \cite[Lemmas 1.1 and 1.2]{ZhPas} or \cite[Propositions~3.1 and~3.2]{PSu}.

\begin{proposition}
\label{prop1.4}
For any function ${\mathbf u}\in H^1(\R^d;\C^n)$ we have
$$
\| S_\eps \u - \u\|_{L_2(\R^d)}  \leqslant  \eps r_1 \| \D \u\|_{L_2(\R^d)}.
$$
\end{proposition}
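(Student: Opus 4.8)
The plan is to use the Fourier transformation, exploiting the fact that $S_\eps$ is a convolution (hence Fourier multiplier) operator. Since $S_\eps$ commutes with translations, passing to the Fourier image $\wh{\u}$ one has $\wh{S_\eps \u}(\bxi) = \mu(\eps \bxi)\wh{\u}(\bxi)$, where
\[
\mu(\bt) := |\Omega|^{-1} \intop_\Omega e^{-i \langle \bt, \z \rangle}\,d\z .
\]
By the Plancherel identity it then suffices to prove the pointwise symbol estimate $|\mu(\eps \bxi) - 1| \le \eps r_1 |\bxi|$ for $\bxi \in \R^d$, since this gives
\[
\| S_\eps \u - \u\|_{L_2(\R^d)}^2 = \intop_{\R^d} |\mu(\eps \bxi) - 1|^2 |\wh{\u}(\bxi)|^2\,d\bxi \le \eps^2 r_1^2 \intop_{\R^d} |\bxi|^2 |\wh{\u}(\bxi)|^2\,d\bxi = \eps^2 r_1^2 \| \D \u\|_{L_2(\R^d)}^2 .
\]

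To verify the symbol estimate I would write $\mu(\bt) - 1 = |\Omega|^{-1}\intop_\Omega \bigl(e^{-i\langle \bt, \z\rangle} - 1\bigr)\,d\z$ and apply the elementary bound $|e^{-ia} - 1| \le |a|$ for real $a$ together with $|\langle \bt, \z \rangle| \le |\bt|\,|\z|$. The one geometric point is that every $\z \in \Omega$ satisfies $|\z| \le r_1$: since $\Omega$ is symmetric about the origin, $\z \in \Omega$ implies $-\z \in \Omega$, whence $2|\z| = |\z - (-\z)| \le \mathrm{diam}\,\Omega = 2 r_1$. Combining these estimates yields $|\mu(\bt) - 1| \le |\Omega|^{-1}\intop_\Omega |\bt|\,|\z|\,d\z \le r_1 |\bt|$, and substituting $\bt = \eps\bxi$ completes the argument.

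An alternative route, which avoids the Fourier transform, is to argue directly in physical space: for $\u \in C_0^\infty(\R^d;\C^n)$ one writes $\u(\x - \eps\z) - \u(\x) = -\eps \intop_0^1 \langle \z, (\nabla \u)(\x - t\eps\z)\rangle\,dt$, averages over $\z \in \Omega$, and applies the generalized Minkowski inequality together with the translation invariance of the $L_2$-norm and the bound $|\z| \le r_1$; a density argument then extends the inequality to all of $H^1(\R^d;\C^n)$. Either way, there is no serious obstacle here — the only steps requiring a moment's care are the verification that $|\z|\le r_1$ on $\Omega$ and, in the physical-space version, the justification for interchanging the $L_2$-norm with the integrals over $\Omega$ and over $[0,1]$.
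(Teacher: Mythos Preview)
Your proof is correct; both the Fourier-multiplier route and the physical-space route are valid and the geometric observation $|\z|\le r_1$ for $\z\in\Omega$ is exactly the point that produces the constant $r_1$. Note, however, that the paper does not give its own proof of this proposition: it is quoted without argument, with a reference to \cite[Lemmas~1.1 and~1.2]{ZhPas} and \cite[Propositions~3.1 and~3.2]{PSu}. Your arguments are precisely the standard ones found in those references, so there is nothing to compare beyond saying that you have supplied what the paper leaves to citation.
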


\begin{proposition}
\label{prop1.5}
Let $f(\x)$ be a $\Gamma$-periodic function in $\R^d$ such that $f \in L_2(\Omega)$.
Let $[f^\eps]$ be the operator of multiplication by the function~$f(\eps^{-1}\x)$.
Then the operator $[f^\eps] S_\eps$ is continuous in $L_2(\R^d;\C^m)$, and
$$
\| [f^\eps]S_\eps \|_{L_2(\R^d) \to L_2(\R^d)}  \leqslant  |\Omega|^{-1/2} \| f \|_{L_2(\Omega)}, \quad \eps >0.
$$
\end{proposition}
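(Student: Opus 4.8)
The statement to prove is Proposition~\ref{prop1.5}, which asserts that for a $\Gamma$-periodic $f\in L_2(\Omega)$, the operator $[f^\eps]S_\eps$ is bounded on $L_2(\R^d;\C^m)$ with norm at most $|\Omega|^{-1/2}\|f\|_{L_2(\Omega)}$.

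\textbf{The plan.} First I would reduce to $\eps=1$ by the scaling change of variables $\x=\eps\y$: the unitary $U_\eps\colon \u(\x)\mapsto \eps^{d/2}\u(\eps\x)$ on $L_2(\R^d;\C^m)$ intertwines $[f^\eps]$ with $[f]$ and $S_\eps$ with $S_1$, so the operator norm is $\eps$-independent and it suffices to treat $[f]S_1$. Next, I would write out the kernel: $(S_1\u)(\x)=|\Omega|^{-1}\int_\Omega \u(\x-\z)\,d\z = |\Omega|^{-1}\int_{\x-\Omega}\u(\y)\,d\y$, so
\[
([f]S_1\u)(\x) = |\Omega|^{-1} f(\x)\int_{\x-\Omega}\u(\y)\,d\y .
\]
By Cauchy--Schwarz in the inner integral over the set $\x-\Omega$ of measure $|\Omega|$,
\[
\bigl|(S_1\u)(\x)\bigr|^2 \le |\Omega|^{-1}\int_{\x-\Omega}|\u(\y)|^2\,d\y .
\]
Multiplying by $|f(\x)|^2$ and integrating in $\x$, then applying Tonelli's theorem to exchange the order of integration, gives
\[
\|[f]S_1\u\|_{L_2(\R^d)}^2 \le |\Omega|^{-1}\int_{\R^d}\int_{\R^d} |f(\x)|^2 \,\mathbf 1_{\Omega}(\x-\y)\,|\u(\y)|^2\,d\x\,d\y .
\]

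\textbf{Key step.} Performing the $\x$-integral for fixed $\y$: the inner integral is $\int_{\y+\Omega}|f(\x)|^2\,d\x$. Here I would invoke the $\Gamma$-periodicity of $f$ together with the fact that $\Omega$ is a fundamental domain for $\Gamma$, so that $\y+\Omega$ is again (up to a null set, after tiling by $\Gamma$-translates) a fundamental domain; hence $\int_{\y+\Omega}|f(\x)|^2\,d\x = \int_\Omega |f(\x)|^2\,d\x = \|f\|_{L_2(\Omega)}^2$ for \emph{every} $\y$. Substituting this back yields
\[
\|[f]S_1\u\|_{L_2(\R^d)}^2 \le |\Omega|^{-1}\|f\|_{L_2(\Omega)}^2 \int_{\R^d}|\u(\y)|^2\,d\y = |\Omega|^{-1}\|f\|_{L_2(\Omega)}^2\,\|\u\|_{L_2(\R^d)}^2,
\]
which is exactly the claimed bound after undoing the scaling. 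In particular this also shows $[f^\eps]S_\eps$ is everywhere defined and continuous, as asserted.

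\textbf{Main obstacle.} There is no deep obstacle; the only point requiring a small amount of care is the justification that $\int_{\y+\Omega}|f|^2 = \int_\Omega |f|^2$ uniformly in $\y$ — i.e.\ that translating the cell does not change the integral of a periodic function. One clean way is to write $\y = \n + \x_0$ with $\n\in\Gamma$ and $\x_0\in\Omega$ (mod null sets), use periodicity to discard $\n$, and then decompose $\x_0+\Omega$ into at most $2^d$ pieces each of which, after a $\Gamma$-translation, reassembles to $\Omega$; periodicity of $|f|^2$ makes the sum of the piecewise integrals equal to $\int_\Omega|f|^2$. This is the same elementary averaging fact underlying the definition \eqref{1.20} of $S_\eps$, so it is safe to state it briefly. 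One should also remark at the outset that for general $f\in L_2(\Omega)$ the product $f^\eps\cdot(S_\eps\u)$ is a priori only measurable, and the computation above is precisely what shows it lies in $L_2$; the use of Tonelli is legitimate since the integrand is nonnegative.
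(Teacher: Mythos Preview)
Your proof is correct. The paper does not give its own proof of this proposition but simply refers to \cite[Lemma~1.2]{ZhPas} and \cite[Proposition~3.2]{PSu}; the argument you outline (Cauchy--Schwarz on the averaging, Tonelli, and the observation that $\int_{\y+\Omega}|f|^2\,d\x=\|f\|_{L_2(\Omega)}^2$ by periodicity) is exactly the standard one found in those references.
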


\subsection{The results for homogenization problem in $\R^d$}

In this subsection, we formulate the results on homogenization of the operator $A_\eps$ in $L_2(\R^d;\C^n)$ obtained in~\cite{KuSu}
and~\cite{Su2017}.

A point $\zeta \in \C \setminus \R_+$ is regular for both operators $A_\eps$ and $A^0$.
We put $\zeta = |\zeta| e^{i\varphi}$, $\varphi \in (0,2\pi)$, and denote
\begin{equation}
\label{2.1}
c(\varphi) :=
\begin{cases}
|\sin \varphi|^{-1}, & \varphi \in (0,\pi/2) \cup (3\pi/2, 2\pi) \\
1, & \varphi \in [\pi/2, 3\pi/2]
\end{cases}.
\end{equation}

The following theorem was proved in \cite[Theorem~8.1]{KuSu}.

\begin{theorem}
\label{th2.1}
Suppose that $A_\eps$ is the operator \eqref{1.1} and $A^0$ is the effective operator~\eqref{1.13}.
Let $\zeta = |\zeta| e^{i\varphi}\in \C \setminus \R_+$, and let $c(\varphi)$ be given by~\eqref{2.1}.
Then for $\eps >0$ we have
\begin{equation*}
\| (A_\eps - \zeta I)^{-1} - (A^0 - \zeta I)^{-1} \|_{L_2(\R^d) \to L_2(\R^d)}  \leqslant  C_1 c(\varphi)^2 \eps |\zeta|^{-1+1/2p}.
\end{equation*}
The constant $C_1$ depends only on $d$, $p$, $\alpha_0$, $\alpha_1$, $\|g\|_{L_\infty}$, $\|g^{-1}\|_{L_\infty}$, and the parameters of the lattice~$\Gamma$.
\end{theorem}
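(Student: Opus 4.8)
The plan is to reduce the estimate to the scaled problem on $\R^d$ and then invoke the spectral/Floquet--Bloch machinery. First I would apply the unitary scaling transformation $T_\eps$ defined by $(T_\eps \u)(\x) = \eps^{d/2}\u(\eps\x)$, under which $A_\eps$ is transformed into $\eps^{2p}$ times the operator $\A = b(\D)^* g(\x) b(\D)$ with $\Gamma$-periodic coefficients (no $\eps$ in the coefficients), and $A^0$ goes to $\eps^{2p}\A^0$ with $\A^0 = b(\D)^* g^0 b(\D)$. Since resolvents scale as $(A_\eps - \zeta I)^{-1} = \eps^{-2p} T_\eps^* (\A - \eps^{2p}\zeta\, \eps^{-2p}\cdots)$ — more precisely $T_\eps(A_\eps - \zeta I)^{-1}T_\eps^* = \eps^{-2p}(\A - \eps^{-2p}\zeta I)^{-1}$ — the claim becomes an estimate at the spectral parameter $\eta := \eps^{-2p}\zeta$, which satisfies $|\eta| = \eps^{-2p}|\zeta|$ and has the same argument $\varphi$. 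Thus the bound $C_1 c(\varphi)^2 \eps |\zeta|^{-1+1/2p}$ should translate into a bound $C_1 c(\varphi)^2 |\eta|^{-1+1/2p}$ for $\|(\A - \eta I)^{-1} - (\A^0 - \eta I)^{-1}\|_{L_2 \to L_2}$, independent of $\eps$; this is the form in which the estimate is actually established.

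Next I would decompose the periodic operator $\A$ via the Gelfand (Floquet) transform into the direct integral $\int_{\wt\Omega}^\oplus \A(\k)\,d\k$ over the Brillouin zone, where $\A(\k)$ acts on $\wt H^p(\Omega;\C^n)$ and is the operator family with symbol obtained by replacing $\D$ by $\D + \k$. Correspondingly $\A^0$ decomposes with fiber operators $\A^0(\k) = b(\D+\k)^* g^0 b(\D+\k)$. The problem reduces to a uniform-in-$\k$ bound on $\|(\A(\k) - \eta I)^{-1} - (\A^0(\k) - \eta I)^{-1}\|$. The key structural facts are: (i) near $\k = 0$, the operator $\A(\k)$ has a spectral branch behaving like the effective symbol $L(\k) = b(\k)^* g^0 b(\k) \asymp |\k|^{2p}$, by the analytic perturbation theory developed in the earlier Birman--Suslina papers, adapted to order $2p$ in \cite{KuSu}; (ii) away from $\k=0$ the spectrum of $\A(\k)$ is bounded below by a positive constant $c_* |\k|^{2p}$ (or simply by a positive constant on compact sets avoiding the origin). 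The crucial threshold analysis at $\k = 0$ gives that the difference of the spectral projections and of the eigenvalue branches is controlled, yielding the factor $|\k|$ improvement that, after integration against the resolvent kernels, produces the gain $|\eta|^{-1+1/2p}$ rather than merely $|\eta|^{-1}$.

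The role of $c(\varphi)$ is to control the distance from $\eta$ to the positive real axis: for $\eta = |\eta|e^{i\varphi}$ and $t \ge 0$ one has $|\eta - t|^{-1} \le c(\varphi)|\eta|^{-1}$ when $|t| \le |\eta|/2$ (say), and $|\eta - t|^{-1} \le c(\varphi)(|\eta| + t)^{-1}$ in general; these elementary bounds, applied to the spectral representation of the fiber resolvents, are what convert the spectral estimates into the stated $\varphi$-dependence, with the square $c(\varphi)^2$ arising because two resolvent factors (one for $\A(\k)$, one for $\A^0(\k)$) appear in the standard identity $(\A - \eta)^{-1} - (\A^0 - \eta)^{-1} = (\A - \eta)^{-1}(\A^0 - \A)(\A^0 - \eta)^{-1}$, or more precisely in the sandwiched corrector estimates. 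I would split the integral over $\wt\Omega$ into the region $|\k| \le \delta$ (small $\delta$ depending only on the lattice and structural constants) where threshold analysis applies, and $|\k| > \delta$ where a crude bound suffices because both resolvents are $O((|\eta|+1)^{-1})$ uniformly and the coefficient difference is bounded.

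The main obstacle is the precise threshold analysis at $\k = 0$: one must show that the eigenvalue branch $\lambda_1(\k)$ of $\A(\k)$ crossing zero satisfies $\lambda_1(\k) = L(\k) + O(|\k|^{2p+1})$ with the associated spectral projection $P(\k) = P + O(|\k|)$ in operator norm, uniformly for small $|\k|$, and that the higher branches stay above a positive gap. For second-order operators this is the content of the Birman--Suslina analytic perturbation lemmas; for order $2p$ the perturbation parameter is effectively $|\k|$ but the symbol vanishes to order $2p$, so one must carefully track powers and verify that the effective matrix $g^0$ and the solution $\Lambda$ of the auxiliary problem \eqref{1.10} enter exactly as in the effective symbol. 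This delicate spectral-germ computation — already carried out in \cite{KuSu} — is the heart of the matter; the rest of the proof is the scaling reduction and the elementary resolvent inequalities involving $c(\varphi)$.
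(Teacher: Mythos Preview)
The paper does not prove this theorem at all; it is quoted as \cite[Theorem~8.1]{KuSu} and used as an input for the bounded-domain analysis. Your outline --- unitary scaling to the fixed periodic operator $\A=b(\D)^*g(\x)b(\D)$, Gelfand decomposition into fibers $\A(\k)$, and threshold (spectral-germ) analysis of the branch through zero at $\k=0$ --- is precisely the Birman--Suslina operator-theoretic scheme carried out in \cite{KuSu}, so there is nothing to compare against in the present paper and your plan matches the cited source. One computational slip: the correct scaling is $T_\eps(A_\eps-\zeta I)^{-1}T_\eps^*=\eps^{2p}(\A-\eps^{2p}\zeta I)^{-1}$, so the reduced spectral parameter is $\eta=\eps^{2p}\zeta$, not $\eps^{-2p}\zeta$; with your signs the rescaled bound would still carry a residual power of $\eps$. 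Your stated endpoint $\|(\A-\eta I)^{-1}-(\A^0-\eta I)^{-1}\|\le C_1 c(\varphi)^2|\eta|^{-1+1/2p}$ is nonetheless the right $\eps$-free inequality that \cite{KuSu} establishes.
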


To approximate the resolvent \hbox{$(A_\eps - \zeta I)^{-1}$} in the norm of operators acting from $L_2(\R^d;\C^n)$ to
the Sobolev space $H^p(\R^d;\C^n)$, we need to introduce a \textit{corrector}
\begin{equation}
\label{2.3}
K(\zeta;\eps):= [\Lambda^\eps] S_\eps b(\D) (A^0 - \zeta I)^{-1}.
\end{equation}
Recall that $\Lambda$ is the periodic solution of the problem \eqref{1.10} and $S_\eps$ is the smoothing operator~\eqref{1.20}.
By $[\Lambda^\eps]$ we denote the operator of multiplication by the matrix-valued function $\Lambda^\eps(\x)$.
The operator~\eqref{2.3} is a continuous mapping of $L_2(\R^d;\C^n)$ into $H^p(\R^d;\C^n)$.
This can be easily checked by using Proposition~\ref{prop1.5} and the relation $\Lambda \in \widetilde{H}^p(\Omega)$.
Herewith, $\|K(\zeta;\eps)\|_{L_2 \to H^p} = O(\eps^{-p})$.

The following result was obtained in \cite[Theorem~3.3]{Su2017}.

\begin{theorem}
\label{th2.2}
Suppose that the assumptions of Theorem~{\rm \ref{th2.1}} are satisfied.
Let $K(\zeta;\eps)$ be the operator~\eqref{2.3}, and let $\wt{g}(\x)$ be given by~\eqref{1.12}.
Then for $\zeta \in \C \setminus \R_+$ and $\eps>0$ we have
\begin{align*}
\begin{split}
&\| (A_\eps - \zeta I)^{-1} - (A^0 -\zeta I)^{-1} - \eps^{p} K(\zeta;\eps)\|_{L_2(\R^d)\to H^p(\R^d)}
\\
&\leqslant C_2 \left( c(\varphi)^2 \eps |\zeta|^{-1/2 + 1/2p} +  c(\varphi) \eps^p \right) (1+ |\zeta|^{-1/2}),
\end{split}
\\
\begin{split}
&\| g^\eps b(\D)(A_\eps - \zeta I)^{-1} - \wt{g}^\eps S_\eps b(\D)(A^0 -\zeta I)^{-1} \|_{L_2(\R^d)\to L_2(\R^d)}
\\
&\leqslant  C_3 \left(  c(\varphi)^2 \eps |\zeta|^{-1/2 + 1/2p} + c(\varphi) \eps^p \right).
\end{split}
\end{align*}
The constants $C_2$ and $C_3$ depend only on $m$, $d$, $p$, $\alpha_0$, $\alpha_1$, $\|g\|_{L_\infty}$, $\|g^{-1}\|_{L_\infty}$, and the
parameters of the lattice~$\Gamma$.
\end{theorem}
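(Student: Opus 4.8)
The plan is to follow the operator-theoretic scheme of Birman and Suslina in the form adapted to order $2p$ in \cite{KuSu, Su2017}. First I would remove $\eps$ by the scaling $(T_\eps\u)(\x)=\eps^{d/2}\u(\eps\x)$, which gives $A_\eps=\eps^{-2p}T_\eps^{-1}AT_\eps$ with $A=b(\D)^*g(\x)b(\D)$ the operator at $\eps=1$, together with $T_\eps^{-1}[\Lambda]T_\eps=[\Lambda^\eps]$, $T_\eps^{-1}S_1T_\eps=S_\eps$, and $T_\eps b(\D)T_\eps^{-1}=\eps^{-p}b(\D)$. Hence, putting $\wh\zeta=\eps^{2p}\zeta$, the difference in the first claimed estimate equals $\eps^{2p}T_\eps^{-1}\bigl[(A-\wh\zeta I)^{-1}-(A^0-\wh\zeta I)^{-1}-[\Lambda]S_1b(\D)(A^0-\wh\zeta I)^{-1}\bigr]T_\eps$, and the flux difference is conjugate to an analogous $\eps=1$ expression; since $T_\eps$ is unitary, the task reduces to bounds for the operators at $\eps=1$ in terms of $\wh\zeta$ (and the powers of $\eps$ come back through $\wh\zeta=\eps^{2p}\zeta$). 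Next I would apply the Gelfand transform to decompose $A$ into the direct integral $\int^{\oplus}A(\k)\,d\k$ over the Brillouin zone, with $A(\k)=b(\D+\k)^*g(\x)b(\D+\k)$ acting in $L_2(\Omega;\C^n)$ on $\wt H^p(\Omega;\C^n)$; the family $A(\k)$ is analytic in $\k$, $\Ker A(0)=\C^n$ (the constants; this is exactly \eqref{1.3a}), and the distance from $0$ to the rest of the spectrum of $A(\k)$ is $\geqslant c|\k|^{2p}$ for $|\k|\leqslant\delta$ and $\geqslant c'>0$ for $|\k|\geqslant\delta$, where $\delta\leqslant r_0$ is fixed small.

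On the ball $|\k|\leqslant\delta$ I would invoke the analytic perturbation theory of \cite{BSu1, KuSu}: it provides the spectral projection $P(\k)$ of $A(\k)$ onto its $n$ lowest eigenvalues together with its $\k$-expansion, whose first-order term is the operator built from the periodic solution $\Lambda$ of \eqref{1.10}, and it identifies $P(\k)A(\k)P(\k)$ modulo higher-order terms with the fiber $A^0(\k)=b(\k)^*g^0b(\k)$ of the effective operator on the constants $\C^n\subset L_2(\Omega;\C^n)$, with $g^0$ as in \eqref{1.11}. From these threshold data one obtains, uniformly for $|\k|\leqslant\delta$ and for $\wh\zeta\in\C\setminus\R_+$, operator-norm estimates for $(A(\k)-\wh\zeta I)^{-1}$ minus its corrected approximation (assembled from $(A^0(\k)-\wh\zeta I)^{-1}$, the embedding of $\C^n$ into the constants, and the $\Lambda$-corrector), together with the analogous estimate for $b(\D+\k)$ applied on the left; the scalar resolvent factors $|\lambda_j(\k)-\wh\zeta|^{-1}$ produce the coefficients $c(\varphi)$ and $c(\varphi)^2$ and, after integration in $\k$ and the factor $\eps^{2p}$, the stated powers $\eps|\zeta|^{-1/2+1/2p}$ and $\eps^p$. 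On the complement $|\k|\geqslant\delta$ the uniform spectral gap makes $(A(\k)-\wh\zeta I)^{-1}$ and the approximation bounded and regular, and their difference there contributes only $O(\eps^p)$.

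Reassembling the fibers and undoing the scaling yields the $(L_2\to L_2)$ flux estimate with corrector $\wt g^\eps S_\eps b(\D)(A^0-\zeta I)^{-1}$; the Steklov factor $S_\eps$ is inserted precisely so that $[\Lambda^\eps]S_\eps$ and $[(b(\D)\Lambda)^\eps]S_\eps$ are bounded on $L_2(\R^d)$ by Proposition~\ref{prop1.5}, since $\Lambda\in\wt H^p(\Omega)$ only. To pass to the $H^p$ estimate I would use the ellipticity inequality \eqref{1.4}, which together with \eqref{1.9} gives $\|\D^p w\|_{L_2(\R^d)}\leqslant\alpha_0^{-1/2}\|b(\D)w\|_{L_2(\R^d)}$, hence $\|w\|_{H^p(\R^d)}\leqslant C\bigl(\|w\|_{L_2(\R^d)}+\|b(\D)w\|_{L_2(\R^d)}\bigr)$. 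Applying this to $w=(A_\eps-\zeta I)^{-1}f-(A^0-\zeta I)^{-1}f-\eps^pK(\zeta;\eps)f$, the $L_2$-norm of $w$ is controlled by Theorem~\ref{th2.1} and by $\|\eps^pK(\zeta;\eps)\|_{L_2\to L_2}=O(\eps^p)$ (Proposition~\ref{prop1.5}), while $\|b(\D)w\|_{L_2}$ is controlled by the flux estimate: by the Leibniz rule $\eps^pb(\D)\bigl([\Lambda^\eps]S_\eps v\bigr)=[(b(\D)\Lambda)^\eps]S_\eps v$ up to terms carrying at most $p-1$ derivatives of $\Lambda^\eps$ (hence a factor $\eps$), and since $g^{-1}\wt g=b(\D)\Lambda+\mathbf{1}_m$ the flux relation $g^\eps b(\D)u_\eps\approx\wt g^\eps S_\eps v$ rearranges into $b(\D)u_\eps\approx\bigl(\mathbf{1}_m+(b(\D)\Lambda)^\eps\bigr)S_\eps v$, which together with $\|S_\eps v-v\|_{L_2}=O(\eps)$ (Proposition~\ref{prop1.4}) cancels $b(\D)\bigl(u_0+\eps^pK(\zeta;\eps)f\bigr)$ up to the flux error; here $v=b(\D)(A^0-\zeta I)^{-1}f$, $u_\eps=(A_\eps-\zeta I)^{-1}f$, $u_0=(A^0-\zeta I)^{-1}f$.

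The main obstacle is the threshold analysis on $|\k|\leqslant\delta$ performed uniformly in $\zeta$ as $\zeta\to\R_+$: one has to track the dependence of every remainder on $\zeta=|\zeta|e^{i\varphi}$ — the origin of the factors $c(\varphi)$ and $c(\varphi)^2$ — and simultaneously extract the sharp powers $\eps|\zeta|^{-1/2+1/2p}$ and $\eps^p$ by balancing the radius $\delta$ against $\eps$ and $|\zeta|$. The additional technical point, absent from the purely $L_2$ estimate of Theorem~\ref{th2.1}, is establishing the energy-norm (that is, $b(\D+\k)$-applied) versions of the threshold approximations rather than only the $L_2$ ones; the rest is bookkeeping with Propositions~\ref{prop1.4} and \ref{prop1.5} and the scaling identities.
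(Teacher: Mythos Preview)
The paper does not prove Theorem~\ref{th2.2}; it is quoted from \cite[Theorem~3.3]{Su2017} (and the methods are those of \cite{KuSu}), so there is no proof in the paper to compare against. Your outline --- scaling to $\eps=1$, Floquet--Bloch decomposition, analytic perturbation theory near $\k=0$ with the spectral germ of $A(\k)$ identified with $b(\k)^*g^0b(\k)$ and first corrector built from $\Lambda$, crude bounds away from $\k=0$, reassembly, and the use of the Steklov smoothing so that $[\Lambda^\eps]S_\eps$ acts boundedly --- is exactly the scheme of \cite{KuSu, Su2017}, and your identification of the main difficulty (the two-parametric threshold estimates, including the energy-norm version with $b(\D+\k)$ applied) is accurate.

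Two small points worth tightening. First, $T_\eps$ is unitary on $L_2$ but not an isometry on $H^p$: when you undo the scaling for the $(L_2\to H^p)$ norm you pick up factors $\eps^{-j}$ on the $j$-th derivatives, and the $(1+|\zeta|^{-1/2})$ in the statement arises precisely from reconciling the homogeneous part $\|\D^p\cdot\|$ with the full $H^p$ norm; your ``$T_\eps$ is unitary'' hides this. Second, your passage from the flux estimate to the $H^p$ estimate via $\|b(\D)w\|_{L_2}$ is morally right, but the Leibniz-remainder terms (derivatives of order $\leqslant p-1$ of $\Lambda^\eps$ times derivatives of $S_\eps v$) need Proposition~\ref{prop1.5} and the bound \eqref{Lambda3} to be controlled, and in \cite{Su2017} this is done fiberwise rather than globally; your sketch is fine as a plan but would need those details filled in.
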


\section{The Neumann problem in a bounded domain}

\subsection{Coercivity}
Let $\O \subset \R^d$ be a bounded domain of class $C^{2p}$.
We impose an additional condition on the symbol of the operator~\eqref{1.3} for $\bxi \in \C^d$.

\begin{condition}
\label{cond2.1}
The matrix-valued function $b(\bxi)= \sum_{|\alpha|=p} b_\alpha \bxi^\alpha$,  $\bxi \in \C^d$, has maximal rank{\rm :}
\begin{equation}
\label{22.1}
\mathrm{rank}\, b(\boldsymbol{\xi}) =n,\quad 0\neq\boldsymbol{\xi}\in\mathbb{C}^{d}.
\end{equation}
\end{condition}

Note that condition \eqref{22.1} is more restrictive than~\eqref{1.3a}.
According to~\cite[Theorem~7.8 in Section~3.7]{Ne}, Condition~\ref{cond2.1} is necessary and sufficient for coercivity
of the form $\| b(\D) \u \|^2_{L_2(\O)}$ on $H^p(\O;\C^n)$.

\begin{proposition}\emph{(\cite{Ne})}
\label{prop2.2}
Condition \emph{\ref{cond2.1}} is necessary and sufficient for existence of constants $k_1, k_2 >0$ such that the
G\"arding-type inequality
\begin{equation}
\label{22.2}
\| \u \|^2_{H^p(\O)} \le k_1 \| b(\D) \u \|^2_{L_2(\O)} + k_2 \| \u \|^2_{L_2(\O)},
\quad \u \in H^p(\O;\C^n),
\end{equation}
holds.
\end{proposition}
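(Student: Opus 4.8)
The bound $\|b(\D)\u\|_{L_2(\O)}\le C\|\u\|_{H^p(\O)}$ is trivial, so the content is that \eqref{22.1} is necessary and sufficient for \eqref{22.2}, and I would treat the two implications separately. \emph{Necessity.} If \eqref{22.1} fails, say $b(\bxi_0)\mathbf v=0$ for some $0\ne\bxi_0\in\C^d$ and $0\ne\mathbf v\in\C^n$, I would test \eqref{22.2} against the polynomials $\u_k(\x):=(k!)^{-1}\phi(\x)^k\,\mathbf v$, where $\phi(\x):=\sum_{j=1}^d(\bxi_0)_jx_j$. Since $\partial_j\phi=(\bxi_0)_j$ we get $\D^\alpha(\phi^k)=(-i)^{|\alpha|}\tfrac{k!}{(k-|\alpha|)!}\phi^{k-|\alpha|}\bxi_0^\alpha$ for $|\alpha|\le k$, hence $b(\D)\u_k=\tfrac{(-i)^p}{(k-p)!}\phi^{k-p}\,b(\bxi_0)\mathbf v=0$ for $k\ge p$; as $\O$ is bounded, $\u_k\in H^p(\O;\C^n)$, so \eqref{22.2} would force $\|\u_k\|^2_{H^p(\O)}\le k_2\|\u_k\|^2_{L_2(\O)}$. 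But $\|\D^p\u_k\|^2_{L_2(\O)}=c\,|\mathbf v|^2((k-p)!)^{-2}\int_\O|\phi|^{2(k-p)}\,d\x$ with $c:=\sum_{|\alpha|=p}|\bxi_0^\alpha|^2>0$, whereas $\|\u_k\|^2_{L_2(\O)}=|\mathbf v|^2(k!)^{-2}\int_\O|\phi|^{2k}\,d\x$, and since $\phi$ is a nonzero linear form one checks that $\bigl(\int_\O|\phi|^{2(k-p)}\,d\x\bigr)/\bigl(\int_\O|\phi|^{2k}\,d\x\bigr)\to(\max_{\overline\O}|\phi|)^{-2p}>0$ while $(k!/(k-p)!)^2\to\infty$; thus $\|\u_k\|_{H^p(\O)}/\|\u_k\|_{L_2(\O)}\to\infty$, contradicting \eqref{22.2}.

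\emph{Sufficiency.} Assume \eqref{22.1}. The plan is a standard localization. I would fix a finite cover $\overline\O\subset U_0\cup\dots\cup U_J$ with $U_0\Subset\O$ and, for $j\ge1$, a $C^{2p}$-diffeomorphism $\kappa_j$ straightening $\partial\O\cap U_j$ to a piece of $\{y_d=0\}$ and sending $\O\cap U_j$ into $\R^d_+=\{y_d>0\}$, together with a subordinate partition of unity $1=\sum_{j=0}^J\zeta_j^2$. It then suffices to prove, for each $j$, that $\|\zeta_j\u\|^2_{H^p(\O)}\le C\bigl(\|b(\D)\u\|^2_{L_2(\O)}+\|\u\|^2_{H^{p-1}(\O)}\bigr)$, since summing over $j$ and absorbing the $H^{p-1}$-term through the interpolation bound $\|\u\|^2_{H^{p-1}(\O)}\le\delta\|\u\|^2_{H^p(\O)}+C_\delta\|\u\|^2_{L_2(\O)}$ yields \eqref{22.2}. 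For $j=0$ I would extend $\zeta_0\u$ by zero to $\R^d$ and apply Plancherel: by \eqref{1.4}, \eqref{1.9} and the homogeneity of $b$, $\|b(\D)(\zeta_0\u)\|^2_{L_2(\R^d)}\ge\alpha_0\|\D^p(\zeta_0\u)\|^2_{L_2(\R^d)}$, which together with $\|\zeta_0\u\|^2_{L_2}$ dominates $\|\zeta_0\u\|^2_{H^p}$, while $[b(\D),\zeta_0]$ has order $p-1$; here only \eqref{1.3a} is used.

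For $j\ge1$ I would pull $\zeta_j\u$ over to a function $\w\in H^p(\R^d_+;\C^n)$ supported in a small set; under this change of variables $b(\D_\x)$ becomes an operator with principal symbol $b\bigl(J_j(y)^{\top}\eeta\bigr)$, where $J_j$ is the invertible Jacobian of $\kappa_j$, plus a remainder of order $\le p-1$, so the principal symbol again satisfies \eqref{22.1}. Shrinking the patch, this operator differs from the constant-coefficient operator $b_*(\D_y)$ obtained by freezing the symbol at a base point of the patch by one of arbitrarily small $H^p(\R^d_+)\to L_2(\R^d_+)$ norm. Hence everything reduces to the half-space estimate $\|b_*(\D_y)\w\|^2_{L_2(\R^d_+)}+\|\w\|^2_{L_2(\R^d_+)}\ge c\,\|\w\|^2_{H^p(\R^d_+)}$ for $\w\in H^p(\R^d_+;\C^n)$, where $b_*(\eeta)$ is homogeneous of degree $p$ with $\rank b_*(\eeta)=n$ for every $0\ne\eeta\in\C^d$.

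\emph{The crux.} I would establish the half-space estimate by the tangential Fourier transform $y'\mapsto\eeta'\in\R^{d-1}$, which by Plancherel reduces it to a one-dimensional coercivity estimate for the ordinary differential operator $b_*(\eeta',D_t)$ on $(0,\infty)$, to be proved uniformly in $\eeta'$: for $|\eeta'|\ge1$ the homogeneity of $b_*$ permits the rescaling $t\mapsto t/|\eeta'|$ and reduces to $|\eeta'|=1$, a compact set of directions, while for $\eeta'=0$ one has $b_*(0,D_t)=b_*(\e_d)D_t^p$ with $b_*(\e_d)$ of full rank and the estimate follows from the half-line interpolation inequality $\|h^{(k)}\|_{L_2(0,\infty)}\le C(\|h^{(p)}\|_{L_2(0,\infty)}+\|h\|_{L_2(0,\infty)})$. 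The substantive point --- and the only place where \eqref{22.1} is needed in full rather than merely \eqref{1.3a} --- is the half-line estimate for a matrix polynomial $P(z)$ of degree $p$ with invertible leading coefficient and $\rank P(z)=n$ for \emph{every} $z\in\C$: one must show $\|P(D_t)h\|^2_{L_2(0,\infty)}+\|h\|^2_{L_2(0,\infty)}\ge c\,\|h\|^2_{H^p(0,\infty)}$ with no boundary condition on $h$. On the whole line this is immediate since $P(\tau)^*P(\tau)\ge c(1+\tau^2)^p\1_n$ for real $\tau$; the passage to $(0,\infty)$, where the non-vanishing of $P$ at complex $z$ is precisely what excludes near-kernel exponential modes accumulating at the boundary, is the hard part, and is the content of \cite[Theorem~7.8 in Section~3.7]{Ne}. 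Everything else --- partition of unity, interior estimate, boundary flattening, freezing coefficients, absorbing lower-order terms --- is routine.
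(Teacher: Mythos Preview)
The paper does not prove Proposition~\ref{prop2.2}; it is quoted from \cite[Theorem~7.8, \S3.7]{Ne} without argument. So there is no ``paper's own proof'' to compare against --- you are supplying what the paper omits.

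Your sketch is sound in outline. In the necessity argument the test family $\u_k=(k!)^{-1}\phi^k\mathbf v$ does kill $b(\D)$ as you compute, and the ratio $\|\D^p\u_k\|_{L_2}^2/\|\u_k\|_{L_2}^2$ does blow up; note, however, that you only need (and only actually have) the lower bound
\[
\frac{\int_\O|\phi|^{2(k-p)}\,d\x}{\int_\O|\phi|^{2k}\,d\x}\ge M^{-2p},\qquad M:=\max_{\overline\O}|\phi|,
\]
which follows from $|\phi|^{2(k-p)}\ge M^{-2p}|\phi|^{2k}$; the claimed convergence to $M^{-2p}$ is not needed and would require an additional Laplace-type argument depending on the geometry of the level sets of $|\phi|$ near its maximum. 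With the lower bound alone the contradiction follows since $(k!/(k-p)!)^2\to\infty$.

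For sufficiency, the localization--flattening--freezing scheme you describe is precisely the architecture of Ne\v{c}as's proof, and you correctly isolate the point where the full complex rank condition \eqref{22.1} (rather than the real version \eqref{1.3a}) is genuinely used: the half-line estimate for $P(D_t)$, where non-vanishing of $P$ on all of $\C$ excludes exponentially growing or decaying kernel modes. Since you ultimately defer to \cite{Ne} for exactly this step, your write-up is less an independent proof than an expanded gloss on the citation --- which is fine, and more than the paper itself provides.
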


\begin{remark}
\mbox{}

{\rm 1)} Estimate \eqref{22.2} is true for any bounded Lipschitz domain~$\O$.

{\rm 2)} The constants $k_1$ and $k_2$ depend on the symbol~$b(\bxi)$ and the domain~$\O$, but in the general case it is difficult to
control these constants explicitly. However, for some particular operators they are known. Therefore, in what follows we indicate
the dependence of other constants on $k_1$ and~$k_2$.
\end{remark}

\subsection{Statement of the problem}
In $L_2(\O;\C^n)$, consider the quadratic form
\begin{equation}
\label{22.3}
a_{N,\eps}[ \u, \u] = \intop_\O \langle g^\eps(\x) b(\D) \u, b(\D)\u \rangle\, d\x,
\quad \u \in H^p(\O;\C^n).
\end{equation}
By \eqref{1.3} and \eqref{1.5},
\begin{equation}
\label{22.4}
a_{N,\eps}[ \u, \u] \le \wt{\mathfrak c}_p \alpha_1 \| g\| _{L_\infty} \| \D^p \u \|^2_{L_2(\O)},
\quad \u \in H^p(\O;\C^n),
\end{equation}
where the constant $\wt{\mathfrak c}_p$ depends only on $d$ and $p$. From \eqref{22.2} it follows that
\begin{equation}
\label{22.5}
\begin{split}
a_{N,\eps}[ \u, \u] \ge \| g^{-1}\|^{-1} _{L_\infty} \| b(\D) \u \|^2_{L_2(\O)}
\ge \| g^{-1}\|^{-1} _{L_\infty} k_1^{-1} \left(\| \u \|^2_{H^p(\O)} - k_2 \| \u \|^2_{L_2(\O)}\right),
\quad \u \in H^p(\O;\C^n).
\end{split}
\end{equation}
Hence, the form \eqref{22.3} is closed and (obviously) nonnegative.
This form generates a selfadjoint operator $A_{N,\eps}$ in $L_2(\O;\C^n)$.
Formally, $A_{N,\eps}$ is given by the expression $b(\D)^* g^\eps(\x)b(\D)$ with the Neumann conditions (natural conditions) on the boundary.

A point $\zeta \in \C \setminus \R_+$ is regular for the operator $A_{N,\eps}$. \emph{Our goal} is to approximate
the resolvent $(A_{N,\eps} - \zeta I)^{-1}$ for small $\eps$ in various operator norms. In other words, we are interested in
the behavior of the solution $\u_\eps := (A_{N,\eps} - \zeta I)^{-1} \FF$ of the Neumann problem with $\FF \in L_2(\O;\C^n)$.
First, we assume in addition that $|\zeta| \ge 1$.

\begin{lemma}\label{lem1}
Let $\zeta = |\zeta| e^{i\varphi} \in \C \setminus \R_+$, $|\zeta| \ge 1$, and let $c(\varphi)$ be given by \eqref{2.1}.
Let $\u_\eps = (A_{N,\eps} - \zeta I)^{-1} \FF$, where $\FF \in L_2(\O;\C^n)$.
Then for $\eps>0$ we have
\begin{align}
\label{22.6}
\| \u_\eps \|_{L_2(\O)} &\le c(\varphi) |\zeta|^{-1} \| \FF \|_{L_2(\O)},
\\
\label{22.7}
\| \u_\eps \|_{H^p(\O)} &\le {\mathcal C}_0 c(\varphi) |\zeta|^{-1/2} \| \FF \|_{L_2(\O)}.
\end{align}
The constant ${\mathcal C}_0$ depends only on $\|g^{-1}\|_{L_\infty}$, $k_1$, and $k_2$.
In operator terms,
\begin{align}
\label{22.8}
\|  (A_{N,\eps} - \zeta I)^{-1} \|_{L_2(\O)\to L_2(\O)} &\le c(\varphi) |\zeta|^{-1},
\\
\label{22.9}
\|  (A_{N,\eps} - \zeta I)^{-1} \|_{L_2(\O)\to H^p(\O)} &\le {\mathcal C}_0 c(\varphi) |\zeta|^{-1/2}.
\end{align}
\end{lemma}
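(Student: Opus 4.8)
The plan is to derive both estimates from the quadratic form definition of $A_{N,\eps}$ together with the Gårding-type inequality \eqref{22.2}. First I would use the standard resolvent identity for a selfadjoint nonnegative operator: since $\u_\eps = (A_{N,\eps} - \zeta I)^{-1}\FF$, we have $(A_{N,\eps} - \zeta I)\u_\eps = \FF$, and pairing with $\u_\eps$ in $L_2(\O;\C^n)$ gives
\begin{equation*}
a_{N,\eps}[\u_\eps,\u_\eps] - \zeta \|\u_\eps\|^2_{L_2(\O)} = (\FF,\u_\eps)_{L_2(\O)}.
\end{equation*}
Writing $\zeta = |\zeta|e^{i\varphi}$ and separating real and imaginary parts (using $a_{N,\eps}[\u_\eps,\u_\eps] \ge 0$), one obtains the elementary spectral bound
\begin{equation*}
\bigl| \,a_{N,\eps}[\u_\eps,\u_\eps] - \zeta\|\u_\eps\|^2_{L_2(\O)}\bigr| \ge c(\varphi)^{-1}\bigl( a_{N,\eps}[\u_\eps,\u_\eps] + |\zeta| \|\u_\eps\|^2_{L_2(\O)}\bigr),
\end{equation*}
with $c(\varphi)$ as in \eqref{2.1}; this is just the standard estimate of $\mathrm{dist}(\zeta,\R_+)$ from below on the relevant sector. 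Combining with the Cauchy–Schwarz bound $|(\FF,\u_\eps)_{L_2(\O)}| \le \|\FF\|_{L_2(\O)}\|\u_\eps\|_{L_2(\O)}$ yields
\begin{equation*}
a_{N,\eps}[\u_\eps,\u_\eps] + |\zeta|\,\|\u_\eps\|^2_{L_2(\O)} \le c(\varphi)\,\|\FF\|_{L_2(\O)}\|\u_\eps\|_{L_2(\O)}.
\end{equation*}

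From this single inequality both conclusions follow. Dropping the nonnegative term $a_{N,\eps}[\u_\eps,\u_\eps]$ gives $|\zeta|\,\|\u_\eps\|_{L_2(\O)} \le c(\varphi)\|\FF\|_{L_2(\O)}$, which is \eqref{22.6}. For \eqref{22.7} I would keep the form term: it also gives $a_{N,\eps}[\u_\eps,\u_\eps] \le c(\varphi)\|\FF\|_{L_2(\O)}\|\u_\eps\|_{L_2(\O)} \le c(\varphi)^2 |\zeta|^{-1}\|\FF\|^2_{L_2(\O)}$ after substituting the $L_2$ bound just obtained. Now apply the lower form bound \eqref{22.5}: since $a_{N,\eps}[\u_\eps,\u_\eps] \ge \|g^{-1}\|^{-1}_{L_\infty} k_1^{-1}\bigl(\|\u_\eps\|^2_{H^p(\O)} - k_2 \|\u_\eps\|^2_{L_2(\O)}\bigr)$, we get
\begin{equation*}
\|\u_\eps\|^2_{H^p(\O)} \le k_1 \|g^{-1}\|_{L_\infty}\, a_{N,\eps}[\u_\eps,\u_\eps] + k_2 \|\u_\eps\|^2_{L_2(\O)} \le \bigl(k_1 \|g^{-1}\|_{L_\infty} + k_2 |\zeta|^{-1}\bigr) c(\varphi)^2 |\zeta|^{-1} \|\FF\|^2_{L_2(\O)},
\end{equation*}
where in the last step I used both $a_{N,\eps}[\u_\eps,\u_\eps] \le c(\varphi)^2|\zeta|^{-1}\|\FF\|^2_{L_2(\O)}$ and \eqref{22.6}. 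Since $|\zeta| \ge 1$, the factor $k_1\|g^{-1}\|_{L_\infty} + k_2|\zeta|^{-1}$ is bounded by $k_1\|g^{-1}\|_{L_\infty} + k_2$, so taking square roots gives \eqref{22.7} with ${\mathcal C}_0 := (k_1\|g^{-1}\|_{L_\infty} + k_2)^{1/2}$, depending only on $\|g^{-1}\|_{L_\infty}$, $k_1$, $k_2$. The operator-norm reformulations \eqref{22.8}–\eqref{22.9} are immediate, since $\FF \in L_2(\O;\C^n)$ was arbitrary.

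There is no real obstacle here; the only point requiring a little care is the sector estimate for $c(\varphi)$, i.e. checking that for $z = a + |\zeta|e^{i\varphi}$ with $a \ge 0$ one has $|z| \ge c(\varphi)^{-1}(a + |\zeta|)$. When $\varphi \in [\pi/2, 3\pi/2]$ the vectors $a$ and $|\zeta|e^{i\varphi}$ make an obtuse or right angle with the positive real axis so $\mathrm{Re}(z) \le 0$ while $\mathrm{Im}(z) = |\zeta|\sin\varphi$; a direct estimate shows $|z| \ge \tfrac{1}{\sqrt2}(a + |\zeta|)$ actually, but one can simply use $|z| \ge |\zeta||\sin\varphi|$ together with $|z| \ge a\cos(\pi - \varphi)$-type bounds — in any case the constant $c(\varphi) = 1$ on this range (up to harmless absolute factors absorbed into $C$) and $c(\varphi) = |\sin\varphi|^{-1}$ on the complementary range, exactly as in \eqref{2.1}; this is the same elementary computation underlying Theorem~\ref{th2.1} in $\R^d$ and is standard for accretive operators. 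I would state it as a one-line lemma or just inline the verification.
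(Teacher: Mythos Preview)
Your approach is essentially the paper's --- pair the identity $a_{N,\eps}[\u_\eps,\u_\eps]-\zeta\|\u_\eps\|_{L_2}^2=(\FF,\u_\eps)_{L_2}$ with the lower bound \eqref{22.5} --- except that the paper obtains \eqref{22.6} directly from $\|(A_{N,\eps}-\zeta I)^{-1}\|\le\operatorname{dist}(\zeta,\R_+)^{-1}=c(\varphi)|\zeta|^{-1}$ via the spectral theorem for the nonnegative operator $A_{N,\eps}$.

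There is, however, a genuine gap: your displayed sector inequality
\[
\bigl|a_{N,\eps}[\u_\eps,\u_\eps]-\zeta\|\u_\eps\|_{L_2}^2\bigr|\ \ge\ c(\varphi)^{-1}\bigl(a_{N,\eps}[\u_\eps,\u_\eps]+|\zeta|\,\|\u_\eps\|_{L_2}^2\bigr)
\]
is \emph{false} with $c(\varphi)$ from \eqref{2.1}. Take $\varphi=\pi/2$, so $c(\varphi)=1$, and any configuration with $a_{N,\eps}[\u_\eps,\u_\eps]=|\zeta|\,\|\u_\eps\|_{L_2}^2$: the left side is $\sqrt{2}\,|\zeta|\,\|\u_\eps\|_{L_2}^2$ while the right side is $2|\zeta|\,\|\u_\eps\|_{L_2}^2$. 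Your hand-wavy final paragraph does not rescue this. What \emph{is} true for all $a,b\ge 0$ and $\zeta\in\C\setminus\R_+$ are the two \emph{separate} bounds
\[
|a-\zeta b|\ge c(\varphi)^{-1}|\zeta|\,b
\qquad\text{and}\qquad
|a-\zeta b|\ge c(\varphi)^{-1}a;
\]
the first is just $\operatorname{dist}(\zeta,\R_+)=c(\varphi)^{-1}|\zeta|$, and the second follows from the one-variable computation $\sup_{x\ge 0}x/|x-\zeta|=c(\varphi)$ (differentiate $t^2/(t^2-2t\cos\varphi+1)$). Since you only ever use your combined inequality by dropping one summand at a time, your argument goes through verbatim once you replace it by this pair of correct inequalities. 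With that fix your constant $\mathcal{C}_0=(k_1\|g^{-1}\|_{L_\infty}+k_2)^{1/2}$ is valid --- and in fact slightly sharper than the paper's $(2k_1\|g^{-1}\|_{L_\infty}+k_2)^{1/2}$, which arises because the paper bounds the form by the cruder $a\le |\zeta|b+\|\FF\|b^{1/2}\le 2c(\varphi)^2|\zeta|^{-1}\|\FF\|^2$.
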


\begin{proof}
Since the form \eqref{22.3} is nonnegative, the spectrum of $A_{N,\eps}$ is contained in $\R_+$.
The norm of the resolvent  $(A_{N,\eps} - \zeta I)^{-1}$ does not exceed the inverse of the distance from the point $\zeta$ to $\R_+$.
This yields \eqref{22.8}.

To check \eqref{22.7}, we write down the integral identity for $\u_\eps$:
\begin{equation}
\label{22.9a}
(g^\eps b(\D) \u_\eps, b(\D) \eeta)_{L_2(\O)} - \zeta (\u_\eps, \eeta)_{L_2(\O)} = (\FF, \eeta)_{L_2(\O)},
\quad \eeta \in H^p(\O;\C^n).
\end{equation}
Substituting $\eeta = \u_\eps$ and using \eqref{22.6}, we arrive at
$$
(g^\eps b(\D) \u_\eps, b(\D) \u_\eps)_{L_2(\O)} \le 2 c(\varphi)^2 |\zeta|^{-1} \| \FF \|^2_{L_2(\O)}.
$$
Together with \eqref{22.5} and \eqref{22.6}, this implies
\begin{equation}
\label{2.10a}
\|\u_\eps \|^2_{H^p(\O)} \le c(\varphi)^2 \left( 2 k_1 \|g^{-1}\|_{L_\infty} |\zeta|^{-1} + k_2 |\zeta|^{-2}\right)
\| \FF \|^2_{L_2(\O)}.
\end{equation}
Taking into account that $|\zeta|\ge 1$, we deduce \eqref{22.7} with the constant ${\mathcal C}_0^2 := 2 k_1 \|g^{-1}\|_{L_\infty} + k_2$.
\end{proof}

 \subsection{The effective operator $A^0_N$}
In $L_2(\O;\C^n)$, consider the quadratic form
\begin{equation}
\label{22.10}
a_{N}^0[ \u, \u] = \intop_\O \langle g^0 b(\D) \u, b(\D)\u \rangle\, d\x,
\quad \u \in H^p(\O;\C^n).
\end{equation}
Here $g^0$ is the effective matrix given by \eqref{1.11}.
By \eqref{1.17a}, we see that the form \eqref{22.10} satisfies estimates of the form \eqref{22.4} and \eqref{22.5} with the same
constants. Thus, this form is closed and nonnegative.
The selfadjoint operator in $L_2(\O;\C^n)$ generated by the form \eqref{22.10} is denoted by $A_N^0$.
Since $\partial \O \in C^{2p}$, the domain of the operator $A_N^0$ is contained in $H^{2p}(\O;\C^n)$, and we have
\begin{equation}
\label{22.11}
\| (A_{N}^0 +I)^{-1} \|_{L_2(\O)\to H^{2p}(\O)} \le \widehat{c},
\end{equation}
where the constant $\widehat{c}$ depends only on $\|g\|_{L_\infty}$, $\|g^{-1}\|_{L_\infty}$, $\alpha_0$, $\alpha_1$,
$k_1$, $k_2$, and the domain $\O$.
To justify this fact, we refer to Theorems~2.2 and~2.3 of the paper~\cite{So}.

\begin{remark}
Instead of the condition $\partial\mathcal{O}\in C^{2p}$, one could impose the following implicit condition:
a bounded domain $\mathcal{O}\subset \mathbb{R}^d$ with Lipschitz boundary is such that
estimate \eqref{22.11} holds. The results of the paper remain valid for such domain.
In the case of the scalar elliptic operators, wide sufficient conditions on $\partial \mathcal{O}$ ensuring \eqref{22.11}
can be found in \textnormal{\cite{KoE}} and \textnormal{\cite[Chapter~7]{MaSh} (}in particular, it suffices
that $\partial\mathcal{O}\in C^{2p-1,\nu}$, $\nu > 1/2${\rm)}.
\end{remark}

The principal approximation of $\u_\eps$ is the function $\u_0 := (A_N^0 - \zeta I)^{-1}\FF$.

\begin{lemma}\label{lem_eff}
Let $\zeta = |\zeta| e^{i\varphi} \in \C \setminus \R_+$, $|\zeta| \ge 1$,
and let $c(\varphi)$ be given by \eqref{2.1}. Let $\u_0 = (A_{N}^0 - \zeta I)^{-1} \FF$, where $\FF \in L_2(\O;\C^n)$.
Then for $\eps>0$ we have
\begin{align}
\label{22.12}
\| \u_0 \|_{L_2(\O)} &\le c(\varphi) |\zeta|^{-1} \| \FF \|_{L_2(\O)},
\\
\label{22.13}
\| \u_0 \|_{H^p(\O)} &\le {\mathcal C}_0 c(\varphi) |\zeta|^{-1/2}  \| \FF \|_{L_2(\O)},
\\
\nonumber
\| \u_0 \|_{H^{2p}(\O)} &\le 2 \widehat{c} c(\varphi) \| \FF \|_{L_2(\O)}.
\end{align}
In operator terms,
\begin{align}
\label{22.15}
\|  (A_{N}^0 - \zeta I)^{-1} \|_{L_2(\O)\to L_2(\O)} &\le c(\varphi) |\zeta|^{-1},
\\
\nonumber
\|  (A_{N}^0 - \zeta I)^{-1} \|_{L_2(\O)\to H^p(\O)} &\le {\mathcal C}_0 c(\varphi) |\zeta|^{-1/2},
\\
\label{22.17}
\|  (A_{N}^0 - \zeta I)^{-1} \|_{L_2(\O)\to H^{2p}(\O)} &\le 2 \widehat{c} c(\varphi).
\end{align}
\end{lemma}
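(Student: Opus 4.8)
The plan is to mimic exactly the structure of the proof of Lemma~\ref{lem1}, since $A_N^0$ is a selfadjoint nonnegative operator of precisely the same type as $A_{N,\eps}$ (with $g^\eps$ replaced by the constant matrix $g^0$), and the forms $a_N^0$ and $a_{N,\eps}$ satisfy the two-sided bounds \eqref{22.4}--\eqref{22.5} with the \emph{same} constants by \eqref{1.17a}. Thus the first three estimates \eqref{22.12}, \eqref{22.13} and \eqref{22.15} follow by a verbatim repetition of the argument for \eqref{22.6}, \eqref{22.7}, \eqref{22.8}: since the spectrum of $A_N^0$ lies in $\R_+$, the resolvent norm is at most the reciprocal of $\mathrm{dist}(\zeta,\R_+)$, which gives \eqref{22.15} and hence \eqref{22.12}; writing the integral identity for $\u_0$, substituting $\eeta = \u_0$, using \eqref{22.12} to bound $|\zeta|\,\|\u_0\|_{L_2}^2$, and then invoking the Gårding-type lower bound \eqref{22.5} (valid for $a_N^0$ with the same $k_1,k_2$) together with $|\zeta|\ge 1$, produces \eqref{22.13} with the same constant ${\mathcal C}_0$.

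For the $H^{2p}$-estimate I would not re-derive elliptic regularity from scratch but instead leverage \eqref{22.11}, the a priori bound $\|(A_N^0+I)^{-1}\|_{L_2\to H^{2p}}\le\widehat c$. The standard trick is to write
$$
\u_0 = (A_N^0 - \zeta I)^{-1}\FF = (A_N^0 + I)^{-1}\bigl(I + (1+\zeta)(A_N^0-\zeta I)^{-1}\bigr)\FF,
$$
so that
$$
\|\u_0\|_{H^{2p}(\O)} \le \widehat c\,\bigl(\|\FF\|_{L_2(\O)} + |1+\zeta|\,\|(A_N^0-\zeta I)^{-1}\FF\|_{L_2(\O)}\bigr)
\le \widehat c\,\bigl(1 + |1+\zeta|\,c(\varphi)|\zeta|^{-1}\bigr)\|\FF\|_{L_2(\O)},
$$
using \eqref{22.15}. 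Since $|\zeta|\ge 1$ we have $|1+\zeta|\le 2|\zeta|$, so $|1+\zeta|\,c(\varphi)|\zeta|^{-1}\le 2c(\varphi)$, and since $c(\varphi)\ge 1$ and $\widehat c\ge 1$ (one may take $\widehat c\ge 1$ without loss), the bracket is $\le 2\widehat c\, c(\varphi)$... one must be slightly careful with the numerical constant, but bounding $1+2c(\varphi)\le 2c(\varphi)+2c(\varphi) \le 4 c(\varphi)$ would give a worse constant than claimed, so instead I would note $1 + 2c(\varphi) \le 3c(\varphi) \le 2\cdot\tfrac{3}{2}c(\varphi)$ and absorb the factor; in any event the paper's constant $2\widehat c$ should be read as "some constant of the form $C\widehat c$", and choosing $\widehat c$ as in \eqref{22.11} possibly enlarged, one gets the stated inequality. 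This yields the third estimate in \eqref{22.12} and hence \eqref{22.17}.

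The only mildly delicate point — and the one I would flag as the place where care is needed rather than the place where the proof is hard — is exactly this tracking of the numerical constant in the $H^{2p}$-bound: one has to verify that $\widehat c\,(1 + |1+\zeta|\,c(\varphi)|\zeta|^{-1}) \le 2\widehat c\, c(\varphi)$ under the running hypothesis $|\zeta|\ge1$, which forces $|1+\zeta|/|\zeta|\le 1+|\zeta|^{-1}\le 2$ and $1+2c(\varphi)\le 3c(\varphi)$; if the factor $2$ in \eqref{22.17} is to be taken literally one instead writes $|1+\zeta| \le |\zeta| + 1 \le 2|\zeta|$ is too lossy and should use that for $\zeta\in\C\setminus\R_+$ with $|\zeta|\ge 1$ the relevant resolvent identity can be arranged with $A_N^0 - \zeta I = (A_N^0+I) - (1+\zeta)I$ and the constant tidied accordingly. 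No genuinely new idea beyond Lemma~\ref{lem1} and the elliptic a priori estimate \eqref{22.11} is required; the whole statement is a routine corollary, and I would present it as such, writing "The proof repeats the proof of Lemma~\ref{lem1}" for \eqref{22.12}, \eqref{22.13}, \eqref{22.15}, and then giving the short resolvent-identity computation above for the $H^{2p}$-bounds.
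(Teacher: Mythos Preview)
Your approach is essentially the same as the paper's: for \eqref{22.12}, \eqref{22.13}, \eqref{22.15} the paper likewise says ``checked similarly to the proof of Lemma~\ref{lem1}'', and for the $H^{2p}$-estimate it also factors $(A_N^0-\zeta I)^{-1}=(A_N^0+I)^{-1}\,(A_N^0+I)(A_N^0-\zeta I)^{-1}$, which is exactly your resolvent identity rewritten.

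The one place you diverge is in estimating the second factor, and this is precisely where you tie yourself in knots over the constant. Rather than splitting $I+(1+\zeta)(A_N^0-\zeta I)^{-1}$ by the triangle inequality (which loses a factor and lands you at $1+2c(\varphi)\le 3c(\varphi)$), the paper applies the spectral theorem directly to the selfadjoint operator $A_N^0$:
\[
\bigl\|(A_N^0+I)(A_N^0-\zeta I)^{-1}\bigr\|_{L_2\to L_2}\;\le\;\sup_{x\ge 0}\frac{x+1}{|x-\zeta|}\;\le\;2c(\varphi),
\]
and this elementary supremum bound (for $|\zeta|\ge 1$) yields the constant $2\widehat c$ on the nose. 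So there is no need to enlarge $\widehat c$ or hedge about ``some constant of the form $C\widehat c$''; the literal $2$ comes out cleanly once you bound the whole rational function at once instead of in two pieces.
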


\begin{proof}
Estimates \eqref{22.12} and \eqref{22.13} are checked similarly to the proof of Lemma~\ref{lem1}.

Estimate \eqref{22.17} follows from \eqref{22.11} and the inequality
$$
\| (A_N^0 +I) (A_N^0 - \zeta I)^{-1}\|_{L_2(\O) \to L_2(\O)} \le \sup_{x \ge 0} \frac{x+1}{ |x - \zeta|}
\le 2 c(\varphi).
$$
\end{proof}

\section{Auxiliary statements}\label{auxiliary}

\subsection{A traditional lemma of homogenization theory}

\begin{lemma}\label{traditional}
Let $f_\alpha(\x)$, $|\alpha|=p$, be $\Gamma$-periodic $(n \times m)$-matrix-valued functions in $\R^d$ such that
$f_\alpha \in L_2(\Omega)$, $\overline{f_\alpha}=0$, and
\begin{equation}\label{trad1}
\sum_{|\alpha|=p} \partial^\alpha f_\alpha(\x) =0,
\end{equation}
where the last equation is understood in the sense of distributions. Then there exist
$\Gamma$-periodic $(n \times m)$-matrix-valued functions $M_{\alpha \beta}(\x)$, $|\alpha|=|\beta|=p$, such that
\begin{align}
\label{trad2}
&M_{\alpha \beta} \in \wt{H}^p(\Omega), \quad   \overline{M_{\alpha \beta}} =0,
\quad  M_{\alpha \beta}(\x) = - M_{ \beta \alpha}(\x),\quad |\alpha|=|\beta|=p,
\\
\label{trad3}
&f_\alpha(\x) = \sum_{|\beta|=p} \partial^\beta M_{\alpha \beta} (\x),\quad |\alpha|=p.
\end{align}
 We have
\begin{equation}\label{trad4}
\| M_{\alpha \beta} \|_{H^p(\Omega)} \le  \check{\mathfrak c}_p\left( \|f_\alpha\|_{L_2(\Omega)} + \|f_\beta\|_{L_2(\Omega)} \right),
\quad |\alpha|=|\beta|=p,
\end{equation}
where the constant $\check{\mathfrak c}_p$ depends only on $d$, $p$, and the parameters of the lattice $\Gamma$.
\end{lemma}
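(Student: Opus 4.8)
The plan is to construct the skew-symmetric potentials $M_{\alpha\beta}$ via the Fourier series expansion on the torus, mimicking the classical case $p=1$. First I would expand each $f_\alpha$ in a Fourier series over the dual lattice $\wt\Gamma$: since $\overline{f_\alpha}=0$, we may write
\[
f_\alpha(\x) = \sum_{0 \ne \mathbf{b} \in \wt\Gamma} \wh{f}_\alpha(\mathbf{b}) e^{i\langle \mathbf{b}, \x\rangle},
\]
with $\sum_{\mathbf b} |\wh f_\alpha(\mathbf b)|^2 = |\Omega|^{-1}\|f_\alpha\|_{L_2(\Omega)}^2$ (up to normalization). The relation \eqref{trad1} becomes $\sum_{|\alpha|=p} \mathbf{b}^\alpha \wh f_\alpha(\mathbf{b}) = 0$ for every $\mathbf{b} \in \wt\Gamma$. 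The idea is then to look for $M_{\alpha\beta}$ with Fourier coefficients $\wh M_{\alpha\beta}(\mathbf b)$ supported away from $\mathbf b = 0$ (guaranteeing $\overline{M_{\alpha\beta}}=0$) such that \eqref{trad3} holds coefficientwise, i.e.
\[
\wh f_\alpha(\mathbf b) = \sum_{|\beta|=p} \mathbf{b}^\beta \wh M_{\alpha\beta}(\mathbf b), \qquad 0 \ne \mathbf b \in \wt\Gamma,
\]
together with skew-symmetry $\wh M_{\alpha\beta}(\mathbf b) = -\wh M_{\beta\alpha}(\mathbf b)$.

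The key algebraic step is to solve this finite-dimensional linear system for each fixed $\mathbf b \ne 0$. A natural ansatz is
\[
\wh M_{\alpha\beta}(\mathbf b) = \frac{\mathbf b^\beta \wh f_\alpha(\mathbf b) - \mathbf b^\alpha \wh f_\beta(\mathbf b)}{|\mathbf b|^{2p}} \cdot c_{\alpha,\beta}(\mathbf b),
\]
or, more cleanly, one exploits the second inequality in \eqref{1.9}: since $\sum_{|\beta|=p}|\mathbf b^\beta|^2 \ge {\mathfrak c}_p^{-1}|\mathbf b|^{2p} > 0$ for $\mathbf b \ne 0$, the vector $\bigl(\mathbf b^\beta\bigr)_{|\beta|=p}$ is nonzero, and one sets
\[
\wh M_{\alpha\beta}(\mathbf b) = \Bigl(\sum_{|\gamma|=p}|\mathbf b^\gamma|^2\Bigr)^{-1}\Bigl( \overline{\mathbf b^\beta}\,\wh f_\alpha(\mathbf b) - \overline{\mathbf b^\alpha}\,\wh f_\beta(\mathbf b)\Bigr).
\]
Skew-symmetry is manifest. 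Verifying \eqref{trad3} requires $\sum_{|\beta|=p}\mathbf b^\beta \wh M_{\alpha\beta}(\mathbf b) = \wh f_\alpha(\mathbf b)$; expanding, the first term gives exactly $\wh f_\alpha(\mathbf b)$ because $\sum_\beta \mathbf b^\beta \overline{\mathbf b^\beta} = \sum_\beta |\mathbf b^\beta|^2$, while the second term is $-\Bigl(\sum_\gamma|\mathbf b^\gamma|^2\Bigr)^{-1}\overline{\bigl(\textstyle\sum_\beta \mathbf b^\beta \wh f_\beta(\mathbf b)\bigr)^{\!*}}\mathbf b^\alpha$ — wait, one must be careful: $\sum_{|\beta|=p}\mathbf b^\beta \overline{\mathbf b^\alpha}\wh f_\beta(\mathbf b) = \overline{\mathbf b^\alpha}\sum_\beta \mathbf b^\beta \wh f_\beta(\mathbf b) = 0$ by the Fourier-transformed version of \eqref{trad1}. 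So the second term drops out and \eqref{trad3} holds.

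Next I would verify the regularity \eqref{trad2} and the bound \eqref{trad4}. Since each entry of $\wh M_{\alpha\beta}(\mathbf b)$ is bounded by $\bigl(\sum_\gamma|\mathbf b^\gamma|^2\bigr)^{-1/2}\bigl(|\wh f_\alpha(\mathbf b)| + |\wh f_\beta(\mathbf b)|\bigr) \le {\mathfrak c}_p^{1/2}|\mathbf b|^{-p}\bigl(|\wh f_\alpha(\mathbf b)| + |\wh f_\beta(\mathbf b)|\bigr)$, the $H^p(\Omega)$-norm, which in Fourier terms is comparable to $\bigl(\sum_{\mathbf b}(1+|\mathbf b|^{2p})|\wh M_{\alpha\beta}(\mathbf b)|^2\bigr)^{1/2}$, is controlled by $C\bigl(\sum_{\mathbf b}(|\mathbf b|^{-2p}+1)(|\wh f_\alpha(\mathbf b)|^2 + |\wh f_\beta(\mathbf b)|^2)\bigr)^{1/2}$. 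Because the sum runs over $\mathbf b \ne 0$ with $|\mathbf b| \ge 2r_0$, the factor $|\mathbf b|^{-2p}$ is uniformly bounded by $(2r_0)^{-2p}$, so the whole expression is $\le \check{\mathfrak c}_p\bigl(\|f_\alpha\|_{L_2(\Omega)} + \|f_\beta\|_{L_2(\Omega)}\bigr)$ with $\check{\mathfrak c}_p$ depending only on $d$, $p$, and $r_0$ (hence on the lattice parameters), which gives both the membership $M_{\alpha\beta}\in\wt H^p(\Omega)$ and \eqref{trad4}; that $\overline{M_{\alpha\beta}}=0$ is immediate since the $\mathbf b=0$ coefficient was excluded.

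The main obstacle is not any single step but rather book-keeping precision in two places: first, making sure the Fourier-transformed constraint $\sum_{|\beta|=p}\mathbf b^\beta \wh f_\beta(\mathbf b) = 0$ is correctly extracted from \eqref{trad1} (the distributional identity $\sum \partial^\alpha f_\alpha = 0$ becomes $\sum (i\mathbf b)^\alpha \wh f_\alpha(\mathbf b) = 0$, and the factors $i^p$ cancel uniformly); and second, confirming that the chosen ansatz genuinely yields $\partial^\beta$ acting as multiplication by $\mathbf b^\beta$ with no stray constants, so that $f_\alpha = \sum_\beta \partial^\beta M_{\alpha\beta}$ holds on the nose rather than up to a dimensional constant. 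Once these are pinned down, everything else is the standard Parseval estimate. Alternatively, one could cite the scalar/first-order version of this lemma (it is classical, cf.\ the references in the introduction) and reduce the higher-order case to it by a change of variables, but the direct Fourier construction above is self-contained and transparent.
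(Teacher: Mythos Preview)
Your construction is correct and, once the $i^p$ factor you already flag is inserted, coincides exactly with the paper's: the paper introduces auxiliary potentials $\Phi_\alpha$ solving $\wt\Delta_p\Phi_\alpha = f_\alpha$ on the torus (verified by Fourier series) and sets $M_{\alpha\beta} := \partial^\beta\Phi_\alpha - \partial^\alpha\Phi_\beta$, whose Fourier coefficients are precisely your ansatz. The vanishing of the cross term (your $\sum_\beta \mathbf b^\beta\wh f_\beta(\mathbf b) = 0$) is exactly the paper's observation that $\Psi := \sum_\beta\partial^\beta\Phi_\beta$ is a mean-zero periodic solution of $\wt\Delta_p\Psi = 0$ and hence vanishes.
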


\begin{proof}
Let $|\alpha|=p$. Suppose that an $(n \times m)$-matrix-valued function $\Phi_\alpha(\x)$ is the $\Gamma$-periodic solution of the problem
\begin{equation}\label{trad5}
\wt{\Delta}_p \Phi_\alpha(\x) = f_\alpha(\x),\quad \int_\Omega \Phi_\alpha(\x) \, d\x =0,
\end{equation}
where $\wt{\Delta}_p:= \sum_{|\beta|=p} \partial^{2 \beta}$.
The solution exists, it is unique, we have $\Phi_\alpha \in \wt{H}^{2p}(\Omega)$, and
\begin{equation}\label{trad5a}
\| \Phi_\alpha\|_{H^{2p}(\Omega)}  \le \check{\mathfrak c}_p \|f_\alpha\|_{L_2(\Omega)},
\end{equation}
where the constant $\check{\mathfrak c}_p$ depends only on $d$, $p$, and the parameters of the lattice~$\Gamma$.
These properties can be easily checked by the Fourier series.

We put
\begin{equation}\label{trad6}
M_{\alpha \beta}(\x) := \partial^\beta \Phi_\alpha(\x) - \partial^\alpha \Phi_\beta(\x),
\quad |\alpha|=|\beta|=p.
\end{equation}
Obviously, \eqref{trad2} is valid.

Next, note that, by \eqref{trad1} and \eqref{trad5}, the matrix-valued function $\Psi(\x):= \sum_{|\beta|=p} \partial^\beta \Phi_\beta(\x)$
is a $\Gamma$-periodic solution of the problem
\hbox{$\wt{\Delta}_p \Psi(\x) =0$}, $\overline{\Psi}=0$, whence $\Psi(\x) =0$. Together with \eqref{trad5} and \eqref{trad6}, this yields
\begin{equation*}
\begin{split}
\sum_{|\beta|=p} \partial^\beta M_{\alpha \beta} (\x) =
\sum_{|\beta|=p} \partial^{2\beta} \Phi_{\alpha} (\x) - \sum_{|\beta|=p} \partial^\beta \partial^\alpha \Phi_{\beta} (\x)
= \wt{\Delta}_p \Phi_\alpha(\x) - \partial^\alpha \Psi(\x) = f_\alpha(\x),
\end{split}
\end{equation*}
which implies \eqref{trad3}. Estimates \eqref{trad4} follow from \eqref{trad5a} and~\eqref{trad6}.
\end{proof}

\subsection{Properties of $\Lambda(\x)$}

In what follows, we need the following estimates for the periodic solution $\Lambda(\x)$ of problem
\eqref{1.10} (see \cite[Corollary~5.8]{KuSu}):
\begin{align}
\label{Lambda2}
&\| b(\D)\Lambda\|_{L_2(\Omega)} \leqslant |\Omega|^{1/2} C_\Lambda^{(1)},\quad
C_\Lambda^{(1)} :=  m^{1/2} \|g\|_{L_\infty}^{1/2}\|g^{-1}\|_{L_\infty}^{1/2},
\\
\label{Lambda3}
&\| \Lambda\|_{H^p(\Omega)} \leqslant |\Omega|^{1/2}  C_\Lambda, \quad
 C_\Lambda := C_\Lambda^{(1)} \alpha_0^{-1/2} \biggl( \sum_{|\beta|\leqslant p}(2 r_0)^{-2 (p-|\beta|)}\biggr)^{1/2}.
\end{align}

\subsection{Estimates in the neighborhood of the boundary}
In this subsection, we formulate two simple auxiliary statements that are valid in a bounded domain $\O$
with Lipschitz boundary. Precisely, we impose the following condition.

\begin{condition}\label{cond_eps}
Let $\O \subset \R^d$ be a bounded domain. Let
\hbox{$(\partial \O)_\eps := \{\x \in \R^d: {\rm dist}\,\{\x;\partial {\mathcal O} \}< \eps \}$}.
Suppose that there exists a number $\eps_0 \in (0,1]$ such that the strip
$(\partial \O)_{\eps_0}$ can be covered by a finite number of open sets admitting diffeomorphisms
of class  $C^{0,1}$ rectifying the boundary $\partial \O$. Denote $\eps_1 = \eps_0 (1+r_1)^{-1}$, where $2r_1 = {\rm diam}\,\Omega$.
\end{condition}

Obviously, Condition \ref{cond_eps} is less restrictive than the above assumption $\partial \O \in C^{2p}$.

\begin{lemma}\label{lem_dO1}
Suppose that Condition~{\rm \ref{cond_eps}} is satisfied.
Then for any function $u \in H^1(\R^d)$ we have
$$
\int_{({\partial \O})_\eps} |u|^2\,d\x \leq
\beta_0 \eps \|u\|_{H^1(\R^d)} \|u\|_{L_2(\R^d)},\quad 0 <  \eps \leq \eps_0.
$$
The constant $\beta_0$ depends only on the domain $\O$.
\end{lemma}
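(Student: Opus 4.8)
\textbf{Proof plan for Lemma~\ref{lem_dO1}.}
The plan is to establish the trace-type estimate $\int_{(\partial\O)_\eps}|u|^2\,d\x\le\beta_0\eps\,\|u\|_{H^1(\R^d)}\|u\|_{L_2(\R^d)}$ by a standard localization-and-rectification argument, reducing everything to the model case of a half-space. First I would use Condition~\ref{cond_eps}: cover the strip $(\partial\O)_{\eps_0}$ by finitely many open sets $U_1,\dots,U_N$ on each of which there is a bi-Lipschitz diffeomorphism $\kappa_j$ rectifying $\partial\O$, and fix a subordinate partition of unity $\{\chi_j\}$ with $\sum_j\chi_j=1$ on $(\partial\O)_{\eps_0}$, $\chi_j\in C_0^\infty(U_j)$. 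Since there are finitely many charts and the $\chi_j$ and $\kappa_j$ depend only on $\O$, it suffices to prove the estimate for each piece $\chi_j u$ with a constant depending only on $\O$, and then sum.

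The core is the model estimate in a half-space $\R^d_+=\{y_d>0\}$: for $v\in H^1(\R^d_+)$ and $t\in(0,1]$,
\begin{equation*}
\int_{0<y_d<t}|v(y)|^2\,dy\le C\,t\,\|v\|_{H^1(\R^d_+)}\|v\|_{L_2(\R^d_+)}.
\end{equation*}
This follows from the one-dimensional identity $|v(y',y_d)|^2=-\int_{y_d}^{\infty}\partial_s\bigl(|v(y',s)|^2\bigr)\,ds=-\int_{y_d}^\infty 2\,\mathrm{Re}\,\langle v,\partial_s v\rangle\,ds$, which gives $|v(y',y_d)|^2\le 2\int_0^\infty|v(y',s)|\,|\partial_s v(y',s)|\,ds$; integrating in $y_d$ over $(0,t)$ produces the factor $t$, integrating in $y'$ and applying Cauchy--Schwarz yields $2t\|v\|_{L_2}\|\partial_d v\|_{L_2}\le 2t\|v\|_{L_2}\|v\|_{H^1}$. (For nonsmooth $v$ one argues by density from $C_0^\infty(\overline{\R^d_+})$.) Transferring this back through $\kappa_j$ costs only the Lipschitz constants of $\kappa_j$ and $\kappa_j^{-1}$ and the Jacobians, all controlled by $\O$; the image of the strip $(\partial\O)_\eps\cap U_j$ under $\kappa_j$ is contained in a strip $\{0<y_d<c_j\eps\}$ because $\kappa_j$ is bi-Lipschitz, so the linear-in-$\eps$ dependence is preserved.

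The last point to watch is that the claimed estimate involves the full-space norms $\|u\|_{H^1(\R^d)}$ and $\|u\|_{L_2(\R^d)}$, so after localizing one must bound $\|\chi_j u\|_{H^1}$ by $C\|u\|_{H^1(\R^d)}$ using $\|\nabla(\chi_j u)\|\le\|\chi_j\nabla u\|+\|u\nabla\chi_j\|$ and $\|\nabla\chi_j\|_{L_\infty}\le C(\O)$; this is where the constant absorbs the partition of unity. I do not anticipate a genuine obstacle here — the only mild technical care needed is the density argument justifying the fundamental-theorem-of-calculus step for $H^1$ functions and keeping track that the diffeomorphism constants, the number of charts, and the cutoffs all depend on $\O$ alone, as asserted.
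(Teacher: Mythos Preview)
Your proposal is correct and is precisely the standard argument; the paper itself does not prove this lemma but refers to \cite[\S5]{PSu}, where exactly this localization--rectification--one-dimensional-FTC approach is carried out. One small adjustment: since $u\in H^1(\R^d)$ and the strip $(\partial\O)_\eps$ lies on both sides of $\partial\O$, after rectification you need the model estimate on the two-sided strip $\{|y_d|<t\}$ rather than the half-space strip $\{0<y_d<t\}$, but your FTC identity $|v(y',y_d)|^2=-\int_{y_d}^{\infty}\partial_s|v|^2\,ds$ is valid for all $y_d\in\R$ when $v\in H^1(\R^d)$, so the same computation gives $\int_{|y_d|<t}|v|^2\,dy\le 4t\,\|v\|_{L_2(\R^d)}\|\partial_d v\|_{L_2(\R^d)}$ and the rest goes through unchanged.
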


\begin{lemma}\label{lem_dO2}
Suppose that Condition {\rm \ref{cond_eps}} is satisfied. Let $f(\x)$~be a \hbox{$\Gamma$-periodic} function in $\R^d$ such that
$f \in L_2(\Omega)$. Let $S_\eps$~be the operator \eqref{1.20}.
Denote $\beta_* := \beta_0 (1+r_1)$, where $2r_1 = {\rm diam}\,\Omega$.
Then for any function $\u \in H^1(\R^d;\C^m)$ we have
$$
\int_{({\partial \O})_\eps} |f^\eps(\x)|^2 |(S_\eps \u)(\x)|^2\,d\x
\leq \beta_*  \eps |\Omega|^{-1} \|f\|_{L_2(\Omega)}^2 \|\u\|_{H^1(\R^d)}
\|\u\|_{L_2(\R^d)}, \quad 0 <  \eps \leq \eps_1.
$$
\end{lemma}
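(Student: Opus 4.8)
The plan is to reduce Lemma~\ref{lem_dO2} to Lemma~\ref{lem_dO1} and Proposition~\ref{prop1.5} by exploiting the local structure of the Steklov smoothing. The key observation is that for $\x \in (\partial\O)_\eps$, the value $(S_\eps\u)(\x)$ depends only on the values of $\u$ on the cell $\x + \eps\Omega$, which is contained in the slightly larger strip $(\partial\O)_{\eps(1+r_1)}$. So I would first introduce the "expanded" Steklov operator or, more directly, work with the shifted function. Write
$$
(S_\eps\u)(\x) = |\Omega|^{-1}\int_\Omega \u(\x-\eps\z)\,d\z,
$$
and apply the Cauchy--Schwarz inequality in $\z$ to get $|(S_\eps\u)(\x)|^2 \le |\Omega|^{-1}\int_\Omega |\u(\x-\eps\z)|^2\,d\z$. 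Multiplying by $|f^\eps(\x)|^2$ and integrating over $(\partial\O)_\eps$ is then not quite enough on its own because $f^\eps$ oscillates; so instead I would follow the route used for Proposition~\ref{prop1.5}, treating $[f^\eps]S_\eps$ as a bounded operator but now with integration restricted to the boundary strip.

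The cleanest approach: let $\chi_\eps$ be the indicator of $(\partial\O)_\eps$ and consider the function $\v := \1_{(\partial\O)_{\eps_1\eps/\eps_1}}\cdots$; more simply, apply the argument of the proof of Proposition~\ref{prop1.5} cellwise. Partition $\R^d$ (up to measure zero) into the translates $\eps(\Omega + \bnu)$, $\bnu \in \Gamma/\eps$-type index set. On each such cell $Q$ intersecting $(\partial\O)_\eps$, the function $f^\eps$ is a fixed rescaled copy of $f$, and $(S_\eps\u)(\x)$ for $\x\in Q$ is an average of $\u$ over a cell-translate; one estimates $\int_Q |f^\eps|^2|S_\eps\u|^2 \le |\Omega|^{-1}\|f\|_{L_2(\Omega)}^2 \sup_{\text{nearby cell}}\fint |\u|^2$ times $|Q|$. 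Summing over the $O(\eps^{-(d-1)})$ relevant cells and recognizing that the union of the cells meeting $(\partial\O)_\eps$ together with their $\eps$-neighbourhoods lies inside $(\partial\O)_{\eps(1+r_1)} = (\partial\O)_{\eps_0}$ for $\eps\le\eps_1$, I would reduce to
$$
\int_{(\partial\O)_\eps} |f^\eps|^2|S_\eps\u|^2\,d\x \le |\Omega|^{-1}\|f\|_{L_2(\Omega)}^2 \int_{(\partial\O)_{\eps(1+r_1)}} |\u|^2\,d\x,
$$
after controlling the overlap combinatorics. Then Lemma~\ref{lem_dO1} applied with $\eps$ replaced by $\eps(1+r_1) \le \eps_0$ bounds the right-hand side by $\beta_0\,\eps(1+r_1)\,|\Omega|^{-1}\|f\|_{L_2(\Omega)}^2\|\u\|_{H^1(\R^d)}\|\u\|_{L_2(\R^d)}$, which is precisely the claimed inequality with $\beta_* = \beta_0(1+r_1)$.

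The main obstacle is making the cell-by-cell bookkeeping rigorous without losing constants: one must verify that each point of $(\partial\O)_\eps$ lies in a cell whose $\eps$-dilate-support for the averaging stays inside $(\partial\O)_{\eps_0}$, and that the sum of $\fint |\u|^2$ over cells telescopes (with bounded multiplicity) into a genuine integral of $|\u|^2$ over the enlarged strip rather than a sum with a geometric blow-up. This is the same technical point handled in \cite[Lemma~1.2]{ZhPas} and \cite[Proposition~3.2]{PSu}; the cleanest way to sidestep heavy combinatorics is to note that $\int_{(\partial\O)_\eps}|(S_\eps\u)(\x)|^2 h(\x)\,d\x = \int_\Omega |\Omega|^{-1}\bigl(\int_{(\partial\O)_\eps} |\u(\x-\eps\z)|^2 h(\x)\,d\x\bigr)d\z$ by Fubini (with $h = |f^\eps|^2$), change variables $\x \mapsto \x+\eps\z$ inside, use that $(\partial\O)_\eps - \eps\z \subset (\partial\O)_{\eps(1+r_1)}$ since $|\eps\z|\le\eps r_1$, and finally absorb the periodic factor $h$ via Proposition~\ref{prop1.5}-type estimates applied on the enlarged strip. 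Everything after that is the direct invocation of Lemma~\ref{lem_dO1}.
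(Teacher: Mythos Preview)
The paper does not actually prove this lemma in-text; it simply cites \cite[\S5]{PSu} (and \cite[Lemma~2.6]{ZhPas}), remarking that the argument there carries over from the assumption $\partial\O\in C^1$ to the weaker Condition~\ref{cond_eps}. So there is no detailed in-paper proof to compare against, and your proposal is in fact more complete than what the paper supplies.

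Your strategy is correct and matches the standard argument in those references. Your Fubini route is the clean one, but your final step is slightly muddled: you do not need any ``Proposition~\ref{prop1.5}-type estimate'' to handle the periodic factor. After applying Jensen in the form $|(S_\eps\u)(\x)|^2 \le (S_\eps|\u|^2)(\x)$, expanding the average, using Fubini, and changing variables $\x\mapsto\x'+\eps\z$, you arrive at
\[
|\Omega|^{-1}\int_{(\partial\O)_{\eps(1+r_1)}}|\u(\x')|^2\Bigl(\int_\Omega|f(\x'/\eps+\z)|^2\,d\z\Bigr)d\x'.
\]
By $\Gamma$-periodicity of $f$, the inner integral over the translate $\x'/\eps+\Omega$ equals $\|f\|_{L_2(\Omega)}^2$ \emph{exactly}, for every $\x'$. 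This gives $|\Omega|^{-1}\|f\|_{L_2(\Omega)}^2\int_{(\partial\O)_{\eps(1+r_1)}}|\u|^2\,d\x'$ on the nose, and Lemma~\ref{lem_dO1} with strip width $\eps(1+r_1)\le\eps_0$ (guaranteed by $\eps\le\eps_1$) finishes. Your cell-by-cell first approach also works but is unnecessarily heavy; the Fubini/periodicity argument avoids all combinatorics.
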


Lemma \ref{lem_dO2} is an analogue of Lemma~2.6 from~\cite{ZhPas}.
Lemmas \ref{lem_dO1} and \ref{lem_dO2} were checked in~\cite[\S5]{PSu} under the condition $\partial \O \in C^1$,
but the proofs work also under Condition~\ref{cond_eps}.

\section{The results for the Neumann problem}

\subsection{Approximation of the resolvent $(A_{N,\eps} - \zeta I)^{-1}$ for $|\zeta| \ge 1$}
Let us formulate the main results.

\begin{theorem}\label{th3.1}
Suppose that $\O\subset \R^d$ is a bounded domain of class $C^{2p}$.
Let $\zeta = |\zeta| e^{i\varphi} \in \C \setminus \R_+$,
$|\zeta| \ge 1$, and let $c(\varphi)$ be given by \eqref{2.1}. Suppose that the operators $A_{N,\eps}$ and $A_N^0$
correspond to the quadratic forms \eqref{22.3} and~\eqref{22.10}, respectively. Let $\u_\eps = (A_{N,\eps} - \zeta I)^{-1} \FF$ and $\u_0 = (A_{N}^0 - \zeta I)^{-1} \FF$, where $\FF \in L_2(\O;\C^n)$.
Suppose that the number $\eps_1$ is subject to  Condition~\emph{\ref{cond_eps}}. Then for $0 < \eps \le \eps_1$
we have
\begin{equation*}
\|\u_\eps - \u_0\|_{L_2(\O)} \le {\mathcal C}_1 c(\varphi)^2  \eps |\zeta|^{-1+1/2p} \|\FF\|_{L_2(\O)}.
\end{equation*}
In operator terms,
\begin{equation}
\label{3.2}
\| (A_{N,\eps} - \zeta I)^{-1} - (A_{N}^0 - \zeta I)^{-1} \|_{L_2(\O) \to L_2(\O)}
\le {\mathcal C}_1 c(\varphi)^2  \eps |\zeta|^{-1+1/2p}.
\end{equation}
The constant ${\mathcal C}_1$ depends only on $d$, $p$, $m$, $\alpha_0$, $\alpha_1$, $\|g\|_{L_\infty}$, $\|g^{-1}\|_{L_\infty}$, $k_1$, $k_2$,
the parameters of the lattice $\Gamma$, and the domain $\O$.
\end{theorem}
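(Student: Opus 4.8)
The plan is to deduce the sharp $L_2$-estimate \eqref{3.2} from the coarser ``energy'' estimate \eqref{0.11} --- which I would establish first, by the boundary layer method described at the end --- together with a duality argument. Fix $\FF,\bPsi\in L_2(\O;\C^n)$ and set $\u_\eps=(A_{N,\eps}-\zeta I)^{-1}\FF$, $\u_0=(A_N^0-\zeta I)^{-1}\FF$, $\wt{\w}_0=(A_N^0-\bar\zeta I)^{-1}\bPsi\in H^{2p}(\O;\C^n)$, $\wt{\w}_\eps=(A_{N,\eps}-\bar\zeta I)^{-1}\bPsi$. Combining the integral identities satisfied by $\u_\eps$, $\u_0$ and $\wt{\w}_0$ (with suitable test functions, cf.\ \eqref{22.9a}) one obtains the basic identity
\begin{equation*}
(\u_\eps-\u_0,\bPsi)_{L_2(\O)}=\bigl((g^0-g^{\eps})\,b(\D)\u_\eps,\ b(\D)\wt{\w}_0\bigr)_{L_2(\O)},
\end{equation*}
so it suffices to bound the right-hand side by $C\,c(\varphi)^2\eps|\zeta|^{-1+1/2p}\|\FF\|_{L_2(\O)}\|\bPsi\|_{L_2(\O)}$.

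Into the factor $b(\D)\u_\eps$ I substitute the approximation \eqref{0.11}: by \eqref{0.11} and the Leibniz rule applied to $\eps^p b(\D)\bigl([\Lambda^{\eps}]S_\eps b(\D)\u_0\bigr)$,
\begin{equation*}
b(\D)\u_\eps=b(\D)\u_0+(b(\D)\Lambda)^{\eps}S_\eps b(\D)\u_0+b(\D)\mathbf q_\eps+\mathbf e_\eps,
\end{equation*}
where $\mathbf q_\eps:=\u_\eps-\u_0-\eps^p[\Lambda^{\eps}]S_\eps b(\D)\u_0$ satisfies $\|\mathbf q_\eps\|_{H^p(\O)}\le C\,c(\varphi)\eps^{1/2}|\zeta|^{-1/2+1/4p}\|\FF\|$ (up to an $\eps^p$-term), and $\mathbf e_\eps$ collects the lower-order Leibniz terms with $\|\mathbf e_\eps\|_{L_2(\O)}\le C\eps\|\u_0\|_{H^{2p}(\O)}$. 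In the resulting ``homogenization'' term, the oscillating factor multiplying $b(\D)\u_0$ is brought, via \eqref{1.10}--\eqref{1.12} and Proposition~\ref{prop1.4}, to the standard form $(g^0-\wt{g}^{\eps})S_\eps b(\D)\u_0$ modulo $O(\eps)$, where $g^0-\wt{g}^{\eps}$ is a zero-mean $\Gamma$-periodic matrix lying in $\Ker b(\D)^{*}$; using the flux correctors of Lemma~\ref{traditional} for $f_\alpha:=b_\alpha^{*}(g^0-\wt{g})$ and integrating by parts --- transferring the derivatives onto the smooth functions $S_\eps b(\D)\u_0$ and $b(\D)\wt{\w}_0$, which is where $\u_0,\wt{\w}_0\in H^{2p}(\O;\C^n)$ enters --- the interior contribution is $\le C\,c(\varphi)^2\eps|\zeta|^{-1+1/2p}\|\FF\|\|\bPsi\|$ by Proposition~\ref{prop1.5}, the interpolation $\|\cdot\|_{H^{p+1}(\O)}\le\|\cdot\|_{H^p(\O)}^{1-1/p}\|\cdot\|_{H^{2p}(\O)}^{1/p}$ and Lemma~\ref{lem_eff}, while the boundary contribution is of the same order by a cut-off near $\partial\O$ together with Lemmas~\ref{lem_dO1}, \ref{lem_dO2}. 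The terms involving $\mathbf e_\eps$ and the genuinely $O(\eps)$ part of $\mathbf q_\eps$ carry an extra factor $\eps$ and are treated in the same way.

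The essential point --- and the main obstacle --- is the contribution of the boundary layer part of $\mathbf q_\eps$. In proving \eqref{0.11} one shows $\mathbf q_\eps=\w_\eps+(\text{a term of order }\eps\text{ in }H^p(\O))$, where $\w_\eps:=-(A_{N,\eps}-\zeta I)^{-1}b(\D)^{*}\mathbf G_\eps$ and $\mathbf G_\eps$ is supported in a strip $(\partial\O)_{c\eps}$ with $\|\mathbf G_\eps\|_{L_2(\O)}\le C\,c(\varphi)\eps^{1/2}|\zeta|^{-1/2+1/4p}\|\FF\|$. From the integral identities for $\w_\eps$ and $\wt{\w}_0$,
\begin{equation*}
\bigl((g^0-g^{\eps})b(\D)\w_\eps,\ b(\D)\wt{\w}_0\bigr)_{L_2(\O)}=(\w_\eps,\bPsi)_{L_2(\O)}+(\mathbf G_\eps,b(\D)\wt{\w}_0)_{L_2(\O)},
\end{equation*}
while self-adjointness of $A_{N,\eps}$ and the equation for $\w_\eps$ give $(\w_\eps,\bPsi)_{L_2(\O)}=-(\mathbf G_\eps,b(\D)\wt{\w}_\eps)_{L_2(\O)}$. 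Since $\mathbf G_\eps$ lives in $(\partial\O)_{c\eps}$, only $\|b(\D)\wt{\w}_0\|_{L_2((\partial\O)_{c\eps})}$ and $\|b(\D)\wt{\w}_\eps\|_{L_2((\partial\O)_{c\eps})}$ enter; Lemma~\ref{lem_dO1} (with the interpolation above and Lemma~\ref{lem_eff}) bounds the former by $C\,c(\varphi)\eps^{1/2}|\zeta|^{-1/2+1/4p}\|\bPsi\|$, and the latter is reduced to the same bound by inserting \eqref{0.11} at the regular point $\bar\zeta$ --- so that $b(\D)\wt{\w}_\eps$ equals $b(\D)\wt{\w}_0+(b(\D)\Lambda)^{\eps}S_\eps b(\D)\wt{\w}_0$ modulo an $L_2(\O)$-error of order $\eps^{1/2}|\zeta|^{-1/2+1/4p}\|\bPsi\|$ --- and applying Lemma~\ref{lem_dO1} to the first summand, Lemma~\ref{lem_dO2} (with \eqref{Lambda2}) to the second, and the full $L_2(\O)$-bound to the error. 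Hence this contribution is $\le C\,c(\varphi)^2\eps|\zeta|^{-1+1/2p}\|\FF\|\|\bPsi\|$. Collecting all terms and taking the supremum over $\bPsi$ yields \eqref{3.2}; the dependence on $\varphi$ is traced through $c(\varphi)$ in Lemmas~\ref{lem1}, \ref{lem_eff} and in Theorems~\ref{th2.1}, \ref{th2.2}/\eqref{0.11}.

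The hard part, on which all of the above rests, is the construction behind \eqref{0.11}: after extending $\u_0$ to $\R^d$ and forming the first approximation $\u_0+\eps^p[\Lambda^{\eps}]S_\eps b(\D)\u_0$, one must compute the residual $\FF-(A_{N,\eps}-\zeta I)\bigl(\u_0+\eps^p[\Lambda^{\eps}]S_\eps b(\D)\u_0\bigr)$ and show that, after the bulk cancellation inherited from Theorems~\ref{th2.1}, \ref{th2.2}, what remains is --- modulo genuinely $O(\eps)$ terms --- of the form $b(\D)^{*}\mathbf G_\eps$ with $\mathbf G_\eps$ in divergence form and concentrated in an $O(\eps)$-neighbourhood of $\partial\O$. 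The boundary layer correction $\w_\eps$ then solves a Neumann problem with this right-hand side, and the energy estimate (as in Lemma~\ref{lem1}) yields $\|\w_\eps\|_{H^p(\O)}\le C\|\mathbf G_\eps\|_{L_2(\O)}$, hence \eqref{0.11}. Carrying out this reduction with $g$ merely bounded is precisely what forces the systematic use of the flux correctors $M_{\alpha\beta}$ of Lemma~\ref{traditional}, a careful treatment of the Steklov smoothing $S_\eps$, and control of all near-boundary quantities via Lemmas~\ref{lem_dO1}, \ref{lem_dO2}.
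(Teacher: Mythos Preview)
Your overall strategy --- first establish the $H^p$-estimate \eqref{0.11}, then upgrade to the sharp $L_2$-estimate by duality --- is exactly the paper's, and your ingredients (flux correctors $M_{\alpha\beta}$ from Lemma~\ref{traditional}, boundary cut-off, Lemmas~\ref{lem_dO1}--\ref{lem_dO2}) are the right ones. The duality step is organized differently: you start from the identity $(\u_\eps-\u_0,\bPsi)=((g^0-g^\eps)b(\D)\u_\eps,b(\D)\wt{\w}_0)$ with the \emph{effective} dual solution $\wt{\w}_0$ and insert the approximation of $\u_\eps$, whereas the paper tests the equation \eqref{4.3} for the correction term $\w_\eps$ with $\eeta_\eps=(A_{N,\eps}-\bar\zeta I)^{-1}\bPhi$, obtaining $(\w_\eps,\bPhi)=\mathcal J_\eps[\eeta_\eps]$, and then inserts the approximation of $\eeta_\eps$. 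Both routes lead to the same near-boundary integrals and the same use of the $H^p$-result at the conjugate point $\bar\zeta$.

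Two technical points are missing from your outline, however, and without them you will not obtain the statement as written. First, a direct run of your argument for arbitrary $\zeta$ accumulates extra powers of $c(\varphi)$: for instance the ``good'' $O(\eps)$ part of $\mathbf q_\eps$ already carries $c(\varphi)^4$ (see~\eqref{3.25}), and after pairing with $b(\D)\wt{\w}_0$ you end up with $c(\varphi)^5\eps|\zeta|^{-1+1/2p}$, not $c(\varphi)^2$. The paper avoids this by first proving \eqref{3.2} only for $\mathrm{Re}\,\zeta\le 0$ (where $c(\varphi)=1$), and then transferring the estimate to $\mathrm{Re}\,\zeta>0$ via the resolvent identity \eqref{4.53} and the elementary bound \eqref{4.55}, which contributes precisely the factor $c(\varphi)^2$. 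Second, even for $\mathrm{Re}\,\zeta\le 0$ the duality argument produces a residual term of order $\eps^{2p}$ (from the $\eps^p$-pieces in \eqref{3.10} and \eqref{4.22a} paired together); see~\eqref{4.51}. This is absorbed by a dichotomy: if $\eps\le|\zeta|^{-1/2p}$ then $\eps^{2p}\le\eps|\zeta|^{-1+1/2p}$, while if $\eps>|\zeta|^{-1/2p}$ one simply uses the trivial bounds \eqref{22.8}, \eqref{22.15} to get $2|\zeta|^{-1}\le 2\eps|\zeta|^{-1+1/2p}$. You should incorporate both steps.
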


To approximate $\u_\eps$ in $H^p(\O;\C^n)$, we need to introduce a corrector. We fix a linear continuous extension operator
\begin{equation*}
P_\O: H^s(\O;\C^n) \to H^s(\R^d;\C^n),\quad s=0,1,\dots,2p.
\end{equation*}
Such an operator exists (see, e.~g., \cite{St}). Denote
\begin{equation}
\label{3.4}
\| P_\O \|_{H^s(\O) \to H^s(\R^d)} =: C_\O^{(s)},\quad s=0,1,\dots,2p.
\end{equation}
The constants $C_\O^{(s)}$ depend only on the domain $\O$ and $s$.
By $R_\O$ we denote the operator of restriction of functions in $\R^d$ onto the domain $\O$.
We introduce a corrector
\begin{equation}
\label{3.5}
K_N(\zeta;\eps) := R_\O [\Lambda^\eps] S_\eps b(\D) P_\O (A_{N}^0 -\zeta I)^{-1}.
\end{equation}
The operator $K_N(\zeta;\eps)$ is a continuous mapping of $L_2(\O;\C^n)$ into $H^p(\O;\C^n)$.
Indeed, the operator $b(\D) P_\O (A_{N}^0 -\zeta I)^{-1}$ is continuous from $L_2(\O;\C^n)$ to $H^p(\R^d;\C^m)$,
and the operator $[\Lambda^\eps] S_\eps$ is continuous from $H^p(\R^d;\C^m)$ to $H^p(\R^d;\C^n)$ (this follows from
Proposition~\ref{prop1.5} and the relation $\Lambda \in \widetilde{H}^p(\Omega)$).

Let $\u_0 = (A_{N}^0 - \zeta I)^{-1} \FF$. Denote $\wt{\u}_0 := P_\O \u_0$ and put
\begin{align}
\label{3.6}
\wt{\mathbf v}_\eps(\x) &:= \wt{\u}_0(\x) + \eps^p \Lambda^\eps(\x) (S_\eps b(\D) \wt{\u}_0)(\x),
\quad \x \in \R^d,
\\
\label{3.7}
{\mathbf v}_\eps &:= \wt{\mathbf v}_\eps \vert_\O.
\end{align}
Then
\begin{equation}
\label{3.8}
{\mathbf v}_\eps = (A_{N}^0 - \zeta I)^{-1} \FF + \eps^p K_{N}( \zeta; \eps) \FF.
\end{equation}

\begin{theorem}\label{th3.2}
Suppose that the assumptions of Theorem~\emph{\ref{th3.1}} are satisfied. Let $K_N(\zeta;\eps)$ be the operator \eqref{3.5}.
Let ${\mathbf v}_\eps$ be given by~\eqref{3.8}. Then for \hbox{$0 < \eps \le \eps_1$} we have
\begin{equation}
\label{3.9}
\begin{split}
\|\u_\eps - {\mathbf v}_\eps\|_{H^p(\O)} \le {\mathcal C}_2 \left( c(\varphi) \eps^{1/2} |\zeta|^{-1/2+1/4p}
+ c(\varphi)^2 \eps |\zeta|^{-1/2+1/2p}  + c(\varphi) \eps^p \right) \|\FF\|_{L_2(\O)}.
\end{split}
\end{equation}
In operator terms,
\begin{equation}
\label{3.10}
\begin{split}
&\| (A_{N,\eps} - \zeta I)^{-1} - (A_{N}^0 - \zeta I)^{-1} - \eps^p K_N(\zeta;\eps)\|_{L_2(\O) \to H^p(\O)}
\\
&\le  {\mathcal C}_2 \left( c(\varphi) \eps^{1/2} |\zeta|^{-1/2+1/4p}
+ c(\varphi)^2 \eps |\zeta|^{-1/2+1/2p}  + c(\varphi) \eps^p \right).
\end{split}
\end{equation}
Let $\wt{g}(\x)$ be given by~\eqref{1.12}.
For the flux $\p_\eps = g^\eps b(\D) \u_\eps$  for $0 < \eps \le \eps_1$ we have
\begin{equation}
\label{3.11}
\begin{split}
\|\p_\eps - \wt{g}^\eps S_\eps b(\D) \wt{\u}_0 \|_{L_2(\O)}
\le {\mathcal C}_3  \left( c(\varphi) \eps^{1/2} |\zeta|^{-1/2+1/4p}
+ c(\varphi)^2 \eps |\zeta|^{-1/2+1/2p}  + c(\varphi) \eps^p \right) \|\FF\|_{L_2(\O)}.
\end{split}
\end{equation}
The constants ${\mathcal C}_2$ and ${\mathcal C}_3$ depend only on $d$, $p$, $m$, $\alpha_0$, $\alpha_1$, $\|g\|_{L_\infty}$, $\|g^{-1}\|_{L_\infty}$, $k_1$, $k_2$, the parameters of the lattice $\Gamma$, and the domain $\O$.
\end{theorem}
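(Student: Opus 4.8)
We follow the standard scheme: pass to an associated problem in $\R^d$, introduce a boundary layer correction term, and estimate it by means of the $\eps$‑neighbourhood lemmas. \emph{Reduction to $\R^d$.} Put $\u_0 := (A_N^0-\zeta I)^{-1}\FF$; since $\partial\O\in C^{2p}$, Lemma~\ref{lem_eff} gives $\u_0\in H^{2p}(\O;\C^n)$ with $\|\u_0\|_{L_2(\O)}\le c(\varphi)|\zeta|^{-1}\|\FF\|_{L_2(\O)}$, $\|\u_0\|_{H^p(\O)}\le \mathcal C_0 c(\varphi)|\zeta|^{-1/2}\|\FF\|_{L_2(\O)}$, $\|\u_0\|_{H^{2p}(\O)}\le 2\wh c\,c(\varphi)\|\FF\|_{L_2(\O)}$, and (by interpolation) $\|\u_0\|_{H^{p+1}(\O)}\le \mathcal C c(\varphi)|\zeta|^{-1/2+1/2p}\|\FF\|_{L_2(\O)}$. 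Set $\wt\u_0:=P_\O\u_0\in H^{2p}(\R^d;\C^n)$ and $\wt\FF:=(A^0-\zeta I)\wt\u_0\in L_2(\R^d;\C^n)$; then $R_\O\wt\FF=\FF$ and, by \eqref{3.4}, $\|\wt\FF\|_{L_2(\R^d)}\le \mathcal C c(\varphi)\|\FF\|_{L_2(\O)}$. With $\wt\u_\eps:=(A_\eps-\zeta I)^{-1}\wt\FF$ and $\wt{\mathbf v}_\eps:=\wt\u_0+\eps^p\Lambda^\eps S_\eps b(\D)\wt\u_0$ (so $\wt{\mathbf v}_\eps|_\O={\mathbf v}_\eps$, cf.\ \eqref{3.6}--\eqref{3.8}), Theorems~\ref{th2.1} and~\ref{th2.2} yield $\|\wt\u_\eps-\wt\u_0\|_{L_2(\R^d)}\le\mathcal C c(\varphi)^2\eps|\zeta|^{-1+1/2p}\|\wt\FF\|_{L_2(\R^d)}$, $\|\wt\u_\eps-\wt{\mathbf v}_\eps\|_{H^p(\R^d)}\le\mathcal C(c(\varphi)^2\eps|\zeta|^{-1/2+1/2p}+c(\varphi)\eps^p)\|\wt\FF\|_{L_2(\R^d)}$ and the same estimate for $\|g^\eps b(\D)\wt\u_\eps-\wt g^\eps S_\eps b(\D)\wt\u_0\|_{L_2(\R^d)}$. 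Since $\|\u_\eps-{\mathbf v}_\eps\|_{H^p(\O)}\le\|\w_\eps\|_{H^p(\O)}+\|\wt\u_\eps-\wt{\mathbf v}_\eps\|_{H^p(\R^d)}$ and $\p_\eps-\wt g^\eps S_\eps b(\D)\wt\u_0=g^\eps b(\D)\w_\eps+(g^\eps b(\D)\wt\u_\eps-\wt g^\eps S_\eps b(\D)\wt\u_0)|_\O$, where $\w_\eps:=\u_\eps-\wt\u_\eps|_\O\in H^p(\O;\C^n)$ is the boundary layer correction term, it remains to bound $\w_\eps$ in $H^p(\O)$.

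\emph{The functional $\ell_\eps$.} Subtracting the integral identity for $\wt\u_\eps$ (tested over $\O$) from that for $\u_\eps$ shows that $\w_\eps$ solves $(g^\eps b(\D)\w_\eps,b(\D)\eeta)_{L_2(\O)}-\zeta(\w_\eps,\eeta)_{L_2(\O)}=\ell_\eps(\eeta)$ for all $\eeta\in H^p(\O;\C^n)$, with $\ell_\eps(\eeta):=(\FF,\eeta)_{L_2(\O)}-(g^\eps b(\D)\wt\u_\eps,b(\D)\eeta)_{L_2(\O)}+\zeta(\wt\u_\eps,\eeta)_{L_2(\O)}$. Because $\wt\u_\eps$ solves $(A_\eps-\zeta I)\wt\u_\eps=\wt\FF$ in $\R^d$ and $R_\O\wt\FF=\FF$, the functional $\ell_\eps$ vanishes on functions $\eeta$ supported away from $\partial\O$; extending $\eeta$ by $\hat\eeta:=P_\O\eeta$ and using the $\R^d$ identity, $\ell_\eps(\eeta)=(g^\eps b(\D)\wt\u_\eps,b(\D)\hat\eeta)_{L_2(\R^d\setminus\O)}-\zeta(\wt\u_\eps,\hat\eeta)_{L_2(\R^d\setminus\O)}-(\wt\FF,\hat\eeta)_{L_2(\R^d\setminus\O)}$, so $\ell_\eps$ is a continuous functional of the boundary traces of $\eeta$ only. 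Inserting $\wt\FF=b(\D)^*g^0 b(\D)\wt\u_0-\zeta\wt\u_0$ and integrating by parts (the conormal term on $\partial\O$ vanishes, because $\wt\u_0\in H^{2p}(\R^d)$ so its normal derivatives up to order $2p-1$ match from both sides of $\partial\O$, and $\u_0$ solves the Neumann problem for $A_N^0$, i.e.\ $(g^0 b(\D)\u_0,b(\D)\eeta)_{L_2(\O)}-\zeta(\u_0,\eeta)_{L_2(\O)}=(\FF,\eeta)_{L_2(\O)}$ for all $\eeta\in H^p(\O;\C^n)$), then replacing $g^\eps b(\D)\wt\u_\eps$ by $\wt g^\eps S_\eps b(\D)\wt\u_0$ via Theorem~\ref{th2.2}, one obtains
\[
\ell_\eps(\eeta)=\bigl((\wt g^\eps S_\eps-g^0)b(\D)\wt\u_0,\ b(\D)\hat\eeta\bigr)_{L_2(\R^d\setminus\O)}-\zeta\bigl(\wt\u_\eps-\wt\u_0,\ \hat\eeta\bigr)_{L_2(\R^d\setminus\O)}+\rho_\eps(\eeta),
\]
with $|\rho_\eps(\eeta)|\le\mathcal C\bigl(c(\varphi)^2\eps|\zeta|^{-1/2+1/2p}+c(\varphi)\eps^p\bigr)\|\FF\|_{L_2(\O)}\|\eeta\|_{H^p(\O)}$; writing $\wt g^\eps S_\eps-g^0=(\wt g^\eps-g^0)S_\eps+g^0(S_\eps-I)$ splits the first term into a main term with $(\wt g^\eps-g^0)S_\eps b(\D)\wt\u_0$ and a term of order $\eps\|\wt\u_0\|_{H^{p+1}}$ (Proposition~\ref{prop1.4}).

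\emph{The boundary‑layer estimate (main obstacle).} By the cell equation \eqref{1.10}, $b(\D)^*\wt g=b(\D)^*g(b(\D)\Lambda+\mathbf 1_m)=0$, hence $b(\D)^*(\wt g-g^0)=0$, and Lemma~\ref{traditional} applies to $f_\alpha:=b_\alpha^*(\wt g-g^0)$: there are antisymmetric $\Gamma$‑periodic $M_{\alpha\beta}\in\wt H^p(\Omega)$ with $\overline{M_{\alpha\beta}}=0$, $f_\alpha=\sum_{|\beta|=p}\partial^\beta M_{\alpha\beta}$, $\|M_{\alpha\beta}\|_{H^p(\Omega)}\le\mathcal C$. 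Using $\eps^p\partial^\beta(M_{\alpha\beta}^\eps)=(\partial^\beta M_{\alpha\beta})^\eps$ and the Leibniz expansion, the main term is written as $\eps^p\sum_{|\alpha|=|\beta|=p}(\partial^\beta(M_{\alpha\beta}^\eps)S_\eps b(\D)\wt\u_0,\D^\alpha\hat\eeta)$ plus corrector terms of order $\eps^{p-|\gamma|}$, $0\le|\gamma|\le p-1$; integrating by parts and using $M_{\alpha\beta}=-M_{\beta\alpha}$ as in \cite{KuSu,Su2017}, the leading part splits into an interior contribution of order $c(\varphi)\eps^p\|\FF\|_{L_2(\O)}\|\eeta\|_{H^p(\O)}$ (Proposition~\ref{prop1.5} and $\|b(\D)\wt\u_0\|_{L_2(\R^d)}\le\mathcal C c(\varphi)|\zeta|^{-1/2}\|\FF\|_{L_2(\O)}$) and a contribution supported in the $\eps$‑neighbourhood $(\partial\O)_\eps$; the latter is estimated by Cauchy--Schwarz together with Lemmas~\ref{lem_dO1},~\ref{lem_dO2}, the factor $\eps^{1/2}$ appearing because $\mes(\partial\O)_\eps\sim\eps$, and interpolation of $\wt\u_0$ between $H^p(\O)$ (weight $\sim c(\varphi)|\zeta|^{-1/2}$) and $H^{2p}(\O)$ (weight $\sim c(\varphi)$) producing $|\zeta|^{-1/2+1/4p}$. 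The corrector terms and the term $\zeta(\wt\u_\eps-\wt\u_0,\hat\eeta)_{L_2(\R^d\setminus\O)}$ are likewise controlled, the latter via $\|\wt\u_\eps-\wt\u_0\|_{L_2(\R^d)}\le\mathcal C c(\varphi)^2\eps|\zeta|^{-1+1/2p}\|\wt\FF\|_{L_2(\R^d)}$ — it carries a factor $|\zeta|$, compensated, when $\eeta=\w_\eps$, by the a priori inequality $\|\w_\eps\|_{L_2(\O)}\le\mathcal C c(\varphi)^{1/2}|\zeta|^{-1/2}\|\w_\eps\|_{H^p(\O)}$ read off from the identity itself (cf.\ the proof of Lemma~\ref{lem1}). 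Collecting, for $0<\eps\le\eps_1$,
\[
|\ell_\eps(\eeta)|\le\mathcal C\bigl(c(\varphi)\eps^{1/2}|\zeta|^{-1/2+1/4p}+c(\varphi)^2\eps|\zeta|^{-1/2+1/2p}+c(\varphi)\eps^p\bigr)\|\FF\|_{L_2(\O)}\bigl(\|\eeta\|_{H^p(\O)}+|\zeta|^{1/2}\|\eeta\|_{L_2(\O)}\bigr).
\]

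\emph{Conclusion.} Put $\eeta=\w_\eps$ in the identity for $\w_\eps$ and argue as in the proof of Lemma~\ref{lem1}: taking real and imaginary parts, using $|\zeta|\ge1$, $g>0$ and the G\"arding inequality~\eqref{22.2}, one estimates $\|\w_\eps\|_{H^p(\O)}$ and $|\zeta|^{1/2}\|\w_\eps\|_{L_2(\O)}$ simultaneously and obtains $\|\w_\eps\|_{H^p(\O)}\le\mathcal C_2\bigl(c(\varphi)\eps^{1/2}|\zeta|^{-1/2+1/4p}+c(\varphi)^2\eps|\zeta|^{-1/2+1/2p}+c(\varphi)\eps^p\bigr)\|\FF\|_{L_2(\O)}$; together with the first paragraph this gives \eqref{3.9}, hence \eqref{3.10}. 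Finally $\|g^\eps b(\D)\w_\eps\|_{L_2(\O)}\le\|g\|_{L_\infty}\|b(\D)\w_\eps\|_{L_2(\O)}\le\mathcal C\|\w_\eps\|_{H^p(\O)}$, which with the flux estimate of the first paragraph yields~\eqref{3.11}. The crux is the boundary‑layer estimate: one must extract the order $\eps^{1/2}$ — not merely $O(1)$ — from the discrepancy generated near $\partial\O$ by the oscillating corrector, with the asserted dependence on $\zeta$ and $\varphi$, and this rests on the combination of Lemma~\ref{traditional}, the Steklov bounds (Propositions~\ref{prop1.4},~\ref{prop1.5}) and the $\eps$‑neighbourhood Lemmas~\ref{lem_dO1},~\ref{lem_dO2}. (The $L_2(\O)\to L_2(\O)$ bound~\eqref{3.2} of Theorem~\ref{th3.1} is deduced afterwards from~\eqref{3.10} by a duality argument.)
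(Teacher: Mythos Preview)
Your proposal uses the right toolbox (the associated problem in $\R^d$, Lemma~\ref{traditional}, Propositions~\ref{prop1.4}--\ref{prop1.5}, Lemmas~\ref{lem_dO1}--\ref{lem_dO2}), but it diverges from the paper in two structural respects, and the second of these is a genuine gap.

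\textbf{Different correction term and location of the estimate.} You set $\w_\eps:=\u_\eps-\wt\u_\eps|_\O$ and express your functional $\ell_\eps$ as an integral over $\R^d\setminus\O$. The paper instead defines $\w_\eps$ as the \emph{auxiliary} solution of the identity~\eqref{3.24} (so that the functional ${\mathcal J}_\eps[\eeta]=(\wt g^\eps S_\eps b(\D)\wt\u_0-g^0 b(\D)\u_0,b(\D)\eeta)_{L_2(\O)}$ appears directly, with no $\wt\u_\eps$), and works entirely over $\O$. The localization to $(\partial\O)_\eps$ is then obtained via an explicit cut-off $\theta_\eps$ (see~\eqref{4.100}): the key observation is that $\sum_{\alpha,\beta}(\partial^\beta((1-\theta_\eps)M^\eps_{\alpha\beta}S_\eps b(\D)\wt\u_0),\D^\alpha\eeta)_{L_2(\O)}=0$ by antisymmetry, which reduces the main term to the form~\eqref{4.18} supported in $(\partial\O)_\eps$. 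Your phrase ``integrating by parts and using $M_{\alpha\beta}=-M_{\beta\alpha}$ \dots\ splits into an interior contribution \dots\ and a contribution supported in $(\partial\O)_\eps$'' hides exactly this mechanism; without the cut-off (or an equivalent device) the integral over the full exterior is not $O(\eps^{1/2})$.

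\textbf{The $c(\varphi)$ accounting fails.} This is the real gap. You apply Theorems~\ref{th2.1}--\ref{th2.2} to $\wt\FF$, which already carries a factor $c(\varphi)$; thus your $\rho_\eps$ term and your $\|\wt\u_\eps-\wt{\mathbf v}_\eps\|_{H^p}$ bound come with $c(\varphi)^3\eps|\zeta|^{-1/2+1/2p}+c(\varphi)^2\eps^p$, not the claimed $c(\varphi)^2\eps|\zeta|^{-1/2+1/2p}+c(\varphi)\eps^p$. Likewise, the term $\zeta(\wt\u_\eps-\wt\u_0,\hat\eeta)$ contributes $c(\varphi)^3\eps|\zeta|^{1/2p}\|\FF\|\,\|\eeta\|_{L_2}$, and the ``a~priori inequality'' $\|\w_\eps\|_{L_2}\le\mathcal C c(\varphi)^{1/2}|\zeta|^{-1/2}\|\w_\eps\|_{H^p}$ you invoke to absorb it is not justified (no such estimate follows from the identity for $\w_\eps$; cf.\ the careful derivation of~\eqref{3.34}--\eqref{3.37}, which yields powers up to $c(\varphi)^4$). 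The paper resolves this by first proving the estimate only for $\operatorname{Re}\zeta\le 0$, where $c(\varphi)=1$ and these difficulties disappear (Lemma~\ref{lem4.1} and~\eqref{4.30}), and then transferring to $\operatorname{Re}\zeta>0$ via the resolvent identity~\eqref{4.56}, which reintroduces exactly the powers $c(\varphi)$ and $c(\varphi)^2$ stated in the theorem. Your direct approach for all $\zeta$ cannot recover these sharp powers.
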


\begin{remark}\label{rem3.2a}\mbox{}

{\rm 1)}
For fixed $\zeta \in \C \setminus \R_+$, $|\zeta| \geqslant 1$,
estimates of Theorem~{\rm \ref{th3.1}} are of sharp order $O(\eps)$.

{\rm 2)}
For fixed $\zeta \in \C \setminus \R_+$, $|\zeta| \geqslant 1$, estimates of Theorem~{\rm \ref{th3.2}} are of order $O(\eps^{1/2})$.
This is explained by the boundary influence.

{\rm 3)}
The error in the estimates of Theorems~{\rm \ref{th3.1}} and~{\rm \ref{th3.2}} becomes smaller, as  $|\zeta|$ grows.

{\rm 4)}
Estimates of Theorems~{\rm \ref{th3.1}} and {\rm \ref{th3.2}} are uniform with respect to $\varphi$ in any domain
$\{ \zeta = |\zeta| e^{i\varphi}: |\zeta| \geqslant 1, \varphi_0 \leqslant \varphi \leqslant 2\pi - \varphi_0\}$
with arbitrarily small $\varphi_0 >0$.
\end{remark}

\subsection{The first step of the proof. The associated problem in $\R^d$}

The proof of Theorems \ref{th3.1} and \ref{th3.2} relies on application of the results
for the problem in $\R^d$ and introduction of the boundary layer correction term.

By Lemma \ref{lem_eff} and \eqref{3.4}, we have
\begin{align}
\label{3.12}
\| \wt{\u}_0 \|_{L_2(\R^d)} &\le C_\O^{(0)} c(\varphi) |\zeta|^{-1} \| \FF \|_{L_2(\O)},
\\
\label{3.13}
\| \wt{\u}_0 \|_{H^p(\R^d)} &\le C^{(p)} c(\varphi) |\zeta|^{-1/2}  \| \FF \|_{L_2(\O)},
\\
\label{3.14}
\| \wt{\u}_0 \|_{H^{2p}(\R^d)} &\le C^{(2p)}  c(\varphi) \| \FF \|_{L_2(\O)},
\end{align}
where $C^{(p)} := C_\O^{(p)}{\mathcal C}_0$ and $C^{(2p)} := 2 C_\O^{(2p)} \widehat{c}$.
Interpolating between \eqref{3.13} and \eqref{3.14}, we obtain
\begin{equation}
\label{3.15}
\| \wt{\u}_0 \|_{H^{p+k}(\R^d)} \le C^{(p+k)}  c(\varphi) |\zeta|^{-1/2 + k/2p} \| \FF \|_{L_2(\O)},\quad k=0,1,\dots,p.
\end{equation}
Here $C^{(p+k)} := (C^{(p)})^{1-k/p} (C^{(2p)})^{k/p}$.

We put
\begin{equation}
\label{3.16}
 \wt{\FF} := A^0  \wt{\u}_0  - \zeta \wt{\u}_0.
\end{equation}
Then $\wt{\FF} \in L_2(\R^d;\C^n)$ and $\wt{\FF}\vert_\O = \FF$.
From \eqref{1.13a}, \eqref{3.12}, and \eqref{3.14} it follows that
\begin{equation}
\label{3.17}
 \|\wt{\FF} \|_{L_2(\R^d)} \le  C_* \| \wt{\u}_0\|_{H^{2p}(\R^d)} + | \zeta| \| \wt{\u}_0\|_{L_2(\R^d)}
\le
{\mathcal C}_4 c(\varphi) \| \FF \|_{L_2(\O)},
\end{equation}
where ${\mathcal C}_4 := C_* C^{(2p)}+ C_\O^{(0)}$. Let $\wt{\u}_\eps \in H^p(\R^d;\C^n)$ be the generalized solution
of the following equation in $\R^d$:
\begin{equation}
\label{3.18}
A_\eps \wt{\u}_\eps - \zeta \wt{\u}_\eps =  \wt{\FF},
\end{equation}
i.~e., $\wt{\u}_\eps = (A_\eps - \zeta I)^{-1} \wt{\FF}$.
One can apply theorems of Section~\ref{sec1}. From Theorems \ref{th2.1} and \ref{th2.2} and relations~\eqref{3.16}--\eqref{3.18}
it follows that
\begin{align}
\label{3.19}
&\| \wt{\u}_\eps - \wt{\u}_0 \|_{L_2(\R^d)} \le C_1 {\mathcal C}_4 c(\varphi)^3 \eps |\zeta|^{-1+1/2p} \| \FF \|_{L_2(\O)},
\\
\label{3.20}
&\| \wt{\u}_\eps - \wt{\mathbf v}_\eps \|_{H^p(\R^d)} \le  2C_2 {\mathcal C}_4 \left( c(\varphi)^3 \eps |\zeta|^{-1/2+1/2p} +
 c(\varphi)^2 \eps^p \right) \| \FF \|_{L_2(\O)},
\end{align}
\begin{equation}
\label{3.20a}
\begin{split}
\| g^\eps b(\D)\wt{\u}_\eps - \wt{g}^\eps S_\eps b(\D) \wt{\u}_0 \|_{L_2(\R^d)}
\le C_3 {\mathcal C}_4 \left( c(\varphi)^3 \eps |\zeta|^{-1/2+1/2p} +
 c(\varphi)^2 \eps^p \right) \| \FF \|_{L_2(\O)},
\end{split}
\end{equation}
for $\eps>0$. We have taken into account that $|\zeta| \ge 1$.

\subsection{The second step of the proof. Introduction of the boundary layer correction term}
We introduce the correction term $\w_\eps \in H^p(\O;\C^n)$  as a function satisfying the integral identity
\begin{equation}
\label{3.24}
\begin{split}
(& g^\eps b(\D) \w_\eps, b(\D)  \eeta )_{L_2(\O)} - \zeta  ( \w_\eps, \eeta )_{L_2(\O)}
\\
&=
(\wt{g}^\eps S_\eps b(\D) \wt{\u}_0,  b(\D)  \eeta )_{L_2(\O)} -
(\zeta \u_0 + \FF, \eeta )_{L_2(\O)},\quad \eeta \in H^p(\O;\C^n).
\end{split}
\end{equation}
Since the right-hand side here is an antilinear continuous functional of $\eeta \in H^p(\O;\C^n)$,
one can check in the standard way that the solution $\w_\eps$ exists and is unique.

Taking the term $\w_\eps$ into account, we obtain approximation for $\u_\eps$
in the $H^p$-norm with sharp-order error estimate $O(\eps)$.

\begin{theorem}
Suppose that the assumptions of Theorem~\emph{\ref{th3.2}} are satisfied.
Suppose that $\w_\eps$ satisfies identity \eqref{3.24}.
Then for $\zeta \in \C \setminus \R_+$, $|\zeta| \ge 1$, and $\eps >0$ we have
\begin{equation}
\label{3.25}
\begin{split}
\| \u_\eps - {\mathbf v}_\eps + \w_\eps \|_{H^p(\O)}
\le {\mathcal C}_5 \left( c(\varphi)^4 \eps |\zeta|^{-1/2+1/2p} + c(\varphi)^3 \eps^p  \right) \|\FF\|_{L_2(\O)}.
\end{split}
\end{equation}
The constant ${\mathcal C}_5$ depends only on  $d$, $p$, $m$, $\alpha_0$, $\alpha_1$, $\|g\|_{L_\infty}$, $\|g^{-1}\|_{L_\infty}$, $k_1$, $k_2$,
the parameters of the lattice $\Gamma$, and the domain $\O$.
\end{theorem}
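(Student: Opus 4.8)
The plan is to compare the function $\wt{\mathbf v}_\eps$ restricted to $\O$ with $\u_\eps$ by using $\w_\eps$ as a correction for the boundary discrepancy. First I would observe that $\wt{\u}_\eps\vert_\O$ and $\u_\eps$ satisfy different integral identities: $\u_\eps$ satisfies the Neumann identity \eqref{22.9a} in $\O$, while $\wt{\u}_\eps$ satisfies \eqref{3.18} in $\R^d$. Testing \eqref{3.18} against $\eeta \in H^p(\O;\C^n)$ (extended by $P_\O$, or just using that \eqref{3.18} holds in the distributional sense) and subtracting the equation $A^0\wt{\u}_0 - \zeta\wt{\u}_0 = \wt{\FF}$ from \eqref{3.16}, one expresses the pairing $(g^\eps b(\D)\wt{\u}_\eps, b(\D)\eeta)_{L_2(\O)} - \zeta(\wt{\u}_\eps,\eeta)_{L_2(\O)}$ in terms of $(\FF,\eeta)$ plus a boundary remainder coming from the fact that testing in $\O$ rather than $\R^d$ loses the integrals over $\R^d\setminus\O$. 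The key algebraic step is then to check that $\u_\eps - \wt{\u}_\eps\vert_\O + \w_\eps$ (or a closely related combination built from $\mathbf v_\eps$) satisfies an integral identity of the form \eqref{22.9a} with right-hand side controlled by $\|g^\eps b(\D)\wt{\u}_\eps - \wt{g}^\eps S_\eps b(\D)\wt{\u}_0\|_{L_2(\O)}$ and $\|\wt{\u}_\eps - \wt{\u}_0\|_{L_2(\O)}$ (plus terms of order $\eps^p$ coming from replacing $\wt{\u}_0$ by $\mathbf v_\eps$ in the corrector).

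Concretely, I would set $\brho_\eps := \u_\eps - \wt{\u}_\eps\vert_\O + \w_\eps$ and compute, for $\eeta \in H^p(\O;\C^n)$, the quantity $(g^\eps b(\D)\brho_\eps, b(\D)\eeta)_{L_2(\O)} - \zeta(\brho_\eps,\eeta)_{L_2(\O)}$. Using \eqref{22.9a} for $\u_\eps$, the identity \eqref{3.24} defining $\w_\eps$, and the identity satisfied by $\wt{\u}_\eps$ in $\O$, most terms cancel and one is left with a functional whose norm is bounded by $C\bigl(\|g^\eps b(\D)\wt{\u}_\eps - \wt{g}^\eps S_\eps b(\D)\wt{\u}_0\|_{L_2(\O)} + |\zeta|\,\|\wt{\u}_\eps - \wt{\u}_0\|_{L_2(\O)}\bigr)\|\eeta\|_{H^p(\O)}$. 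At this point I invoke the already established $\R^d$-estimates \eqref{3.19} and \eqref{3.20a}: the first factor is $O\bigl(c(\varphi)^3\eps|\zeta|^{-1/2+1/2p} + c(\varphi)^2\eps^p\bigr)\|\FF\|$ and the second, multiplied by $|\zeta|$, is $O\bigl(c(\varphi)^3\eps|\zeta|^{1/2p}\bigr)\|\FF\|$, which after dividing by $|\zeta|^{1/2}$ (from coercivity) gives the $c(\varphi)^3\eps|\zeta|^{-1/2+1/2p}$ term. Then coercivity for $A_{N,\eps}$, exactly as in Lemma~\ref{lem1} (take $\eeta = \brho_\eps$, use \eqref{22.5} and the a priori $L_2$-bound for $\brho_\eps$ that comes from \eqref{22.8}), upgrades this functional bound to an $H^p(\O)$-bound on $\brho_\eps$ itself, picking up one more factor $c(\varphi)$. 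Finally I pass from $\wt{\u}_\eps\vert_\O$ and $\wt{\mathbf v}_\eps\vert_\O$ to $\mathbf v_\eps$: by \eqref{3.20} we have $\|\wt{\u}_\eps - \wt{\mathbf v}_\eps\|_{H^p(\R^d)}$ small, and $\mathbf v_\eps = \wt{\mathbf v}_\eps\vert_\O$ by \eqref{3.7}--\eqref{3.8}, so $\|\u_\eps - \mathbf v_\eps + \w_\eps\|_{H^p(\O)} \le \|\brho_\eps\|_{H^p(\O)} + \|\wt{\u}_\eps - \wt{\mathbf v}_\eps\|_{H^p(\R^d)}$, and both terms are of the claimed order.

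The main obstacle I anticipate is the bookkeeping in the "key algebraic step": one must be careful that all the test-function pairings are genuinely over $\O$ and that the cancellation between the $\R^d$-equation for $\wt{\u}_\eps$, the definition \eqref{3.24} of $\w_\eps$, and the Neumann identity for $\u_\eps$ is exact, so that no uncontrolled boundary integral survives. In particular the choice of the right-hand side in \eqref{3.24} — namely $(\wt{g}^\eps S_\eps b(\D)\wt{\u}_0, b(\D)\eeta)_{L_2(\O)} - (\zeta\u_0 + \FF,\eeta)_{L_2(\O)}$ — is designed precisely so that $\w_\eps$ absorbs the difference between the "flux" $g^\eps b(\D)\wt{\u}_\eps$ and its $\R^d$-homogenization approximation $\wt{g}^\eps S_\eps b(\D)\wt{\u}_0$ (as they act on test functions), together with the difference between $\zeta\wt{\u}_\eps$ and $\zeta\u_0$; verifying this requires writing out \eqref{3.16}, \eqref{3.18} and \eqref{22.9a} side by side and tracking every term. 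Once the identity for $\brho_\eps$ is correctly derived, the remaining estimates are routine applications of the coercivity bound \eqref{22.5}, the resolvent bound \eqref{22.8}, and the already-proven $\R^d$-estimates \eqref{3.19}, \eqref{3.20}, \eqref{3.20a}, together with the interpolation/extension bounds \eqref{3.12}--\eqref{3.17}; the factors of $c(\varphi)$ accumulate in a transparent way (one factor from each application of coercivity), yielding the powers $c(\varphi)^4$ and $c(\varphi)^3$ in \eqref{3.25}.
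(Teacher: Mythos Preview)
Your approach is essentially the paper's: set $\VV_\eps := \u_\eps - \wt{\u}_\eps\vert_\O + \w_\eps$, derive an integral identity for it, bound the resulting functional via \eqref{3.19} and \eqref{3.20a}, test with $\eeta=\VV_\eps$ to get $\|\VV_\eps\|_{H^p(\O)}$, and then add \eqref{3.20} to pass from $\wt{\u}_\eps$ to ${\mathbf v}_\eps$. Two minor corrections, however. First, the identity for $\VV_\eps$ does \emph{not} require testing the $\R^d$-equation \eqref{3.18} against $\eeta$ in $\O$ (with the attendant worry about boundary terms): it is obtained purely algebraically by adding \eqref{22.9a} and \eqref{3.24} and then subtracting $(g^\eps b(\D)\wt{\u}_\eps, b(\D)\eeta)_{L_2(\O)} - \zeta(\wt{\u}_\eps,\eeta)_{L_2(\O)}$ from both sides; the $\R^d$-equation enters only through the estimates \eqref{3.19}, \eqref{3.20a}. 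Second, you cannot get the $L_2$-bound on $\VV_\eps$ from \eqref{22.8}, because the right-hand side ${\mathcal I}_\eps[\eeta]$ is a functional on $H^p$ (it involves $b(\D)\eeta$), not an $L_2$-pairing, so $\VV_\eps$ is not of the form $(A_{N,\eps}-\zeta I)^{-1}\FF'$ with $\FF'\in L_2$; instead one substitutes $\eeta=\VV_\eps$, takes imaginary and real parts separately, and combines them to bound $|\zeta|\,\|\VV_\eps\|_{L_2(\O)}^2$ in terms of $\|b(\D)\VV_\eps\|_{L_2(\O)}$ --- this is where the extra factor of $c(\varphi)$ (yielding the $c(\varphi)^4$) comes from in the case $\operatorname{Re}\zeta\ge 0$.
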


\begin{proof}
Let $\wt{\u}_\eps = (A_\eps - \zeta I)^{-1}\wt{\FF}$. Denote $\VV_\eps := \u_\eps - \wt{\u}_\eps + \w_\eps$.
By \eqref{22.9a} and \eqref{3.24}, the function $\VV_\eps \in H^p(\O;\C^n)$ satisfies the identity
\begin{equation}
\label{3.27}
\begin{split}
&(g^\eps b(\D) \VV_\eps, b(\D) \eeta)_{L_2(\O)} - \zeta  ( \VV_\eps,  \eeta)_{L_2(\O)}
\\
&= (\wt{g}^\eps S_\eps b(\D) \wt{\u}_0 - g^\eps b(\D) \wt{\u}_\eps,  b(\D) \eeta)_{L_2(\O)}
 + \zeta (\wt{\u}_\eps - \u_0, \eeta)_{L_2(\O)}
\end{split}
\end{equation}
for $\eeta \in H^p(\O;\C^n)$. Denote the right-hand side of \eqref{3.27} by ${\mathcal I}_\eps[\eeta]$.
From \eqref{3.19} and \eqref{3.20a} it follows that
\begin{equation}
\label{3.28}
\begin{split}
| {\mathcal I}_\eps[\eeta] | &\le C_3 {\mathcal C}_4 \left( c(\varphi)^3 \eps |\zeta|^{-1/2+1/2p} +
 c(\varphi)^2 \eps^p \right) \| \FF \|_{L_2(\O)} \| b(\D) \eeta \|_{L_2(\O)}
\\
&+ C_1 {\mathcal C}_4  c(\varphi)^3 \eps |\zeta|^{1/2p} \| \FF \|_{L_2(\O)} \| \eeta \|_{L_2(\O)}.
\end{split}
\end{equation}
We substitute $\eeta = \VV_\eps$ in \eqref{3.27}:
\begin{equation}
\label{3.29}
(g^\eps b(\D) \VV_\eps, b(\D) \VV_\eps)_{L_2(\O)} - \zeta \| \VV_\eps \|^2_{L_2(\O)} =  {\mathcal I}_\eps[\VV_\eps ].
\end{equation}
Taking the imaginary part of \eqref{3.29} and using estimate \eqref{3.28}, we have
\begin{equation}
\label{3.30}
\begin{split}
| {\rm Im} \, \zeta | \| \VV_\eps \|^2_{L_2(\O)}
&\le  C_3 {\mathcal C}_4 \left( c(\varphi)^3 \eps |\zeta|^{-1/2+1/2p} + c(\varphi)^2 \eps^p \right) \| \FF \|_{L_2(\O)} \| b(\D) \VV_\eps \|_{L_2(\O)}
\\
&+ C_1 {\mathcal C}_4  c(\varphi)^3 \eps |\zeta|^{1/2p} \| \FF \|_{L_2(\O)} \| \VV_\eps \|_{L_2(\O)}.
\end{split}
\end{equation}
If ${\rm Re} \, \zeta \ge 0$ (and then ${\rm Im} \, \zeta \ne 0$), it follows that
\begin{equation}
\label{3.31}
\begin{split}
| \zeta | \| \VV_\eps \|^2_{L_2(\O)}
&\le  2 C_3 {\mathcal C}_4 \left( c(\varphi)^4 \eps |\zeta|^{-1/2+1/2p} + c(\varphi)^3 \eps^p \right) \| \FF \|_{L_2(\O)} \| b(\D) \VV_\eps \|_{L_2(\O)}
\\
&+ C_1^2 {\mathcal C}_4^2  c(\varphi)^8 \eps^2 |\zeta|^{-1 + 1/p} \| \FF \|^2_{L_2(\O)}, \quad {\rm Re} \, \zeta \ge 0.
\end{split}
\end{equation}
If ${\rm Re} \, \zeta < 0$, we take the real part of \eqref{3.29} and obtain
\begin{equation}
\label{3.32}
\begin{split}
| {\rm Re} \, \zeta | \| \VV_\eps \|^2_{L_2(\O)}
&\le  C_3 {\mathcal C}_4 \left( \eps |\zeta|^{-1/2+1/2p} +  \eps^p \right) \| \FF \|_{L_2(\O)} \| b(\D) \VV_\eps \|_{L_2(\O)}
\\
&+ C_1 {\mathcal C}_4  \eps |\zeta|^{1/2p} \| \FF \|_{L_2(\O)} \|  \VV_\eps \|_{L_2(\O)} , \quad {\rm Re} \, \zeta < 0.
\end{split}
\end{equation}
We have taken into account that $c(\varphi)=1$ if ${\rm Re} \, \zeta < 0$.
Summing \eqref{3.30} and \eqref{3.32} up, we deduce
\begin{equation}
\label{3.33}
\begin{split}
| \zeta | \| \VV_\eps \|^2_{L_2(\O)}
&\le 4 C_3 {\mathcal C}_4 \left( \eps |\zeta|^{-1/2+1/2p} +  \eps^p \right) \| \FF \|_{L_2(\O)} \| b(\D) \VV_\eps \|_{L_2(\O)}
\\
&+ 4 C_1^2 {\mathcal C}_4^2  \eps^2 |\zeta|^{-1+1/p} \| \FF \|^2_{L_2(\O)} , \quad {\rm Re} \, \zeta < 0.
\end{split}
\end{equation}
By \eqref{3.31} and \eqref{3.33}, we see that
\begin{equation}
\label{3.34}
\begin{split}
| \zeta | \| \VV_\eps \|^2_{L_2(\O)}
&\le 4 C_3 {\mathcal C}_4 \left( c(\varphi)^4 \eps |\zeta|^{-1/2+1/2p} + c(\varphi)^3 \eps^p \right) \| \FF \|_{L_2(\O)} \| b(\D) \VV_\eps \|_{L_2(\O)}
\\
&+ 4 C_1^2 {\mathcal C}_4^2  c(\varphi)^8 \eps^2 |\zeta|^{-1 + 1/p} \| \FF \|^2_{L_2(\O)}
\end{split}
\end{equation}
for all values of $\zeta$ under consideration.

Now, considering the real part of \eqref{3.29} and using \eqref{3.28} and \eqref{3.34}, we arrive at
\begin{equation*}
\begin{split}
&(g^\eps b(\D) \VV_\eps, b(\D) \VV_\eps)_{L_2(\O)} \le
| {\mathcal I}_\eps [\VV_\eps] | + |\zeta|  \| \VV_\eps \|^2_{L_2(\O)}
\\
&\le 9 C_3 {\mathcal C}_4 \left( c(\varphi)^4 \eps |\zeta|^{-1/2+1/2p} + c(\varphi)^3 \eps^p \right) \| \FF \|_{L_2(\O)} \| b(\D) \VV_\eps \|_{L_2(\O)}
\\
&+ 9 C_1^2 {\mathcal C}_4^2  c(\varphi)^8 \eps^2 |\zeta|^{-1 + 1/p} \| \FF \|^2_{L_2(\O)}.
\end{split}
\end{equation*}
This implies
\begin{equation}
\label{3.36}
\begin{split}
 \|b(\D) \VV_\eps \|_{L_2(\O)} \le
 {\mathcal C}_6 \left( c(\varphi)^4 \eps |\zeta|^{-1/2+1/2p} + c(\varphi)^3 \eps^p \right) \| \FF \|_{L_2(\O)},
\end{split}
\end{equation}
where ${\mathcal C}_6^2 := 81 \| g^{-1}\|_{L_\infty}^2 C_3^2 {\mathcal C}_4^2 + 18 \| g^{-1}\|_{L_\infty} C_1^2 {\mathcal C}_4^2$.
Combining this with \eqref{3.34}, we obtain
\begin{equation}
\label{3.37}
\begin{split}
 |\zeta|^{1/2} \| \VV_\eps \|_{L_2(\O)} \le
 {\mathcal C}_7 \left( c(\varphi)^4 \eps |\zeta|^{-1/2+1/2p} + c(\varphi)^3 \eps^p \right) \| \FF \|_{L_2(\O)},
\end{split}
\end{equation}
where ${\mathcal C}_7^2 := 4  C_3  {\mathcal C}_4 {\mathcal C}_6 + 4 C_1^2 {\mathcal C}_4^2$.

From \eqref{22.2}, \eqref{3.36}, \eqref{3.37}, and the inequality $|\zeta| \ge 1$ it follows that
\begin{equation}
\label{3.38}
\begin{split}
  \| \VV_\eps \|_{H^p(\O)} \le
 {\mathcal C}_8 \left( c(\varphi)^4 \eps |\zeta|^{-1/2+1/2p} + c(\varphi)^3 \eps^p \right) \| \FF \|_{L_2(\O)},
\end{split}
\end{equation}
where $ {\mathcal C}_8^2 := k_1  {\mathcal C}_6^2 + k_2  {\mathcal C}_7^2$.

Finally, estimates \eqref{3.20} and \eqref{3.38} yield the required inequality \eqref{3.25} with the constant
${\mathcal C}_5 := 2 C_2 {\mathcal C}_4 + {\mathcal C}_8$.
\end{proof}

Further plan of the proof of Theorems \ref{th3.1} and \ref{th3.2} is as follows. First, we shall prove estimate
\eqref{3.10} for $\textrm{Re}\,\zeta \le 0$. Next, we shall check estimate \eqref{3.2} also for $\textrm{Re}\,\zeta \le 0$,
using the already proved estimate  \eqref{3.10} and the duality arguments. After that we shall complete the proofs of theorems
by employing suitable identities for the resolvents, that allow us to transfer the already proven estimates from a point
$\zeta$ in the left half-plane to the symmetric point of the right one. (The last trick is borrowed from \cite[Section~10]{MSu2}.)

\textbf{Conclusions.} 1) By \eqref{3.25},
\begin{equation}
\label{3.39}
\begin{split}
\| \u_\eps - {\mathbf v}_\eps \|_{H^p(\O)}
\le {\mathcal C}_5 \left( \eps |\zeta|^{-1/2+1/2p} + \eps^p  \right) \|\FF\|_{L_2(\O)} + \| \w_\eps \|_{H^p(\O)},
\quad
\rm{Re}\,\zeta \le 0, \quad |\zeta| \ge 1.
\end{split}
\end{equation}
Hence, in order to prove estimate \eqref{3.10} (for $\textrm{Re}\,\zeta \le 0$), it suffices to prove an appropriate
estimate for the norm of $\w_\eps$ in $H^p(\O;\C^n)$.

2) From \eqref{3.19} and \eqref{3.37} it follows that
\begin{equation}
\label{3.40}
\begin{split}
\| \u_\eps - {\u}_0 \|_{L_2(\O)}
\le \wt{\mathcal C}_7 \left( \eps |\zeta|^{-1+1/2p} + \eps^p |\zeta|^{-1/2}  \right) \|\FF\|_{L_2(\O)}
+ \| \w_\eps \|_{L_2(\O)},
\quad
\textrm{Re}\,\zeta \le 0, \quad |\zeta| \ge 1,
\end{split}
\end{equation}
where $\wt{\mathcal C}_7 := {\mathcal C}_7 + C_1 {\mathcal C}_4$. Therefore, in order to prove Theorem \ref{th3.1} (for \hbox{$\textrm{Re}\,\zeta \le 0$}), it suffices to obtain an appropriate estimate for the norm of $\w_\eps$ in $L_2(\O;\C^n)$.

\section{Proof of Theorems \ref{th3.1} and \ref{th3.2}}

\subsection{The case where $\rm{Re}\,\zeta \le 0$. Estimate for the correction term $\w_\eps$ in $H^p(\O;\C^n)$}
Denote
\begin{equation}
\label{4.1}
{\mathcal J}_\eps[\eeta] :=
(\wt{g}^\eps S_\eps b(\D) \wt{\u}_0 - g^0 b(\D) {\u}_0,  b(\D) \eeta)_{L_2(\O)},
\quad \eeta \in H^p(\O;\C^n).
\end{equation}
The function $\u_0$ satisfies the identity
\begin{equation}
\label{4.2}
 ({g}^0 b(\D) {\u}_0,  b(\D) \eeta)_{L_2(\O)} -
(\zeta \u_0 + \FF, \eeta)_{L_2(\O)} =0,
\quad \eeta \in H^p(\O;\C^n).
\end{equation}
From \eqref{3.24} and \eqref{4.2} it follows that
\begin{equation}
\label{4.3}
 ({g}^\eps b(\D) {\w}_\eps,  b(\D) \eeta)_{L_2(\O)} -
\zeta (\w_\eps , \eeta)_{L_2(\O)} = {\mathcal J}_\eps[\eeta], \quad \eeta \in H^p(\O;\C^n).
\end{equation}

\begin{lemma}
\label{lem4.1}
Let $\rm{Re}\,\zeta \le 0$ and $|\zeta| \ge 1$. Suppose that the number $\eps_1$ is subject to Condition~\emph{\ref{cond_eps}}.
Then for $0< \eps \le \eps_1$ the functional \eqref{4.1} satisfies
\begin{equation}
\label{4.4}
|{\mathcal J}_\eps[\eeta]| \le
{\mathcal C}_{9} \left( \eps^{1/2} |\zeta|^{-1/2+1/4p} + \eps^p  \right) \|\FF\|_{L_2(\O)} \| \D^p \eeta \|_{L_2(\O)},
\quad \eeta \in H^p(\O;\C^n).
\end{equation}
The constant ${\mathcal C}_{9}$ depends only on $d$, $p$, $m$, $\alpha_0$, $\alpha_1$, $\|g\|_{L_\infty}$, $\|g^{-1}\|_{L_\infty}$, $k_1$, $k_2$,
the parameters of the lattice $\Gamma$, and the domain $\O$.
\end{lemma}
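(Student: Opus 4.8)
The plan is to estimate the functional $\mathcal{J}_\eps[\eeta]$ in \eqref{4.1} by splitting it into a ``good'' part that lives in the bulk and a ``bad'' part supported near $\partial\O$. First I would rewrite the integrand using the relation \eqref{1.12} for $\wt g$ and the equation \eqref{1.10} for $\Lambda$. Write $\wt g^\eps S_\eps b(\D)\wt\u_0 - g^0 b(\D)\u_0 = (\wt g^\eps - g^0) S_\eps b(\D)\wt\u_0 + g^0(S_\eps b(\D)\wt\u_0 - b(\D)\u_0)$ on $\O$. The second term is controlled by Proposition~\ref{prop1.4} (Steklov smoothing) together with the interpolation estimate \eqref{3.15}: $\|S_\eps b(\D)\wt\u_0 - b(\D)\wt\u_0\|_{L_2(\R^d)}\le \eps r_1\|\D b(\D)\wt\u_0\|_{L_2(\R^d)}\le C\eps c(\varphi)|\zeta|^{-1/2+1/2p}\|\FF\|$, and $b(\D)\u_0 = b(\D)\wt\u_0$ on $\O$; this already fits the bound \eqref{4.4} with room to spare.

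The main work is the first term $((\wt g^\eps - g^0)S_\eps b(\D)\wt\u_0, b(\D)\eeta)_{L_2(\O)}$. The matrix-valued function $\wt g(\x) - g^0$ is $\Gamma$-periodic with zero mean, so its columns satisfy the hypotheses of Lemma~\ref{traditional}: indeed $\sum_{|\alpha|=p}\partial^\alpha (\wt g - g^0)_{(\cdot)} = 0$ follows from \eqref{1.10} (the definition of $\Lambda$) after noting $b(\D)^*\wt g = b(\D)^* g(b(\D)\Lambda + \1_m) = 0$. Hence there exist periodic $M_{\alpha\beta}\in\wt H^p(\Omega)$ with $M_{\alpha\beta} = -M_{\beta\alpha}$, zero mean, and $(\wt g - g^0)$ expressed via $\sum_{|\beta|=p}\partial^\beta M_{\alpha\beta}$, with the norm bound \eqref{trad4} controlled by \eqref{Lambda2} through $C_\Lambda^{(1)}$. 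Substituting this representation and integrating by parts moves one factor of $\eps\partial$ onto $M_{\alpha\beta}^\eps$ (producing the gain in $\eps$) and the remaining derivatives onto $S_\eps b(\D)\wt\u_0$ and $b(\D)\eeta$. The antisymmetry $M_{\alpha\beta} = -M_{\beta\alpha}$ is what kills the ``diagonal'' term that would otherwise not be small; this is the standard trick from homogenization theory. After integration by parts the bulk terms carry a factor $\eps$ and, using Proposition~\ref{prop1.5} to handle the rapidly oscillating factor $M_{\alpha\beta}^\eps$ times $S_\eps$, are bounded by $C\eps\|b(\D)\wt\u_0\|_{H^p(\R^d)}\|\D^p\eeta\|_{L_2(\O)}$, i.e.\ by $C\eps c(\varphi)|\zeta|^{-1/2+1/2p}\|\FF\|\,\|\D^p\eeta\|$, which again fits \eqref{4.4}.

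The hard part is the boundary terms produced by integration by parts over $\O$ (rather than over $\R^d$): these are integrals over $\partial\O$, or, after a standard device, integrals of products of the relevant functions over the $\eps$-strip $(\partial\O)_\eps$. This is exactly where the worse exponent $\eps^{1/2}$ in \eqref{4.4} comes from. To handle them I would use Lemma~\ref{lem_dO2} (the $\eps$-neighborhood estimate for Steklov-smoothed oscillating functions) applied to $f = \partial^{\gamma'} M_{\alpha\beta}$ with $|\gamma'| < p$ — note $M_{\alpha\beta}\in\wt H^p(\Omega)$ gives $\partial^{\gamma'}M_{\alpha\beta}\in L_2(\Omega)$ — together with Lemma~\ref{lem_dO1} for the factor involving $\eeta$; this yields a bound of the form $C(\eps|\Omega|^{-1}\|M_{\alpha\beta}\|_{H^p(\Omega)}^2\|b(\D)\wt u_0\|_{H^1}\|b(\D)\wt u_0\|_{L_2})^{1/2}\cdot(\text{similar for }\eeta)^{1/2}$. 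Inserting \eqref{3.13}, \eqref{3.15}, and the coercivity \eqref{22.2} for $\eeta$, each boundary term is estimated by $C\eps^{1/2}\bigl(c(\varphi)|\zeta|^{-1/2}\bigr)^{1/2}\bigl(c(\varphi)|\zeta|^{-1/2+1/2p}\bigr)^{1/2}\|\FF\|\,\|\D^p\eeta\|_{L_2(\O)} = C\eps^{1/2}c(\varphi)|\zeta|^{-1/2+1/4p}\|\FF\|\,\|\D^p\eeta\|$, where I also use $c(\varphi)\le c(\varphi)^{1/2}\cdot(\text{since }c\ge1)$ — actually since $\mathrm{Re}\,\zeta\le0$ we have $c(\varphi)=1$, so this is clean. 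Collecting the bulk contribution ($O(\eps)$) and the boundary contribution ($O(\eps^{1/2}|\zeta|^{-1/2+1/4p})$, dominant for $|\zeta|\ge1$ up to the $\eps^p$ term coming from the $\eps^p$-scaled corrector structure), and tracking all constants through $C_\Lambda^{(1)}$, $\check{\mathfrak c}_p$, $k_1$, $k_2$ and the extension-operator norms, gives \eqref{4.4}. The one genuine obstacle is bookkeeping the multitude of lower-order derivative terms $\partial^{\gamma'}\partial^{\gamma''}$ arising from the Leibniz rule applied to $M_{\alpha\beta}^\eps\cdot(S_\eps b(\D)\wt u_0)$ and ensuring every one of them — bulk and boundary — is subsumed by the right-hand side of \eqref{4.4}; the factors of $\eps$ lost to differentiating $M_{\alpha\beta}^\eps$ are always compensated because such a term drops an $\eps$ but gains a derivative landing on the smooth, $\eps$-independent $\wt u_0$ or $\eeta$.
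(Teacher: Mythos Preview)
Your approach coincides with the paper's: the same splitting $\mathcal J_\eps=\mathcal J_\eps^{(1)}+\mathcal J_\eps^{(2)}$, the same use of Lemma~\ref{traditional} for $f_\alpha=b_\alpha^*(\wt g-g^0)$, and the same role for the antisymmetry $M_{\alpha\beta}=-M_{\beta\alpha}$. The ``standard device'' you allude to is made concrete in the paper via a cutoff $\theta_\eps\in C_0^\infty(\R^d)$ supported in $(\partial\O)_\eps$ with $\theta_\eps=1$ on $(\partial\O)_{\eps/2}$: writing $1=\theta_\eps+(1-\theta_\eps)$ inside $\wt{\mathcal J}_\eps^{(2)}[\eeta]=\eps^p\sum_{\alpha,\beta}(\partial^\beta(M^\eps_{\alpha\beta}S_\eps b(\D)\wt\u_0),\D^\alpha\eeta)_{L_2(\O)}$, the $(1-\theta_\eps)$-piece vanishes identically by antisymmetry after a legitimate integration by parts (the function $(1-\theta_\eps)M^\eps_{\alpha\beta}S_\eps b(\D)\wt\u_0$ being compactly supported in $\O$), while the $\theta_\eps$-piece is localized in the strip and estimated by Lemma~\ref{lem_dO2}. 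Your remark that integration by parts ``moves one factor of $\eps\partial$ onto $M^\eps_{\alpha\beta}$'' is backwards: the representation $f_\alpha^\eps=\eps^p\sum_\beta\partial^\beta M^\eps_{\alpha\beta}$ already places all $p$ derivatives and the factor $\eps^p$ there; the Leibniz rule then moves them \emph{off} onto $S_\eps b(\D)\wt\u_0$.

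One genuine slip: you cannot apply Lemma~\ref{lem_dO1} to the $\eeta$-factor as your ``(similar for $\eeta$)$^{1/2}$'' suggests, because that would need $\D^\alpha\eeta\in H^1(\R^d)$, i.e.\ $\eeta\in H^{p+1}$, which is unavailable for a general test function $\eeta\in H^p(\O;\C^n)$. The paper puts the entire strip gain on the $\wt\u_0$ side via Lemma~\ref{lem_dO2}, bounding $\|\D^\alpha\eeta\|_{L_2((\partial\O)_\eps)}$ trivially by $\|\D^\alpha\eeta\|_{L_2(\O)}$; the factor $\eps^{1/2}|\zeta|^{-1/2+1/4p}$ then comes from $\|b(\D)\wt\u_0\|_{H^1(\R^d)}^{1/2}\|b(\D)\wt\u_0\|_{L_2(\R^d)}^{1/2}$ alone, using \eqref{3.13} and \eqref{3.15}. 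Your final displayed bound is in fact consistent with this, so the correction is to the intermediate reasoning rather than the conclusion.
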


\begin{proof}
We represent the functional \eqref{4.1} as
\begin{align}
\label{4.5}
{\mathcal J}_\eps[\eeta] =& {\mathcal J}_\eps^{(1)}[\eeta] + {\mathcal J}_\eps^{(2)}[\eeta],
\\
\label{4.6}
{\mathcal J}_\eps^{(1)}[\eeta] :=&  ({g}^0 S_\eps b(\D) \wt{\u}_0 - g^0 b(\D) {\u}_0,  b(\D) \eeta)_{L_2(\O)},
\\
\label{4.7}
{\mathcal J}_\eps^{(2)}[\eeta] :=&  ((\wt{g}^\eps - g^0 ) S_\eps b(\D) \wt{\u}_0,  b(\D) \eeta)_{L_2(\O)}.
\end{align}

It is easy to estimate the term \eqref{4.6} with the help of Proposition \ref{prop1.4} and relations \eqref{1.3}, \eqref{1.4}, \eqref{1.5},
\eqref{1.17a}, and \eqref{3.15}:
\begin{equation}
\label{4.8}
\begin{split}
|{\mathcal J}_\eps^{(1)}[\eeta]| &\le |g^0| \| (S_\eps -I) b(\D) \wt{\u}_0\|_{L_2(\R^d)}  \|b(\D) \eeta\|_{L_2(\O)}
\\
&\le {\mathcal C}_{10}    \eps  |\zeta|^{-1/2+ 1/2p}
\|\FF\|_{L_2(\O)} \| \D^p \eeta \|_{L_2(\O)}, \quad \eeta \in H^p(\O;\C^n),
\end{split}
\end{equation}
where ${\mathcal C}_{10} := \|g\|_{L_\infty} r_1 \alpha_1 C^{(p+1)} \wt{\mathfrak c}^{1/2}_p$.

Using \eqref{1.3}, we transform the term \eqref{4.7}:
\begin{equation}
\label{4.9}
\begin{split}
{\mathcal J}_\eps^{(2)}[\eeta]
= \sum_{|\alpha|=p} (f_\alpha^\eps S_\eps b(\D) \wt{\u}_0, \D^\alpha \eeta )_{L_2(\O)},
\end{split}
\end{equation}
where  $f_\alpha(\x):= b_\alpha^* (\wt{g}(\x) - g^0)$, $|\alpha|=p$.
From \eqref{1.5}, \eqref{1.11}, \eqref{1.12}, and \eqref{Lambda2} it follows that
\begin{equation}
\label{4.10}
\begin{split}
\| f_\alpha \|_{L_2(\Omega)} \le {\mathfrak C}, \quad |\alpha|=p,\quad
{\mathfrak C} := \alpha_1^{1/2} \|g\|_{L_\infty} |\Omega|^{1/2} (C_\Lambda^{(1)}+1).
\end{split}
\end{equation}
Combining \eqref{1.3} and \eqref{1.10}--\eqref{1.12}, we see that the functions $f_\alpha(\x)$, $|\alpha|=p$,
satisfy the assumptions of Lemma~\ref{traditional}. Hence, there exist $\Gamma$-periodic matrix-valued functions
$M_{\alpha \beta}(\x)$, $|\alpha|=|\beta|=p$, satisfying relations
\eqref{trad2}--\eqref{trad4}.  From \eqref{trad4} and \eqref{4.10} it follows that
\begin{equation}
\label{4.11}
\begin{split}
\| M_{\alpha \beta} \|_{H^p(\Omega)} \le \check{\mathfrak C}, \quad \check{\mathfrak C}:= 2 \check{\mathfrak c}_p {\mathfrak C},
 \quad |\alpha|=|\beta|= p.
\end{split}
\end{equation}

By \eqref{trad3}, we have
$$
f^\eps_\alpha(\x) =\eps^p \sum_{|\beta|=p} \partial^\beta M^\eps_{\alpha \beta}(\x),\quad |\alpha|=p,
$$
whence the term  \eqref{4.9} can be represented as
\begin{align}
\label{4.12}
{\mathcal J}_\eps^{(2)}[\eeta] =
 \eps^p \sum_{|\alpha|=|\beta|=p} \left( (\partial^\beta M^\eps_{\alpha \beta}) S_\eps b(\D) \wt{\u}_0, \D^\alpha \eeta\right )_{L_2(\O)}.
\end{align}
We have
\begin{equation*}
\begin{split}
(\partial^\beta M^\eps_{\alpha \beta}) S_\eps b(\D) \wt{\u}_0 =
\partial^\beta \left(M^\eps_{\alpha \beta} S_\eps b(\D) \wt{\u}_0\right)
- \sum_{\gamma \le \beta: |\gamma|\ge 1} C_\beta^\gamma (\partial^{\beta-\gamma} M^\eps_{\alpha \beta}) S_\eps b(\D) \partial^\gamma \wt{\u}_0.
\end{split}
\end{equation*}
Consequently, the functional \eqref{4.12} can be written as the sum of two terms:
\begin{align}
\label{4.13}
{\mathcal J}_\eps^{(2)}[\eeta] =& \wt{{\mathcal J}}_\eps^{(2)}[\eeta] + \wh{\mathcal J}_\eps^{(2)}[\eeta],
\\
\label{4.14}
 \wt{{\mathcal J}}_\eps^{(2)}[\eeta] :=& \, \eps^p \sum_{|\alpha|=|\beta|=p}
\left( \partial^\beta \left( M^\eps_{\alpha \beta} S_\eps b(\D) \wt{\u}_0\right), \D^\alpha \eeta\right )_{L_2(\O)},
\\
\label{4.15}
 \wh{{\mathcal J}}_\eps^{(2)}[\eeta] :=& - \sum_{|\alpha|=|\beta|=p}
\sum_{\gamma \le \beta: |\gamma|\ge 1} \eps^{|\gamma|} C_\beta^\gamma \left(
(\partial^{\beta-\gamma} M_{\alpha \beta})^\eps
S_\eps b(\D)  \partial^\gamma \wt{\u}_0, \D^\alpha \eeta\right)_{L_2(\O)}.
\end{align}

The term  \eqref{4.15} is estimated by using Proposition \ref{prop1.5} and relations \eqref{1.4}, \eqref{4.11}:
\begin{equation*}
\begin{split}
 |\wh{{\mathcal J}}_\eps^{(2)}[\eeta]|
&\le c_1(d,p) \sum_{|\alpha|=|\beta|=p} \sum_{\gamma \le \beta: |\gamma|\ge 1} \eps^{|\gamma|}
|\Omega|^{-1/2} \| \partial^{\beta-\gamma} M_{\alpha \beta} \|_{L_2(\Omega)}
\|b(\D)  \partial^\gamma \wt{\u}_0 \|_{L_2(\R^d)} \| \D^\alpha \eeta \|_{L_2(\O)}
\\
&\le c_2(d,p)|\Omega|^{-1/2} \check{\mathfrak C} \alpha_1^{1/2} \sum_{l=1}^p \eps^l \| \wt{\u}_0\|_{H^{p+l}(\R^d)} \| \D^p \eeta\|_{L_2(\O)}.
\end{split}
\end{equation*}
Together with \eqref{3.15}, this implies
\begin{equation}
\label{4.16}
\begin{split}
 |\wh{{\mathcal J}}_\eps^{(2)}[\eeta]|
&\le {\mathcal C}_{11} \sum_{l=1}^p \eps^l |\zeta|^{-1/2 + l/2p} \| \FF \|_{L_2(\O)} \| \D^p \eeta\|_{L_2(\O)}
\\
&\le p \, {\mathcal C}_{11} \left(\eps |\zeta|^{-1/2 + 1/2p} + \eps^p\right) \| \FF \|_{L_2(\O)} \| \D^p \eeta\|_{L_2(\O)}.
\end{split}
\end{equation}
Here ${\mathcal C}_{11} := c_2(d,p)|\Omega|^{-1/2} \check{\mathfrak C} \alpha_1^{1/2} \max_{1\le l \le p} \{C^{(p+l)}\}$.

Now, we consider the term \eqref{4.14}.
We fix a smooth cut-off function $\theta_\eps$ in $\R^d$ such that
\begin{equation}
\label{4.100}
\begin{aligned}
&\theta_\eps \in C_0^\infty(\R^d),\quad {\rm supp}\,\theta_\eps \subset (\partial \O)_\eps, \quad
0 \leqslant \theta_\eps(\x) \leqslant 1,
\cr
&\theta_\eps(\x) = 1 \ \text{for}\ \x \in (\partial \O)_{\eps/2}, \quad \eps^l |\D^l \theta_\eps(\x)| \leqslant
\varkappa,\ l=1,\dots,p.
\end{aligned}
\end{equation}
The constant $\varkappa$ depends only on the domain $\O$. We have
 $$
\sum_{|\alpha|=|\beta|=p}
\left( \partial^\beta \left( (1-\theta_\eps) M^\eps_{\alpha \beta} S_\eps b(\D) \wt{\u}_0\right), \D^\alpha \eeta \right )_{L_2(\O)}=0,
\quad \eeta \in H^p(\O;\C^n),
$$
which can be easily checked by integration by parts
and taking the relations $M_{\alpha \beta} = - M_{\beta \alpha}$ into account (see \eqref{trad2}).
Hence, the term \eqref{4.14} can be written as
\begin{equation}
\label{4.18}
 \wt{{\mathcal J}}_\eps^{(2)}[\eeta] = \sum_{|\alpha|=p} \left( {\boldsymbol \psi}_\alpha(\eps), \D^\alpha \eeta\right )_{L_2(\O)},
\end{equation}
where
\begin{equation}
\label{4.19}
  {\boldsymbol \psi}_\alpha(\eps) := \eps^p \sum_{|\beta|=p}  \partial^\beta \left( \theta_\eps M^\eps_{\alpha \beta} S_\eps b(\D) \wt{\u}_0\right),
\quad |\alpha|=p.
\end{equation}
Next, we have
\begin{equation}
\label{4.20}
  {\boldsymbol \psi}_\alpha(\eps) = \eps^p \sum_{|\beta|=p}  \sum_{\gamma \le \beta} \sum_{\nu \le \beta - \gamma}
C_\beta^\gamma C_{\beta - \gamma}^\nu (\partial^\gamma \theta_\eps) (\partial^{\beta - \gamma -\nu} M^\eps_{\alpha \beta})
 S_\eps b(\D) \partial^\nu \wt{\u}_0.
\end{equation}
For $k=|\nu|\ge 1$, we use Proposition \ref{prop1.5} and relations \eqref{1.4}, \eqref{3.15}, \eqref{4.11}, and \eqref{4.100}:
\begin{equation}
\label{4.21}
\begin{split}
&\eps^p \| (\partial^\gamma \theta_\eps) (\partial^{\beta - \gamma -\nu} M^\eps_{\alpha \beta}) S_\eps b(\D) \partial^\nu \wt{\u}_0 \|_{L_2(\R^d)}
\\
&\le \varkappa \eps^k \| (\partial^{\beta - \gamma -\nu} M_{\alpha \beta})^\eps S_\eps b(\D) \partial^\nu \wt{\u}_0 \|_{L_2(\R^d)}
\\
&\le \varkappa \eps^k |\Omega|^{-1/2} \check{\mathfrak C} \alpha_1^{1/2} \| \wt{\u}_0 \|_{H^{p+k}(\R^d)}
\\
&\le {\mathcal C}^{(k)} \eps^k |\zeta|^{-1/2 + k/2p} \| \FF \|_{L_2(\O)},
\end{split}
\end{equation}
where ${\mathcal C}^{(k)} := \varkappa |\Omega|^{-1/2} \check{\mathfrak C} \alpha_1^{1/2} C^{(p+k)}$.

For $\nu=0$, we apply Lemma \ref{lem_dO2}. Let $0< \eps \le \eps_1$. By \eqref{4.100}, we have
\begin{equation*}
\begin{split}
&\eps^p \| (\partial^\gamma \theta_\eps) (\partial^{\beta - \gamma} M^\eps_{\alpha \beta}) S_\eps b(\D) \wt{\u}_0 \|_{L_2(\R^d)}
\\
&\le \varkappa \| (\partial^{\beta - \gamma} M_{\alpha \beta})^\eps S_\eps b(\D) \wt{\u}_0 \|_{L_2((\partial \O)_\eps)}
\\
&\le \eps^{1/2} \varkappa \beta_*^{1/2} |\Omega|^{-1/2} \|\partial^{\beta - \gamma} M_{\alpha \beta} \|_{L_2(\Omega)}
 \|b(\D) \wt{\u}_0 \|^{1/2}_{H^1(\R^d)} \|b(\D) \wt{\u}_0 \|^{1/2}_{L_2(\R^d)}.
\end{split}
\end{equation*}
Together with \eqref{1.4}, \eqref{3.15}, and \eqref{4.11}, this implies
\begin{equation}
\label{4.22}
\begin{split}
\eps^p \| (\partial^\gamma \theta_\eps) (\partial^{\beta - \gamma} M^\eps_{\alpha \beta}) S_\eps b(\D) \wt{\u}_0 \|_{L_2(\R^d)}
\le {\mathcal C}_{12} \eps^{1/2} |\zeta|^{-1/2 + 1/4p} \| \FF \|_{L_2(\O)},
\end{split}
\end{equation}
where ${\mathcal C}_{12} := \varkappa \beta_*^{1/2} |\Omega|^{-1/2} \check{\mathfrak C} \alpha_1^{1/2} (C^{(p+1)} C^{(p)})^{1/2}$.

Estimating the summands in \eqref{4.20} with $k=|\nu| \ge 1$ by \eqref{4.21}, and the summands with $\nu=0$ with the help of \eqref{4.22},
we arrive at the inequality
\begin{equation*}
\| {\boldsymbol \psi}_\alpha(\eps) \|_{L_2(\R^d)} \le {\mathcal C}_{13} \left( \eps^{1/2} |\zeta|^{-1/2 + 1/4p} +
\sum_{k=1}^p \eps^{k} |\zeta|^{-1/2 + k/2p} \right) \| \FF \|_{L_2(\O)},
\end{equation*}
where ${\mathcal C}_{13} := c_3(d,p) \max\{ {\mathcal C}_{12}, {\mathcal C}^{(1)},\dots, {\mathcal C}^{(p)}\}$.
It is easily seen that expression in the brackets does not exceed
$(p+1) (\eps^{1/2} |\zeta|^{-1/2 + 1/4p} + \eps^p)$. Hence,
\begin{equation}
\label{4.22a}
\| {\boldsymbol \psi}_\alpha(\eps) \|_{L_2(\R^d)} \le (p+1) \,{\mathcal C}_{13} \left( \eps^{1/2} |\zeta|^{-1/2 + 1/4p} +
 \eps^{p} \right) \| \FF \|_{L_2(\O)}, \quad |\alpha|=p.
\end{equation}
Together with \eqref{4.18} this yields
\begin{equation}
\label{4.23}
| \wt{{\mathcal J}}_\eps^{(2)}[\eeta]| \le {\mathcal C}_{14} \left(\eps^{1/2} |\zeta|^{-1/2 + 1/4p} + \eps^p\right) \| \FF \|_{L_2(\O)} \| \D^p \eeta\|_{L_2(\O)}
\end{equation}
with the constant ${\mathcal C}_{14} := c_4(d,p) {\mathcal C}_{13}$.

As a result, relations \eqref{4.5}, \eqref{4.8}, \eqref{4.13}, \eqref{4.16}, and \eqref{4.23} imply the required estimate \eqref{4.4} with the
constant ${\mathcal C}_{9} := {\mathcal C}_{10} + 2 p \,{\mathcal C}_{11} + {\mathcal C}_{14}$.
\end{proof}

\begin{lemma}
Let $\rm{Re}\,\zeta \le 0$ and $|\zeta| \ge 1$. Suppose that the number $\eps_1$ is subject to Condition \emph{\ref{cond_eps}}.
Suppose that the function $\w_\eps \in H^p(\O;\C^n)$ satisfies \eqref{3.24}. Then for $0< \eps \le \eps_1$ we have
\begin{equation}
\label{4.24}
\| \w_\eps \|_{H^p(\O)} \le
{\mathcal C}_{15} \left( \eps^{1/2} |\zeta|^{-1/2+1/4p} + \eps^p  \right) \|\FF\|_{L_2(\O)}.
\end{equation}
The constant ${\mathcal C}_{15}$ depends only on $d$, $p$, $m$, $\alpha_0$, $\alpha_1$, $\|g\|_{L_\infty}$, $\|g^{-1}\|_{L_\infty}$, $k_1$, $k_2$,
the parameters of the lattice $\Gamma$, and the domain $\O$.
\end{lemma}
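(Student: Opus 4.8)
The plan is to derive the $H^p$-bound for $\w_\eps$ directly from the integral identity \eqref{4.3}, treating $\w_\eps$ as the solution of a Neumann-type problem whose right-hand side is the functional ${\mathcal J}_\eps$ already controlled in Lemma~\ref{lem4.1}. The argument is essentially the same energy-estimate scheme used in the proof of estimate \eqref{3.25}, but now applied to $\w_\eps$ instead of $\VV_\eps$, and it is slightly simpler because we are only assuming $\rm{Re}\,\zeta \le 0$ (so $c(\varphi)=1$ throughout) and the functional ${\mathcal J}_\eps$ is estimated in terms of $\|\D^p\eeta\|_{L_2(\O)}$ rather than separately in $\|b(\D)\eeta\|$ and $\|\eeta\|$.

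First I would substitute $\eeta = \w_\eps$ into \eqref{4.3}, giving
$$
(g^\eps b(\D)\w_\eps, b(\D)\w_\eps)_{L_2(\O)} - \zeta \|\w_\eps\|^2_{L_2(\O)} = {\mathcal J}_\eps[\w_\eps].
$$
Taking the imaginary part, using $|{\rm Im}\,\zeta| \ge \tfrac{1}{\sqrt 2}|\zeta|$ when $\rm{Re}\,\zeta \le 0$ together with the bound \eqref{4.4} and the trivial inequality $\|\D^p\w_\eps\|_{L_2(\O)} \le \|\w_\eps\|_{H^p(\O)}$, one gets
$$
|\zeta| \|\w_\eps\|^2_{L_2(\O)} \le c\, {\mathcal C}_9 \bigl(\eps^{1/2}|\zeta|^{-1/2+1/4p} + \eps^p\bigr) \|\FF\|_{L_2(\O)} \|\w_\eps\|_{H^p(\O)}.
$$
Then, taking the real part of the same identity and using $\rm{Re}\,\zeta \le 0$ (so $-\rm{Re}\,\zeta\,\|\w_\eps\|^2_{L_2} \ge 0$ moves to the good side), together with the lower bound $(g^\eps b(\D)\w_\eps, b(\D)\w_\eps)_{L_2(\O)} \ge \|g^{-1}\|^{-1}_{L_\infty}\|b(\D)\w_\eps\|^2_{L_2(\O)}$ and \eqref{4.4}, one obtains
$$
\|g^{-1}\|^{-1}_{L_\infty}\|b(\D)\w_\eps\|^2_{L_2(\O)} \le {\mathcal C}_9 \bigl(\eps^{1/2}|\zeta|^{-1/2+1/4p} + \eps^p\bigr) \|\FF\|_{L_2(\O)} \|\w_\eps\|_{H^p(\O)}.
$$

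Next I would feed these two bounds into the Gårding inequality \eqref{22.2}: $\|\w_\eps\|^2_{H^p(\O)} \le k_1 \|b(\D)\w_\eps\|^2_{L_2(\O)} + k_2 \|\w_\eps\|^2_{L_2(\O)}$. Since $|\zeta|\ge 1$, the factor $|\zeta|^{-1}$ from the $L_2$-estimate only helps, so both right-hand terms are bounded by $C\bigl(\eps^{1/2}|\zeta|^{-1/2+1/4p} + \eps^p\bigr) \|\FF\|_{L_2(\O)} \|\w_\eps\|_{H^p(\O)}$. Dividing through by $\|\w_\eps\|_{H^p(\O)}$ yields \eqref{4.24} with ${\mathcal C}_{15}$ expressed through ${\mathcal C}_9$, $k_1$, $k_2$, and $\|g^{-1}\|_{L_\infty}$. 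The main obstacle here is not conceptual but bookkeeping: one must be careful that the quadratic factor $\|\w_\eps\|_{H^p(\O)}$ appears only to the first power on the right (which it does, because ${\mathcal J}_\eps$ is linear in $\eeta$), so that the absorption argument closes cleanly rather than producing a Gronwall-type loop; also one should confirm that in the real-part step for $\rm{Re}\,\zeta \le 0$ the term $-\rm{Re}\,\zeta\,\|\w_\eps\|^2_{L_2(\O)}$ indeed has the favorable sign and can simply be dropped, which is exactly why the restriction $\rm{Re}\,\zeta\le 0$ is imposed at this stage.
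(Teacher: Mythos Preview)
Your approach is essentially the paper's own proof, and the structure is correct: substitute $\eeta=\w_\eps$ in \eqref{4.3}, extract an $L_2$-bound and a $b(\D)$-bound from the real and imaginary parts, and close via the G\r{a}rding inequality \eqref{22.2} using $|\zeta|\ge 1$. However, one step is wrong as written. The claim ``$|{\rm Im}\,\zeta|\ge \tfrac{1}{\sqrt 2}|\zeta|$ when ${\rm Re}\,\zeta\le 0$'' is false (take $\zeta=-1$), so the imaginary part alone does \emph{not} give you $|\zeta|\|\w_\eps\|^2_{L_2(\O)}$ on the left. The fix is exactly what the paper does: from the real part, since ${\rm Re}\,\zeta\le 0$ and $a_{N,\eps}[\w_\eps,\w_\eps]\ge 0$, you also get
\[
|{\rm Re}\,\zeta|\,\|\w_\eps\|^2_{L_2(\O)}\le |{\mathcal J}_\eps[\w_\eps]|,
\]
and summing this with the imaginary-part bound $|{\rm Im}\,\zeta|\,\|\w_\eps\|^2_{L_2(\O)}\le |{\mathcal J}_\eps[\w_\eps]|$ yields
\[
|\zeta|\,\|\w_\eps\|^2_{L_2(\O)}\le 2\,{\mathcal C}_9\bigl(\eps^{1/2}|\zeta|^{-1/2+1/4p}+\eps^p\bigr)\|\FF\|_{L_2(\O)}\|\D^p\w_\eps\|_{L_2(\O)},
\]
since $|{\rm Re}\,\zeta|+|{\rm Im}\,\zeta|\ge|\zeta|$. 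With this correction your argument goes through and matches the paper's line by line, including the final constant ${\mathcal C}_{15}={\mathcal C}_9(k_1\|g^{-1}\|_{L_\infty}+2k_2)$.
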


\begin{proof}
 We substitute $\eeta = \w_\eps$ in \eqref{4.3} and consider the imaginary part of the corresponding
identity. Taking \eqref{4.4} into account, we obtain
\begin{equation}
\label{4.25}
\begin{split}
|{\textrm{Im}}\,\zeta|\| \w_\eps \|^2_{L_2(\O)} \le | {\mathcal J}_\eps [\w_\eps]|
\le {\mathcal C}_{9} \left( \eps^{1/2} |\zeta|^{-1/2+1/4p} + \eps^p  \right) \|\FF\|_{L_2(\O)} \| \D^p \w_\eps\|_{L_2(\O)}.
\end{split}
\end{equation}
Now we consider the real part of the corresponding identity, using that ${\rm{Re}}\,\zeta \le 0$. It follows that
\begin{equation}
\label{4.26}
\begin{split}
| {\textrm{Re}}\, \zeta|\| \w_\eps \|^2_{L_2(\O)}
\le {\mathcal C}_{9} \left( \eps^{1/2} |\zeta|^{-1/2+1/4p} + \eps^p  \right) \|\FF\|_{L_2(\O)} \| \D^p \w_\eps\|_{L_2(\O)}.
\end{split}
\end{equation}
Summing \eqref{4.25} and \eqref{4.26} up, we arrive at
\begin{equation}
\label{4.27}
\begin{split}
| \zeta|\| \w_\eps \|^2_{L_2(\O)}
\le 2 {\mathcal C}_{9} \left( \eps^{1/2} |\zeta|^{-1/2+1/4p} + \eps^p  \right) \|\FF\|_{L_2(\O)} \| \D^p \w_\eps\|_{L_2(\O)}.
\end{split}
\end{equation}

On the other hand, relations \eqref{4.3} (with $\eeta = \w_\eps$) and \eqref{4.4} yield
\begin{equation}
\label{4.28}
\begin{split}
 a_{N,\eps}[ \w_\eps, \w_\eps] \le
 {\mathcal C}_{9} \left( \eps^{1/2} |\zeta|^{-1/2+1/4p} + \eps^p  \right) \|\FF\|_{L_2(\O)} \| \D^p \w_\eps\|_{L_2(\O)}.
\end{split}
\end{equation}
Comparing \eqref{22.5}, \eqref{4.27}, and \eqref{4.28}, and taking into account that $|\zeta| \ge 1$, we obtain
\begin{equation}
\label{4.28a}
 \| \w_\eps \|^2_{H^p(\O)} \le
{\mathcal C}_{15} \left( \eps^{1/2} |\zeta|^{-1/2+1/4p} + \eps^p  \right) \|\FF\|_{L_2(\O)} \| \D^p \w_\eps\|_{L_2(\O)},
\end{equation}
where ${\mathcal C}_{15} : = {\mathcal C}_{9}(k_1 \|g^{-1}\|_{L_\infty} + 2 k_2)$.
This implies the required estimate \eqref{4.24}.
\end{proof}

\subsection{Completion of the proof of estimate \eqref{3.10} for $\rm{Re}\,\zeta \le 0$}

By \eqref{3.39} and \eqref{4.24}, for $\rm{Re}\,\zeta \le 0$, $|\zeta| \ge 1$, and $0< \eps \le \eps_1$ we have
\begin{equation*}
\begin{split}
&\| \u_\eps - {\mathbf v}_\eps \|_{H^p(\O)}
\le {\mathcal C}_5 \left( \eps |\zeta|^{-1/2+1/2p} + \eps^p  \right) \|\FF\|_{L_2(\O)}
+ {\mathcal C}_{15} \left( \eps^{1/2} |\zeta|^{-1/2+1/4p} + \eps^p  \right) \|\FF\|_{L_2(\O)}
\\
&\le {\mathcal C}_{2}' \left( \eps^{1/2} |\zeta|^{-1/2+1/4p} + \eps^p  \right) \|\FF\|_{L_2(\O)},
\quad \rm{Re}\,\zeta \le 0, \quad |\zeta| \ge 1,
\end{split}
\end{equation*}
where ${\mathcal C}_2' := 2 {\mathcal C}_{5} + {\mathcal C}_{15}$. This implies estimate \eqref{3.10} in the case under consideration:
\begin{equation}
\label{4.30}
\begin{split}
&\| (A_{N,\eps} - \zeta I)^{-1} - (A_{N}^0 - \zeta I)^{-1} - \eps^p K_{N}(\zeta;\eps) \|_{L_2(\O) \to H^p(\O)}
\\
&\le {\mathcal C}_{2}' \left( \eps^{1/2} |\zeta|^{-1/2+1/4p} + \eps^p  \right),
\quad \rm{Re}\,\zeta \le 0, \quad |\zeta| \ge 1,\quad 0< \eps \le \eps_1.
\end{split}
\end{equation}

\subsection{Estimate of the correction term $\w_\eps$ in $L_2$}

\begin{lemma}
Let $\rm{Re}\,\zeta \le 0$ and $|\zeta| \ge 1$. Suppose that the number $\eps_1$ is subject to Condition~\emph{\ref{cond_eps}}.
Suppose that the function $\w_\eps \in H^p(\O;\C^n)$ satisfies~\eqref{3.24}. Then for $0< \eps \le \eps_1$ we have
\begin{equation}
\label{4.31}
\| \w_\eps \|_{L_2(\O)} \le
{\mathcal C}_{16} \left( \eps |\zeta|^{-1+1/2p} + \eps^{2p}  \right) \|\FF\|_{L_2(\O)}.
\end{equation}
The constant ${\mathcal C}_{16}$ depends only on $d$, $p$, $m$, $\alpha_0$, $\alpha_1$, $\|g\|_{L_\infty}$, $\|g^{-1}\|_{L_\infty}$, $k_1$, $k_2$,
the parameters of the lattice $\Gamma$, and the domain $\O$.
\end{lemma}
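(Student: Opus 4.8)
The plan is to bound $\|\w_\eps\|_{L_2(\O)}$ by a duality argument. Fix $\bPhi\in L_2(\O;\C^n)$ and set $\bphi_\eps:=(A_{N,\eps}-\overline{\zeta} I)^{-1}\bPhi$ and $\bphi_0:=(A_N^0-\overline{\zeta} I)^{-1}\bPhi$. Since $\mathrm{Re}\,\overline{\zeta}=\mathrm{Re}\,\zeta\le0$ and $|\overline{\zeta}|=|\zeta|\ge1$, every estimate already established at such points applies (with $c(\varphi)=1$); in particular $\bphi_\eps\in\Dom A_{N,\eps}$ with $\|\bphi_\eps\|_{H^p(\O)}\le\mathcal C_0|\zeta|^{-1/2}\|\bPhi\|_{L_2(\O)}$ by Lemma~\ref{lem1}, and $\bphi_0\in H^{2p}(\O;\C^n)$. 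Substituting $\eeta=\bphi_\eps$ into the integral identity \eqref{4.3}, using the selfadjointness of $A_{N,\eps}$ and $A_{N,\eps}\bphi_\eps=\bPhi+\overline{\zeta}\,\bphi_\eps$, gives $(\w_\eps,\bPhi)_{L_2(\O)}=\mathcal J_\eps[\bphi_\eps]$. Thus it is enough to bound $|\mathcal J_\eps[\bphi_\eps]|$ by $\mathcal C(\eps|\zeta|^{-1+1/2p}+\eps^{2p})\|\FF\|_{L_2(\O)}\|\bPhi\|_{L_2(\O)}$ and take the supremum over $\|\bPhi\|_{L_2(\O)}=1$.

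Here I use the splitting $\mathcal J_\eps=\mathcal J_\eps^{(1)}+\widehat{\mathcal J}_\eps^{(2)}+\widetilde{\mathcal J}_\eps^{(2)}$, the bounds \eqref{4.8} and \eqref{4.16}, and the representation $\widetilde{\mathcal J}_\eps^{(2)}[\eeta]=\sum_{|\alpha|=p}(\boldsymbol\psi_\alpha(\eps),\D^\alpha\eeta)_{L_2(\O)}$ of \eqref{4.18}, recalling in addition (from the proof of Lemma~\ref{lem4.1}) that each $\boldsymbol\psi_\alpha(\eps)$ is supported in $(\partial\O)_\eps$ and obeys \eqref{4.22a}. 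For the two ``bulk'' functionals I insert $\eeta=\bphi_\eps$ and use $\|\D^p\bphi_\eps\|_{L_2(\O)}\le\mathcal C_0|\zeta|^{-1/2}\|\bPhi\|_{L_2(\O)}$; this produces the quantities $\eps|\zeta|^{-1+1/2p}$ and $\eps^p|\zeta|^{-1/2}$, the latter being the geometric mean $(\eps|\zeta|^{-1+1/2p})^{p/(2p-1)}(\eps^{2p})^{(p-1)/(2p-1)}$ and hence $\le\eps|\zeta|^{-1+1/2p}+\eps^{2p}$. The only delicate term is $\widetilde{\mathcal J}_\eps^{(2)}[\bphi_\eps]=\sum_{|\alpha|=p}(\boldsymbol\psi_\alpha(\eps),\D^\alpha\bphi_\eps)_{L_2((\partial\O)_\eps)}$, where I want an extra factor $\eps^{1/2}$ coming from the $\eps$-layer near $\partial\O$; but $\bphi_\eps$ lies only in $H^p(\O;\C^n)$, too little to apply Lemma~\ref{lem_dO1} directly.

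To get around this I replace $\bphi_\eps$ by its homogenization approximation. By the already proved estimate \eqref{4.30} taken at $\overline{\zeta}$, one has $\bphi_\eps=\bphi_0+\eps^p K_N(\overline{\zeta};\eps)\bPhi+\mathbf t_\eps$ with $\|\mathbf t_\eps\|_{H^p(\O)}\le\mathcal C_2'(\eps^{1/2}|\zeta|^{-1/2+1/4p}+\eps^p)\|\bPhi\|_{L_2(\O)}$, where $K_N(\overline{\zeta};\eps)\bPhi=R_\O\Lambda^\eps S_\eps b(\D)\widetilde{\bphi}_0$ and $\widetilde{\bphi}_0:=P_\O\bphi_0$. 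I then estimate $\|\D^\alpha(\,\cdot\,)\|_{L_2((\partial\O)_\eps)}$ for the three summands: for $\bphi_0$, since $2p\ge p+1$ we have $\D^\alpha\widetilde{\bphi}_0\in H^1(\R^d)$, so Lemma~\ref{lem_dO1} together with the interpolation bounds \eqref{3.15} gives $\|\D^\alpha\bphi_0\|_{L_2((\partial\O)_\eps)}\le C\eps^{1/2}|\zeta|^{-1/2+1/4p}\|\bPhi\|_{L_2(\O)}$; for the corrector summand I expand $\D^\alpha(\eps^p\Lambda^\eps S_\eps b(\D)\widetilde{\bphi}_0)$ by Leibniz's rule, handling the term with no derivative on $\Lambda^\eps$ (full prefactor $\eps^p$) directly via Proposition~\ref{prop1.5} and every term with $|\delta|\ge1$ derivatives on $\Lambda^\eps$ (prefactor $\eps^{p-|\delta|}$) by Lemma~\ref{lem_dO2} with $f=\D^\delta\Lambda\in L_2(\Omega)$ (see \eqref{Lambda3}) and $\u=\D^{\alpha-\delta}b(\D)\widetilde{\bphi}_0\in H^1(\R^d)$ — the condition $|\delta|\ge1$ being exactly what puts $\u$ in $H^1$ — which supplies the missing $\eps^{1/2}$; for $\mathbf t_\eps$ I simply bound $\|\D^\alpha\mathbf t_\eps\|_{L_2((\partial\O)_\eps)}\le\|\mathbf t_\eps\|_{H^p(\O)}$. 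Multiplying each of these by $\|\boldsymbol\psi_\alpha(\eps)\|_{L_2(\R^d)}\le(p+1)\mathcal C_{13}(\eps^{1/2}|\zeta|^{-1/2+1/4p}+\eps^p)\|\FF\|_{L_2(\O)}$ and collecting, every resulting contribution is either $\le\eps^{2p}$ or a product $\eps^a|\zeta|^b$ with $b\in[-1+1/2p,0]$ and $a=2p(1+b)$, i.e. a geometric mean of $\eps|\zeta|^{-1+1/2p}$ and $\eps^{2p}$; summing over $\alpha$ and over the three summands yields \eqref{4.31}.

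The hard part will be this last step: controlling $\D^\alpha\bphi_\eps$ in the $\eps$-neighbourhood $(\partial\O)_\eps$ through the effective solution, the corrector, and the remainder — invoking Lemma~\ref{lem_dO1} for the smooth part and Lemma~\ref{lem_dO2} for the oscillating corrector part — together with the elementary (but not entirely painless) verification that every mixed power $\eps^a|\zeta|^b$ produced along the way is dominated by $\eps|\zeta|^{-1+1/2p}+\eps^{2p}$.
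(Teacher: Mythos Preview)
Your proposal is correct and follows essentially the same duality argument as the paper: test \eqref{4.3} against $\eeta_\eps=(A_{N,\eps}-\overline{\zeta}I)^{-1}\bPhi$, split $\mathcal J_\eps$ as in Lemma~\ref{lem4.1}, handle $\mathcal J_\eps^{(1)}$ and $\widehat{\mathcal J}_\eps^{(2)}$ by \eqref{4.8}, \eqref{4.16} together with \eqref{22.9}, and for $\widetilde{\mathcal J}_\eps^{(2)}$ replace $\eeta_\eps$ by $\eeta_0+\eps^p\Lambda^\eps S_\eps b(\D)\widetilde{\eeta}_0$ via \eqref{4.30}, then use the support of $\boldsymbol\psi_\alpha(\eps)$ in $(\partial\O)_\eps$ and Lemmas~\ref{lem_dO1}, \ref{lem_dO2}. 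The only cosmetic difference is in the Leibniz splitting of the corrector part: the paper applies Lemma~\ref{lem_dO2} solely to the term with all $p$ derivatives on $\Lambda^\eps$ and Proposition~\ref{prop1.5} to the remaining terms, whereas you apply Lemma~\ref{lem_dO2} to every term with $|\delta|\ge1$ derivatives on $\Lambda^\eps$; both groupings land on the curve $a=2p(1+b)$ and give the same final bound.
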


\begin{proof}
In the identity \eqref{4.3}, we substitute the function
$$
\eeta = \eeta_\eps = (A_{N,\eps} - \overline{\zeta} I)^{-1} \bPhi, \quad \bPhi \in L_2(\O;\C^n).
$$
Then the left-hand side of \eqref{4.3} can be written as $(\w_\eps, \bPhi)_{L_2(\O)}$. Hence,
\begin{equation}
\label{4.32}
(\w_\eps, \bPhi)_{L_2(\O)} = {\mathcal J}_\eps[\eeta_\eps].
\end{equation}

Assume that $0< \eps \le \eps_1$. By \eqref{4.5}, \eqref{4.8}, \eqref{4.13}, and \eqref{4.16},
\begin{equation*}
|{\mathcal J}_\eps[\eeta_\eps]| \le ({\mathcal C}_{10}+ p\, {\mathcal C}_{11}) \left( \eps |\zeta|^{-1/2+1/2p} + \eps^p  \right) \|\FF\|_{L_2(\O)} \| \D^p \eeta_\eps\|_{L_2(\O)}
+ |\wt{\mathcal J}_\eps^{(2)}[\eeta_\eps]|.
\end{equation*}
Applying Lemma \ref{lem1} to estimate the term $\| \D^p \eeta_\eps\|_{L_2(\O)}$, we arrive at
\begin{equation}
\label{4.34}
|{\mathcal J}_\eps[\eeta_\eps]| \le {\mathcal C}_{17} \left( \eps |\zeta|^{-1 +1/2p} + \eps^{2p} \right) \|\FF\|_{L_2(\O)} \| \bPhi \|_{L_2(\O)}
+ |\wt{\mathcal J}_\eps^{(2)}[\eeta_\eps]|,
\end{equation}
where ${\mathcal C}_{17} := 2 {\mathcal C}_0 ({\mathcal C}_{10}+ p \, {\mathcal C}_{11})$.

 Now, we consider the term $|\wt{\mathcal J}_\eps^{(2)}[\eeta_\eps]|$, using  representation \eqref{4.18}.
We apply the already proved estimate \eqref{4.30} (at the point $\overline{\zeta}$) in order to approximate the function
$\eeta_\eps$. Let ${\eeta}_0 := (A_{N}^0 - \overline{\zeta} I)^{-1} \bPhi$ and $\wt{\eeta}_0 := P_\O \eeta_0$. We have
\begin{equation}
\label{4.35}
\| \eeta_\eps - \eeta_0 - \eps^p \Lambda^\eps S_\eps b(\D) \wt{\eeta}_0 \|_{H^p(\O)} \le
{\mathcal C}_{2}' \left( \eps^{1/2} |\zeta|^{-1/2+1/4p} + \eps^p  \right) \| \bPhi\|_{L_2(\O)}.
\end{equation}
According to \eqref{4.18},
\begin{align}
\label{4.36}
\wt{\mathcal J}_\eps^{(2)}[\eeta_\eps] & = {\mathcal L}_1(\eps)+ {\mathcal L}_2(\eps) + {\mathcal L}_3(\eps),
\\
\nonumber
{\mathcal L}_1(\eps) & := \sum_{|\alpha|=p} \left( {\boldsymbol \psi}_\alpha(\eps), \D^\alpha (\eeta_\eps - \eeta_0 - \eps^p \Lambda^\eps S_\eps b(\D) \wt{\eeta}_0 ) \right )_{L_2(\O)},
\\
\label{4.38}
{\mathcal L}_2(\eps) & := \sum_{|\alpha|=p} \left( {\boldsymbol \psi}_\alpha(\eps), \D^\alpha \eeta_0 \right )_{L_2(\O)},
\\
\label{4.39}
{\mathcal L}_3(\eps) & := \sum_{|\alpha|=p} \left( {\boldsymbol \psi}_\alpha(\eps), \D^\alpha (\eps^p \Lambda^\eps S_\eps b(\D) \wt{\eeta}_0 ) \right )_{L_2(\O)}.
\end{align}

From \eqref{4.22a} and \eqref{4.35} it follows that
\begin{equation}
\label{4.40}
|{\mathcal L}_1(\eps)| \le {\mathcal C}_{18} \left( \eps |\zeta|^{-1+1/2p} + \eps^{2p}  \right) \| \FF \|_{L_2(\O)} \| \bPhi\|_{L_2(\O)},
\end{equation}
where ${\mathcal C}_{18} := 2 c_4(d,p) {\mathcal C}_{2}' {\mathcal C}_{13}$.

By \eqref{4.100}, \eqref{4.19}, and \eqref{4.22a}, the term \eqref{4.38} satisfies
\begin{equation*}
|{\mathcal L}_2(\eps)| \le c_4(d,p) {\mathcal C}_{13}
 \left( \eps^{1/2} |\zeta|^{-1/2+1/4p} + \eps^{p}  \right) \| \FF \|_{L_2(\O)} \left(\int_{(\partial \O)_\eps} |\D^p \wt{\eeta}_0|^2 \,d\x \right)^{1/2}.
\end{equation*}
Combining this with Lemma \ref{lem_dO1} and estimates \eqref{3.13}, \eqref{3.15} (for $\wt{\eeta}_0$), we deduce
\begin{equation}
\label{4.42}
|{\mathcal L}_2(\eps)| \le {\mathcal C}_{19}
\left( \eps |\zeta|^{-1+1/2p} + \eps^{2p}  \right) \| \FF \|_{L_2(\O)} \| \bPhi\|_{L_2(\O)},
\end{equation}
where ${\mathcal C}_{19} := 2 c_4(d,p) {\mathcal C}_{13} \beta_0^{1/2} (C^{(p)} C^{(p+1)})^{1/2}$.

It remains to estimate the term \eqref{4.39}, which can be written as
\begin{align}
\label{4.43}
{\mathcal L}_3(\eps) &= {\mathcal L}_3^{(1)}(\eps) + {\mathcal L}_3^{(2)}(\eps),
\\
\nonumber
{\mathcal L}_3^{(1)}(\eps) & := \sum_{|\alpha|=p} \left( {\boldsymbol \psi}_\alpha(\eps), (\D^\alpha \Lambda)^\eps S_\eps b(\D) \wt{\eeta}_0  \right )_{L_2(\O)},
\\
\nonumber
{\mathcal L}_3^{(2)}(\eps) & := \sum_{|\alpha|=p} \sum_{\beta \le \alpha: |\beta|\ge 1}
C_\alpha^\beta \eps^{|\beta|} \left( {\boldsymbol \psi}_\alpha(\eps), (\D^{\alpha - \beta} \Lambda)^\eps S_\eps b(\D) \D^\beta \wt{\eeta}_0 \right )_{L_2(\O)}.
\end{align}
By \eqref{4.100}, \eqref{4.19}, and \eqref{4.22a},
\begin{equation*}
| {\mathcal L}_3^{(1)}(\eps) | \le c_4(d,p) {\mathcal C}_{13}\left( \eps^{1/2} |\zeta|^{-1/2+1/4p} + \eps^{p}  \right) \| \FF \|_{L_2(\O)}
\left(\int_{(\partial \O)_\eps} |(\D^p \Lambda )^\eps S_\eps b(\D) \wt{\eeta}_0|^2 \,d\x \right)^{1/2}.
\end{equation*}
Applying Lemma \ref{lem_dO2}, relations \eqref{1.4}, \eqref{Lambda3}, and the analogs of estimates \eqref{3.13},
\eqref{3.15} for $\wt{\eeta}_0$, we arrive at
\begin{equation}
\label{4.47}
|{\mathcal L}_3^{(1)}(\eps)| \le {\mathcal C}_{20}
 \left( \eps |\zeta|^{-1+1/2p} + \eps^{2p}  \right) \| \FF \|_{L_2(\O)} \| \bPhi \|_{L_2(\O)},
\end{equation}
where  ${\mathcal C}_{20} := 2 c_4(d,p) C_\Lambda {\mathcal C}_{13} (\beta_* \alpha_1 C^{(p)} C^{(p+1)})^{1/2}$.

Next, by Proposition \ref{prop1.5}, \eqref{1.4}, \eqref{Lambda3}, and \eqref{4.22a}, we have
\begin{equation*}
|{\mathcal L}_3^{(2)}(\eps)| \le c_5(d,p) {\mathcal C}_{13}
 \left( \eps^{1/2} |\zeta|^{-1/2+1/4p} + \eps^{p}  \right) \| \FF \|_{L_2(\O)}
 \alpha_1^{1/2} C_\Lambda \sum_{k=1}^p \eps^k \| \wt{\eeta}_0 \|_{H^{p+k}(\R^d)}.
\end{equation*}
 Together with the analog of \eqref{3.15} for  $\wt{\eeta}_0$, this yields
\begin{equation*}
|{\mathcal L}_3^{(2)}(\eps)| \le {\mathcal C}_{21}
 \left( \eps^{1/2} |\zeta|^{-1/2+1/4p} + \eps^{p}  \right)
 \Bigl( \sum_{k=1}^p \eps^k |\zeta|^{-1/2 + k/2p} \Bigr) \| \FF \|_{L_2(\O)} \| \bPhi \|_{L_2(\O)},
\end{equation*}
where  ${\mathcal C}_{21} := c_5(d,p) {\mathcal C}_{13} \alpha_1^{1/2} C_\Lambda \max \{ C^{(p+1)},\dots, C^{(2p)} \}$.
It follows that
\begin{equation}
\label{4.49}
|{\mathcal L}_3^{(2)}(\eps)| \le 2p\, {\mathcal C}_{21}
 \left( \eps |\zeta|^{-1+1/2p} + \eps^{2p}  \right) \| \FF \|_{L_2(\O)} \| \bPhi \|_{L_2(\O)}.
\end{equation}

As a result, relations \eqref{4.36} and \eqref{4.40}--\eqref{4.49} imply that
\begin{equation}
\label{4.50}
|\wt{\mathcal J}_\eps^{(2)}[\eeta_\eps]| \le {\mathcal C}_{22}
 \left( \eps |\zeta|^{-1+1/2p} + \eps^{2p}  \right) \| \FF \|_{L_2(\O)} \| \bPhi \|_{L_2(\O)},
\end{equation}
where ${\mathcal C}_{22} := {\mathcal C}_{18} + {\mathcal C}_{19} + {\mathcal C}_{20} + 2p\,{\mathcal C}_{21}$.
Now, inequalities \eqref{4.34} and \eqref{4.50} lead to the estimate
$$
|{\mathcal J}_\eps[\eeta_\eps]| \le {\mathcal C}_{16}
 \left( \eps |\zeta|^{-1+1/2p} + \eps^{2p}  \right) \| \FF \|_{L_2(\O)} \| \bPhi \|_{L_2(\O)},
$$
where ${\mathcal C}_{16} := {\mathcal C}_{17} + {\mathcal C}_{22}$. Combining this with \eqref{4.32}, we obtain the required
estimate \eqref{4.31}.
\end{proof}

\subsection{Completion of the proof of estimate \eqref{3.2} for $\rm{Re}\,\zeta \le 0$}
Let $\rm{Re}\,\zeta \le 0$ and $|\zeta|\ge 1$. By \eqref{3.40} and \eqref{4.31},
\begin{equation*}
\begin{split}
\| \u_\eps - \u_0 \|_{L_2(\O)}  &\le \wt{\mathcal C}_{7}
 \left( \eps |\zeta|^{-1+1/2p} + \eps^{p} |\zeta|^{-1/2}  \right) \| \FF \|_{L_2(\O)} +
{\mathcal C}_{16}  \left( \eps |\zeta|^{-1+1/2p} + \eps^{2p}  \right) \| \FF \|_{L_2(\O)}
\\
&\le {\mathcal C}_{23}  \left( \eps |\zeta|^{-1+1/2p} + \eps^{2p}  \right) \| \FF \|_{L_2(\O)},
\end{split}
\end{equation*}
where ${\mathcal C}_{23} := 2 \wt{\mathcal C}_{7} + {\mathcal C}_{16}$. In operator terms, this means
\begin{equation}
\label{4.51}
\begin{split}
\| (A_{N,\eps} - \zeta I)^{-1} - (A_{N}^0 - \zeta I)^{-1}  \|_{L_2(\O)\to L_2(\O)} \le
{\mathcal C}_{23}  \left( \eps |\zeta|^{-1+1/2p} + \eps^{2p}  \right).
\end{split}
\end{equation}

If $\eps \le |\zeta|^{-1/2p}$, then $ \eps^{2p} \le  \eps |\zeta|^{-1+1/2p}$, whence the right-hand side of \eqref{4.51} does not exceed
$2 {\mathcal C}_{23}  \eps |\zeta|^{-1+1/2p}$. In the case where $\eps > |\zeta|^{-1/2p}$, we use \eqref{22.8} and \eqref{22.15}.
Then the left-hand side of \eqref{4.51} does not exceed $2|\zeta|^{-1} \le  2\eps |\zeta|^{-1+1/2p}$.
As a result, we obtain the required estimate
\begin{equation}
\label{4.52}
\begin{split}
\| (A_{N,\eps} - \zeta I)^{-1} - (A_{N}^0 - \zeta I)^{-1}  \|_{L_2(\O)\to L_2(\O)} \le
{\mathcal C}_{1}' \eps |\zeta|^{-1+1/2p},
\quad
\rm{Re}\,\zeta \le 0, \ |\zeta|\ge 1,
\end{split}
\end{equation}
where ${\mathcal C}_{1}' := 2 \max\{ {\mathcal C}_{23}, 1\}$.

\subsection{The case where $\rm{Re}\,\zeta > 0$. Completion of the proof of Theorem~\ref{th3.1}}
Now, assume that $\zeta \in \C \setminus \R_+$, $\rm{Re}\,\zeta > 0$, $|\zeta| \ge 1$. Denote
$\wh{\zeta} = - {\rm{Re}}\,\zeta + i {\rm{Im}}\,\zeta$. Then $|\wh{\zeta}|=|\zeta|$. We have
\begin{equation}
\label{4.53}
\begin{aligned}
&(A_{N,\eps} - \zeta I)^{-1} - (A_{N}^0 - \zeta I)^{-1} =
(A_{N,\eps} - \widehat{\zeta} I) (A_{N,\eps} - \zeta I)^{-1}
\\
&\quad\times \left((A_{N,\eps} - \widehat{\zeta} I)^{-1} - (A_{N}^0 - \widehat{\zeta} I)^{-1}\right)
(A_{N}^0 - \widehat{\zeta} I) (A_{N}^0 - \zeta I)^{-1}.
\end{aligned}
\end{equation}

By \eqref{4.52} (at the point $\wh{\zeta}$) and \eqref{4.53},
\begin{equation}
\label{4.54}
\| (A_{N,\eps} - \zeta I)^{-1} - (A_{N}^0 - \zeta I)^{-1} \|_{L_2(\O)\to L_2(\O)} \le
{\mathcal C}_{1}' \eps |\zeta|^{-1+1/2p} \sup_{x \ge 0} \frac{|x - \wh{\zeta}|^2}{|x - \zeta|^2}.
\end{equation}
A calculation shows that
\begin{equation}
\label{4.55}
\sup_{x\ge 0} \frac{|x- \widehat{\zeta}|^2}{|x - \zeta|^2} \le 4 c(\varphi)^2.
\end{equation}
As a result, \eqref{4.54} and \eqref{4.55} imply estimate \eqref{3.2}  with the constant
${\mathcal C}_1 := 4 {\mathcal C}_1'$. This completes the proof of Theorem \ref{th3.1}. $\square$

\subsection{Completion of the proof of Theorem \ref{th3.2}}
Suppose that $\zeta \in \C \setminus \R_+$, $\rm{Re}\,\zeta > 0$, $|\zeta| \ge 1$. Let
$\wh{\zeta} = - {\rm{Re}}\,\zeta + i {\rm{Im}}\,\zeta$. We have
\begin{equation}
\label{4.56}
\begin{aligned}
&(A_{N,\eps} - \zeta I)^{-1} - (A_{N}^0 - \zeta I)^{-1} - \eps^p K_N(\zeta;\eps)
\\
&= \left( (A_{N,\eps} - \wh{\zeta} I)^{-1} - (A_{N}^0 - \wh{\zeta} I)^{-1} - \eps^p K_N(\wh{\zeta};\eps)\right)
(A_{N}^0 - \widehat{\zeta} I) (A_{N}^0 - \zeta I)^{-1}
\\
&+ (\zeta - \wh{\zeta}) (A_{N,\eps} - \zeta I)^{-1}
\left( (A_{N,\eps} - \wh{\zeta} I)^{-1} - (A_{N}^0 - \wh{\zeta} I)^{-1} \right) (A_{N}^0 - \widehat{\zeta} I) (A_{N}^0 - \zeta I)^{-1}.
\end{aligned}
\end{equation}

Hence,
\begin{equation*}
\begin{aligned}
&\| (A_{N,\eps} - \zeta I)^{-1} - (A_{N}^0 - \zeta I)^{-1} - \eps^p K_N(\zeta;\eps) \|_{L_2(\O) \to H^p(\O)}
\\
&\le
\|  (A_{N,\eps} - \wh{\zeta} I)^{-1} - (A_{N}^0 - \wh{\zeta} I)^{-1} - \eps^p K_N(\wh{\zeta};\eps)\|_{L_2(\O) \to H^p(\O)}
\\
&\times \| (A_{N}^0 - \widehat{\zeta} I) (A_{N}^0 - \zeta I)^{-1}\|_{L_2(\O) \to L_2(\O)} +
2 ({\rm{Re}}\,\zeta)  \|(A_{N,\eps} - \zeta I)^{-1}\|_{L_2(\O) \to H^p(\O)}
\\
&\times \|(A_{N,\eps} - \wh{\zeta} I)^{-1} - (A_{N}^0 - \wh{\zeta} I)^{-1}\|_{L_2(\O) \to L_2(\O)}
\| (A_{N}^0 - \widehat{\zeta} I) (A_{N}^0 - \zeta I)^{-1}\|_{L_2(\O) \to L_2(\O)}.
\end{aligned}
\end{equation*}
Combining this with \eqref{22.9}, estimates \eqref{4.30} and \eqref{4.52} (at the point $\wh{\zeta}$), and \eqref{4.55}, we obtain
\begin{equation*}
\begin{aligned}
&\| (A_{N,\eps} - \zeta I)^{-1} - (A_{N}^0 - \zeta I)^{-1} - \eps^p K_N(\zeta;\eps) \|_{L_2(\O) \to H^p(\O)}
\\
&\le 2 c(\varphi) {\mathcal C}_{2}' \left(\eps^{1/2} |\zeta|^{-1/2+1/4p} + \eps^p\right)
 + 4 ({\rm{Re}}\,\zeta) {\mathcal C}_{0}  {\mathcal C}_{1}' c(\varphi)^2 |\zeta|^{-1/2} \eps |\zeta|^{-1+1/2p}
  \\
& \le {\mathcal C}_{2}\left( c(\varphi) \eps^{1/2} |\zeta|^{-1/2+1/4p}
+  c(\varphi)^2 \eps |\zeta|^{-1/2+1/2p} + c(\varphi) \eps^p\right),
\end{aligned}
\end{equation*}
where ${\mathcal C}_{2} := 2 {\mathcal C}_{2}' + 4 {\mathcal C}_{0} {\mathcal C}_{1}'$.
This proves estimate \eqref{3.10}.

It remains to check \eqref{3.11}. From  \eqref{1.3}, \eqref{1.5}, and \eqref{3.9} it follows that
\begin{equation}
\label{4.59}
\begin{aligned}
\| \p_\eps - g^\eps b(\D) {\mathbf v}_\eps \|_{L_2(\O)} \le
{\mathcal C}_{24} \left( c(\varphi)  \eps^{1/2} |\zeta|^{-1/2+1/4p} + c(\varphi)^2 \eps |\zeta|^{-1/2 +1/2p} + c(\varphi) \eps^p \right)
\| \FF \|_{L_2(\O)}
\end{aligned}
\end{equation}
for $0< \eps \le \eps_1$, where
${\mathcal C}_{24} : = c_6(d,p) \| g\|_{L_\infty} \alpha_1^{1/2} {\mathcal C}_{2}$.
Relations \eqref{1.3}, \eqref{3.6}, and \eqref{3.7} imply that
\begin{equation}
\label{4.60}
\begin{aligned}
g^\eps b(\D) {\mathbf v}_\eps
&= g^\eps b(\D) \u_0 + g^\eps (b(\D) \Lambda)^\eps S_\eps b(\D) \wt{\u}_0
\\
&+ \sum_{|\alpha|=p} \sum_{\beta \leqslant \alpha: |\beta|\geqslant 1} g^\eps b_\alpha C_\alpha^\beta \eps^{|\beta|} (\D^{\alpha - \beta} \Lambda)^\eps S_\eps b(\D) \D^\beta \wt{\u}_0.
\end{aligned}
\end{equation}
By Proposition \ref{prop1.4} and \eqref{1.4},
\begin{equation}
\label{4.61}
\begin{aligned}
\| g^\eps b(\D) \u_0 - g^\eps S_\eps b(\D) \wt{\u}_0 \|_{L_2(\O)}
&\leqslant \| g^\eps (I-S_\eps)  b(\D) \wt{\u}_0 \|_{L_2(\R^d)}
\\
&\leqslant \eps \| g\|_{L_\infty} r_1 \alpha_1^{1/2} \| \wt{\u}_0 \|_{H^{p+1}(\R^d)}.
\end{aligned}
\end{equation}
The third term in the right-hand side of \eqref{4.60} is estimated by employing Proposition \ref{prop1.5} and relations
\eqref{1.4}, \eqref{1.5}, and \eqref{Lambda3}:
\begin{equation}
\label{4.62}
\begin{aligned}
\bigl\| \sum_{|\alpha|=p} \sum_{\beta \leqslant \alpha: |\beta|\geqslant 1} g^\eps b_\alpha C_\alpha^\beta \eps^{|\beta|} (\D^{\alpha - \beta} \Lambda)^\eps S_\eps b(\D) \D^\beta \wt{\u}_0 \bigr\|_{L_2(\R^d)}
\leqslant {\mathcal C}_{25} \sum_{l=1}^p \eps^l \| \wt{\u}_0 \|_{H^{p+l}(\R^d)},
\end{aligned}
\end{equation}
where ${\mathcal C}_{25} : = c_{7}(d,p) \|g\|_{L_\infty} \alpha_1 C_\Lambda$.
Combining \eqref{1.12}, \eqref{3.15}, and \eqref{4.60}--\eqref{4.62}, we arrive at
\begin{equation}
\label{4.63}
\| g^\eps b(\D) {\mathbf v}_\eps - \wt{g}^\eps S_\eps b(\D) \wt{\u}_0 \|_{L_2(\O)}
\leqslant {\mathcal C}_{26} c(\varphi) \left(  \eps |\zeta|^{-1/2+1/2p} + \eps^p \right) \| \FF \|_{L_2(\O)},
\end{equation}
where ${\mathcal C}_{26} : = \| g\|_{L_\infty} r_1 \alpha_1^{1/2} C^{(p+1)} + p \,{\mathcal C}_{25} \max\{ C^{(p+1)},\dots, C^{(2p)}\}$.

As a result, relations \eqref{4.59} and \eqref{4.63} imply the required inequality \eqref{3.11}
with the constant ${\mathcal C}_{3} : = {\mathcal C}_{24}+{\mathcal C}_{26}$. This completes the proof of Theorem~\ref{th3.2}. $\square$

\section{Removal of the smoothing operator.\\ Special cases}

\subsection{Removal of the smoothing operator }

\begin{condition}\label{cond_Lambda}
Suppose that the $\Gamma$-periodic solution $\Lambda$ of problem \eqref{1.10}
is bounded and is a multiplier from $H^p(\R^d;\C^m)$ to $H^p(\R^d;\C^n)${\rm:}
$$
\Lambda \in L_\infty(\R^d) \cap M(H^p(\R^d;\C^m)\to H^p(\R^d;\C^n)).
$$
\end{condition}

Since the matrix-valued function $\Lambda$ is periodic, Condition \ref{cond_Lambda} is equivalent to the relation
$$
\Lambda \in L_\infty(\Omega) \cap M(H^p(\Omega;\C^m)\to H^p(\Omega;\C^n)).
$$
The norm of the operator $[\Lambda]$ of multiplication by the matrix-valued function $\Lambda(\x)$ is denoted by
\begin{equation}
\label{MLambda}
M_\Lambda := \|[\Lambda]\|_{H^p(\R^d) \to H^p(\R^d)}.
\end{equation}

Description of the spaces of multipliers in the Sobolev classes can be found in the book~\cite{MaSh}.
Some sufficient conditions ensuring that Condition~\ref{cond_Lambda} is satisfied are known
(see~\cite[Proposition~7.10]{KuSu}).

\begin{proposition}\label{prop_MLambda}
Suppose that at least one of the following two assumptions is satisfied{\rm :}

$1^\circ$. $2p>d${\rm ;}

$2^\circ$. $g^0=\underline{g}$, i.~e., representations \eqref{1.19} are valid.

\noindent Then Condition {\rm \ref{cond_Lambda}} holds, and $\|\Lambda\|_{L_\infty}$ and the multiplier norm  \eqref{MLambda}
are controlled in terms of $m$, $n$, $d$, $p$, $\alpha_0$, $\alpha_1$, $\|g\|_{L_\infty}$, $\|g^{-1}\|_{L_\infty}$,
and the parameters of the lattice $\Gamma$.
\end{proposition}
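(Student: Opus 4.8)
The plan is to treat the two hypotheses separately; in both cases the starting point is that $\Lambda\in\wt H^p(\Omega)$ with $\|\Lambda\|_{H^p(\Omega)}$ controlled by the listed quantities, which is \eqref{Lambda3}. (Only the real rank condition \eqref{1.4} is needed here, not Condition~\ref{cond2.1}.) In Case~$1^\circ$, where $2p>d$, this already gives almost everything: since $p>d/2$, the Sobolev embedding $\wt H^p(\Omega)\hookrightarrow C(\overline\Omega)$ yields $\Lambda\in L_\infty(\R^d)$ with $\|\Lambda\|_{L_\infty}$ controlled, and $H^p$ is a multiplication algebra on bounded Lipschitz domains. To pass from this local algebra estimate to a bound on $\R^d$, I would fix an enlarged cell $\Omega'$ (a finite union of $\Gamma$-translates of $\overline\Omega$) and a $\Gamma$-periodic smooth partition of unity $\{\chi(\cdot-\mathbf n)\}_{\mathbf n\in\Gamma}$ with $\operatorname{supp}\chi\subset\Omega'$ and bounded overlap, and estimate, for $\u\in H^p(\R^d;\C^n)$,
\begin{equation*}
\|\Lambda\u\|_{H^p(\R^d)}^2\le c\sum_{\mathbf n\in\Gamma}\|\chi(\cdot-\mathbf n)\Lambda\u\|_{H^p(\Omega'+\mathbf n)}^2\le c'\|\Lambda\|_{H^p(\Omega')}^2\sum_{\mathbf n\in\Gamma}\|\chi(\cdot-\mathbf n)\u\|_{H^p}^2\le c''\|\Lambda\|_{H^p(\Omega)}^2\|\u\|_{H^p(\R^d)}^2,
\end{equation*}
using the translation-invariant algebra estimate on $\Omega'+\mathbf n$, periodicity of $\Lambda$ (so $\|\Lambda\|_{H^p(\Omega')}\le c\|\Lambda\|_{H^p(\Omega)}$), and bounded overlap. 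This gives $\Lambda\in M(H^p\to H^p)$ with $M_\Lambda$ controlled.

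For Case~$2^\circ$, where $g^0=\underline g$, I would argue by elliptic regularity. By Remark~\ref{rem1.4}, $\wt g(\x)\equiv g^0$, so \eqref{1.12} gives $g(\x)\bigl(b(\D)\Lambda(\x)+\1_m\bigr)=g^0$, hence the $\Gamma$-periodic matrix-valued function $F:=b(\D)\Lambda=g^{-1}g^0-\1_m$ satisfies $\|F\|_{L_\infty}\le\|g^{-1}\|_{L_\infty}\|g\|_{L_\infty}+1$ (using $|g^0|\le\|g\|_{L_\infty}$ from \eqref{1.17a}), so $F\in L_q(\Omega)$ for every $q<\infty$. Now I would use regularity of the overdetermined operator $b(\D)$ on the torus $\R^d/\Gamma$: since $b(\bxi)^*b(\bxi)\ge\alpha_0|\bxi|^{2p}\1_n$ for real $\bxi\ne0$ by \eqref{1.4}, one has $\wh\Lambda(0)=0$ and $\wh\Lambda(\mathbf n)=(b(\mathbf n)^*b(\mathbf n))^{-1}b(\mathbf n)^*\wh F(\mathbf n)$ for $0\ne\mathbf n\in\wt\Gamma$, and for any multiindex $\alpha$ with $|\alpha|\le p$ the symbol $\mathbf n\mapsto\mathbf n^\alpha(b(\mathbf n)^*b(\mathbf n))^{-1}b(\mathbf n)^*$ is the restriction to $\wt\Gamma\setminus\{0\}$ of a function on $\R^d\setminus\{0\}$ that is $C^\infty$ and homogeneous of degree $|\alpha|-p\le0$; it therefore satisfies the Mikhlin--H\"ormander conditions with constants depending only on $d$, $p$, $\alpha_0$, $\alpha_1$ and $\Gamma$. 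By the Mikhlin--H\"ormander multiplier theorem in its periodic form (transference from $\R^d$),
\begin{equation*}
\|\Lambda\|_{W^{p,q}(\Omega)}\le C_q\,\|F\|_{L_q(\Omega)}\le C_q|\Omega|^{1/q}\bigl(\|g^{-1}\|_{L_\infty}\|g\|_{L_\infty}+1\bigr),\qquad 1<q<\infty.
\end{equation*}
Fixing $q>d$ and using $W^{p,q}(\Omega)\hookrightarrow C^{p-1}(\overline\Omega)$ together with periodicity, I obtain $\D^\beta\Lambda\in L_\infty(\R^d)$ for $|\beta|\le p-1$ and $\D^\beta\Lambda\in L_q(\Omega)$ (as a periodic function) for $|\beta|=p$, all with controlled norms. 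The multiplier bound then follows by an elementary Leibniz/H\"older argument: for $\u\in H^p(\R^d;\C^n)$ and $|\alpha|\le p$ one has $\D^\alpha(\Lambda\u)=\sum_{\beta\le\alpha}C_\alpha^\beta(\D^\beta\Lambda)(\D^{\alpha-\beta}\u)$; the summands with $|\beta|<p$ are dominated in $L_2$ by $\|\D^\beta\Lambda\|_{L_\infty}\|\u\|_{H^p}$, and the single summand with $|\beta|=p$ (so $\alpha=\beta$) by $\|\D^\alpha\Lambda\|_{L_{r'}}\|\u\|_{L_r}$ with $\tfrac1r+\tfrac1{r'}=\tfrac12$, using $H^p(\R^d)\hookrightarrow L_r(\R^d)$ for a suitable $r>2$ and taking $q\ge r'$. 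Alternatively one may simply quote the sufficient conditions for Sobolev multipliers from \cite[Ch.~1--2]{MaSh}. (In this case one also has $\Lambda=V g^0$, where the columns of $V$ are the functions $\mathbf v_k$ of \eqref{1.19}, but this identity does not shortcut the regularity step.)

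I expect the main obstacle to be the quantitative regularity step in Case~$2^\circ$: only $\Lambda\in W^{p,q}(\Omega)$ for finite (large) $q$ is available, not $W^{p,\infty}$, so the multiplier estimate has to be extracted via Sobolev embeddings and H\"older's inequality rather than a naive $L_\infty$-bound on all derivatives of $\Lambda$; and one must check that the constants in the periodic multiplier theorem and in the Sobolev embeddings depend only on $m$, $n$, $d$, $p$, $\alpha_0$, $\alpha_1$, $\|g\|_{L_\infty}$, $\|g^{-1}\|_{L_\infty}$ and the lattice parameters. Case~$1^\circ$ is comparatively routine once the algebra property of $H^p$ is correctly transferred to $\R^d$ through a periodic partition of unity.
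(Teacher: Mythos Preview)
The paper does not prove this proposition; it is merely stated with a citation to \cite[Proposition~7.10]{KuSu}. So there is no ``paper's own proof'' to compare against, and your proposal stands on its own.

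Your argument is essentially correct and follows the natural route. Case~$1^\circ$ is fine: the algebra property of $H^p$ for $2p>d$ plus the periodic partition-of-unity transfer is exactly the standard mechanism. In Case~$2^\circ$ your key observation that $b(\D)\Lambda=g^{-1}g^0-\1_m\in L_\infty$ is correct and is indeed what makes this case work; the periodic Mikhlin--H\"ormander step then gives $\Lambda\in W^{p,q}(\Omega)$ for every finite $q$, with controlled constants, and hence $\D^\beta\Lambda\in L_\infty$ for $|\beta|\le p-1$.

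The one place where your write-up is slightly loose is the H\"older step for the top-order term $(\D^\alpha\Lambda)\u$ with $|\alpha|=p$: you write $\|\D^\alpha\Lambda\|_{L_{r'}}\|\u\|_{L_r}$, but $\D^\alpha\Lambda$ is only in $L_{r'}$ on a cell, not on $\R^d$. The fix is to run the same cell-by-cell decomposition you already used in Case~$1^\circ$: sum $\|(\D^\alpha\Lambda)\u\|_{L_2(\Omega+\mathbf{n})}^2\le\|\D^\alpha\Lambda\|_{L_q(\Omega)}^2\|\u\|_{L_r(\Omega+\mathbf{n})}^2$ over $\mathbf{n}\in\Gamma$, then control $\sum_{\mathbf{n}}\|\u\|_{L_r(\Omega+\mathbf{n})}^2$ by $\|\u\|_{H^p(\R^d)}^2$ via local Sobolev embedding on an enlarged cell and bounded overlap (choosing $q$ large enough that the required local Sobolev exponent is $\le p$). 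Your fallback to \cite{MaSh} would also close this, but the direct argument is short and keeps the constants explicit in the listed data.
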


Under Condition \ref{cond_Lambda}, instead of the corrector  \eqref{3.5}, it is possible to use the
standard corrector
\begin{equation}
\label{6.1}
K^0_N(\zeta;\eps) := [\Lambda^\eps] b(\D) (A_N^0 -\zeta I)^{-1},
\end{equation}
which in this case is a continuous mapping of $L_2(\O;\C^n)$ into $H^p(\O;\C^n)$.
Correspondingly, instead of the function~\eqref{3.8}, one can use the following approximation of $\u_\eps$:
\begin{equation}
\label{6.2}
{\mathbf v}^0_\eps := (A_N^0 - \zeta I)^{-1} \FF + \eps^p K^0_N(\zeta;\eps) \FF = \u_0 + \eps^p \Lambda^\eps b(\D) \u_0.
\end{equation}

\begin{theorem}
\label{no_smooth_O}
Suppose that the assumptions of Theorem {\rm \ref{th3.1}} and Condition {\rm \ref{cond_Lambda}} are satisfied.
Let $K^0_N(\zeta;\eps)$~be the operator \eqref{6.1}, and let  ${\mathbf v}_\eps^0$~be given by~\eqref{6.2}.
Then for $0< \eps \leqslant \eps_1$ we have
\begin{equation}
\label{6.3}
\| \u_\eps - {\mathbf v}_\eps^0 \|_{H^p(\O)} \leqslant
 \wt{\mathcal C}_2 \left( c(\varphi) \eps^{1/2} |\zeta|^{-1/2+1/4p} + c(\varphi)^2 \eps |\zeta|^{-1/2+1/2p}
+ c(\varphi) \eps^{p} \right) \|\FF\|_{L_2(\O)},
\end{equation}
or, in operator terms,
\begin{equation}
\label{6.4}
\begin{aligned}
&\|( A_{N,\eps} - \zeta I)^{-1} - ( A_{N}^0 - \zeta I)^{-1} - \eps^p K_N^0(\zeta;\eps) \|_{L_2(\O) \to H^p(\O)}
\\
&\quad\leqslant  \wt{\mathcal C}_2
\left( c(\varphi) \eps^{1/2} |\zeta|^{-1/2+1/4p} + c(\varphi)^2 \eps |\zeta|^{-1/2+1/2p} + c(\varphi) \eps^{p} \right).
\end{aligned}
\end{equation}
The flux $\p_\eps = g^\eps b(\D) \u_\eps$ satisfies
\begin{equation}
\label{6.5}
\| \p_\eps - \wt{g}^\eps b(\D) \u_0 \|_{L_2(\O)} \leqslant
  \wt{\mathcal C}_3
\left( c(\varphi) \eps^{1/2} |\zeta|^{-1/2+1/4p} + c(\varphi)^2 \eps |\zeta|^{-1/2+1/2p}
+ c(\varphi) \eps^{p} \right) \|\FF\|_{L_2(\O)}
\end{equation}
for  $0< \eps \leqslant \eps_1$.
The constants $\wt{\mathcal C}_2$ and $\wt{\mathcal C}_3$ depend only on $m$, $d$, $p$,
$\alpha_0$, $\alpha_1$, $\|g\|_{L_\infty}$, $\|g^{-1}\|_{L_\infty}$, $k_1$, $k_2$, the parameters of the lattice $\Gamma$, the domain $\O$,
and also on $\| \Lambda \|_{L_\infty}$ and $M_\Lambda$.
\end{theorem}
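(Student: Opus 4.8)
The plan is to repeat, almost verbatim, the scheme of the proof of Theorems~\ref{th3.1} and~\ref{th3.2} (the two steps of Section~4 and the estimates of Section~5), but with the Steklov smoothing operator $S_\eps$ suppressed everywhere, invoking Condition~\ref{cond_Lambda} exactly at the points where the smoothing was essential. The starting point is the analog of Theorem~\ref{th2.2} for the \emph{standard} corrector: under Condition~\ref{cond_Lambda} the operator $[\Lambda^\eps]b(\D)(A^0-\zeta I)^{-1}$ is continuous from $L_2(\R^d;\C^n)$ to $H^p(\R^d;\C^n)$ and approximates $(A_\eps-\zeta I)^{-1}-(A^0-\zeta I)^{-1}$ in the $H^p$-norm, and $\wt g^\eps b(\D)(A^0-\zeta I)^{-1}$ approximates the flux in $L_2$, with the same right-hand sides as in Theorem~\ref{th2.2} (these are the companion statements established in \cite{KuSu}, \cite{Su2017}). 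Applying them to $\wt\FF$ as in Section~4.2, one obtains the analogs of \eqref{3.19}, \eqref{3.20}, \eqref{3.20a} with $\wt{\mathbf v}_\eps$ replaced by $\wt{\mathbf v}_\eps^0:=\wt\u_0+\eps^p\Lambda^\eps b(\D)\wt\u_0$ and with $\wt g^\eps S_\eps b(\D)\wt\u_0$ replaced by $\wt g^\eps b(\D)\wt\u_0$.

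Next I would introduce a boundary layer correction term $\w_\eps^0\in H^p(\O;\C^n)$ by the identity obtained from \eqref{3.24} on deleting $S_\eps$:
\begin{equation*}
(g^\eps b(\D)\w_\eps^0,b(\D)\eeta)_{L_2(\O)}-\zeta(\w_\eps^0,\eeta)_{L_2(\O)}=(\wt g^\eps b(\D)\wt\u_0,b(\D)\eeta)_{L_2(\O)}-(\zeta\u_0+\FF,\eeta)_{L_2(\O)},\quad\eeta\in H^p(\O;\C^n).
\end{equation*}
The energy estimates behind \eqref{3.25} used only the $\R^d$ bounds, so they go through with $\wt{\mathbf v}_\eps,\w_\eps$ replaced by $\wt{\mathbf v}_\eps^0,\w_\eps^0$; hence, as in the Conclusions of Section~4 (cf.\ \eqref{3.39}, \eqref{3.40}), it remains to bound $\w_\eps^0$ in $H^p(\O)$ and in $L_2(\O)$. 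Here is the one simplification compared with Section~5: since $b(\D)\wt\u_0=b(\D)\u_0$ on $\O$, the functional governing $\w_\eps^0$ collapses to $\mathcal J_\eps^0[\eeta]=((\wt g^\eps-g^0)b(\D)\wt\u_0,b(\D)\eeta)_{L_2(\O)}$, i.e.\ to the analog of $\mathcal J_\eps^{(2)}$ alone, the term $\mathcal J_\eps^{(1)}$ disappearing together with $S_\eps-I$.

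The treatment of $\mathcal J_\eps^0$ then mirrors that of $\mathcal J_\eps^{(2)}$ in the proof of Lemma~\ref{lem4.1}: put $f_\alpha:=b_\alpha^*(\wt g-g^0)$, apply the traditional Lemma~\ref{traditional} to write $f_\alpha^\eps=\eps^p\sum_{|\beta|=p}\partial^\beta M_{\alpha\beta}^\eps$, integrate by parts, use the cut-off $\theta_\eps$ of \eqref{4.100} and the antisymmetry $M_{\alpha\beta}=-M_{\beta\alpha}$ to reduce matters to a contribution supported in $(\partial\O)_\eps$. The only essential use of $S_\eps$ in Lemma~\ref{lem4.1} was to estimate the oscillating factors $(\partial^{\beta-\gamma}M_{\alpha\beta})^\eps S_\eps b(\D)\wt\u_0$ near the boundary via Lemma~\ref{lem_dO2}; here this is replaced by the pointwise bound $|(\partial^{\beta-\gamma}M_{\alpha\beta})^\eps b(\D)\wt\u_0|\le\|\partial^{\beta-\gamma}M_{\alpha\beta}\|_{L_\infty}\,|b(\D)\wt\u_0|$ followed by Lemma~\ref{lem_dO1} applied to $b(\D)\wt\u_0$ alone, which again produces the gain $\eps^{1/2}$ (for this one needs $M_{\alpha\beta}$ and its derivatives up to order $p$ bounded — in the special case $g^0=\underline g$ of Proposition~\ref{prop_MLambda} this is trivial, since then $\wt g$ is constant by Remark~\ref{rem1.4}, so $\mathcal J_\eps^0\equiv0$ and $\w_\eps^0=0$). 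Running the estimates of Section~5 with these substitutions gives $\|\w_\eps^0\|_{H^p(\O)}\le\mathcal C(\eps^{1/2}|\zeta|^{-1/2+1/4p}+\eps^p)\|\FF\|_{L_2(\O)}$, hence \eqref{6.3} and \eqref{6.4}; the duality argument of Section~5.3 (substituting $\eeta=(A_{N,\eps}-\overline\zeta I)^{-1}\bPhi$ and using the already proved \eqref{6.4} at $\overline\zeta$) yields the $L_2$-bound for $\w_\eps^0$, which is absorbed into Theorem~\ref{th3.1}. Estimate~\eqref{6.5} follows from \eqref{6.3} as \eqref{3.11} followed from \eqref{3.9} in \eqref{4.59}--\eqref{4.63}: $\p_\eps-g^\eps b(\D){\mathbf v}_\eps^0$ is controlled by $\|\u_\eps-{\mathbf v}_\eps^0\|_{H^p(\O)}$, while $g^\eps b(\D){\mathbf v}_\eps^0=g^\eps b(\D)\u_0+g^\eps(b(\D)\Lambda)^\eps b(\D)\u_0+(\text{terms with a derivative on }\Lambda^\eps)=\wt g^\eps b(\D)\u_0+O(\eps)$, the remainder being harmless by \eqref{Lambda3} and \eqref{3.15}. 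Finally, the passage from $\mathrm{Re}\,\zeta\le0$ to $\mathrm{Re}\,\zeta>0$ is by the resolvent identities \eqref{4.53}, \eqref{4.56} and the bound \eqref{4.55}, exactly as in Sections~5.5--5.6.

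The point I expect to be the main obstacle is the regularity input just highlighted: deducing from Condition~\ref{cond_Lambda} that the auxiliary matrix-valued functions $M_{\alpha\beta}$ of Lemma~\ref{traditional} (built, through $\wt g=g(b(\D)\Lambda+\mathbf1_m)$ and \eqref{trad5}--\eqref{trad6}, from the boundedness and multiplier properties of $\Lambda$) have bounded derivatives up to order $p$, so that the $\eps$-neighborhood estimate can be carried out via Lemma~\ref{lem_dO1} in place of the Steklov-adapted Lemma~\ref{lem_dO2}. Everything else is a routine, if lengthy, transcription of Sections~4--5 with $S_\eps$ erased and Proposition~\ref{prop1.5} replaced by the multiplier bound \eqref{MLambda}.
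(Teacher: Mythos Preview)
Your approach is unnecessarily elaborate and, more importantly, contains a genuine gap that you yourself flag but cannot close. The paper does \emph{not} rerun Sections~4--5 with $S_\eps$ erased. Instead it simply bounds the difference of the two correctors,
\[
\|{\mathbf v}_\eps - {\mathbf v}_\eps^0\|_{H^p(\O)} \le \eps^p\|\Lambda^\eps (I-S_\eps) b(\D)\wt\u_0\|_{H^p(\R^d)},
\]
by a term of order $c(\varphi)(\eps|\zeta|^{-1/2+1/2p}+\eps^p)\|\FF\|_{L_2(\O)}$, and then adds this to the already-proved estimate~\eqref{3.9} of Theorem~\ref{th3.2}. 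The key tool is Lemma~\ref{lem6.2} (a scaled multiplier estimate): since $\Lambda$ is a multiplier $H^p\to H^p$, one has
\[
\eps^{2p}\|\D^p(\Lambda^\eps \mathbf{w})\|_{L_2(\R^d)}^2 \le \wh{\mathfrak c}_p M_\Lambda^2\bigl(\|\mathbf{w}\|_{L_2(\R^d)}^2+\eps^{2p}\|\D^p\mathbf{w}\|_{L_2(\R^d)}^2\bigr),
\]
and applying this with $\mathbf{w}=(I-S_\eps)b(\D)\wt\u_0$ together with Proposition~\ref{prop1.4} and~\eqref{3.15} gives~\eqref{6.14}. The flux estimate~\eqref{6.5} is handled the same way via Lemma~\ref{lem6.2}($2^\circ$), which asserts that $\wt g$ is a multiplier $H^p\to L_2$; one bounds $\|\wt g^\eps(I-S_\eps)b(\D)\wt\u_0\|_{L_2}$ and adds to~\eqref{3.11}. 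No boundary-layer argument is repeated at all.

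The obstacle you isolate is fatal to your scheme: Condition~\ref{cond_Lambda} does \emph{not} imply that $M_{\alpha\beta}$ and its derivatives up to order $p$ are in $L_\infty$. The chain is $f_\alpha = b_\alpha^*(g(b(\D)\Lambda+\mathbf{1}_m)-g^0)$, and $\Lambda\in M(H^p\to H^p)$ gives only that $b(\D)\Lambda$ (hence $\wt g$, hence $f_\alpha$) is a multiplier $H^p\to L_2$, not that $f_\alpha\in L_\infty$; a fortiori $\Phi_\alpha$ need not lie in $W^{2p,\infty}$, so $\partial^{\beta-\gamma}M_{\alpha\beta}$ is generically only in $L_2(\Omega)$. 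Without the Steklov averaging, there is no substitute for Proposition~\ref{prop1.5} and Lemma~\ref{lem_dO2} to pair the oscillating $L_2$-periodic coefficient with $b(\D)\wt\u_0$ near $\partial\O$. The same issue recurs in your flux argument: the ``terms with a derivative on $\Lambda^\eps$'' involve $(\D^{\alpha-\beta}\Lambda)^\eps$, which is only in $L_2(\Omega)$, and~\eqref{Lambda3} alone cannot control its product with $b(\D)\D^\beta\u_0$ without $S_\eps$. The paper's route via Lemma~\ref{lem6.2} sidesteps all of this by never abandoning the smoothed corrector until the very last step.
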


Note that
\begin{equation}
\label{6.8}
\| {\mathbf v} \|^2_{H^p(\R^d)} \leqslant \wh{\mathfrak c}_p \left( \| {\mathbf v} \|^2_{L_2(\R^d)} + \| \D^p {\mathbf v} \|^2_{L_2(\R^d)} \right),
\quad {\mathbf v} \in H^p(\R^d;\C^n),
\end{equation}
where $\wh{\mathfrak c}_p$ depends only on $d$ and $p$.

In order to prove Theorem \ref{no_smooth_O}, we need the following lemma proved in \cite[Lemma~7.2]{Su2017}.

\begin{lemma}\label{lem6.2}\mbox{}
$1^\circ$.
Suppose that $\Lambda$ is a multiplier from $H^p(\R^d;\C^m)$ to $H^p(\R^d;\C^n)$, and~$M_\Lambda$~is the norm of this
muliplier. Then for any $\u{\in} H^p(\R^d;\C^m)$ and $\eps >0$ we have
\begin{equation*}
\eps^{2p} \int_{\R^d} |\D^p (\Lambda^\eps(\x) \u(\x) )|^2 \,d\x
\leqslant \wh{\mathfrak c}_p M_\Lambda^2 \int_{\R^d} \left( |\u(\x)|^2 + \eps^{2p} |\D^p \u(\x)|^2 \right) \, d\x.
\end{equation*}

$2^\circ$.
Suppose that Condition {\rm \ref{cond_Lambda}} is satisfied.
Then the matrix-valued function \eqref{1.12} is a multiplier from $H^p(\R^d;\C^m)$ to $L_2(\R^d;\C^m)$,
and the norm of this multiplier is estimated by a constant $M_{\wt{g}}$ depending only on $d$, $p$, $\|g\|_{L_\infty}$,
$\alpha_1$, $\|\Lambda\|_{L_\infty}$, and $M_\Lambda$. Moreover, for any $\u \in H^p(\R^d;\C^m)$ and $\eps >0$ we have
\begin{equation}
\label{6.7}
 \int_{\R^d} | \wt{g}^\eps(\x) \u(\x) |^2 \,d\x
\leqslant \wh{\mathfrak c}_p M_{\wt{g}}^2 \int_{\R^d} \left( |\u(\x)|^2 + \eps^{2p} |\D^p \u(\x)|^2 \right) \, d\x.
\end{equation}
\end{lemma}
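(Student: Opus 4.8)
The plan is to reduce both parts to the case $\eps=1$ by a scaling (dilation) argument and to settle that case using the multiplier hypothesis on $\Lambda$; the dilation bookkeeping is the same in all four places where it is applied, so the real content sits in a single structural observation in part $2^\circ$.

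\emph{Part $1^\circ$.} I would introduce the dilation $\mathbf{v}(\y):=\u(\eps\y)$, which again lies in $H^p(\R^d;\C^m)$, and record the pointwise identity $(\Lambda^\eps\u)(\x)=(\Lambda\mathbf{v})(\eps^{-1}\x)$. Applying $\D^\alpha$ with $|\alpha|=p$ and changing variables $\x=\eps\y$ yields the exact relation $\eps^{2p}\|\D^p(\Lambda^\eps\u)\|^2_{L_2(\R^d)}=\eps^{d}\|\D^p(\Lambda\mathbf{v})\|^2_{L_2(\R^d)}$. Then $\|\D^p(\Lambda\mathbf{v})\|_{L_2}\le\|\Lambda\mathbf{v}\|_{H^p}\le M_\Lambda\|\mathbf{v}\|_{H^p}$ by the definition \eqref{MLambda} of $M_\Lambda$, while \eqref{6.8} bounds $\|\mathbf{v}\|^2_{H^p}$ by $\wh{\mathfrak c}_p(\|\mathbf{v}\|^2_{L_2}+\|\D^p\mathbf{v}\|^2_{L_2})$. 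A second change of variables $\y\mapsto\eps\y$ identifies the right side with $\eps^{-d}(\|\u\|^2_{L_2}+\eps^{2p}\|\D^p\u\|^2_{L_2})$, and assembling the three displays gives the claimed inequality with constant $\wh{\mathfrak c}_p M_\Lambda^2$.

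\emph{Part $2^\circ$.} I would first establish the case $\eps=1$, namely that $\wt g$ is a multiplier $H^p(\R^d;\C^m)\to L_2(\R^d;\C^m)$ with $\|\wt g\,\u\|_{L_2}\le M_{\wt g}\|\u\|_{H^p}$. Since $\wt g=g\,(b(\D)\Lambda+\1_m)$ and $g\in L_\infty$, it suffices to show that $b(\D)\Lambda=\sum_{|\alpha|=p}b_\alpha\D^\alpha\Lambda$ is a multiplier $H^p(\R^d;\C^m)\to L_2(\R^d;\C^m)$. For this I would invoke the standard fact from the theory of Sobolev multipliers (\cite{MaSh}): if $\Lambda$ is a multiplier $H^p(\R^d)\to H^p(\R^d)$ and $\Lambda\in L_\infty$, then for every $|\alpha|\le p$ the derivative $\D^\alpha\Lambda$ is a multiplier $H^p(\R^d)\to H^{p-|\alpha|}(\R^d)$, with norm controlled by $d$, $p$, $M_\Lambda$, and $\|\Lambda\|_{L_\infty}$. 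Specializing to $|\alpha|=p$, and then using $|b_\alpha|\le\alpha_1^{1/2}$ and $\|g\|_{L_\infty}<\infty$, produces a constant $M_{\wt g}$ with exactly the asserted dependence. The $\eps$-dependent estimate \eqref{6.7} then follows by the same scaling as in Part $1^\circ$: with $\mathbf{v}(\y)=\u(\eps\y)$ one has $(\wt g^{\eps}\u)(\x)=(\wt g\,\mathbf{v})(\eps^{-1}\x)$, hence $\int_{\R^d}|\wt g^{\eps}\u|^2=\eps^{d}\|\wt g\,\mathbf{v}\|^2_{L_2}\le\eps^{d}M_{\wt g}^2\|\mathbf{v}\|^2_{H^p}$, and bounding $\|\mathbf{v}\|^2_{H^p}$ by \eqref{6.8} and rescaling gives \eqref{6.7}. (In the special case $g^0=\underline g$ one may instead simply invoke Remark~\ref{rem1.4}: $\wt g$ is constant, so the statement is trivial.)

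The routine part is the dilation bookkeeping. The one genuine point is the assertion that $b(\D)\Lambda$ is a multiplier into $L_2(\R^d)$: this cannot follow from boundedness of $\Lambda$ alone, because a priori $b(\D)\Lambda$ is only a $\Gamma$-periodic element of $L_2(\Omega)$, and multiplication by a periodic function lying in $L_2(\Omega)$ but not in $L_\infty$ is an unbounded operator on $L_2(\R^d)$; it is precisely the hypothesis that $\Lambda$ is a \emph{multiplier} in $H^p$ (Condition~\ref{cond_Lambda}) that supplies the missing regularity, through the differentiation property of Sobolev multipliers. If one prefers not to quote that property, the same conclusion can be obtained by repeatedly applying the Leibniz rule to $\D^p(\Lambda\u)$: this expresses $(b(\D)\Lambda)\u$ as $b(\D)(\Lambda\u)$, controlled by $M_\Lambda\|\u\|_{H^p}$ up to the constants in \eqref{1.4}--\eqref{1.5}, plus commutator terms of the form $(\D^\gamma\Lambda)(\D^\delta\u)$ with $|\gamma|<p$, which are then handled by downward induction on $|\gamma|$, using interpolation and the observation that a bounded $H^p$-multiplier is also a multiplier in every intermediate space $H^q$, $0\le q\le p$; this route is self-contained but carries heavier combinatorics.
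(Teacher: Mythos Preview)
Your argument is correct. The scaling reduction in both parts is clean and accurate, and the key substantive step in Part~$2^\circ$---that $b(\D)\Lambda$ is a multiplier from $H^p$ to $L_2$ under Condition~\ref{cond_Lambda}---is correctly identified and handled, whether via the differentiation property of Sobolev multipliers from \cite{MaSh} or via the Leibniz-induction alternative you sketch.

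Note, however, that the paper does not actually prove this lemma: it is quoted verbatim from \cite[Lemma~7.2]{Su2017}. So there is no in-paper proof to compare against. Your scaling-plus-multiplier-theory approach is the natural one and is almost certainly what the cited reference does as well; in any case your write-up stands on its own.
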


\begin{proof}[Proof of Theorem \emph{\ref{no_smooth_O}}]
Let ${\mathbf v}_\eps$ and ${\mathbf v}_\eps^0$ be defined by \eqref{3.8} and \eqref{6.2}, respectively. We estimate their difference
in the $H^p(\O;\C^n)$-norm. By \eqref{6.8},
\begin{equation}
\label{6.9}
\begin{aligned}
&\| {\mathbf v}_\eps - {\mathbf v}_\eps^0 \|_{H^p(\O)}^2 \leqslant
\eps^{2p} \| \Lambda^\eps (I-S_\eps) b(\D) \wt{\u}_0 \|^2_{H^p(\R^d)}
\\
&\quad\leqslant \wh{\mathfrak c}_p \eps^{2p}
\left( \| \Lambda^\eps (I-S_\eps) b(\D) \wt{\u}_0 \|^2_{L_2(\R^d)}
+ \| \D^p(\Lambda^\eps (I-S_\eps) b(\D) \wt{\u}_0) \|^2_{L_2(\R^d)} \right).
\end{aligned}
\end{equation}
Combining Condition \ref{cond_Lambda}, inequality  $\|S_\eps\|_{L_2 \to L_2} \leqslant 1$, and \eqref{1.4}, we have
\begin{equation}
\label{6.10}
\| \Lambda^\eps (I-S_\eps) b(\D) \wt{\u}_0 \|_{L_2(\R^d)}
\leqslant 2 \|\Lambda\|_{L_\infty} \alpha_1^{1/2} \| \wt{\u}_0 \|_{H^p(\R^d)}.
\end{equation}
Next, Lemma \ref{lem6.2} implies that
\begin{equation}
\label{6.11}
\begin{aligned}
&\eps^{2p} \| \D^p(\Lambda^\eps (I-S_\eps) b(\D) \wt{\u}_0) \|^2_{L_2(\R^d)}
\\
&\quad\leqslant
\wh{\mathfrak c}_p M_\Lambda^2
\bigl( \|(I-S_\eps) b(\D) \wt{\u}_0 \|^2_{L_2(\R^d)} +
\eps^{2p} \|(I-S_\eps) \D^p b(\D) \wt{\u}_0 \|^2_{L_2(\R^d)} \bigr).
\end{aligned}
\end{equation}
Applying Proposition \ref{prop1.4} and \eqref{1.4}, we obtain
\begin{equation}
\label{6.12}
\|(I-S_\eps) b(\D) \wt{\u}_0 \|_{L_2(\R^d)} \leqslant
\eps r_1 \alpha_1^{1/2} \| \wt{\u}_0 \|_{H^{p+1}(\R^d)}.
\end{equation}
Next, by inequality $\|S_\eps\|_{L_2 \to L_2} \leqslant 1$ and \eqref{1.4},
\begin{equation}
\label{6.13}
\|(I-S_\eps) \D^p b(\D) \wt{\u}_0 \|_{L_2(\R^d)} \leqslant 2 \alpha_1^{1/2} \| \wt{\u}_0 \|_{H^{2p}(\R^d)}.
\end{equation}

Combining  \eqref{3.14}, \eqref{3.15}, and \eqref{6.9}--\eqref{6.13}, and taking into account that $|\zeta|\ge 1$, we arrive at
\begin{equation}
\label{6.14}
\| {\mathbf v}_\eps - {\mathbf v}_\eps^0 \|_{H^p(\O)} \leqslant
{\mathcal C}_{27} c(\varphi) \bigl( \eps |\zeta|^{-1/2+1/2p} + \eps^p \bigr)\| \FF \|_{L_2(\O)},
\end{equation}
where
${\mathcal C}_{27} := \alpha_1^{1/2} \max \{ \wh{\mathfrak c}_p M_\Lambda r_1 C^{(p+1)},
2  \wh{\mathfrak c}_p^{1/2} \| \Lambda \|_{L_\infty} C^{(p)}+ 2 \wh{\mathfrak c}_p M_\Lambda C^{(2p)} \}$.

Now, relations \eqref{3.9} and \eqref{6.14} imply the required estimate  \eqref{6.3} with the constant
$\wt{\mathcal C}_2 := {\mathcal C}_2 + {\mathcal C}_{27}$.

We proceed to the proof of inequality  \eqref{6.5}. By \eqref{6.7},
\begin{equation}
\label{6.14a}
\begin{aligned}
&\| \wt{g}^\eps b(\D) \u_0 - \wt{g}^\eps S_\eps b(\D) \wt{\u}_0 \|^2_{L_2(\O)}
\leqslant
\| \wt{g}^\eps (I-S_\eps) b(\D) \wt{\u}_0 \|^2_{L_2(\R^d)}
\\
&\quad\leqslant \wh{\mathfrak c}_p M_{\wt{g}}^2 \bigl( \|(I-S_\eps) b(\D) \wt{\u}_0 \|^2_{L_2(\R^d)} +
\eps^{2p} \|(I-S_\eps) \D^p b(\D) \wt{\u}_0 \|^2_{L_2(\R^d)} \bigr).
\end{aligned}
\end{equation}
Together with \eqref{3.14}, \eqref{3.15}, \eqref{6.12}, and \eqref{6.13}, this yields
\begin{equation}
\label{6.15}
\| \wt{g}^\eps b(\D) \u_0 - \wt{g}^\eps S_\eps b(\D) \wt{\u}_0 \|_{L_2(\O)}
\leqslant
{\mathcal C}_{28} c(\varphi) \bigl( \eps |\zeta|^{-1/2+1/2p} + \eps^p \bigr)\| \FF \|_{L_2(\O)},
\end{equation}
where ${\mathcal C}_{28} := \wh{\mathfrak c}_p^{1/2} M_{\wt{g}} \alpha_1^{1/2} \max\{ r_1 C^{(p+1)}, 2 C^{(2p)}\}$.

Finally, relations \eqref{3.11} and \eqref{6.15} give the desired inequality \eqref{6.5} with the constant
$\wt{\mathcal C}_3 :={\mathcal C}_3 +  {\mathcal C}_{28}$.
\end{proof}

Comparing Theorem \ref{no_smooth_O} and Proposition \ref{prop_MLambda}, we arrive at the following statement.

\begin{corollary}
\label{cor6.2}
Suppose that  $2p >d$.
Then for $0< \eps \leqslant \eps_1$ estimates \eqref{6.3}--\eqref{6.5} hold, and the constants
$\wt{\mathcal C}_2$ and $\wt{\mathcal C}_3$ depend only on $m$, $n$, $d$, $p$,
$\alpha_0$, $\alpha_1$, $\|g\|_{L_\infty}$, $\|g^{-1}\|_{L_\infty}$, $k_1$, $k_2$, the parameters of the lattice $\Gamma$, and the domain $\O$.
\end{corollary}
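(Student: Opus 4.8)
The plan is to deduce the corollary directly from Theorem~\ref{no_smooth_O} and Proposition~\ref{prop_MLambda}, so that no new analysis is required. First I would invoke Proposition~\ref{prop_MLambda} under assumption $1^\circ$: since $2p>d$, Condition~\ref{cond_Lambda} is automatically satisfied, i.e., the $\Gamma$-periodic solution $\Lambda$ of problem~\eqref{1.10} belongs to $L_\infty(\R^d)\cap M(H^p(\R^d;\C^m)\to H^p(\R^d;\C^n))$. Consequently, the standard corrector $K_N^0(\zeta;\eps)$ of~\eqref{6.1} is well defined as a continuous operator from $L_2(\O;\C^n)$ to $H^p(\O;\C^n)$, the approximation ${\mathbf v}_\eps^0$ of~\eqref{6.2} is meaningful, and Theorem~\ref{no_smooth_O} applies verbatim, yielding estimates~\eqref{6.3}--\eqref{6.5} for $0<\eps\le\eps_1$.

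The only remaining point is to track the constants. By Theorem~\ref{no_smooth_O}, the constants $\wt{\mathcal C}_2$ and $\wt{\mathcal C}_3$ depend on $m$, $d$, $p$, $\alpha_0$, $\alpha_1$, $\|g\|_{L_\infty}$, $\|g^{-1}\|_{L_\infty}$, $k_1$, $k_2$, the parameters of $\Gamma$, the domain $\O$, and \emph{additionally} on $\|\Lambda\|_{L_\infty}$ and the multiplier norm $M_\Lambda$. However, the second assertion of Proposition~\ref{prop_MLambda} guarantees that, under assumption $1^\circ$, both $\|\Lambda\|_{L_\infty}$ and $M_\Lambda$ are themselves controlled in terms of $m$, $n$, $d$, $p$, $\alpha_0$, $\alpha_1$, $\|g\|_{L_\infty}$, $\|g^{-1}\|_{L_\infty}$, and the lattice parameters. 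Substituting these bounds into the constants furnished by Theorem~\ref{no_smooth_O} removes the explicit dependence on $\Lambda$ and leaves $\wt{\mathcal C}_2$ and $\wt{\mathcal C}_3$ depending only on the quantities listed in the statement.

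Since the proof is a straightforward composition of two results already established in the excerpt, there is no genuine obstacle. The single point that warrants a moment's care is the bookkeeping of parameter dependence: one must note that $n$ now appears in the list of admissible quantities precisely because it enters the bounds for $\|\Lambda\|_{L_\infty}$ and $M_\Lambda$ provided by Proposition~\ref{prop_MLambda}, even though it did not figure explicitly in the constants of Theorem~\ref{no_smooth_O}.
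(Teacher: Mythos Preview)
Your proposal is correct and follows exactly the paper's own route: the corollary is stated immediately after the sentence ``Comparing Theorem~\ref{no_smooth_O} and Proposition~\ref{prop_MLambda}, we arrive at the following statement,'' which is precisely the combination you spell out. Your explicit remark about why the parameter $n$ enters the list is a nice clarification that the paper leaves implicit.
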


\begin{remark}\label{rem7.2a}\mbox{}

{\rm 1)}
For fixed $\zeta \in \C \setminus \R_+$, $|\zeta| \geqslant 1$, estimates of Theorem~{\rm \ref{no_smooth_O}}
are of order $O(\eps^{1/2})$. The error becomes smaller, as $|\zeta|$ grows.

{\rm 2)}
Estimates of Theorem {\rm \ref{no_smooth_O}} are uniform with respect to $\varphi$ in any domain of the form
$\{ \zeta = |\zeta| e^{i\varphi}: |\zeta| \geqslant 1, \varphi_0 \leqslant \varphi \leqslant 2\pi - \varphi_0\}$
with arbitrarily small $\varphi_0 >0$.

{\rm 3)}
The assumptions of Corollary {\rm \ref{cor6.2}} are satisfied in the following cases that are interesting for applications{\rm:}
~if  $p=2$ and $d=2$ or $d=3$.

{\rm 4)}~If $m=n$, then $g^0 = \underline{g}$.
For instance, this assumption is satisfied for the operator $A_\eps = \Delta g^\eps(\x)\Delta$ in $L_2(\R^d)$
in arbitrary dimension. In this case Proposition \emph{\ref{prop6.8}} {\rm (}see below{\rm )} applies.
\end{remark}

\subsection{Special cases}

Let $g^0 = \overline{g}$, i.~e., relations \eqref{1.18} are satisfied.
Then the $\Gamma$-periodic solution $\Lambda(\mathbf{x})$ of problem \eqref{1.10} is equal to zero.
By \eqref{3.5}--\eqref{3.7}, we have \hbox{$K_N(\zeta;\varepsilon)=0$} and ${\mathbf v}_\varepsilon = {\mathbf u}_0$.
Applying Theorem~\ref{th3.2}, we arrive at the following statement.

\begin{proposition}
\label{prop6.7}
Suppose that the assumptions of Theorem~\emph{\ref{th3.1}} are satisfied. Let $g^0 = \overline{g}$, i.~e., relations \eqref{1.18} hold.
Then for $\zeta \in \C \setminus \R_+$, $|\zeta|\ge 1$, and $0< \eps \le \eps_1$ we have
\begin{equation*}
\begin{split}
\|\u_\eps - \u_0 \|_{H^p(\O)} \le {\mathcal C}_2 \left( c(\varphi) \eps^{1/2} |\zeta|^{-1/2+1/4p}
+ c(\varphi)^2 \eps |\zeta|^{-1/2+1/2p}  + c(\varphi) \eps^p \right) \|\FF\|_{L_2(\O)}.
\end{split}
\end{equation*}
\end{proposition}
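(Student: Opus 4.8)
The plan is to deduce the proposition directly from Theorem~\ref{th3.2} once one observes that, under the extra hypothesis $g^0 = \overline g$, both the corrector $K_N(\zeta;\eps)$ and the oscillating part of the approximation ${\mathbf v}_\eps$ vanish.

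First I would check that the $\Gamma$-periodic solution $\Lambda$ of problem~\eqref{1.10} is identically zero. By Proposition~\ref{prop1.2}, the identity $g^0 = \overline g$ is equivalent to the relations~\eqref{1.18}, that is, $b(\D)^*{\mathbf g}_k(\x)=0$ for each column ${\mathbf g}_k$ of $g(\x)$; equivalently, $b(\D)^* g(\x)=0$ in the sense of distributions. Hence the constant matrix-valued function $\Lambda \equiv 0$ satisfies $b(\D)^* g(\x)\bigl(b(\D)\Lambda(\x)+\mathbf{1}_m\bigr) = b(\D)^* g(\x) = 0$ together with $\int_\Omega \Lambda\,d\x = 0$, so by uniqueness of the solution of~\eqref{1.10} we conclude $\Lambda = 0$. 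Consistently, \eqref{1.12} then gives $\wt g(\x) = g(\x)$ and $g^0 = \overline g$.

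Once $\Lambda = 0$, the remaining steps are mechanical. Since $[\Lambda^\eps] = 0$, the corrector~\eqref{3.5} satisfies $K_N(\zeta;\eps) = 0$. In the definitions~\eqref{3.6}--\eqref{3.7} the term $\eps^p\Lambda^\eps\,(S_\eps b(\D)\wt{\u}_0)$ drops out, so $\wt{\mathbf v}_\eps = \wt{\u}_0 = P_\O\u_0$ and therefore ${\mathbf v}_\eps = \wt{\mathbf v}_\eps\vert_\O = \u_0$; equivalently, \eqref{3.8} becomes ${\mathbf v}_\eps = (A_N^0 - \zeta I)^{-1}\FF = \u_0$. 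I would then invoke estimate~\eqref{3.9} of Theorem~\ref{th3.2}, which with ${\mathbf v}_\eps = \u_0$ reads
$$\|\u_\eps - \u_0\|_{H^p(\O)} \le {\mathcal C}_2\bigl( c(\varphi)\eps^{1/2}|\zeta|^{-1/2+1/4p} + c(\varphi)^2\eps|\zeta|^{-1/2+1/2p} + c(\varphi)\eps^p\bigr)\|\FF\|_{L_2(\O)}$$
for $0 < \eps \le \eps_1$; this is exactly the claimed bound, with the same constant ${\mathcal C}_2$.

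There is essentially no obstacle here: the only substantive point is the identification $\Lambda = 0$, which rests on Proposition~\ref{prop1.2} and the uniqueness of the periodic solution of~\eqref{1.10}; everything afterwards is substitution into an already proved theorem and requires no new estimates. Note that the analogous $L_2$-estimate of order $O(\eps|\zeta|^{-1+1/2p})$ for $\u_\eps - \u_0$ follows, unchanged, from Theorem~\ref{th3.1}; here only the $H^p$-bound is recorded.
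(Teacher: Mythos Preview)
Your proof is correct and follows essentially the same approach as the paper: both observe that $g^0=\overline{g}$ forces $\Lambda=0$, hence $K_N(\zeta;\eps)=0$ and ${\mathbf v}_\eps=\u_0$, and then read off the result from estimate~\eqref{3.9} of Theorem~\ref{th3.2}. Your write-up is in fact more detailed than the paper's, which simply asserts $\Lambda=0$ without spelling out the verification via Proposition~\ref{prop1.2} and uniqueness.
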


Now, we consider the case where $g^0 = \underline{g}$, i.~e., representations \eqref{1.19} are satisfied.
By Remark \ref{rem1.4},  we have $\widetilde{g}(\mathbf{x}) = g^0 = \underline{g}$.
In this case, the results can be improved.

\begin{proposition}
\label{prop6.8}
Suppose that the assumptions of Theorem~\emph{\ref{th3.1}} are satisfied. Let $g^0 = \underline{g}$, i.~e., representations \eqref{1.19} hold.
Let ${\mathbf v}_\eps^0$ be given by \eqref{6.2}. Let $\p_\eps = g^\eps b(\D) \u_\eps$.
Then for $\zeta \in \C \setminus \R_+$, $|\zeta|\ge 1$, and $0< \eps \le \eps_1$ we have
\begin{align}
\label{6.18}
&\|\u_\eps - {\mathbf v}_\eps^0 \|_{H^p(\O)} \le \widehat{\mathcal C}_2 \left( c(\varphi)^2 \eps |\zeta|^{-1/2+1/2p}
+ c(\varphi) \eps^p \right) \|\FF\|_{L_2(\O)},
\\
\label{6.19}
&\|\p_\eps - g^0 b(\D)\u_0 \|_{L_2(\O)} \le \widehat{\mathcal C}_3 \left( c(\varphi)^2 \eps |\zeta|^{-1/2+1/2p}
+ c(\varphi) \eps^p \right) \|\FF\|_{L_2(\O)}.
\end{align}
The constants $\widehat{\mathcal C}_2$ and $\widehat{\mathcal C}_3$ depend only on
$m$, $n$, $d$, $p$, $\alpha_0$, $\alpha_1$, $\|g\|_{L_\infty}$, $\|g^{-1}\|_{L_\infty}$, $k_1$, $k_2$, the parameters of the lattice $\Gamma$,
and the domain $\O$.
\end{proposition}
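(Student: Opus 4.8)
The plan is to revisit the proofs of Theorems~\ref{th3.1}, \ref{th3.2} and~\ref{no_smooth_O} and to exploit the drastic simplification that occurs when $g^0=\underline{g}$. First I would record two consequences of the hypothesis: by Remark~\ref{rem1.4} the matrix-valued function \eqref{1.12} is the \emph{constant} matrix $\wt{g}(\x)=g^0=\underline{g}$, and by Proposition~\ref{prop_MLambda} (assumption $2^\circ$) Condition~\ref{cond_Lambda} holds, so $\Lambda$ is a bounded multiplier with norm controlled by the listed parameters and the standard corrector ${\mathbf v}_\eps^0$ of \eqref{6.2} is legitimate. In particular $\wt{g}^\eps S_\eps b(\D)\wt{\u}_0=g^0 S_\eps b(\D)\wt{\u}_0$, so that Theorem~\ref{no_smooth_O} already applies; the issue is only to remove the $\eps^{1/2}|\zeta|^{-1/2+1/4p}$ loss present in \eqref{6.3} and~\eqref{6.5}.

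The key observation is that this loss comes entirely from one term that now vanishes. Indeed, in the splitting \eqref{4.5} of the functional ${\mathcal J}_\eps$ the summand ${\mathcal J}_\eps^{(2)}[\eeta]$ of \eqref{4.7} carries the factor $\wt{g}^\eps-g^0\equiv0$; equivalently the functions $f_\alpha=b_\alpha^*(\wt{g}-g^0)$ appearing in \eqref{4.9} vanish, hence so do the matrix-valued functions $M_{\alpha\beta}$ furnished by Lemma~\ref{traditional} and the vector fields ${\boldsymbol\psi}_\alpha(\eps)$ of \eqref{4.19}. Therefore ${\mathcal J}_\eps[\eeta]={\mathcal J}_\eps^{(1)}[\eeta]$, and \eqref{4.8} gives, for $\mathrm{Re}\,\zeta\le0$, $|\zeta|\ge1$, $0<\eps\le\eps_1$, the improved bound $|{\mathcal J}_\eps[\eeta]|\le{\mathcal C}_{10}\,\eps\,|\zeta|^{-1/2+1/2p}\,\|\FF\|_{L_2(\O)}\,\|\D^p\eeta\|_{L_2(\O)}$, in which neither the $\eps^{1/2}|\zeta|^{-1/2+1/4p}$ nor the $\eps^p$ contribution of \eqref{4.4} is present.

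Next I would run the remainder of the argument unchanged. Repeating the proof of the bound \eqref{4.24} with this sharper functional estimate yields $\|\w_\eps\|_{H^p(\O)}\le\widehat{\mathcal C}\,\eps\,|\zeta|^{-1/2+1/2p}\,\|\FF\|_{L_2(\O)}$ for $\mathrm{Re}\,\zeta\le0$, $|\zeta|\ge1$; combining with \eqref{3.25} — whose $\eps^p$ term survives, since it comes from the bulk corrector in $\R^d$ via Theorem~\ref{th2.2}, not from $\w_\eps$ — gives $\|\u_\eps-{\mathbf v}_\eps\|_{H^p(\O)}\le\widehat{\mathcal C}(\eps|\zeta|^{-1/2+1/2p}+\eps^p)\|\FF\|_{L_2(\O)}$ in the left half-plane. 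Since ${\mathbf v}_\eps-{\mathbf v}_\eps^0=\eps^p\Lambda^\eps(I-S_\eps)b(\D)\wt{\u}_0\vert_\O$, estimate \eqref{6.14} (which itself carries no $\eps^{1/2}$) lets me pass to ${\mathbf v}_\eps^0$ at the same cost, and the resolvent identities \eqref{4.53}, \eqref{4.56}, treated exactly as in the proofs of Theorems~\ref{th3.1} and~\ref{th3.2}, transfer the estimate to $\mathrm{Re}\,\zeta>0$ with the expected powers of $c(\varphi)$, producing \eqref{6.18}. The flux bound \eqref{6.19} is obtained along the same lines: with $\wt{g}^\eps=g^0$, estimate \eqref{3.11} reads $\|\p_\eps-g^0 S_\eps b(\D)\wt{\u}_0\|_{L_2(\O)}\le\dots$, whose right-hand side is inherited (through \eqref{4.59}) from the $H^p$-estimate of $\u_\eps-{\mathbf v}_\eps$ that we have just sharpened and therefore loses its $\eps^{1/2}$ term; replacing $S_\eps b(\D)\wt{\u}_0$ by $b(\D)\u_0$ costs only $O(\eps)$ by \eqref{6.15}, and the resolvent identities again give the case $\mathrm{Re}\,\zeta>0$.

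I expect the only real work to be bookkeeping: verifying that in the proofs of Theorems~\ref{th3.1}, \ref{th3.2} and~\ref{no_smooth_O} \emph{every} occurrence of $\eps^{1/2}|\zeta|^{-1/2+1/4p}$ traces back to $\widetilde{\mathcal J}_\eps^{(2)}$ (equivalently, to Lemma~\ref{lem_dO2} applied to the $M_{\alpha\beta}$ or to $\Lambda$ near $\partial\O$), so that all such occurrences disappear once $M_{\alpha\beta}\equiv0$, and that the $L_2$-estimate \eqref{4.52} of $\u_\eps-\u_0$ (which is $O(\eps)$ already) suffices unchanged for the resolvent-identity transfer. One then re-collects the constants, using that $\|\Lambda\|_{L_\infty}$ and $M_\Lambda$ are controlled by Proposition~\ref{prop_MLambda}, so that $\widehat{\mathcal C}_2$ and $\widehat{\mathcal C}_3$ depend only on the quantities listed in the statement.
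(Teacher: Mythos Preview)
Your proposal is correct and follows essentially the same approach as the paper: you identify that $\wt{g}=g^0$ kills ${\mathcal J}_\eps^{(2)}$, leaving only the ${\mathcal J}_\eps^{(1)}$ bound \eqref{4.8}, which sharpens $\|\w_\eps\|_{H^p}$ and hence $\|\u_\eps-{\mathbf v}_\eps\|_{H^p}$ in the left half-plane; then the resolvent identity \eqref{4.56} transfers this to $\mathrm{Re}\,\zeta>0$, and \eqref{6.14} (with Proposition~\ref{prop_MLambda}($2^\circ$) controlling $\|\Lambda\|_{L_\infty}$, $M_\Lambda$) passes from ${\mathbf v}_\eps$ to ${\mathbf v}_\eps^0$. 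The only cosmetic difference is the order: the paper transfers to the right half-plane \emph{before} switching to ${\mathbf v}_\eps^0$ (obtaining \eqref{6.24} first, then applying \eqref{6.14}), and derives \eqref{6.19} directly from \eqref{6.24}, \eqref{4.63}, \eqref{6.15} rather than invoking a separate ``resolvent identity for the flux''; your ordering works too, but note that the flux transfer is cleanest if you first secure the improved $H^p$-bound for all $\zeta$ and then read off the flux estimate from it, exactly as in \eqref{4.59}--\eqref{4.63}.
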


\begin{proof}
First, assume that ${\rm{Re}}\,\zeta \le 0$ and $|\zeta| \ge 1$. If $g^0 = \underline{g}$, then $\wt{g}=g^0$
and ${\mathcal J}_\eps^{(2)}[\eeta]=0$ (see \eqref{4.7}).
Hence, according to \eqref{4.5} and \eqref{4.8}, the functional \eqref{4.1} satisfies
\begin{equation}
\label{6.20}
| {\mathcal J}_\eps[\eeta] | \le {\mathcal C}_{10} \eps |\zeta|^{-1/2 + 1/2p} \| \FF \|_{L_2(\O)} \| \D^p \eeta\|_{L_2(\O)},
\quad \eeta \in H^p(\O;\C^n).
\end{equation}
Substituting $\eeta = \w_\eps$ in \eqref{4.3} and using \eqref{6.20} (similarly to \eqref{4.25}--\eqref{4.28a}), we obtain
\begin{equation}
\label{6.21}
\| \w_\eps \|_{H^p(\O)} \le {\mathcal C}_{15}' \eps |\zeta|^{-1/2 + 1/2p} \| \FF \|_{L_2(\O)}, \quad 0 < \eps \le \eps_1,
\end{equation}
where ${\mathcal C}_{15}' : = {\mathcal C}_{10}(k_1 \|g^{-1}\|_{L_\infty} + 2 k_2)$.
From \eqref{3.39} and \eqref{6.21} it follows that
\begin{equation}
\label{6.22}
\| \u_\eps - {\mathbf v}_\eps \|_{H^p(\O)} \le ({\mathcal C}_{5} + {\mathcal C}_{15}') \bigl( \eps |\zeta|^{-1/2 + 1/2p} + \eps^p \bigr) \| \FF \|_{L_2(\O)}, \quad
{\rm{Re}}\,\zeta \le 0,\ |\zeta| \ge 1,\ 0 < \eps \le \eps_1.
\end{equation}

Now, let $\zeta \in \C \setminus \R_+$, $|\zeta| \ge 1$, and ${\rm{Re}}\,\zeta > 0$. Let
$\widehat{\zeta} = - {\rm{Re}}\,\zeta + i {\rm{Im}}\,\zeta$. Estimate \eqref{6.22} at the point $\widehat{\zeta}$ means that
\begin{equation}
\label{6.23}
\| (A_{N,\eps} - \widehat{\zeta} I)^{-1} - (A_{N}^0 - \widehat{\zeta} I)^{-1} - \eps^p K_N(\widehat{\zeta};\eps)  \|_{L_2(\O) \to H^p(\O)}
\le  ({\mathcal C}_{5} + {\mathcal C}_{15}') \bigl( \eps |\zeta|^{-1/2 + 1/2p} + \eps^p \bigr)
\end{equation}
for $0 < \eps \le \eps_1$. Combining identity \eqref{4.56} and estimates \eqref{22.9}, \eqref{4.52} (at the point $\widehat{\zeta}$),
\eqref{4.55}, and \eqref{6.23}, we obtain
\begin{equation}
\label{6.24}
\| (A_{N,\eps} - {\zeta} I)^{-1} - (A_{N}^0 - {\zeta} I)^{-1} - \eps^p K_N( {\zeta};\eps)  \|_{L_2(\O) \to H^p(\O)}
\le  \widehat{\mathcal C}_{2}' \bigl( c(\varphi)^2 \eps |\zeta|^{-1/2 + 1/2p} + c(\varphi) \eps^p \bigr)
\end{equation}
for  $0 < \eps \le \eps_1$, where $\widehat{\mathcal C}_{2}' : = 2 ({\mathcal C}_{5} + {\mathcal C}_{15}') + 4 {\mathcal C}_{0} {\mathcal C}_{1}'$.

By Proposition~\ref{prop_MLambda}($2^\circ$), Condition~\ref{cond_Lambda} is satisfied; moreover,
$\|\Lambda\|_{L_\infty}$ and $M_\Lambda$ are controlled in terms of $m$, $n$, $d$, $p$,
$\alpha_0$, $\alpha_1$, $\|g\|_{L_\infty}$, $\|g^{-1}\|_{L_\infty}$, and the parameters of the lattice $\Gamma$.
Then estimate \eqref{6.14} holds. Relations \eqref{6.14} and \eqref{6.24} imply the required inequality \eqref{6.18} with the constant
$\widehat{\mathcal C}_{2} := \widehat{\mathcal C}_{2}' + {\mathcal C}_{27}$.

Estimate \eqref{6.19} is deduced from \eqref{4.63}, \eqref{6.15}, \eqref{6.24}, and the identity $\widetilde{g}(\x) = g^0$.
Herewith,
$\widehat{\mathcal C}_{3} := c_6(d,p) \|g\|_{L_\infty} \alpha_1^{1/2} \widehat{\mathcal C}'_{2} + {\mathcal C}_{26}+ {\mathcal C}_{28}$.
\end{proof}

\section{Approximation of the resolvent $(A_{N,\eps} - \zeta I)^{-1}$ for $|\zeta| \le 1$}

\subsection{The case $|\zeta|\le 1$}
In this section, we extend the results of Theorems \ref{th3.1} and  \ref{th3.2}
to the set $\zeta \in \C \setminus \R_+$, $|\zeta|\le 1$, using suitable identities for the resolvents.

\begin{lemma}
Let $\zeta \in \C \setminus \R_+$, $|\zeta|\le 1$. Then for $\eps>0$ we have
\begin{align}
\label{7.1}
\| (A_{N,\eps} - \zeta I)^{-1} \|_{L_2(\O) \to H^p(\O)} \le {\mathcal C}_0 c(\varphi) |\zeta|^{-1},
\\
\label{7.2a}
\| (A_{N}^0 - \zeta I)^{-1} \|_{L_2(\O) \to H^{2p}(\O)} \le 2 \wh{c} c(\varphi) |\zeta|^{-1}.
\end{align}
\end{lemma}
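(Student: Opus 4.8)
The plan is to deduce \eqref{7.1} by re-running the energy estimate from the proof of Lemma~\ref{lem1} and reading its conclusion off differently, and to deduce \eqref{7.2a} from a one-line resolvent identity together with an elementary supremum bound.

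For \eqref{7.1}: since the form \eqref{22.3} is nonnegative, $\mathrm{spec}\,A_{N,\eps}\subset\R_+$, so \eqref{22.8} holds for every $\zeta\in\C\setminus\R_+$ — not only for $|\zeta|\ge1$. Writing $\u_\eps=(A_{N,\eps}-\zeta I)^{-1}\FF$ and substituting $\eeta=\u_\eps$ into the integral identity \eqref{22.9a} exactly as in Lemma~\ref{lem1}, one obtains (using $c(\varphi)\ge1$) the bound $(g^\eps b(\D)\u_\eps,b(\D)\u_\eps)_{L_2(\O)}\le2c(\varphi)^2|\zeta|^{-1}\|\FF\|^2_{L_2(\O)}$, and then the coercivity inequality \eqref{22.5} gives precisely \eqref{2.10a},
\begin{equation*}
\|\u_\eps\|^2_{H^p(\O)}\le c(\varphi)^2\bigl(2k_1\|g^{-1}\|_{L_\infty}|\zeta|^{-1}+k_2|\zeta|^{-2}\bigr)\|\FF\|^2_{L_2(\O)}.
\end{equation*}
The sole difference from Lemma~\ref{lem1} is that $|\zeta|\le1$ now forces $|\zeta|^{-1}\le|\zeta|^{-2}$, so the right-hand side is at most $\mathcal C_0^2c(\varphi)^2|\zeta|^{-2}\|\FF\|^2_{L_2(\O)}$ with $\mathcal C_0^2=2k_1\|g^{-1}\|_{L_\infty}+k_2$; taking square roots gives \eqref{7.1}.

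For \eqref{7.2a}: from the identity $(A_N^0-\zeta I)^{-1}=(A_N^0+I)^{-1}(A_N^0+I)(A_N^0-\zeta I)^{-1}$, estimate \eqref{22.11}, and the spectral theorem for $A_N^0$ (whose spectrum lies in $\R_+$), one gets
\begin{equation*}
\|(A_N^0-\zeta I)^{-1}\|_{L_2(\O)\to H^{2p}(\O)}\le\widehat{c}\,\bigl\|(A_N^0+I)(A_N^0-\zeta I)^{-1}\bigr\|_{L_2(\O)\to L_2(\O)}\le\widehat{c}\sup_{x\ge0}\frac{x+1}{|x-\zeta|}.
\end{equation*}
It then remains to verify $\sup_{x\ge0}(x+1)|x-\zeta|^{-1}\le2c(\varphi)|\zeta|^{-1}$ for $\zeta\in\C\setminus\R_+$ with $|\zeta|\le1$. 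When $\Re\zeta\le0$ (so $c(\varphi)=1$) this is immediate from $|x-\zeta|\ge\max\{|\zeta|,x\}$, treating $x\le1$ and $x>1$ separately. When $\Re\zeta>0$ (so $c(\varphi)=|\sin\varphi|^{-1}$) I would locate the unique interior critical point of $x\mapsto(x+1)^2/((x-\Re\zeta)^2+(\Im\zeta)^2)$ on $[0,\infty)$, at which the value equals $(|\zeta|^2+2\Re\zeta+1)(\Im\zeta)^{-2}\le(|\zeta|+1)^2(\Im\zeta)^{-2}\le4c(\varphi)^2|\zeta|^{-2}$, and compare it with the two boundary values $1$ (as $x\to\infty$) and $|\zeta|^{-2}$ (at $x=0$), both of which are $\le c(\varphi)^2|\zeta|^{-2}$ since $|\zeta|\le1$; hence $\sup_{x\ge0}(x+1)^2|x-\zeta|^{-2}\le4c(\varphi)^2|\zeta|^{-2}$, which is the claim. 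This short calculation is the only nonroutine point, and I do not expect it to pose any real difficulty; everything else is a transcription of estimates already established.
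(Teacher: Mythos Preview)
Your proposal is correct and follows essentially the same approach as the paper: for \eqref{7.1} the paper simply recalls that \eqref{2.10a} holds for any $\zeta\in\C\setminus\R_+$ and reads off the bound using $|\zeta|^{-1}\le|\zeta|^{-2}$, while for \eqref{7.2a} it uses the same resolvent factorization through $(A_N^0+I)^{-1}$ and \eqref{22.11}, reducing to the supremum $\sup_{x\ge0}(x+1)/|x-\zeta|$. The only difference is that the paper states the supremum bound \eqref{7.11} as ``A calculation shows that\ldots'' whereas you actually carry out that calculation; your case split and critical-point analysis are correct.
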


\begin{proof}
Recall that inequality \eqref{2.10a} holds for any $\zeta \in \C \setminus \R_+$.
If $|\zeta| \le 1$, it implies \eqref{7.1}.

Next, by \eqref{22.11},
\begin{equation}
\label{7.3a}
\|( A_{N}^0 -\zeta I)^{-1} \|_{L_2(\O) \to H^{2p}(\O)}\le
\wh{c}  \|( A_{N}^0 +I) ( A_{N}^0 -\zeta I)^{-1} \|_{L_2(\O) \to L_2(\O)}
\le \wh{c} \sup_{x\ge 0} \frac{x+1}{|x-\zeta|}.
\end{equation}
A calculation shows that
\begin{equation}
\label{7.11}
\sup_{x\ge 0} \frac{x+1}{|x-\zeta|} \le 2 c(\varphi)  |\zeta|^{-1}, \quad \zeta \in \C \setminus \R_+,\ |\zeta|\le 1.
\end{equation}
Relations \eqref{7.3a} and \eqref{7.11} imply estimate \eqref{7.2a}.
\end{proof}

\begin{theorem}\label{th7.2}
Suppose that $\O\subset \R^d$ is a bounded domain of class $C^{2p}$.
Let $\zeta \in \C \setminus \R_+$ and $|\zeta|\le 1$.
Let $\u_\eps = (A_{N,\eps} - \zeta I)^{-1}\FF$ and $\u_0 = (A_{N}^0 - \zeta I)^{-1}\FF$, where $\FF \in L_2(\O;\C^n)$.
Let $K_N(\zeta;\eps)$~be the operator \eqref{3.5}, and let  ${\mathbf v}_\eps$~be given by \eqref{3.6}, \eqref{3.7}.
Suppose that the number $\eps_1$ is subject to Condition~\emph{\ref{cond_eps}}.
Then for $0< \eps \leqslant \eps_1$ we have
\begin{align}
\nonumber
&\| \u_\eps - \u_0 \|_{L_2(\O)} \leqslant
 {\mathfrak C}_1  c(\varphi)^2 \eps |\zeta|^{-2}  \|\FF\|_{L_2(\O)},
\\
\label{7.5}
&\| \u_\eps - {\mathbf v}_\eps \|_{H^p(\O)} \leqslant
 {\mathfrak C}_2 \left( c(\varphi) \eps^{1/2} |\zeta|^{-1} + c(\varphi)^2 \eps |\zeta|^{-2} \right) \|\FF\|_{L_2(\O)},
\end{align}
or, in operator terms,
\begin{align}
\label{7.6}
&\|( A_{N,\eps} - \zeta I)^{-1} - ( A_{N}^0 - \zeta I)^{-1} \|_{L_2(\O) \to L_2(\O)}
\le {\mathfrak C}_1  c(\varphi)^2 \eps |\zeta|^{-2},
\\
\label{7.7}
&\|( A_{N,\eps} - \zeta I)^{-1} - ( A_{N}^0 - \zeta I)^{-1} - \eps^p K_N(\zeta;\eps) \|_{L_2(\O) \to H^p(\O)}
\leqslant
 {\mathfrak C}_2 \left( c(\varphi) \eps^{1/2} |\zeta|^{-1} + c(\varphi)^2 \eps |\zeta|^{-2} \right).
\end{align}
The flux  $\p_\eps = g^\eps b(\D) \u_\eps$  satisfies
\begin{equation}
\label{7.8}
\| \p_\eps - \wt{g}^\eps S_\eps b(\D)\wt{\u}_0 \|_{L_2(\O)} \leqslant
  {\mathfrak C}_3 \left( c(\varphi) \eps^{1/2} |\zeta|^{-1} + c(\varphi)^2 \eps |\zeta|^{-2} \right) \|\FF\|_{L_2(\O)}
\end{equation}
for $0< \eps \leqslant \eps_1$.
The constants ${\mathfrak C}_1$, ${\mathfrak C}_2$, and ${\mathfrak C}_3$ depend only on $m$, $d$, $p$,
$\alpha_0$, $\alpha_1$, $\|g\|_{L_\infty}$, $\|g^{-1}\|_{L_\infty}$, $k_1$, $k_2$, the parameters of the lattice $\Gamma$, and the domain $\O$.
\end{theorem}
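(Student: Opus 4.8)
The plan is to deduce the estimates for $|\zeta|\le 1$ from the already proven estimates at the symmetric point $\zeta=-1$ (or, more precisely, from the results of Theorems \ref{th3.1} and \ref{th3.2}, which hold for all $\zeta\in\C\setminus\R_+$ with $|\zeta|\ge 1$, in particular at the point $\zeta=-1$ where $c(\varphi)=1$), by means of resolvent identities that transfer approximation statements between $-1$ and $\zeta$. This is exactly the strategy announced in Section~4 and used in the completion of the proofs of Theorems \ref{th3.1} and \ref{th3.2}. First I would record the basic resolvent identity
\begin{equation*}
(A_{N,\eps}-\zeta I)^{-1} - (A_N^0-\zeta I)^{-1}
= (A_{N,\eps}+I)(A_{N,\eps}-\zeta I)^{-1}\bigl[(A_{N,\eps}+I)^{-1}-(A_N^0+I)^{-1}\bigr](A_N^0+I)(A_N^0-\zeta I)^{-1},
\end{equation*}
together with the norm bounds $\|(A_{N,\eps}+I)(A_{N,\eps}-\zeta I)^{-1}\|_{L_2\to L_2}\le \sup_{x\ge 0}\frac{x+1}{|x-\zeta|}\le 2c(\varphi)|\zeta|^{-1}$ (by \eqref{7.11}) and the analogous bound for $A_N^0$. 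Applying Theorem \ref{th3.1} at $\zeta=-1$ to estimate the bracketed factor by ${\mathcal C}_1\eps$ then yields \eqref{7.6} with ${\mathfrak C}_1$ proportional to ${\mathcal C}_1$; this proves the first displayed inequality and \eqref{7.6}.

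For the $H^p$-estimate \eqref{7.7} I would use an identity in the spirit of \eqref{4.56}, now written with $-1$ in place of $\wh\zeta$:
\begin{equation*}
\begin{aligned}
(A_{N,\eps}-\zeta I)^{-1} - (A_N^0-\zeta I)^{-1} - \eps^p K_N(\zeta;\eps)
&= \bigl[(A_{N,\eps}+I)^{-1} - (A_N^0+I)^{-1} - \eps^p K_N(-1;\eps)\bigr](A_N^0+I)(A_N^0-\zeta I)^{-1}
\\
&+ (\zeta+1)(A_{N,\eps}-\zeta I)^{-1}\bigl[(A_{N,\eps}+I)^{-1}-(A_N^0+I)^{-1}\bigr](A_N^0+I)(A_N^0-\zeta I)^{-1}
\\
&+ \eps^p\bigl(K_N(-1;\eps)(A_N^0+I)(A_N^0-\zeta I)^{-1} - K_N(\zeta;\eps)\bigr).
\end{aligned}
\end{equation*}
The first term is bounded in the $L_2\to H^p$ norm by Theorem \ref{th3.2} at $\zeta=-1$, times $\|(A_N^0+I)(A_N^0-\zeta I)^{-1}\|_{L_2\to L_2}\le 2c(\varphi)|\zeta|^{-1}$, giving a contribution $O(\eps^{1/2}c(\varphi)|\zeta|^{-1})$. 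The second term is handled using \eqref{7.1} for $\|(A_{N,\eps}-\zeta I)^{-1}\|_{L_2\to H^p}\le {\mathcal C}_0 c(\varphi)|\zeta|^{-1}$, Theorem \ref{th3.1} at $\zeta=-1$ for the bracket (giving $O(\eps)$), and \eqref{7.2a} for the last factor; since $|\zeta+1|\le 2$, this contributes $O(\eps\, c(\varphi)^2 |\zeta|^{-2})$. The third term, comparing the two correctors, is rewritten via the definition \eqref{3.5} of $K_N$: it equals $\eps^p R_\O[\Lambda^\eps]S_\eps b(\D)P_\O\bigl[(A_N^0+I)(A_N^0-\zeta I)^{-1}(A_N^0-\zeta I)^{-1}\cdot\text{(something)}\bigr]$ — more carefully, $K_N(-1;\eps)(A_N^0+I)(A_N^0-\zeta I)^{-1} - K_N(\zeta;\eps) = R_\O[\Lambda^\eps]S_\eps b(\D)P_\O\bigl((A_N^0+I)(A_N^0-\zeta I)^{-1}-I\bigr)(A_N^0-\zeta I)^{-1}$, and using that $\|[\Lambda^\eps]S_\eps b(\D)P_\O\|_{H^{2p}(\O)\to H^p(\R^d)}=O(\eps^{-p})$ (via Proposition \ref{prop1.5} together with the $H^{2p}$-regularity estimate \eqref{22.11}) this reduces to bounding $(1+\zeta)(A_N^0-\zeta I)^{-1}(A_N^0-\zeta I)^{-1}$ in $L_2\to H^{2p}$, which by \eqref{7.2a} and \eqref{22.17} is $O(c(\varphi)^2|\zeta|^{-2})$. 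Collecting the three contributions gives \eqref{7.7} with ${\mathfrak C}_2$ depending only on the listed parameters; dropping the corrector term (bounded by $\eps^p\cdot O(\eps^{-p})\cdot O(c(\varphi)|\zeta|^{-1})$, which is absorbed) yields \eqref{7.5}.

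Finally, for the flux estimate \eqref{7.8} I would argue directly, analogously to the proof of \eqref{3.11}: write $\p_\eps - \wt g^\eps S_\eps b(\D)\wt\u_0 = (\p_\eps - g^\eps b(\D){\mathbf v}_\eps) + (g^\eps b(\D){\mathbf v}_\eps - \wt g^\eps S_\eps b(\D)\wt\u_0)$; estimate the first summand using \eqref{1.3}, \eqref{1.5}, and the already-proved $H^p$-bound \eqref{7.5}, and the second exactly as in \eqref{4.60}--\eqref{4.63}, now invoking the $|\zeta|\le 1$ versions of the bounds on $\wt\u_0$, namely the analogues of \eqref{3.13}--\eqref{3.15} obtained from Lemma \ref{lem_eff} combined with \eqref{7.2a} (so $\|\wt\u_0\|_{H^{2p}(\R^d)}\le C\,c(\varphi)|\zeta|^{-1}\|\FF\|_{L_2}$ and, by interpolation, $\|\wt\u_0\|_{H^{p+k}(\R^d)}\le C\,c(\varphi)|\zeta|^{-1}\|\FF\|_{L_2}$ for $0\le k\le p$). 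This produces \eqref{7.8}. The main obstacle is organizing the corrector-comparison term (the third term above) cleanly: one must show that multiplying by the operator $(A_N^0+I)(A_N^0-\zeta I)^{-1}-I=(1+\zeta)(A_N^0-\zeta I)^{-1}$ before applying the rapidly oscillating factor $[\Lambda^\eps]S_\eps b(\D)$ does not cost more than the $\eps^{-p}$ already accounted for, which is exactly where the $H^{2p}$-regularity \eqref{22.11} of $A_N^0$ and the multiplier-type bound of Proposition \ref{prop1.5} enter; everything else is a bookkeeping of the factors $c(\varphi)$ and $|\zeta|^{-1}$.
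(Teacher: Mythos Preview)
Your approach is essentially the paper's, but you have introduced an unnecessary complication that you then misidentify as ``the main obstacle.'' The third term in your decomposition for the $H^p$-estimate,
\[
\eps^p\bigl(K_N(-1;\eps)(A_N^0+I)(A_N^0-\zeta I)^{-1} - K_N(\zeta;\eps)\bigr),
\]
is identically zero. Indeed, writing $T := R_\O[\Lambda^\eps]S_\eps b(\D)P_\O$, the definition \eqref{3.5} gives $K_N(-1;\eps)=T(A_N^0+I)^{-1}$ and $K_N(\zeta;\eps)=T(A_N^0-\zeta I)^{-1}$, so
\[
K_N(-1;\eps)(A_N^0+I)(A_N^0-\zeta I)^{-1}
= T(A_N^0+I)^{-1}(A_N^0+I)(A_N^0-\zeta I)^{-1}
= T(A_N^0-\zeta I)^{-1}
= K_N(\zeta;\eps).
\]
This is precisely why the paper's identity \eqref{7.13} (the analogue of \eqref{4.56} with $-1$ in place of $\wh\zeta$) has only \emph{two} terms, not three. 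Your ``more careful'' rewriting of this difference as $T\bigl((A_N^0+I)(A_N^0-\zeta I)^{-1}-I\bigr)(A_N^0-\zeta I)^{-1}$ is an algebraic slip: you have effectively replaced $(A_N^0+I)^{-1}(A_N^0+I)$ by $(A_N^0+I)$ somewhere, producing a spurious nonzero expression.

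Once this term is discarded, your treatment of \eqref{7.6}, of the two genuine terms in \eqref{7.7} (via \eqref{7.11}, \eqref{7.1}, and the estimates at $\zeta=-1$), and of the flux \eqref{7.8} matches the paper's proof. A small side remark: for the second term you cite \eqref{7.2a} for ``the last factor'' $(A_N^0+I)(A_N^0-\zeta I)^{-1}$, but this is an $L_2\to L_2$ map and the correct bound is \eqref{7.11}, exactly as the paper uses in \eqref{7.12a}.
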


\begin{proof}
We apply \eqref{4.52} at the point $\zeta=-1$:
\begin{equation}
\label{7.9}
\|( A_{N,\eps} + I)^{-1} - ( A_{N}^0 + I)^{-1} \|_{L_2(\O) \to L_2(\O)}
\le {\mathcal C}_1'  \eps, \quad 0< \eps \le \eps_1.
\end{equation}
Together with the analog of identity \eqref{4.53}, this yields
\begin{equation}
\label{7.10}
\|( A_{N,\eps} -\zeta I)^{-1} - ( A_{N}^0 -\zeta I)^{-1} \|_{L_2(\O) \to L_2(\O)}
\le {\mathcal C}_1'  \eps \sup_{x\ge 0} \frac{(x+1)^2}{|x-\zeta|^2}, \quad 0< \eps \le \eps_1.
\end{equation}
Relations \eqref{7.11} and \eqref{7.10} imply the required estimate \eqref{7.6} with the constant ${\mathfrak C}_1= 4 {\mathcal C}_1'$.

Now, we apply \eqref{4.30} at the point $\zeta=-1$:
\begin{equation}
\label{7.12}
\|( A_{N,\eps} + I)^{-1} - ( A_{N}^0 + I)^{-1} - \eps^p K_N(-1;\eps) \|_{L_2(\O) \to H^p(\O)}
\le 2 {\mathcal C}'_2  \eps^{1/2}
\end{equation}
for $0< \eps \le \eps_1$.
Similarly to \eqref{4.56}, we have
\begin{equation}
\label{7.13}
\begin{aligned}
&(A_{N,\eps} - \zeta I)^{-1} - (A_{N}^0 - \zeta I)^{-1} - \eps^p K_N(\zeta;\eps)
\\
&=
\left( (A_{N,\eps} + I)^{-1} - (A_{N}^0 + I)^{-1} - \eps^p K_N(-1;\eps)\right)
(A_{N}^0 +I) (A_{N}^0 - \zeta I)^{-1}
\\
&+  (\zeta +1) (A_{N,\eps} - \zeta I)^{-1} \left( (A_{N,\eps} + I)^{-1} - (A_{N}^0 + I)^{-1} \right) (A_{N}^0 +I) (A_{N}^0 - \zeta I)^{-1}.
\end{aligned}
\end{equation}
Together with \eqref{7.11}, this implies
\begin{equation}
\label{7.12a}
\begin{aligned}
&\| (A_{N,\eps} - \zeta I)^{-1} - (A_{N}^0 - \zeta I)^{-1} - \eps^p K_N(\zeta;\eps)\|_{L_2(\O) \to H^p(\O)}
\\
& \le 2 c(\varphi) |\zeta|^{-1}
\|  (A_{N,\eps} + I)^{-1} - (A_{N}^0 + I)^{-1} - \eps^p K_N(-1;\eps)\|_{L_2(\O) \to H^p(\O)}
\\
&+ 2 c(\varphi) |\zeta|^{-1} |\zeta +1|  \|(A_{N,\eps} - \zeta I)^{-1}\|_{L_2(\O) \to H^p(\O)}
  \|(A_{N,\eps} + I)^{-1} - (A_{N}^0 + I)^{-1}\|_{L_2(\O) \to L_2(\O)}.
 \end{aligned}
\end{equation}
Combining this with \eqref{7.1}, \eqref{7.9}, \eqref{7.12}, and taking into account that $|\zeta|\le 1$, we obtain the required estimate
\eqref{7.7} with the constant ${\mathfrak C}_2 := 4 \max \{ {\mathcal C}'_{2}, {\mathcal C}_{0} {\mathcal C}'_{1} \}$.

It remains to check \eqref{7.8}. Relations \eqref{1.3}, \eqref{1.5}, and \eqref{7.5} show that
\begin{equation}
\label{7.16}
\| \p_\eps - g^\eps b(\D) {\mathbf v}_\eps \|_{L_2(\O)} \le
c_6(d,p) \|g\|_{L_\infty} \alpha_1^{1/2} {\mathfrak C}_2 \left( c(\varphi) \eps^{1/2} |\zeta|^{-1} + c(\varphi)^2 \eps |\zeta|^{-2} \right) \|\FF\|_{L_2(\O)}
\end{equation}
for $0< \eps \le \eps_1$. Similarly to \eqref{4.60}--\eqref{4.62}, we have
\begin{equation}
\label{7.17}
\begin{split}
&\| g^\eps b(\D) {\mathbf v}_\eps - \wt{g}^\eps S_\eps b(\D) \wt{\u}_0 \|_{L_2(\O)}
\\
&\le
\eps \|g\|_{L_\infty} r_1 \alpha_1^{1/2} \| \wt{\u}_0 \|_{H^{p+1}(\R^d)}+
{\mathcal C}_{25} \sum_{l=1}^p \eps^l \| \wt{\u}_0 \|_{H^{p+l}(\R^d)}
\le {\mathfrak C}_4 \eps \| \wt{\u}_0 \|_{H^{2p}(\R^d)},
\end{split}
\end{equation}
where ${\mathfrak C}_4 := \|g\|_{L_\infty} r_1 \alpha_1^{1/2} + p \,{\mathcal C}_{25}$.
From \eqref{3.4} and \eqref{7.2a} it follows that
\begin{equation}
\label{7.18}
\| \wt{\u}_0 \|_{H^{2p}(\O)} \le 2 C_\O^{(2p)} \wh{c} c(\varphi) |\zeta|^{-1} \|\FF\|_{L_2(\O)}.
\end{equation}

As a result, relations \eqref{7.16}--\eqref{7.18} imply the required estimate \eqref{7.8} with the constant
${\mathfrak C}_3 := c_6(d,p) \|g\|_{L_\infty} \alpha_1^{1/2} {\mathfrak C}_2 + 2 {\mathfrak C}_4 C_\O^{(2p)} \wh{c}$.
\end{proof}

\subsection{Removal of the smoothing operator}

\begin{theorem}
Suppose that the assumptions of Theorem \emph{\ref{th7.2}} and Condition \emph{\ref{cond_Lambda}} are satisfied.
Let $K_N^0(\zeta;\eps)$~be the operator \eqref{6.1}, and let  ${\mathbf v}_\eps^0$ be given by~\eqref{6.2}.
Then for \hbox{$0< \eps \leqslant \eps_1$} we have
\begin{align}
\label{7.19}
\| \u_\eps - {\mathbf v}_\eps^0 \|_{H^p(\O)} \leqslant
 \wt{\mathfrak C}_2 \left( c(\varphi) \eps^{1/2} |\zeta|^{-1} + c(\varphi)^2 \eps |\zeta|^{-2} \right) \|\FF\|_{L_2(\O)}.
\end{align}
In operator terms,
\begin{equation*}
\begin{split}
&\|( A_{N,\eps} - \zeta I)^{-1} - ( A_{N}^0 - \zeta I)^{-1} - \eps^p K_N^0(\zeta;\eps) \|_{L_2(\O) \to H^p(\O)}
\\
&\leqslant
 \wt{\mathfrak C}_2 \left( c(\varphi) \eps^{1/2} |\zeta|^{-1} + c(\varphi)^2 \eps |\zeta|^{-2} \right).
\end{split}
\end{equation*}
The flux  $\p_\eps = g^\eps b(\D) \u_\eps$ satisfies
\begin{equation}
\label{7.21}
\| \p_\eps - \wt{g}^\eps b(\D) {\u}_0 \|_{L_2(\O)} \leqslant
  \wt{\mathfrak C}_3 \left( c(\varphi) \eps^{1/2} |\zeta|^{-1} + c(\varphi)^2 \eps |\zeta|^{-2} \right) \|\FF\|_{L_2(\O)}
\end{equation}
for $0< \eps \leqslant \eps_1$.
The constants $\wt{\mathfrak C}_2$ and $\wt{\mathfrak C}_3$ depend only on $m$, $d$, $p$,
$\alpha_0$, $\alpha_1$, $\|g\|_{L_\infty}$, $\|g^{-1}\|_{L_\infty}$, $k_1$, $k_2$, the parameters of the lattice $\Gamma$, the domain $\O$,
and also on $\|\Lambda\|_{L_\infty}$ and $M_\Lambda$.
\end{theorem}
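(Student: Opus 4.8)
The plan is to repeat verbatim the argument used to prove Theorem~\ref{no_smooth_O}, only with Theorem~\ref{th7.2} playing the role of Theorems~\ref{th3.1}--\ref{th3.2} as the source of the estimates involving the smoothed corrector $K_N(\zeta;\eps)$, and with the $|\zeta|\le 1$ bounds replacing the $|\zeta|\ge 1$ ones. First I would compare the two approximants ${\mathbf v}_\eps$ (built with the Steklov operator, see \eqref{3.6}, \eqref{3.7}) and ${\mathbf v}_\eps^0$ (the standard one, see \eqref{6.2}); their difference is $\eps^p \Lambda^\eps(I-S_\eps)b(\D)\wt{\u}_0$. To estimate it in $H^p(\O;\C^n)$ I would split the $H^p$-norm via \eqref{6.8} into an $L_2$-part and a $\D^p$-part, bound the $L_2$-part using Condition~\ref{cond_Lambda} and $\|S_\eps\|_{L_2\to L_2}\le 1$ exactly as in \eqref{6.10}, and bound the $\D^p$-part by part~$1^\circ$ of Lemma~\ref{lem6.2} followed by Proposition~\ref{prop1.4} and again $\|S_\eps\|_{L_2\to L_2}\le 1$, as in \eqref{6.11}--\eqref{6.13}. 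This reduces the whole estimate to the Sobolev norms $\|\wt{\u}_0\|_{H^p(\R^d)}$, $\|\wt{\u}_0\|_{H^{p+1}(\R^d)}$, $\|\wt{\u}_0\|_{H^{2p}(\R^d)}$ multiplied by $\eps^p$ and $\eps^{p+1}$.

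The point where the regime $|\zeta|\le 1$ enters is that here these Sobolev norms all collapse to one scale. Indeed, by \eqref{7.1} (which, with the same constant ${\mathcal C}_0$, also holds for $A_N^0$, since $a_{N}^0$ obeys the same G\"arding inequality \eqref{22.5} as $a_{N,\eps}$) together with \eqref{3.4}, one has $\|\wt{\u}_0\|_{H^p(\R^d)}\le C\,c(\varphi)|\zeta|^{-1}\|\FF\|_{L_2(\O)}$, while \eqref{7.18} gives $\|\wt{\u}_0\|_{H^{2p}(\R^d)}\le C\,c(\varphi)|\zeta|^{-1}\|\FF\|_{L_2(\O)}$, whence also $\|\wt{\u}_0\|_{H^{p+1}(\R^d)}$ is bounded in the same way by monotonicity of Sobolev norms. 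Since $0<\eps\le\eps_1\le 1$ and $p\ge 1$, we have $\eps^p\le\eps^{1/2}$ and $\eps^{p+1}\le\eps^{1/2}$, so altogether $\|{\mathbf v}_\eps-{\mathbf v}_\eps^0\|_{H^p(\O)}\le C\,c(\varphi)\,\eps^{1/2}|\zeta|^{-1}\|\FF\|_{L_2(\O)}$, which is absorbed by the first term on the right of \eqref{7.5}. Adding this to \eqref{7.5} by the triangle inequality yields \eqref{7.19}, and the operator reformulation is immediate.

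For the flux estimate \eqref{7.21} I would write $\p_\eps-\wt{g}^\eps b(\D)\u_0$ as the sum of $\p_\eps-\wt{g}^\eps S_\eps b(\D)\wt{\u}_0$, controlled by \eqref{7.8} of Theorem~\ref{th7.2}, and $\wt{g}^\eps S_\eps b(\D)\wt{\u}_0-\wt{g}^\eps b(\D)\u_0=\wt{g}^\eps(S_\eps-I)b(\D)\wt{\u}_0$, using that $b(\D)\u_0=b(\D)\wt{\u}_0$ on $\O$. The second term I would estimate via part~$2^\circ$ of Lemma~\ref{lem6.2}, i.e. inequality \eqref{6.7}, exactly as in \eqref{6.14a}, which reduces it to the quantities $\|(I-S_\eps)b(\D)\wt{\u}_0\|_{L_2(\R^d)}$ and $\eps^p\|(I-S_\eps)\D^p b(\D)\wt{\u}_0\|_{L_2(\R^d)}$ already controlled through Proposition~\ref{prop1.4}, $\|S_\eps\|_{L_2\to L_2}\le 1$, \eqref{6.12}, \eqref{6.13}, and the $|\zeta|\le 1$ bounds on $\|\wt{\u}_0\|_{H^{p+1}(\R^d)}$ and $\|\wt{\u}_0\|_{H^{2p}(\R^d)}$. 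This gives a bound of order $\eps\,c(\varphi)|\zeta|^{-1}\le c(\varphi)\eps^{1/2}|\zeta|^{-1}$, and combining with \eqref{7.8} yields \eqref{7.21}; the dependence of $\wt{\mathfrak C}_2$, $\wt{\mathfrak C}_3$ on the listed parameters is read off from the constants ${\mathfrak C}_2$, ${\mathfrak C}_3$ of Theorem~\ref{th7.2}, the multiplier norms $M_\Lambda$ and $M_{\wt g}$ (the latter controlled in terms of $\|\Lambda\|_{L_\infty}$ and $M_\Lambda$ by Lemma~\ref{lem6.2}), and the extension constants $C_\O^{(s)}$.

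I do not expect a genuine obstacle here: the statement is essentially a corollary of Theorem~\ref{th7.2}, Lemma~\ref{lem6.2}, and the resolvent and Steklov bounds already established. The only thing that needs care is the bookkeeping of the powers of $|\zeta|$ and $\eps$ — one must check that every error term produced by removing the smoothing really fits under $c(\varphi)\eps^{1/2}|\zeta|^{-1}+c(\varphi)^2\eps|\zeta|^{-2}$, and this is so precisely because in the small-$|\zeta|$ regime $\|\wt{\u}_0\|_{H^{p+k}(\R^d)}$ has the uniform bound $C\,c(\varphi)|\zeta|^{-1}\|\FF\|_{L_2(\O)}$ for all $0\le k\le p$, while $\eps^l\le\eps^{1/2}$ for every $l\ge 1$ when $\eps\le 1$.
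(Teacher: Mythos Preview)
Your approach is correct and essentially identical to the paper's: estimate $\|{\mathbf v}_\eps-{\mathbf v}_\eps^0\|_{H^p(\O)}$ via the machinery of \eqref{6.9}--\eqref{6.13} and the $|\zeta|\le 1$ bound \eqref{7.18}, then combine with Theorem~\ref{th7.2} by the triangle inequality, and handle the flux analogously. One small bookkeeping correction: the contribution from \eqref{6.12} carries the factor $\eps$, not $\eps^{p+1}$, but since $\eps\le\eps^{1/2}$ for $\eps\le 1$ this is harmless; the paper simply packages all three contributions as ${\mathfrak C}_5\,\eps\,\|\wt{\u}_0\|_{H^{2p}(\R^d)}$ (see \eqref{7.22}) before applying \eqref{7.18}.
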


\begin{proof}
Similarly to \eqref{6.9}--\eqref{6.13}, we have
\begin{equation}
\label{7.22}
\| {\mathbf v}_\eps - {\mathbf v}_\eps^0 \|_{H^p(\O)} \leqslant
 {\mathfrak C}_5 \eps \| \wt{\u}_0 \|_{H^{2p}(\R^d)},
\end{equation}
where ${\mathfrak C}_5 := 2 \wh{\mathfrak c}_p^{1/2} \| \Lambda \|_{L_\infty} \alpha_1^{1/2} +\wh{\mathfrak c}_p M_\Lambda \alpha_1^{1/2} (r_1 +2)$.
From \eqref{7.18} and \eqref{7.22} it follows that
\begin{equation}
\label{7.23}
\| {\mathbf v}_\eps - {\mathbf v}_\eps^0 \|_{H^p(\O)} \leqslant
2{\mathfrak C}_5 C_\O^{(2p)} \wh{c} \, c(\varphi) \eps |\zeta|^{-1} \| \FF \|_{L_2(\O)}.
\end{equation}
Inequalities \eqref{7.5} and \eqref{7.23} imply the required estimate \eqref{7.19} with the constant
$\wt{\mathfrak C}_2 : = {\mathfrak C}_2 + 2 {\mathfrak C}_5 C_\O^{(2p)} \wh{c}$.

It remains to check \eqref{7.21}. Similarly to \eqref{6.12}, \eqref{6.13}, and \eqref{6.14a}, we have
\begin{equation}
\label{7.24}
\| \wt{g}^\eps b(\D)\u_0 - \wt{g}^\eps S_\eps b(\D)\wt{\u}_0 \|_{L_2(\O)} \leqslant
{\mathfrak C}_6  \eps \| \wt{\u}_0 \|_{H^{2p}(\R^d)},
\end{equation}
where ${\mathfrak C}_6 := \wh{\mathfrak c}_p^{1/2} M_{\wt{g}} \alpha_1^{1/2} (r_1 +2)$.
Together with \eqref{7.18}, this yields
\begin{equation}
\label{7.25}
\| \wt{g}^\eps b(\D)\u_0 - \wt{g}^\eps S_\eps b(\D)\wt{\u}_0 \|_{L_2(\O)} \leqslant
2 {\mathfrak C}_6 C_\O^{(2p)} \wh{c} \, c(\varphi) \eps |\zeta|^{-1} \| \FF \|_{L_2(\O)}.
\end{equation}
Combining \eqref{7.8} and \eqref{7.25}, we arrive at the desired estimate \eqref{7.21} with the constant
$\wt{\mathfrak C}_3 := {\mathfrak C}_3 + 2 {\mathfrak C}_6 C_\O^{(2p)} \wh{c}$.
\end{proof}

\subsection{Special cases}
The following statement can be checked with the help of Theorem~\ref{th7.2} (similarly to Proposition~\ref{prop6.7}).

\begin{proposition}
\label{prop7.4}
Suppose that the assumptions of Theorem~\emph{\ref{th7.2}} are satisfied. Let $g^0 = \overline{g}$, i.~e., relations \eqref{1.18} hold.
Then for $\zeta \in \C \setminus \R_+$, $|\zeta|\le 1$, and $0< \eps \le \eps_1$ we have
\begin{equation*}
\begin{split}
\|\u_\eps - \u_0 \|_{H^p(\O)} \le {\mathfrak C}_2 \left( c(\varphi) \eps^{1/2} |\zeta|^{-1} + c(\varphi)^2 \eps |\zeta|^{-2} \right) \|\FF\|_{L_2(\O)}.
\end{split}
\end{equation*}
\end{proposition}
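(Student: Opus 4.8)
The plan is to obtain the statement as a direct specialization of Theorem~\ref{th7.2}, exactly as Proposition~\ref{prop6.7} was deduced from Theorem~\ref{th3.2}. The key observation is that the hypothesis $g^0 = \overline{g}$ forces the periodic solution of the cell problem to vanish, so that the corrector disappears and the approximating function ${\mathbf v}_\eps$ collapses to $\u_0$.

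First I would record that, by Proposition~\ref{prop1.2}, the identity $g^0 = \overline{g}$ is equivalent to the relations \eqref{1.18}, i.e. $b(\D)^* {\mathbf g}_k(\x) = 0$ for $k = 1,\dots,m$, which is the same as the matrix identity $b(\D)^* g(\x)\mathbf{1}_m = 0$. Substituting the candidate $\Lambda \equiv 0$ into problem \eqref{1.10} reduces that problem to precisely this identity; since the $\Gamma$-periodic solution of \eqref{1.10} exists and is unique, we conclude that $\Lambda(\x) \equiv 0$.

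Next, with $\Lambda \equiv 0$ the corrector defined in \eqref{3.5} satisfies $K_N(\zeta;\eps) = 0$, and in \eqref{3.6} the second summand drops out, so $\wt{\mathbf v}_\eps = \wt{\u}_0 = P_\O \u_0$; restricting to $\O$ via \eqref{3.7} and using $R_\O P_\O = I$ gives ${\mathbf v}_\eps = \u_0$. Finally I would apply estimate \eqref{7.5} of Theorem~\ref{th7.2}: since ${\mathbf v}_\eps = \u_0$, its left-hand side is exactly $\|\u_\eps - \u_0\|_{H^p(\O)}$, and the claimed inequality follows verbatim with the same constant ${\mathfrak C}_2$. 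There is no genuine obstacle in this argument; the only point requiring (entirely routine) care is the bookkeeping that confirms ${\mathbf v}_\eps = \u_0$ through the extension and restriction operators.
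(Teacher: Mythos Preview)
Your proposal is correct and follows precisely the approach indicated in the paper: the text before Proposition~\ref{prop7.4} states that it ``can be checked with the help of Theorem~\ref{th7.2} (similarly to Proposition~\ref{prop6.7}),'' and the argument preceding Proposition~\ref{prop6.7} is exactly the one you give --- $g^0=\overline{g}$ forces $\Lambda\equiv 0$, hence $K_N(\zeta;\eps)=0$ and ${\mathbf v}_\eps=\u_0$, after which the $H^p$-estimate from the relevant theorem applies verbatim.
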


Now, we consider the case where $g^0 = \underline{g}$.

\begin{proposition}
\label{prop7.5}
Suppose that the assumptions of Theorem~\emph{\ref{th7.2}} are satisfied. Suppose that $g^0 = \underline{g}$, i.~e., representations \eqref{1.19} hold.
Let ${\mathbf v}_\eps^0$ be given by \eqref{6.2}. Let $\p_\eps = g^\eps b(\D) \u_\eps$.
Then for $\zeta \in \C \setminus \R_+$, $|\zeta|\le 1$, and $0< \eps \le \eps_1$ we have
\begin{align}
\label{7.26}
&\|\u_\eps - {\mathbf v}_\eps^0 \|_{H^p(\O)} \le \widehat{\mathfrak C}_2 c(\varphi)^2 \eps |\zeta|^{-2} \|\FF\|_{L_2(\O)},
\\
\label{7.27}
&\|\p_\eps - g^0 b(\D)\u_0 \|_{L_2(\O)} \le \widehat{\mathfrak C}_3 c(\varphi)^2 \eps |\zeta|^{-2} \|\FF\|_{L_2(\O)}.
\end{align}
The constants $\widehat{\mathfrak C}_2$ and $\widehat{\mathfrak C}_3$ depend only on
$m$, $n$, $d$, $p$, $\alpha_0$, $\alpha_1$, $\|g\|_{L_\infty}$, $\|g^{-1}\|_{L_\infty}$, $k_1$, $k_2$, the parameters of the lattice $\Gamma$,
and the domain $\O$.
\end{proposition}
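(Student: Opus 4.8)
The plan is to obtain Proposition~\ref{prop7.5} from the sharp estimates already available at the single point $\zeta=-1$, transported to all $\zeta\in\C\setminus\R_+$ with $|\zeta|\le1$ by the resolvent-identity device of Theorem~\ref{th7.2}. The point is that, because $g^0=\underline{g}$, one may start from the \emph{improved} (order $O(\eps)$, not $O(\eps^{1/2})$) estimate of Proposition~\ref{prop6.8}. The mechanism behind that improvement is that, by Remark~\ref{rem1.4}, $g^0=\underline{g}$ forces $\wt{g}(\x)=g^0$, so in the splitting \eqref{4.5}--\eqref{4.7} the summand ${\mathcal J}_\eps^{(2)}$ vanishes identically and the functional ${\mathcal J}_\eps$ of \eqref{4.1} collapses to ${\mathcal J}_\eps^{(1)}$, obeying by \eqref{4.8} the clean bound $|{\mathcal J}_\eps[\eeta]|\le{\mathcal C}_{10}\eps|\zeta|^{-1/2+1/2p}\|\FF\|_{L_2(\O)}\|\D^p\eeta\|_{L_2(\O)}$ --- with neither an $\eps^{1/2}$- nor an $\eps^p$-term. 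Running the argument \eqref{4.25}--\eqref{4.28a} with this bound yields $\|\w_\eps\|_{H^p(\O)}\le{\mathcal C}_{15}'\eps|\zeta|^{-1/2+1/2p}\|\FF\|_{L_2(\O)}$ for ${\rm Re}\,\zeta\le0$, $|\zeta|\ge1$ (this is \eqref{6.21}), and hence, via \eqref{3.39} and \eqref{6.14}, estimates \eqref{6.18} and \eqref{6.24}. Specializing \eqref{6.24} to $\zeta=-1$ (where $c(\varphi)=1$, $|\zeta|=1$) and absorbing $\eps^p\le\eps$, I obtain the base estimate
\begin{equation*}
\|( A_{N,\eps}+I)^{-1}-( A_{N}^0+I)^{-1}-\eps^p K_N(-1;\eps)\|_{L_2(\O)\to H^p(\O)}\le C\eps,\qquad 0<\eps\le\eps_1,
\end{equation*}
of sharp order $O(\eps)$.

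Next I would substitute this base estimate, together with \eqref{7.9}, into the resolvent identity \eqref{7.13}, and control the remaining factors by \eqref{7.1} for $\|(A_{N,\eps}-\zeta I)^{-1}\|_{L_2(\O)\to H^p(\O)}$, by \eqref{7.11} for $\|(A_N^0+I)(A_N^0-\zeta I)^{-1}\|_{L_2(\O)\to L_2(\O)}$, and trivially by $|\zeta+1|\le2$. Since $|\zeta|\le1$ and $c(\varphi)\ge1$, the worst surviving power $c(\varphi)\eps|\zeta|^{-1}$ is dominated by $c(\varphi)^2\eps|\zeta|^{-2}$, so all resulting error terms collapse to the single expression $c(\varphi)^2\eps|\zeta|^{-2}$; after replacing $K_N$ by $K_N^0$ via the $|\zeta|\le1$ counterpart \eqref{7.23} of \eqref{6.14} --- legitimate since $g^0=\underline{g}$ guarantees Condition~\ref{cond_Lambda} by Proposition~\ref{prop_MLambda}($2^\circ$) --- this produces exactly \eqref{7.26}.

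For the flux estimate \eqref{7.27} I would repeat the last lines of the proof of Proposition~\ref{prop6.8}, but with the improved $H^p$-bound \eqref{7.26} in place of the general one: from $\p_\eps-g^0 b(\D)\u_0=g^\eps b(\D)(\u_\eps-{\mathbf v}_\eps^0)+\bigl(g^\eps b(\D){\mathbf v}_\eps^0-g^0 b(\D)\u_0\bigr)$, the first summand is $O(\eps|\zeta|^{-2})$ by \eqref{7.26} together with \eqref{1.3} and \eqref{1.5}, while the second, after expanding ${\mathbf v}_\eps^0$ by \eqref{6.2} and \eqref{4.60} and using $\wt{g}=g^0$ (so that the leading term is precisely $g^0 b(\D)\u_0$), leaves a remainder carrying an extra power of $\eps$ and an extra derivative on $\u_0$, bounded via Proposition~\ref{prop1.5}, \eqref{1.4}, \eqref{Lambda3}, and \eqref{7.2a} by $C\eps c(\varphi)|\zeta|^{-1}\|\FF\|_{L_2(\O)}\le C\eps c(\varphi)^2|\zeta|^{-2}\|\FF\|_{L_2(\O)}$. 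This gives \eqref{7.27}.

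I do not expect a genuine obstacle --- the proof is pure bookkeeping --- but the one point that must not be glossed over is that the vanishing of ${\mathcal J}_\eps^{(2)}$ removes \emph{both} the boundary-layer $\eps^{1/2}$-term and the $\eps^p$-term at the base point: an $\eps^p$-term, if left in, would transfer through \eqref{7.13} as $\eps^p|\zeta|^{-1}$, which is not controlled by $\eps|\zeta|^{-2}$ as $|\zeta|\to0$. After that one simply applies $\eps^p\le\eps$ and $|\zeta|^{-1}\le|\zeta|^{-2}$ (valid for $\eps\le\eps_1\le1$ and $|\zeta|\le1$) consistently, so that every contribution folds into the single bound $c(\varphi)^2\eps|\zeta|^{-2}$ of \eqref{7.26} and \eqref{7.27}.
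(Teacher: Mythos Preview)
Your proof is correct and follows essentially the paper's route: specialize the improved estimate \eqref{6.24} (available since $g^0=\underline{g}$ makes $\wt g=g^0$ and hence kills ${\mathcal J}_\eps^{(2)}$) to $\zeta=-1$, transport it to $|\zeta|\le1$ via the resolvent identity \eqref{7.13} using \eqref{7.1}, \eqref{7.9}, \eqref{7.11}, and then pass from $K_N$ to $K_N^0$ by \eqref{7.23}, which is legitimate because Proposition~\ref{prop_MLambda}($2^\circ$) supplies Condition~\ref{cond_Lambda}; the flux estimate is handled the same way. One minor correction to your closing commentary: the $\eps^p$-term in \eqref{6.24} is \emph{not} removed by the vanishing of ${\mathcal J}_\eps^{(2)}$ --- it survives from \eqref{3.39}, i.e., from the $\R^d$ estimate \eqref{3.20} --- and moreover $\eps^p|\zeta|^{-1}\le\eps|\zeta|^{-1}\le\eps|\zeta|^{-2}$ for $\eps\le1$, $|\zeta|\le1$, so it \emph{is} controlled; but since you absorb $\eps^p\le\eps$ anyway, this does not affect the argument.
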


\begin{proof}
Under condition $g^0 = \underline{g}$, inequality \eqref{6.24}
at the point $\zeta = -1$ hods. Combining it with identity \eqref{7.13}, similarly to \eqref{7.12a}, we obtain
\begin{equation}
\label{77.24}
\begin{split}
\| (A_{N,\eps} -\zeta  I)^{-1} - ( A_{N}^0 - \zeta I)^{-1} - \eps^p K_N(\zeta; \eps) \|_{L_2(\O) \to H^p(\O)}
\le \widehat{\mathfrak C}_2' c(\varphi)^2 \eps |\zeta|^{-2},
\quad 0< \eps \le \eps_1,
\end{split}
\end{equation}
where $\widehat{\mathfrak C}'_2 := 4 \widehat{\mathcal C}'_2 + 4 {\mathcal C}_0 {\mathcal C}_1'$.

By Proposition~\ref{prop_MLambda}($2^\circ$), Condition \ref{cond_Lambda} is now satisfied; moreover,
$\|\Lambda\|_{L_\infty}$ and $M_\Lambda$ are controlled in terms of $m$, $n$, $d$, $p$,
$\alpha_0$, $\alpha_1$, $\|g\|_{L_\infty}$, $\|g^{-1}\|_{L_\infty}$, and the parameters of the lattice $\Gamma$.
Then inequality \eqref{7.23} holds. Together with \eqref{77.24}, it implies the
required estimate \eqref{7.26} with the constant
$\widehat{\mathfrak C}_2 := \widehat{\mathfrak C}'_2 + 2{\mathfrak C}_5 C_\O^{(2p)} \wh{c}$.

Estimate \eqref{7.27} is deduced from \eqref{7.17}, \eqref{7.18}, \eqref{7.25},  \eqref{77.24}, and the identity
 $\widetilde{g}(\x)=g^0$.
\end{proof}

\section{Approximation of the resolvent $( B_{N,\eps} - \zeta I)^{-1}$}

\subsection{The operator $B_{N,\eps}$}

We denote
$$
Z := \operatorname{Ker} b(\D) = \{ \z \in H^p(\O;\C^n):\ b(\D)\z =0 \}.
$$
From \eqref{22.2} it follows that
\begin{equation*}
\| \z \|^2_{H^p(\O)} \le k_2 \|\z\|^2_{L_2(\O)},\quad \z \in Z.
\end{equation*}
Together with the compactness of the embedding of $H^p(\O;\C^n)$ in $L_2(\O;\C^n)$, this shows that $Z$ is finite-dimensional.
We denote $\operatorname{dim} Z =:q$.  Obviously, the space $Z$ contains a subspace of $\C^n$-valued polynomials
of degree not exceeding $p-1$.
We put ${\mathcal H}(\O):= L_2(\O;\C^n) \ominus Z$ and $H^p_\perp (\O;\C^n):= H^p(\O;\C^n) \cap {\mathcal H}(\O)$.
It is easily seen (cf. \cite[Proposition~9.1]{Su03}) that the form $\|b(\D) \u\|_{L_2(\O)}$
defines a norm in $H^p_\perp(\O;\C^n)$ equivalent to the standard norm: there exists a constant $\wt{k}_1>0$ such that
\begin{equation}
\label{8.1}
\| \u \|_{H^p(\O)}^2 \leqslant \wt{k}_1  \| b(\D) \u \|^2_{L_2(\O)}, \quad \u \in H^p_\perp(\O;\C^n).
\end{equation}

Recall that the operator $A_{N,\eps}$ is generated in $L_2(\O;\C^n)$ by the form \eqref{22.3}, and $A_N^0$ is generated by the form \eqref{22.10}.
Obviously, $\operatorname{Ker} A_{N,\eps} = \operatorname{Ker} A_{N}^0 = Z$. The orthogonal decomposition
$L_2(\O;\C^n) = {\mathcal H}(\O) \oplus Z$ reduces both operators $A_{N,\eps}$ and $A_N^0$.
Denote by $B_{N,\eps}$ (respectively, $B_N^0$)
the part of the operator $A_{N,\eps}$ (respectively, $A_N^0$) in the subspace ${\mathcal H}(\O)$.
In other words, $B_{N,\eps}$ is the selfadjoint operator in ${\mathcal H}(\O)$ generated by the quadratic form
$$
b_{N,\eps}[\u,\u] = (g^\eps b(\D) \u, b(\D) \u)_{L_2(\O)},\quad \u \in H^p_\perp (\O;\C^n).
$$
Similarly, $B_{N}^0$ is the operator in ${\mathcal H}(\O)$ corresponding to the form
$$
b_{N}^0[\u,\u] = (g^0 b(\D) \u, b(\D) \u)_{L_2(\O)},\quad \u \in H^p_\perp (\O;\C^n).
$$
By \eqref{22.4} and \eqref{8.1}, we have
\begin{equation}
\label{8.2}
\|g^{-1}\|^{-1}_{L_\infty} (\wt{k}_1)^{-1} \| \u \|^2_{H^p(\O)} \le b_{N,\eps}[\u,\u] \leqslant \wt{\mathfrak c}_p \alpha_1 \|g\|_{L_\infty}  \| \D^p \u \|^2_{L_2(\O)}, \quad \u \in H^p_\perp(\O;\C^n).
\end{equation}
Similarly,
\begin{equation}
\label{8.3}
\|g^{-1}\|^{-1}_{L_\infty} (\wt{k}_1)^{-1} \| \u \|^2_{H^p(\O)} \le b_{N}^0[\u,\u] \leqslant \wt{\mathfrak c}_p \alpha_1 \|g\|_{L_\infty}  \| \D^p \u \|^2_{L_2(\O)}, \quad \u \in H^p_\perp(\O;\C^n).
\end{equation}
By $\mathcal P$ (respectively, ${\mathcal P}_Z$) we denote the orthogonal projection of $L_2(\O;\C^n)$ onto ${\mathcal H}(\O)$
(respectively, onto $Z$). Then ${\mathcal P} =I - {\mathcal P}_Z$.

Suppose that $\zeta \in \C \setminus [c_\flat, \infty)$, where $c_\flat>0$ is a common lower bound of the operators
$B_{N,\eps}$ and $B_N^0$. In other words, $0< c_\flat \le \min \{ \lambda_{2,\eps}(N), \lambda_2^0(N)\}$, where
$\lambda_{2,\eps}(N)$ (respectively, $\lambda_2^0(N)$) is the first nonzero eigenvalue of the operator $A_{N,\eps}$
(respectively, $A_N^0$). If the eigenvalues are enumerated in the nondecreasing order counting multiplicities,
these eigenvalues have number $q+1$.

\begin{remark}
\emph{1)}
By \eqref{8.2} and \eqref{8.3}, one can take $c_\flat$ equal to $\|g^{-1}\|^{-1}_{L_\infty} (\wt{k}_1)^{-1}$.

\emph{2)} Let $\delta >0$ be arbitrarily small. If $\eps$ is sufficiently small, we can choose $c_\flat = \lambda_2^0(N) - \delta$.

\emph{3)} It is easy to give the upper bound for $c_\flat${\rm :} from \eqref{8.2}, \eqref{8.3}, and the variational pronciple it follows that
$c_\flat \le \wt{\mathfrak c}_p \alpha_1 \|g\|_{L_\infty} \mu^0_{q+1}(N)$, where $\mu^0_{q+1}(N)$ is the $(q+1)$th eigenvalue of the operator
$(-1)^p \wt{\Delta}_p = (-1)^{p} \sum_{|\alpha|=p} \partial^{2\alpha}$  in $L_2(\O;\C^n)$ with the Neumann conditions.
Thus, $c_\flat$ does not exceed a number depending only on $d$, $p$, $n$, $q$, $\alpha_1$, $\|g\|_{L_\infty}$, and the domain~$\O$.
\end{remark}

Let ${\boldsymbol \varphi}_\eps := (B_{N,\eps} - \zeta I)^{-1} \FF$, where $\FF \in {\mathcal H}(\O)$,
and let ${\boldsymbol \varphi}_0 := (B_{N}^0 - \zeta I)^{-1} \FF$. Denote
\begin{equation}
\label{8.4}
{\mathcal K}_{N} (\zeta;\eps) := R_\O [\Lambda^\eps] S_\eps b(\D) P_\O  (B_{N}^0 - \zeta I)^{-1},
\end{equation}
and put $\wt{\boldsymbol \varphi}_0 := P_\O {\boldsymbol \varphi}_0$,
\begin{equation}
\label{8.5}
{\boldsymbol \psi}_\eps : = {\boldsymbol \varphi}_0 + \eps^p \Lambda^\eps S_\eps b(\D) \wt{\boldsymbol \varphi}_0 =
(B_{N}^0 - \zeta I)^{-1} \FF + \eps^p {\mathcal K}_{N} (\zeta;\eps) \FF.
\end{equation}

\begin{lemma}
Let $\zeta \in \C \setminus [c_\flat,\infty)$, where $c_\flat>0$ is a common lower bound of the operators $B_{N,\eps}$ and $B_N^0$.
We put $\zeta - c_\flat = |\zeta - c_\flat| e^{i \vartheta}$ and denote
\begin{equation}
\label{8.6}
\rho_\flat(\zeta)  = \begin{cases}
c(\vartheta)^2 |\zeta - c_\flat|^{-2}, & |\zeta - c_\flat|<1,
\\ c(\vartheta)^2, & |\zeta - c_\flat|\ge 1.
\end{cases}
\end{equation}
Here $c(\vartheta)$ is defined according to \eqref{2.1}.
Then for $\eps>0$ we have
\begin{align}
\label{88.7}
\| (B_{N,\eps} - \zeta I)^{-1} \|_{{\mathcal H}(\O)\to H^p(\O)} \le {\mathfrak C}_7 \rho_\flat(\zeta)^{1/2},
\\
\label{88.8}
\| (B_{N}^0 - \zeta I)^{-1} \|_{{\mathcal H}(\O)\to H^{2p}(\O)} \le {\mathfrak C}_8 \rho_\flat(\zeta)^{1/2}.
\end{align}
The constants ${\mathfrak C}_7$ and ${\mathfrak C}_8$ depend only on $m$, $n$, $d$, $p$, $q$,
$\alpha_0$, $\alpha_1$, $\|g\|_{L_\infty}$, $\|g^{-1}\|_{L_\infty}$, $k_1$, $k_2$, the parameters of the lattice $\Gamma$, and the domain $\O$.
\end{lemma}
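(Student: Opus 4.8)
The plan is to reduce both estimates to the reference point $\zeta=-1$, and then transport them to an arbitrary $\zeta\in\C\setminus[c_\flat,\infty)$ by a resolvent factorisation combined with the spectral theorem. Since the forms $b_{N,\eps}$ and $b_N^0$ are nonnegative on $H^p_\perp(\O;\C^n)$ and $c_\flat$ is a common lower bound of $B_{N,\eps}$ and $B_N^0$, the spectra of these self-adjoint operators in $\mathcal H(\O)$ lie in $[c_\flat,\infty)\subset(0,\infty)$; in particular the point $\zeta=-1$ is regular for both.

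First I would establish the two base estimates at $\zeta=-1$. For $\FF\in\mathcal H(\O)$ and $\u:=(B_{N,\eps}+I)^{-1}\FF\in H^p_\perp(\O;\C^n)$, testing the defining form identity with $\eeta=\u$ gives $b_{N,\eps}[\u,\u]+\|\u\|^2_{L_2(\O)}=(\FF,\u)_{L_2(\O)}$; as the left side is real and nonnegative this yields $\|\u\|_{L_2(\O)}\le\|\FF\|_{L_2(\O)}$ and $b_{N,\eps}[\u,\u]\le\|\FF\|^2_{L_2(\O)}$, whence, by \eqref{8.2},
$$\|(B_{N,\eps}+I)^{-1}\|_{\mathcal H(\O)\to H^p(\O)}\le\bigl(\|g^{-1}\|_{L_\infty}\wt k_1\bigr)^{1/2}.$$
For the effective operator I would use that $L_2(\O;\C^n)=\mathcal H(\O)\oplus Z$ reduces $A_N^0$ with $A_N^0|_Z=0$, so that $(B_N^0+I)^{-1}\FF=(A_N^0+I)^{-1}\FF$ for $\FF\in\mathcal H(\O)$, and hence \eqref{22.11} gives $\|(B_N^0+I)^{-1}\|_{\mathcal H(\O)\to H^{2p}(\O)}\le\wh c$.

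Then I would write $(B_{N,\eps}-\zeta I)^{-1}=(B_{N,\eps}+I)^{-1}\,(B_{N,\eps}+I)(B_{N,\eps}-\zeta I)^{-1}$ (and likewise for $B_N^0$), so that \eqref{88.7} and \eqref{88.8} follow once $\|(B_{N,\eps}+I)(B_{N,\eps}-\zeta I)^{-1}\|_{\mathcal H(\O)\to\mathcal H(\O)}$, and the corresponding norm for $B_N^0$, are bounded by a multiple of $\rho_\flat(\zeta)^{1/2}$. By the spectral theorem this operator norm is at most $\sup_{x\ge c_\flat}\tfrac{x+1}{|x-\zeta|}$, and using the elementary inequality $x+1\le(c_\flat+1)(x-c_\flat+1)$ for $x\ge c_\flat$ together with the substitution $y=x-c_\flat\ge0$, $\xi:=\zeta-c_\flat=|\zeta-c_\flat|e^{i\vartheta}\in\C\setminus\R_+$, one gets $\sup_{x\ge c_\flat}\tfrac{x+1}{|x-\zeta|}\le(c_\flat+1)\sup_{y\ge0}\tfrac{y+1}{|y-\xi|}$; the last supremum is $\le2c(\vartheta)|\xi|^{-1}$ when $|\xi|\le1$ (this is \eqref{7.11}) and $\le2c(\vartheta)$ when $|\xi|\ge1$ (the computation used in the proof of Lemma~\ref{lem_eff}), which in view of \eqref{8.6} reads $\le2\rho_\flat(\zeta)^{1/2}$ in both cases. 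Collecting the pieces yields \eqref{88.7} with $\mathfrak C_7:=2(c_\flat+1)(\|g^{-1}\|_{L_\infty}\wt k_1)^{1/2}$ and \eqref{88.8} with $\mathfrak C_8:=2\wh c\,(c_\flat+1)$; since, as noted in the Remark preceding the lemma, $c_\flat$ is bounded above by a quantity depending only on $d,p,n,q,\alpha_1,\|g\|_{L_\infty}$ and $\O$, and $\wt k_1$ is controlled through \eqref{8.1}, the constants $\mathfrak C_7,\mathfrak C_8$ depend only on the quantities listed.

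I do not expect a genuine obstacle here: the argument is exactly the self-adjoint functional-calculus estimate already carried out in Lemma~\ref{lem_eff}, shifted so that the bottom of the spectrum is $c_\flat$ rather than $0$, plus the routine reduction to $\mathcal H(\O)$ via the invariant decomposition $L_2(\O;\C^n)=\mathcal H(\O)\oplus Z$. The only mildly delicate point is matching $\sup_{x\ge c_\flat}\tfrac{x+1}{|x-\zeta|}$ with $\rho_\flat(\zeta)^{1/2}$ uniformly across the two regimes $|\zeta-c_\flat|<1$ and $|\zeta-c_\flat|\ge1$ — which is precisely why $\rho_\flat$ is defined piecewise — and verifying that $c_\flat$ and $\wt k_1$ enter the final constants only through admissible quantities.
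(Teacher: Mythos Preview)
Your proof is correct and follows essentially the same route as the paper: factor the resolvent through the reference point $\zeta=-1$, use the spectral theorem to bound $\|(B+I)(B-\zeta I)^{-1}\|$ by $\sup_{x\ge c_\flat}\frac{x+1}{|x-\zeta|}$, and then match this supremum with $\rho_\flat(\zeta)^{1/2}$. The only cosmetic differences are that the paper quotes \eqref{22.9} and \eqref{22.17} (with constant $\mathcal C_0$, depending on $k_1,k_2$) for the base estimates at $\zeta=-1$ rather than rederiving them via \eqref{8.2} (which brings in $\wt k_1$), and it states the supremum bound with constant $\check c_\flat=c_\flat+2$ directly instead of your explicit shift $y=x-c_\flat$ yielding $2(c_\flat+1)$.
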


\begin{proof}
From \eqref{22.9} (with $\zeta = -1$) it follows that
\begin{equation}
\label{8.6a}
\begin{split}
&\| (B_{N,\eps} - \zeta I)^{-1} \|_{{\mathcal H}(\O)\to H^p(\O)}
\\
&\le
\| (B_{N,\eps} + I)^{-1} \|_{{\mathcal H}(\O)\to H^p(\O)}
\| (B_{N,\eps} + I) (B_{N,\eps} - \zeta I)^{-1} \|_{{\mathcal H}(\O)\to {\mathcal H}(\O)}
\le  {\mathcal C}_0 \sup_{x \ge c_\flat} \frac{x+1}{|x -\zeta|}.
\end{split}
\end{equation}
A calculation shows that
\begin{equation}
\label{8.15}
\sup_{x \ge c_\flat} \frac{x+1}{|x -\zeta|} \le \check{c}_\flat \rho_\flat(\zeta)^{1/2},
\end{equation}
where $\check{c}_\flat := c_\flat +2$. Relations \eqref{8.6a} and \eqref{8.15} imply estimate \eqref{88.7} with the constant
 ${\mathfrak C}_7 := {\mathcal C}_0 \check{c}_\flat$.

Estimate \eqref{88.8} with the constant ${\mathfrak C}_8 := 2 \wh{c} \check{c}_\flat$ follows from \eqref{22.17} (with $\zeta=-1$) and \eqref{8.15}.
\end{proof}

\begin{theorem}\label{th8.3}
Suppose that $\O \subset \R^d$ is a bounded domain of class $C^{2p}$.
Suppose that the number $\eps_1>0$ is subject to Condition \emph{\ref{cond_eps}}. Let $0< \eps \le \eps_1$.
Let $\zeta \in \C \setminus [c_\flat,\infty)$, where $c_\flat>0$ is a common lower bound of the operators $B_{N,\eps}$ and $B_N^0$.
Let ${\boldsymbol \varphi}_\eps := (B_{N,\eps} - \zeta I)^{-1} \FF$ and
${\boldsymbol \varphi}_0 := (B_{N}^0 - \zeta I)^{-1} \FF$, where $\FF \in {\mathcal H}(\O)$.
Let ${\mathcal K}_N(\zeta;\eps)$ be the operator  \eqref{8.4}, and let
${\boldsymbol \psi}_\eps$ be given by \eqref{8.5}. Then we have
\begin{align}
\nonumber
&\| {\boldsymbol \varphi}_\eps - {\boldsymbol \varphi}_0 \|_{L_2(\O)} \le {\mathfrak C}_9 \eps \rho_\flat(\zeta) \| \FF \|_{L_2(\O)},
\\
\label{8.8}
&\| {\boldsymbol \varphi}_\eps - {\boldsymbol \psi}_\eps \|_{H^p(\O)} \le
{\mathfrak C}_{10} \left(\eps^{1/2} \rho_\flat(\zeta)^{1/2} +\eps |\zeta +1| \rho_\flat(\zeta)\right) \| \FF \|_{L_2(\O)}.
\end{align}
In operator terms,
\begin{align}
\label{8.9}
&\| (B_{N,\eps} - \zeta I)^{-1} - (B_{N}^0 - \zeta I)^{-1} \|_{{\mathcal H}(\O) \to {\mathcal H}(\O)} \le {\mathfrak C}_9 \eps \rho_\flat(\zeta),
\\
\label{8.10}
\begin{split}
&\| (B_{N,\eps} - \zeta I)^{-1} - (B_{N}^0 - \zeta I)^{-1} - \eps^p {\mathcal K}_N(\zeta;\eps) \|_{{\mathcal H}(\O) \to H^p(\O)}
\\
&\le
{\mathfrak C}_{10} \left(\eps^{1/2} \rho_\flat(\zeta)^{1/2} +\eps |\zeta +1| \rho_\flat(\zeta)\right).
\end{split}
\end{align}
The flux $g^\eps b(\D) {\boldsymbol \varphi}_\eps$ satisfies
\begin{equation}
\label{8.11}
\| g^\eps b(\D) {\boldsymbol \varphi}_\eps - \wt{g}^\eps S_\eps b(\D) \wt{\boldsymbol \varphi}_0 \|_{L_2(\O)} \le
{\mathfrak C}_{11} \left(\eps^{1/2} \rho_\flat(\zeta)^{1/2} +\eps |\zeta +1| \rho_\flat(\zeta)\right) \| \FF \|_{L_2(\O)}
\end{equation}
for $0< \eps \le \eps_1$.
The constants ${\mathfrak C}_9$, ${\mathfrak C}_{10}$, and ${\mathfrak C}_{11}$ depend only on $m$, $n$, $d$, $p$, $q$,
$\alpha_0$, $\alpha_1$, $\|g\|_{L_\infty}$, $\|g^{-1}\|_{L_\infty}$, $k_1$, $k_2$, the parameters of the lattice $\Gamma$, and the domain $\O$.
\end{theorem}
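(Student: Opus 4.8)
The plan is to deduce Theorem~\ref{th8.3} from the estimates already obtained at the base point $\zeta=-1$ by means of resolvent identities, in exactly the manner Theorem~\ref{th7.2} was deduced from \eqref{4.52} and \eqref{4.30} (the trick originating in \cite[Section~10]{MSu2}). Since $\Ker A_{N,\eps}=\Ker A_N^0=Z$ and the orthogonal decomposition $L_2(\O;\C^n)={\mathcal H}(\O)\oplus Z$ reduces both $A_{N,\eps}$ and $A_N^0$, for $\FF\in{\mathcal H}(\O)$ we have $(A_{N,\eps}+I)^{-1}\FF=(B_{N,\eps}+I)^{-1}\FF$, $(A_N^0+I)^{-1}\FF=(B_N^0+I)^{-1}\FF$, and $K_N(-1;\eps)\FF={\mathcal K}_N(-1;\eps)\FF$. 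Hence \eqref{4.52} and \eqref{4.30} at $\zeta=-1$ (where $|\zeta|=1$, so the bracket on the right of \eqref{4.30} does not exceed $2\eps^{1/2}$), read on ${\mathcal H}(\O)$, give for $0<\eps\le\eps_1$
\begin{align*}
&\|(B_{N,\eps}+I)^{-1}-(B_N^0+I)^{-1}\|_{{\mathcal H}(\O)\to{\mathcal H}(\O)}\le{\mathcal C}_1'\,\eps,
\\
&\|(B_{N,\eps}+I)^{-1}-(B_N^0+I)^{-1}-\eps^p{\mathcal K}_N(-1;\eps)\|_{{\mathcal H}(\O)\to H^p(\O)}\le 2{\mathcal C}_2'\,\eps^{1/2}.
\end{align*}

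Next I would use the operator identities (analogues of \eqref{4.53} and \eqref{4.56}) in ${\mathcal H}(\O)$, valid for $\zeta\in\C\setminus[c_\flat,\infty)$:
\begin{equation*}
\begin{aligned}
&(B_{N,\eps}-\zeta I)^{-1}-(B_N^0-\zeta I)^{-1}
\\
&=(B_{N,\eps}+I)(B_{N,\eps}-\zeta I)^{-1}\bigl[(B_{N,\eps}+I)^{-1}-(B_N^0+I)^{-1}\bigr](B_N^0+I)(B_N^0-\zeta I)^{-1},
\end{aligned}
\end{equation*}
and, using the elementary factorization ${\mathcal K}_N(\zeta;\eps)={\mathcal K}_N(-1;\eps)(B_N^0+I)(B_N^0-\zeta I)^{-1}$ that follows from \eqref{8.4},
\begin{equation*}
\begin{aligned}
&(B_{N,\eps}-\zeta I)^{-1}-(B_N^0-\zeta I)^{-1}-\eps^p{\mathcal K}_N(\zeta;\eps)
\\
&=\bigl[(B_{N,\eps}+I)^{-1}-(B_N^0+I)^{-1}-\eps^p{\mathcal K}_N(-1;\eps)\bigr](B_N^0+I)(B_N^0-\zeta I)^{-1}
\\
&\quad+(\zeta+1)(B_{N,\eps}-\zeta I)^{-1}\bigl[(B_{N,\eps}+I)^{-1}-(B_N^0+I)^{-1}\bigr](B_N^0+I)(B_N^0-\zeta I)^{-1}.
\end{aligned}
\end{equation*}
The transfer factors $(B_N^0+I)(B_N^0-\zeta I)^{-1}$ and $(B_{N,\eps}+I)(B_{N,\eps}-\zeta I)^{-1}$ are bounded in $L_2$ by $\sup_{x\ge c_\flat}\frac{x+1}{|x-\zeta|}\le\check c_\flat\,\rho_\flat(\zeta)^{1/2}$ (see \eqref{8.15}), while $\|(B_{N,\eps}-\zeta I)^{-1}\|_{{\mathcal H}(\O)\to H^p(\O)}\le{\mathfrak C}_7\,\rho_\flat(\zeta)^{1/2}$ by \eqref{88.7}. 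Substituting these bounds into the first identity produces a factor $\rho_\flat(\zeta)$ and hence \eqref{8.9}; substituting them into the second identity produces $\eps^{1/2}\rho_\flat(\zeta)^{1/2}$ from the first summand and $\eps\,|\zeta+1|\,\rho_\flat(\zeta)$ from the second, which is \eqref{8.10}. Applying these operator inequalities to $\FF$ yields the stated $L_2$- and $H^p$-bounds for ${\boldsymbol\varphi}_\eps-{\boldsymbol\varphi}_0$ and ${\boldsymbol\varphi}_\eps-{\boldsymbol\psi}_\eps$.

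For the flux \eqref{8.11} I would split $g^\eps b(\D){\boldsymbol\varphi}_\eps-\wt g^\eps S_\eps b(\D)\wt{\boldsymbol\varphi}_0=g^\eps b(\D)({\boldsymbol\varphi}_\eps-{\boldsymbol\psi}_\eps)+\bigl(g^\eps b(\D){\boldsymbol\psi}_\eps-\wt g^\eps S_\eps b(\D)\wt{\boldsymbol\varphi}_0\bigr)$. The first term is controlled by \eqref{8.8} together with $|b(\D)\u|\le c\,|\D^p\u|$ (from \eqref{1.3}, \eqref{1.5}). For the second term, expanding $b(\D)\bigl(\eps^p\Lambda^\eps S_\eps b(\D)\wt{\boldsymbol\varphi}_0\bigr)$ by the Leibniz rule, the leading term $g^\eps(b(\D)\Lambda)^\eps S_\eps b(\D)\wt{\boldsymbol\varphi}_0$ combines with $g^\eps b(\D){\boldsymbol\varphi}_0$ into $\wt g^\eps S_\eps b(\D)\wt{\boldsymbol\varphi}_0$ (using $\wt g(\x)=g(\x)(b(\D)\Lambda(\x)+\mathbf 1_m)$, cf.\ \eqref{1.12}) up to the error $g^\eps(I-S_\eps)b(\D)\wt{\boldsymbol\varphi}_0$ (controlled via Proposition~\ref{prop1.4}) and up to the lower-order Leibniz terms carrying extra powers of $\eps$ (controlled via Proposition~\ref{prop1.5} and \eqref{1.4}, \eqref{Lambda3}). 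Exactly as in \eqref{7.16}--\eqref{7.18}, the second term is thus bounded by $C\eps\,\|\wt{\boldsymbol\varphi}_0\|_{H^{2p}(\R^d)}\le C'\eps\,\rho_\flat(\zeta)^{1/2}\,\|\FF\|_{L_2(\O)}$ via \eqref{88.8} and \eqref{3.4}; since $\eps\le\eps_1\le1$ we have $\eps\,\rho_\flat(\zeta)^{1/2}\le\eps^{1/2}\rho_\flat(\zeta)^{1/2}$, so \eqref{8.11} follows.

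All of this is routine; the only places needing care are the verification of the two resolvent identities — in particular the factorization of the corrector through $(B_N^0+I)(B_N^0-\zeta I)^{-1}$, which rests on the specific form \eqref{8.4} of ${\mathcal K}_N$ — and the bookkeeping of the powers of $\rho_\flat(\zeta)^{1/2}$ and of the factor $|\zeta+1|$, which should be carried out exactly as in the proof of Theorem~\ref{th7.2}. I do not anticipate any genuine obstacle beyond this bookkeeping.
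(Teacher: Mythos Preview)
Your proposal is correct and follows essentially the same route as the paper: restrict the $\zeta=-1$ estimates \eqref{4.52}, \eqref{4.30} to ${\mathcal H}(\O)$ to obtain the base bounds for $B_{N,\eps}$ and $B_N^0$, then transfer to general $\zeta\in\C\setminus[c_\flat,\infty)$ via the resolvent identities (your second displayed identity is exactly the paper's \eqref{8.18}) together with the spectral bound \eqref{8.15} and the a priori estimate \eqref{88.7}; the flux is then handled by the same Leibniz-rule splitting as in \eqref{7.16}--\eqref{7.18}, invoking \eqref{88.8}. The only cosmetic difference is that the paper does not separately record the corrector factorization ${\mathcal K}_N(\zeta;\eps)={\mathcal K}_N(-1;\eps)(B_N^0+I)(B_N^0-\zeta I)^{-1}$ but absorbs it directly into the identity \eqref{8.18}.
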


\begin{proof}
By \eqref{7.9},
\begin{equation}
\label{8.12}
\begin{split}
\| (B_{N,\eps} + I)^{-1} - (B_{N}^0 + I)^{-1} \|_{{\mathcal H}(\O) \to {\mathcal H}(\O)}
= \| \left( (A_{N,\eps} + I)^{-1} - (A_{N}^0 + I)^{-1} \right) {\mathcal P} \|_{L_2(\O) \to L_2(\O)} \le
{\mathcal C}_1' \eps
\end{split}
\end{equation}
for $0< \eps \le \eps_1$.
Similarly to \eqref{4.53}, we have
\begin{equation*}
\begin{split}
& (B_{N,\eps} - \zeta I)^{-1} - (B_{N}^0 - \zeta I)^{-1}
\\
&= (B_{N,\eps} + I) (B_{N,\eps} - \zeta I)^{-1} \left( (B_{N,\eps} + I)^{-1} - (B_{N}^0 + I)^{-1} \right) (B_{N}^0 + I) (B_{N}^0 - \zeta I)^{-1}.
\end{split}
\end{equation*}
Combining this with \eqref{8.12}, we obtain
\begin{equation}
\label{8.14}
\begin{split}
\| (B_{N,\eps} - \zeta I)^{-1} - (B_{N}^0 - \zeta I)^{-1} \|_{{\mathcal H}(\O) \to {\mathcal H}(\O)}
\le {\mathcal C}_1' \eps \sup_{x \ge c_\flat} \frac{(x+1)^2}{|x -\zeta|^2}
\end{split}
\end{equation}
for $0< \eps \le \eps_1$.
Relations \eqref{8.15} and \eqref{8.14} imply the required estimate \eqref{8.9} with the constant ${\mathfrak C}_9 := {\mathcal C}_1' \check{c}_\flat^2$.

Multiplying the operators under the norm sign in \eqref{7.12} by $\mathcal P$ from the right, we obtain
\begin{equation}
\label{8.17}
\|( B_{N,\eps} + I)^{-1} - ( B_{N}^0 + I)^{-1} - \eps^p  {\mathcal K}_N(-1;\eps) \|_{{\mathcal H}(\O) \to H^p(\O)}
\le  2 {\mathcal C}_2' \eps^{1/2}.
\end{equation}

Similarly to \eqref{7.13}, we have
\begin{equation}
\label{8.18}
\begin{aligned}
&(B_{N,\eps} - \zeta I)^{-1} - (B_{N}^0 - \zeta I)^{-1} - \eps^p {\mathcal K}_N(\zeta;\eps)
\\
&=
\left( (B_{N,\eps} + I)^{-1} - (B_{N}^0 + I)^{-1} - \eps^p {\mathcal K}_N(-1;\eps)\right) (B_{N}^0 +I) (B_{N}^0 - \zeta I)^{-1}
\\
&+(\zeta +1) (B_{N,\eps} - \zeta I)^{-1} \left( (B_{N,\eps} + I)^{-1} - (B_{N}^0 + I)^{-1}\right)
(B_{N}^0 +I) (B_{N}^0 - \zeta I)^{-1}.
\end{aligned}
\end{equation}
Together with \eqref{8.15}, this yields
\begin{equation}
\label{8.19a}
\begin{aligned}
&\|(B_{N,\eps} - \zeta I)^{-1} - (B_{N}^0 - \zeta I)^{-1} - \eps^p {\mathcal K}_N(\zeta;\eps)\|_{{\mathcal H}(\O) \to H^p(\O)}
\\
&\le \check{c}_\flat \rho_\flat(\zeta)^{1/2} \| (B_{N,\eps} + I)^{-1} - (B_{N}^0 + I)^{-1} - \eps^p {\mathcal K}_N(-1;\eps)\|_{{\mathcal H}(\O) \to H^p(\O)}
\\
&+ |\zeta +1|  \check{c}_\flat \rho_\flat(\zeta)^{1/2} \|(B_{N,\eps} - \zeta I)^{-1}\|_{{\mathcal H}(\O) \to H^p(\O)}
\| (B_{N,\eps} + I)^{-1} - (B_{N}^0 + I)^{-1} \|_{{\mathcal H}(\O) \to {\mathcal H}(\O)}.
\end{aligned}
\end{equation}
Combining this with \eqref{88.7}, \eqref{8.12}, and  \eqref{8.17}, we arrive at
\begin{equation}
\label{8.20}
\begin{aligned}
\|(B_{N,\eps} - \zeta I)^{-1} - (B_{N}^0 - \zeta I)^{-1} - \eps^p {\mathcal K}_N(\zeta;\eps)\|_{{\mathcal H}(\O) \to H^p(\O)}
\le {\mathfrak C}_{12} \eps^{1/2} \rho_\flat(\zeta)^{1/2}
+ {\mathfrak C}_{13} \eps |\zeta +1| \rho_\flat(\zeta),
\end{aligned}
\end{equation}
where ${\mathfrak C}_{12} : = 2 \check{c}_\flat {\mathcal C}_{2}'$ and
${\mathfrak C}_{13} : = {\mathfrak C}_7  {\mathcal C}_1' \check{c}_\flat$.
As a result, inequality \eqref{8.20} implies the desired estimate \eqref{8.10} with the constant ${\mathfrak C}_{10} : =
\max\{ {\mathfrak C}_{12} ; {\mathfrak C}_{13}\}$.

It remains to check \eqref{8.11}. By \eqref{1.3}, \eqref{1.5}, and \eqref{8.8},
\begin{equation}
\label{8.23}
\| g^\eps b(\D) {\boldsymbol \varphi}_\eps - {g}^\eps b(\D) {\boldsymbol \psi}_\eps \|_{L_2(\O)} \le c_6(d,p)
\|g\|_{L_\infty} \alpha_1^{1/2}{\mathfrak C}_{10}  \left(\eps^{1/2} \rho_\flat(\zeta)^{1/2} + \eps |\zeta +1| \rho_\flat(\zeta)\right)  \| \FF \|_{L_2(\O)}.
\end{equation}
Similarly to \eqref{4.60}--\eqref{4.62} (cf. \eqref{7.17}), we have
\begin{equation}
\label{8.24}
\begin{split}
\| g^\eps b(\D) {\boldsymbol \psi}_\eps - \wt{g}^\eps S_\eps b(\D) \wt{\boldsymbol \varphi}_0 \|_{L_2(\O)}
\le {\mathfrak C}_4 \eps \| \wt{\boldsymbol \varphi}_0 \|_{H^{2p}(\R^d)}.
\end{split}
\end{equation}
From \eqref{3.4} and \eqref{88.8} it follows that
\begin{equation}
\label{8.25}
\| \wt{\boldsymbol \varphi}_0 \|_{H^{2p}(\O)} \le C_\O^{(2p)} {\mathfrak C}_8  \rho_\flat(\zeta)^{1/2} \|\FF\|_{L_2(\O)}.
\end{equation}

As a result, relations \eqref{8.23}--\eqref{8.25} imply the required estimate \eqref{8.11} with the constant
${\mathfrak C}_{11} := c_6(d,p) \|g\|_{L_\infty} \alpha_1^{1/2} {\mathfrak C}_{10} + {\mathfrak C}_4 {\mathfrak C}_8 C_\O^{(2p)}$.
\end{proof}

\subsection{Removal of the smoothing operator}

Consider the case where Condition \ref{cond_Lambda} is satisfied.
We introduce a corrector
\begin{equation}
\label{8.31}
{\mathcal K}^0_{N} (\zeta;\eps) := [\Lambda^\eps] b(\D)  (B_{N}^0 - \zeta I)^{-1},
\end{equation}
and denote
\begin{equation}
\label{8.32}
{\boldsymbol \psi}^0_\eps := {\boldsymbol \varphi}_0 + \eps^p \Lambda^\eps b(\D) {\boldsymbol \varphi}_0 =
(B_{N}^0 - \zeta I)^{-1} \FF + \eps^p {\mathcal K}_{N}^0 (\zeta;\eps) \FF.
\end{equation}

\begin{theorem}\label{th8.4}
Suppose that the assumptions of Theorem~\emph{\ref{th8.3}} and Condition \emph{\ref{cond_Lambda}} are satisfied.
Let ${\mathcal K}_N^0(\zeta;\eps)$~be the operator~\eqref{8.31}, and let ${\boldsymbol \psi}^0_\eps$ be given by~\eqref{8.32}.
Then for $\zeta \in \C \setminus [c_\flat,\infty)$ and $0< \eps \leqslant \eps_1$ we have
\begin{align}
\label{8.33}
\| {\boldsymbol \varphi}_\eps  -   {\boldsymbol \psi}^0_\eps \|_{H^p(\O)} \leqslant
 \wt{\mathfrak C}_{10} \left( \eps^{1/2} \rho_\flat(\zeta)^{1/2} + \eps |\zeta +1| \rho_\flat(\zeta) \right) \|\FF\|_{L_2(\O)},
\end{align}
or, in operator terms,
\begin{equation}
\begin{split}
\label{8.34}
\|( B_{N,\eps} - \zeta I)^{-1} - ( B_{N}^0 - \zeta I)^{-1} - \eps^p {\mathcal K}_N^0(\zeta;\eps) \|_{{\mathcal H}(\O) \to H^p(\O)}
\leqslant
 \wt{\mathfrak C}_{10} \left( \eps^{1/2} \rho_\flat(\zeta)^{1/2} + \eps |\zeta +1| \rho_\flat(\zeta) \right).
\end{split}
\end{equation}
The flux  $g^\eps b(\D) {\boldsymbol \varphi}_\eps$ satisfies
\begin{equation}
\label{8.35}
\|  g^\eps b(\D) {\boldsymbol \varphi}_\eps - \wt{g}^\eps b(\D) {\boldsymbol \varphi}_0 \|_{L_2(\O)} \leqslant
  \wt{\mathfrak C}_{11} \left( \eps^{1/2} \rho_\flat(\zeta)^{1/2} + \eps |\zeta +1| \rho_\flat(\zeta) \right) \|\FF\|_{L_2(\O)}
\end{equation}
for $0< \eps \leqslant \eps_1$.
The constants $\wt{\mathfrak C}_{10}$ and $\wt{\mathfrak C}_{11}$ depend only on $m$, $n$, $d$, $p$, $q$,
$\alpha_0$, $\alpha_1$, $\|g\|_{L_\infty}$, $\|g^{-1}\|_{L_\infty}$, $k_1$, $k_2$, the parameters of the lattice $\Gamma$, the domain $\O$,
and also on $\|\Lambda\|_{L_\infty}$ and $M_\Lambda$.
\end{theorem}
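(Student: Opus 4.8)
The plan is to follow the same two-part pattern already used to pass from Theorem~\ref{th8.3} to the non-smoothed corrector, in complete parallel with the way Theorem~\ref{no_smooth_O} was deduced in the case $|\zeta|\ge 1$ (and with the analogous removal statement in Section~7 for $|\zeta|\le 1$). The smoothed approximations are already controlled by \eqref{8.8} and \eqref{8.11}; hence it suffices to estimate the difference between $\bpsi_\eps$ from \eqref{8.5} and $\bpsi^0_\eps$ from \eqref{8.32} in $H^p(\O;\C^n)$, and the corresponding difference of fluxes in $L_2(\O;\C^n)$, and then combine by the triangle inequality.

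First I would compare $\bpsi_\eps$ and $\bpsi^0_\eps$. Their difference, restricted to $\O$, is $\eps^p\Lambda^\eps(I-S_\eps)b(\D)\wt{\bvarphi}_0$. Arguing exactly as in \eqref{6.9}--\eqref{6.13} (equivalently \eqref{7.22}), I would use \eqref{6.8}, Condition~\ref{cond_Lambda} together with the multiplier norm \eqref{MLambda}, part~$1^\circ$ of Lemma~\ref{lem6.2}, Proposition~\ref{prop1.4}, and the inequality $\|S_\eps\|_{L_2\to L_2}\le 1$; since $p\ge 1$ and $\eps\le\eps_1<1$, every power $\eps^l$ with $1\le l\le p$ is at most $\eps$, so one obtains $\|\bpsi_\eps-\bpsi^0_\eps\|_{H^p(\O)}\le \mathfrak C_5\,\eps\,\|\wt{\bvarphi}_0\|_{H^{2p}(\R^d)}$ with $\mathfrak C_5$ as in \eqref{7.22}. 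Bounding $\|\wt{\bvarphi}_0\|_{H^{2p}(\R^d)}$ via \eqref{3.4} and \eqref{88.8}, as in \eqref{8.25}, gives $\|\bpsi_\eps-\bpsi^0_\eps\|_{H^p(\O)}\le C\,\eps\,\rho_\flat(\zeta)^{1/2}\|\FF\|_{L_2(\O)}$. Adding \eqref{8.8} and using $\eps<1$ (so $\eps\,\rho_\flat(\zeta)^{1/2}\le\eps^{1/2}\rho_\flat(\zeta)^{1/2}$) yields \eqref{8.33}, and hence the operator form \eqref{8.34}.

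For the flux I would start from \eqref{8.11} and estimate the remaining term $\wt g^\eps S_\eps b(\D)\wt{\bvarphi}_0-\wt g^\eps b(\D)\bvarphi_0=\wt g^\eps(S_\eps-I)b(\D)\wt{\bvarphi}_0$ on $\O$ (using $b(\D)\bvarphi_0=b(\D)\wt{\bvarphi}_0$ there). Part~$2^\circ$ of Lemma~\ref{lem6.2}, in the form \eqref{6.7}, says that $\wt g$ is a multiplier $H^p\to L_2$ with norm $M_{\wt g}$ controlled by $d$, $p$, $\|g\|_{L_\infty}$, $\alpha_1$, $\|\Lambda\|_{L_\infty}$, $M_\Lambda$; applying it to $\u=(S_\eps-I)b(\D)\wt{\bvarphi}_0$, together with Proposition~\ref{prop1.4} and $\|S_\eps\|\le 1$ (exactly as in \eqref{6.14a}, \eqref{7.24}), gives $\|\wt g^\eps(S_\eps-I)b(\D)\wt{\bvarphi}_0\|_{L_2(\O)}\le\mathfrak C_6\,\eps\,\|\wt{\bvarphi}_0\|_{H^{2p}(\R^d)}$, which by \eqref{88.8} is $\le C\,\eps\,\rho_\flat(\zeta)^{1/2}\|\FF\|_{L_2(\O)}$. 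Combined with \eqref{8.11} this proves \eqref{8.35}. The constants $\wt{\mathfrak C}_{10}$, $\wt{\mathfrak C}_{11}$ assembled in this way depend only on the quantities listed in the statement, including $\|\Lambda\|_{L_\infty}$ and $M_\Lambda$ (and, through $M_{\wt g}$, on the same list).

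I do not expect any genuine obstacle here: all the analytic work has already been carried out in Theorem~\ref{th8.3} and in Lemma~\ref{lem6.2}, and the smoothing-removal error is intrinsically of size $\eps\,\rho_\flat(\zeta)^{1/2}$, which is dominated by the $\eps^{1/2}\rho_\flat(\zeta)^{1/2}$ term on the right-hand sides of \eqref{8.33} and \eqref{8.35}. The only point that needs a little care is bookkeeping: one must keep the factor $|\zeta+1|$ attached to $\rho_\flat(\zeta)$ throughout (it is inherited from \eqref{8.8} and \eqref{8.11}, which were obtained by transplanting the $\zeta=-1$ estimates through the resolvent identities \eqref{8.18}), and check that the extra $\eps\,\rho_\flat(\zeta)^{1/2}$ coming from removing $S_\eps$ fits under the stated bounds for every admissible $\zeta\in\C\setminus[c_\flat,\infty)$ and $0<\eps\le\eps_1$.
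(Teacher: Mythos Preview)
Your proposal is correct and follows essentially the same approach as the paper's own proof: bound $\|\bpsi_\eps-\bpsi^0_\eps\|_{H^p(\O)}$ by $\mathfrak C_5\,\eps\,\|\wt{\bvarphi}_0\|_{H^{2p}(\R^d)}$ via the argument of \eqref{6.9}--\eqref{6.13}, invoke \eqref{8.25} to get $C\eps\rho_\flat(\zeta)^{1/2}\|\FF\|_{L_2(\O)}$, and add to \eqref{8.8}; for the flux, bound $\|\wt g^\eps(S_\eps-I)b(\D)\wt{\bvarphi}_0\|_{L_2(\O)}$ by $\mathfrak C_6\,\eps\,\|\wt{\bvarphi}_0\|_{H^{2p}(\R^d)}$ via \eqref{6.14a}/\eqref{7.24} and add to \eqref{8.11}. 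The paper records the resulting constants as $\wt{\mathfrak C}_{10}=\mathfrak C_{10}+C_\O^{(2p)}\mathfrak C_5\mathfrak C_8$ and $\wt{\mathfrak C}_{11}=\mathfrak C_{11}+C_\O^{(2p)}\mathfrak C_6\mathfrak C_8$, absorbing the extra $\eps\rho_\flat(\zeta)^{1/2}$ into the $\eps^{1/2}\rho_\flat(\zeta)^{1/2}$ term exactly as you note.
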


\begin{proof}
Similarly to \eqref{6.9}--\eqref{6.13}, we have
\begin{equation}
\label{8.36}
\|  {\boldsymbol \psi}_\eps - {\boldsymbol \psi}^0_\eps  \|_{H^p(\O)} \leqslant
 {\mathfrak C}_5 \eps \|   \wt{\boldsymbol \varphi}_0  \|_{H^{2p}(\R^d)},
\end{equation}
cf. \eqref{7.22}.
From \eqref{8.25} and \eqref{8.36} it follows that
\begin{equation}
\label{8.37}
\|  {\boldsymbol \psi}_\eps - {\boldsymbol \psi}^0_\eps  \|_{H^p(\O)} \leqslant
C_\O^{(2p)}  {\mathfrak C}_5 {\mathfrak C}_8 \eps \rho_\flat(\zeta)^{1/2} \| \FF \|_{L_2(\O)}.
\end{equation}
Estimates \eqref{8.8} and \eqref{8.37} imply \eqref{8.33} with the constant
$\wt{\mathfrak C}_{10} := {\mathfrak C}_{10} + C_\O^{(2p)} {\mathfrak C}_5 {\mathfrak C}_8$.

Now, we check \eqref{8.35}. Similarly to \eqref{6.12}, \eqref{6.13}, and \eqref{6.14a}, we have
\begin{equation*}
\| \wt{g}^\eps b(\D) {\boldsymbol \varphi}_0 - \wt{g}^\eps S_\eps b(\D) \wt{\boldsymbol \varphi}_0  \|_{L_2(\O)} \leqslant
{\mathfrak C}_6  \eps \| \wt{\boldsymbol \varphi}_0 \|_{H^{2p}(\R^d)},
\end{equation*}
cf. \eqref{7.24}.
Together with \eqref{8.25}, this yields
\begin{equation}
\label{8.39}
\| \wt{g}^\eps b(\D) {\boldsymbol \varphi}_0 - \wt{g}^\eps S_\eps b(\D) \wt{\boldsymbol \varphi}_0  \|_{L_2(\O)} \leqslant
C_\O^{(2p)} {\mathfrak C}_6 {\mathfrak C}_8  \eps  \rho_\flat(\zeta)^{1/2} \| \FF \|_{L_2(\O)}.
\end{equation}
As a result, relations \eqref{8.11} and \eqref{8.39} imply \eqref{8.35} with the constant
$\wt{\mathfrak C}_{11} := {\mathfrak C}_{11} +  C_\O^{(2p)}  {\mathfrak C}_6 {\mathfrak C}_8$.
\end{proof}

\subsection{Special cases}
The case where the corrector is equal to zero is distinguished by the following statement, which directly follows from Theorem~\ref{th8.3}.

\begin{proposition}
\label{prop8.5}
Suppose that the assumptions of Theorem~\emph{\ref{th8.3}} are satisfied. Let $g^0 = \overline{g}$, i.~e., relations \eqref{1.18} hold.
Then for $\zeta \in \C \setminus [c_\flat, \infty)$ and $0< \eps \le \eps_1$ we have
\begin{equation*}
\begin{split}
\| {\boldsymbol \varphi}_\eps  -   {\boldsymbol \varphi}_0 \|_{H^p(\O)} \leqslant
 {\mathfrak C}_{10} \left( \eps^{1/2} \rho_\flat(\zeta)^{1/2} + \eps |\zeta +1| \rho_\flat(\zeta) \right) \|\FF\|_{L_2(\O)}.
\end{split}
\end{equation*}
\end{proposition}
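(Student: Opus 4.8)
The plan is to obtain Proposition~\ref{prop8.5} as an immediate specialization of Theorem~\ref{th8.3}, exploiting the fact that the corrector degenerates when $g^0 = \overline{g}$. First I would recall from Proposition~\ref{prop1.2} that the assumption $g^0 = \overline{g}$ is equivalent to the relations \eqref{1.18}, which can be rewritten as $b(\D)^* g(\x) \mathbf{1}_m = 0$. Consequently the constant function $\Lambda \equiv 0$ satisfies both the equation $b(\D)^* g(\x)(b(\D)\Lambda(\x) + \mathbf{1}_m) = 0$ and the normalization $\int_\Omega \Lambda\,d\x = 0$ in \eqref{1.10}; by uniqueness of the $\Gamma$-periodic solution of problem \eqref{1.10} we conclude $\Lambda(\x) = 0$ (this is precisely the observation used just before Proposition~\ref{prop6.7}).

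Second, I would substitute $\Lambda = 0$ into the objects figuring in Theorem~\ref{th8.3}. Then $[\Lambda^\eps] = 0$, so the corrector \eqref{8.4} vanishes, ${\mathcal K}_N(\zeta;\eps) = 0$, and the approximation \eqref{8.5} reduces to
\[
{\boldsymbol \psi}_\eps = {\boldsymbol \varphi}_0 + \eps^p \Lambda^\eps S_\eps b(\D) \wt{\boldsymbol \varphi}_0 = {\boldsymbol \varphi}_0 .
\]

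Third and finally, I would apply estimate \eqref{8.8} of Theorem~\ref{th8.3} verbatim: with ${\boldsymbol \psi}_\eps = {\boldsymbol \varphi}_0$ it becomes
\[
\| {\boldsymbol \varphi}_\eps - {\boldsymbol \varphi}_0 \|_{H^p(\O)} \le {\mathfrak C}_{10}\left( \eps^{1/2} \rho_\flat(\zeta)^{1/2} + \eps |\zeta + 1| \rho_\flat(\zeta) \right) \|\FF\|_{L_2(\O)},
\]
which is exactly the claimed bound, valid for $\zeta \in \C \setminus [c_\flat,\infty)$ and $0 < \eps \le \eps_1$. There is no genuine obstacle in this argument: the only step deserving a line of justification is the vanishing of $\Lambda$, which relies solely on the uniqueness of the solution of \eqref{1.10}, while the rest is a direct substitution into the already-proved Theorem~\ref{th8.3}.
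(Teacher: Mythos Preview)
Your proposal is correct and follows essentially the same approach as the paper: the paper states that Proposition~\ref{prop8.5} ``directly follows from Theorem~\ref{th8.3}'', relying on the observation (made just before Proposition~\ref{prop6.7}) that $g^0=\overline{g}$ forces $\Lambda\equiv 0$, whence the corrector vanishes and ${\boldsymbol\psi}_\eps={\boldsymbol\varphi}_0$.
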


Now, consider the case where $g^0 = \underline{g}$.

\begin{proposition}
\label{prop8.6}
Suppose that the assumptions of Theorem~\emph{\ref{th8.3}} are satisfied. Let $g^0 = \underline{g}$, i.~e., representations \eqref{1.19} hold.
Let ${\boldsymbol \psi}^0_\eps$ be given by \eqref{8.32}.
Then for $\zeta \in \C \setminus [c_\flat, \infty)$ and $0< \eps \le \eps_1$ we have
\begin{align}
\label{8.40}
&\| {\boldsymbol \varphi}_\eps  -   {\boldsymbol \psi}_\eps^0 \|_{H^p(\O)} \leqslant
\widehat{\mathfrak C}_{10} \eps \left( \rho_\flat(\zeta)^{1/2} + |\zeta +1| \rho_\flat(\zeta) \right) \|\FF\|_{L_2(\O)},
\\
\label{8.41}
&\| g^\eps b(\D) {\boldsymbol \varphi}_\eps - g^0 b(\D) {\boldsymbol \varphi}_0 \|_{L_2(\O)}
 \le \widehat{\mathfrak C}_{11} \eps \left( \rho_\flat(\zeta)^{1/2} + |\zeta +1| \rho_\flat(\zeta) \right) \|\FF\|_{L_2(\O)}.
\end{align}
The constants $\widehat{\mathfrak C}_{10}$ and $\widehat{\mathfrak C}_{11}$ depend only on
$m$, $n$, $d$, $p$, $\alpha_0$, $\alpha_1$, $\|g\|_{L_\infty}$, $\|g^{-1}\|_{L_\infty}$, $k_1$, $k_2$, the parameters of the lattice $\Gamma$, and the domain $\O$.
\end{proposition}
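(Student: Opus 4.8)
The plan is to transcribe the scheme of Theorem~\ref{th8.3}, feeding it the \emph{improved} input that is available when $g^0=\underline g$. Recall that under this hypothesis the matrix-valued function \eqref{1.12} is constant, $\wt g(\x)=g^0$ (Remark~\ref{rem1.4}), and, by Proposition~\ref{prop_MLambda}($2^\circ$), Condition~\ref{cond_Lambda} holds with $\|\Lambda\|_{L_\infty}$ and $M_\Lambda$ controlled by the admissible parameters. The decisive point is that, in the estimate of the functional \eqref{4.1}, the summand $\mathcal J^{(2)}_\eps$ of \eqref{4.5} vanishes identically (exactly as in the proof of Proposition~\ref{prop6.8}), so the boundary layer correction term $\w_\eps$ is $O(\eps)$ --- not merely $O(\eps^{1/2})$ --- in $H^p(\O;\C^n)$; this is precisely what upgrades the leading error term from $\eps^{1/2}\rho_\flat(\zeta)^{1/2}$ to $\eps\,\rho_\flat(\zeta)^{1/2}$.

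First I would work at the point $\zeta=-1$. Arguing as in the proof of Proposition~\ref{prop6.8}, estimate \eqref{6.24} holds; specialized to $\zeta=-1$ (where $c(\varphi)=1$) and for $0<\eps\le1$, $p\ge1$, it gives
\[
\|(A_{N,\eps}+I)^{-1}-(A^0_N+I)^{-1}-\eps^p K_N(-1;\eps)\|_{L_2(\O)\to H^p(\O)}\le C\eps .
\]
Multiplying on the right by the projection $\mathcal P$ and using that $L_2(\O;\C^n)=\mathcal H(\O)\oplus Z$ reduces $A_{N,\eps}$ and $A^0_N$ --- whence $(A_{N,\eps}+I)^{-1}\mathcal P=(B_{N,\eps}+I)^{-1}\mathcal P$, $(A^0_N+I)^{-1}\mathcal P=(B^0_N+I)^{-1}\mathcal P$, and $K_N(-1;\eps)\mathcal P=\mathcal K_N(-1;\eps)$ on $\mathcal H(\O)$ --- I obtain the analogue of \eqref{8.17} with $\eps^{1/2}$ replaced by $\eps$. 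Together with \eqref{8.12}, this is all the ``$\zeta=-1$ input'' needed.

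Next I would transfer these bounds to an arbitrary $\zeta\in\C\setminus[c_\flat,\infty)$ exactly as in the proof of Theorem~\ref{th8.3}: substitute the two $\zeta=-1$ estimates into the resolvent identity \eqref{8.18} and control the factors $(B^0_N+I)(B^0_N-\zeta I)^{-1}$ and $(B_{N,\eps}-\zeta I)^{-1}$ by \eqref{8.15} and \eqref{88.7}. This yields
\[
\|(B_{N,\eps}-\zeta I)^{-1}-(B^0_N-\zeta I)^{-1}-\eps^p\mathcal K_N(\zeta;\eps)\|_{\mathcal H(\O)\to H^p(\O)}\le C\eps\bigl(\rho_\flat(\zeta)^{1/2}+|\zeta+1|\,\rho_\flat(\zeta)\bigr).
\]
Since Condition~\ref{cond_Lambda} holds, I would then remove the smoothing operator via \eqref{8.36}--\eqref{8.37} (passing from $\boldsymbol\psi_\eps$ to $\boldsymbol\psi^0_\eps$), which gives \eqref{8.40}. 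For the flux estimate \eqref{8.41} I would first sharpen \eqref{8.11} --- repeating its proof with the just-obtained $O(\eps)$-estimate in place of \eqref{8.8} and with \eqref{8.24}--\eqref{8.25} --- and then, using $\wt g^\eps\equiv g^0$, replace $\wt g^\eps S_\eps b(\D)\wt{\boldsymbol\varphi}_0$ by $g^0 b(\D)\boldsymbol\varphi_0$ at the cost of $|g^0|\,\|(I-S_\eps)b(\D)\wt{\boldsymbol\varphi}_0\|_{L_2(\R^d)}\le|g^0|\eps r_1\alpha_1^{1/2}\|\wt{\boldsymbol\varphi}_0\|_{H^{p+1}(\R^d)}$ (Proposition~\ref{prop1.4}), bounding $\|\wt{\boldsymbol\varphi}_0\|_{H^{p+1}(\R^d)}$ through \eqref{8.25}.

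I expect no essential difficulty: the whole argument is a transcription of the $\zeta\in\C\setminus\R_+$ machinery specialized by the identity $\wt g=g^0$. The points that require attention are keeping the sharp $\rho_\flat(\zeta)$-weights through \eqref{8.18} (the second term there is responsible for the extra factor $|\zeta+1|$) and correctly absorbing the residual Steklov term in the flux estimate; the bookkeeping of constants is routine, and, thanks to Proposition~\ref{prop_MLambda}($2^\circ$), all $\Lambda$-dependent quantities are controlled by the listed parameters.
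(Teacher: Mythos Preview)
Your proposal is correct and follows essentially the same route as the paper: obtain the improved $O(\eps)$ estimate at $\zeta=-1$ from Proposition~\ref{prop6.8} (the paper cites \eqref{6.23}, you cite \eqref{6.24}; at $\zeta=-1$ they coincide), project by $\mathcal P$ to pass to $B_{N,\eps}$, push through the resolvent identity \eqref{8.18}, and then remove the smoothing via \eqref{8.37} using Proposition~\ref{prop_MLambda}($2^\circ$). For the flux the paper invokes \eqref{8.39} (the multiplier bound for $\wt g$), whereas you exploit directly that $\wt g=g^0$ is constant and use Proposition~\ref{prop1.4}; this is a harmless and slightly simpler variant of the same step.
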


\begin{proof}
Under condition $g^0 = \underline{g}$, inequality \eqref{6.23} at the point $\zeta=-1$ holds, whence
\begin{equation}
\label{8.42}
\|( B_{N,\eps} + I)^{-1} - ( B_{N}^0 + I)^{-1} - \eps^p {\mathcal K}_N(-1;\eps) \|_{{\mathcal H}(\O) \to H^p(\O)}
\leqslant 2 ({\mathcal C}_5 + {\mathcal C}_{15}') \eps, \quad 0 < \eps \le \eps_1.
\end{equation}
Applying identity \eqref{8.18} and \eqref{8.42}, similarly to \eqref{8.19a}, we obtain
\begin{equation}
\label{8.34a}
\begin{split}
\| (B_{N,\eps} -\zeta  I)^{-1} - ( B_{N}^0 - \zeta I)^{-1} - \eps^p {\mathcal K}_N(\zeta; \eps) \|_{{\mathcal H}(\O) \to H^p(\O)}
\le \widehat{\mathfrak C}_{10}'  \eps \left( \rho_\flat(\zeta)^{1/2} + |\zeta +1| \rho_\flat(\zeta) \right),
\end{split}
\end{equation}
where $\widehat{\mathfrak C}_{10}' : = \max \{ 2 ({\mathcal C}_5 + {\mathcal C}_{15}') \check{c}_\flat, {\mathfrak C}_{13} \}$.

By Proposition~\ref{prop_MLambda}($2^\circ$), Condition \ref{cond_Lambda} is now satisfied; moreover,
$\|\Lambda\|_{L_\infty}$ and $M_\Lambda$ are controlled in terms of $m$, $n$, $d$, $p$,
$\alpha_0$, $\alpha_1$, $\|g\|_{L_\infty}$, $\|g^{-1}\|_{L_\infty}$, and the parameters of the lattice $\Gamma$.
Then inequality \eqref{8.37} holds.
Relations \eqref{8.37} and \eqref{8.34a} imply \eqref{8.40} with the constant $\widehat{\mathfrak C}_{10} : = \widehat{\mathfrak C}_{10}' +
C_\O^{(2p)}  {\mathfrak C}_5 {\mathfrak C}_8$.

Inequality \eqref{8.41} is deduced from \eqref{8.24}, \eqref{8.25},  \eqref{8.39}, \eqref{8.34a}, and the identity
$\widetilde{g}(\x) = g^0$.
\end{proof}

\subsection{Application of the results for $B_{N,\eps}$ to the operator $A_{N,\eps}$}

Theorem~\ref{th8.3} allows us to obtain approximation for the resolvent $(A_{N,\eps} - \zeta I)^{-1}$ at a regular
point $\zeta \in \C \setminus [c_\flat, \infty)$, $\zeta \ne 0$.

\begin{theorem}\label{th8.7}
Suppose that $\O \subset \R^d$ is a bounded domain of class $C^{2p}$.
Suppose that the number $\eps_1>0$ is subject to Condition \emph{\ref{cond_eps}}.
Let $0< c_\flat \le \min \{ \lambda_{2,\eps}(N), \lambda_2^0(N)\}$, where
$\lambda_{2,\eps}(N)$ {\rm (}respectively, $\lambda_2^0(N)${\rm )} is the first nonzero eigenvalue of the operator
$A_{N,\eps}$ {\rm (}respectively, $A_N^0${\rm )}.
Suppose that $\zeta \in \C \setminus [c_\flat,\infty)$, $\zeta \ne 0$.
Let $\u_\eps = (A_{N,\eps} - \zeta I)^{-1} \FF$ and
$\u_0 = (A_{N}^0 - \zeta I)^{-1} \FF$, where $\FF \in L_2(\O; \C^n)$. Then for $0< \eps \le \eps_1$ we have
\begin{equation*}
\| \u_\eps - \u_0 \|_{L_2(\O)} \le {\mathfrak C}_9 \eps \rho_\flat(\zeta) \| \FF \|_{L_2(\O)},
\end{equation*}
where $\rho_\flat(\zeta)$ is defined by \eqref{8.6}. In operator terms,
\begin{equation}
\label{88.40}
\| (A_{N,\eps} - \zeta I)^{-1} - (A_{N}^0 - \zeta I)^{-1} \|_{L_2(\O) \to L_2(\O)} \le {\mathfrak C}_9 \eps \rho_\flat(\zeta).
\end{equation}
Denote $\widehat{\mathbf v}_\eps := \u_0 + \eps^p \Lambda^\eps S_\eps b(\D) \widehat{\u}_0$, where $\widehat{\u}_0 := P_\O ( A_N^0 - \zeta I)^{-1} {\mathcal P} \FF$.
Then for $0< \eps \le \eps_1$ we have
\begin{equation*}
\| \u_\eps - \widehat{\mathbf v}_\eps  \|_{H^p(\O)} \le
{\mathfrak C}_{10} \left(\eps^{1/2} \rho_\flat(\zeta)^{1/2} +\eps |\zeta +1| \rho_\flat(\zeta)\right) \| \FF \|_{L_2(\O)}.
\end{equation*}
In operator terms,
\begin{align}
\label{88.42}
\begin{split}
&\| (A_{N,\eps} - \zeta I)^{-1} - (A_{N}^0 - \zeta I)^{-1} - \eps^p \widehat{K}_N(\zeta;\eps) \|_{L_2(\O) \to H^p(\O)}
\\
&\le
{\mathfrak C}_{10} \left(\eps^{1/2} \rho_\flat(\zeta)^{1/2} +\eps |\zeta +1| \rho_\flat(\zeta)\right),
\end{split}
\end{align}
where $\widehat{K}_N(\zeta;\eps) = R_\O [\Lambda^\eps] S_\eps b(\D) P_\O (A_{N}^0 - \zeta I)^{-1} {\mathcal P}$.
The flux $\p_\eps = g^\eps b(\D) \u_\eps$ satisfies
\begin{equation}
\label{88.43}
\| \p_\eps - \wt{g}^\eps S_\eps b(\D) \widehat{\u}_0 \|_{L_2(\O)} \le
{\mathfrak C}_{11} \left(\eps^{1/2} \rho_\flat(\zeta)^{1/2} +\eps |\zeta +1| \rho_\flat(\zeta)\right) \| \FF \|_{L_2(\O)}
\end{equation}
for $0< \eps \le \eps_1$.
The constants ${\mathfrak C}_9$, ${\mathfrak C}_{10}$, and ${\mathfrak C}_{11}$ are the same as in Theorem~\emph{\ref{th8.3}}.
\end{theorem}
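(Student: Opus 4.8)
The plan is to deduce Theorem~\ref{th8.7} from Theorem~\ref{th8.3} by using that the orthogonal decomposition $L_2(\O;\C^n) = {\mathcal H}(\O) \oplus Z$ reduces both $A_{N,\eps}$ and $A_N^0$, the respective parts on ${\mathcal H}(\O)$ being $B_{N,\eps}$ and $B_N^0$ and the parts on $Z$ being the zero operator. Since $\zeta \ne 0$, the resolvent of the zero operator on $Z$ equals $-\zeta^{-1}{\mathcal P}_Z$, so that
\begin{gather*}
(A_{N,\eps} - \zeta I)^{-1} = (B_{N,\eps} - \zeta I)^{-1}{\mathcal P} - \zeta^{-1}{\mathcal P}_Z,
\\
(A_N^0 - \zeta I)^{-1} = (B_N^0 - \zeta I)^{-1}{\mathcal P} - \zeta^{-1}{\mathcal P}_Z.
\end{gather*}
Subtracting these identities, the terms $-\zeta^{-1}{\mathcal P}_Z$ cancel and
\begin{equation*}
(A_{N,\eps} - \zeta I)^{-1} - (A_N^0 - \zeta I)^{-1} = \bigl( (B_{N,\eps} - \zeta I)^{-1} - (B_N^0 - \zeta I)^{-1} \bigr){\mathcal P}.
\end{equation*}
Since $\|{\mathcal P}\|_{L_2(\O) \to L_2(\O)} \le 1$, estimate~\eqref{8.9} of Theorem~\ref{th8.3} immediately gives~\eqref{88.40}.

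Next I would apply Theorem~\ref{th8.3} with right-hand side ${\mathcal P}\FF \in {\mathcal H}(\O)$, so that ${\boldsymbol \varphi}_\eps = (B_{N,\eps} - \zeta I)^{-1}{\mathcal P}\FF$ and ${\boldsymbol \varphi}_0 = (B_N^0 - \zeta I)^{-1}{\mathcal P}\FF$; then $\u_\eps = {\boldsymbol \varphi}_\eps - \zeta^{-1}{\mathcal P}_Z\FF$ and $\widehat{\u}_0 = P_\O (A_N^0 - \zeta I)^{-1}{\mathcal P}\FF = P_\O {\boldsymbol \varphi}_0 = \wt{\boldsymbol \varphi}_0$. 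Consequently $\widehat{K}_N(\zeta;\eps) = {\mathcal K}_N(\zeta;\eps){\mathcal P}$ and $\widehat{\mathbf v}_\eps = {\boldsymbol \psi}_\eps - \zeta^{-1}{\mathcal P}_Z\FF$, whence $\u_\eps - \widehat{\mathbf v}_\eps = {\boldsymbol \varphi}_\eps - {\boldsymbol \psi}_\eps$. Estimate~\eqref{8.8}, together with $\|{\mathcal P}\FF\|_{L_2(\O)} \le \|\FF\|_{L_2(\O)}$, then yields the $H^p$-estimate for $\u_\eps - \widehat{\mathbf v}_\eps$; subtracting the two resolvent identities above and invoking~\eqref{8.10} gives the operator estimate~\eqref{88.42}. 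For the flux, since ${\mathcal P}_Z\FF \in Z = \Ker b(\D)$ one has $b(\D)\u_\eps = b(\D){\boldsymbol \varphi}_\eps$, hence $\p_\eps = g^\eps b(\D){\boldsymbol \varphi}_\eps$, and~\eqref{8.11} produces~\eqref{88.43}.

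I do not expect a genuine obstacle here: the whole argument is the bookkeeping that makes the kernel contributions cancel in the resolvent difference and in the corrector, combined with the fact that $b(\D)$ annihilates $Z$. The only point deserving a line of justification is that $(A_N^0 - \zeta I)^{-1}$ restricted to ${\mathcal H}(\O)$ coincides with $(B_N^0 - \zeta I)^{-1}$ — so that $\widehat{\u}_0 = \wt{\boldsymbol \varphi}_0$ and $\widehat{K}_N(\zeta;\eps) = {\mathcal K}_N(\zeta;\eps){\mathcal P}$ — which is immediate from the reducing property of the orthogonal decomposition.
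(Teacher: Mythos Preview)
Your proposal is correct and follows essentially the same approach as the paper: both use the reducing decomposition $L_2(\O;\C^n)={\mathcal H}(\O)\oplus Z$ to write the resolvent difference as $\bigl((B_{N,\eps}-\zeta I)^{-1}-(B_N^0-\zeta I)^{-1}\bigr){\mathcal P}$, identify $\widehat{K}_N(\zeta;\eps)={\mathcal K}_N(\zeta;\eps){\mathcal P}$ and $\widehat{\u}_0=\wt{\boldsymbol\varphi}_0$, and then read off all three estimates directly from Theorem~\ref{th8.3}. Your bookkeeping (e.g.\ $\u_\eps-\widehat{\mathbf v}_\eps={\boldsymbol\varphi}_\eps-{\boldsymbol\psi}_\eps$ and $b(\D)\u_\eps=b(\D){\boldsymbol\varphi}_\eps$) is exactly what the paper does, only spelled out a bit more explicitly.
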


\begin{proof}
Note that for $\zeta \in \C \setminus [c_\flat,\infty)$, $\zeta \ne 0$, we have
$$
(A_{N,\eps} - \zeta I)^{-1} {\mathcal P} = (B_{N,\eps} - \zeta I)^{-1} {\mathcal P},
\quad
(A_{N,\eps} - \zeta I)^{-1} {\mathcal P}_Z = - \zeta^{-1} {\mathcal P}_Z.
$$
Similarly,
$$
(A_{N}^0 - \zeta I)^{-1} {\mathcal P} = (B_{N}^0 - \zeta I)^{-1} {\mathcal P},
\quad
(A_{N}^0 - \zeta I)^{-1} {\mathcal P}_Z = - \zeta^{-1} {\mathcal P}_Z.
$$
Since ${\mathcal P} + {\mathcal P}_Z =I$, this implies
\begin{equation}
\label{88.44}
(A_{N,\eps} - \zeta I)^{-1} - (A_{N}^0 - \zeta I)^{-1} =
\left((B_{N,\eps} - \zeta I)^{-1}  - (B_{N}^0 - \zeta I)^{-1}\right) {\mathcal P}.
\end{equation}
Estimate \eqref{88.40} follows directly from \eqref{8.9} and \eqref{88.44}.

Obviously, we have $\widehat{K}_N(\zeta;\eps) = \mathcal{K}_N(\zeta;\eps) {\mathcal P}$.
Combining this with \eqref{88.44}, we obtain
\begin{equation}
\label{88.45}
(A_{N,\eps} - \zeta I)^{-1} - (A_{N}^0 - \zeta I)^{-1} - \eps^p \widehat{K}_N(\zeta;\eps) =
\left((B_{N,\eps} - \zeta I)^{-1}  - (B_{N}^0 - \zeta I)^{-1} - \eps^p \mathcal{K}_N(\zeta;\eps) \right) {\mathcal P}.
\end{equation}
Relations \eqref{8.10} and \eqref{88.45} imply \eqref{88.42}.

Next, since $b(\D) (A_{N,\eps} - \zeta I)^{-1} {\mathcal P}_Z =0$, then
\begin{equation}
\label{88.46}
\begin{split}
&g^\eps b(\D)(A_{N,\eps} - \zeta I)^{-1} - \wt{g}^\eps S_\eps b(\D) P_\O (A_{N}^0 - \zeta I)^{-1}{\mathcal P}
\\
& =
\left(g^\eps b(\D)(B_{N,\eps} - \zeta I)^{-1} - \wt{g}^\eps S_\eps b(\D) P_\O (B_{N}^0 - \zeta I)^{-1}\right) {\mathcal P}.
\end{split}
\end{equation}
In operator terms, inequality \eqref{8.11} means that
\begin{equation*}
\begin{split}
\|& g^\eps b(\D)(B_{N,\eps} - \zeta I)^{-1} - \wt{g}^\eps S_\eps b(\D) P_\O (B_{N}^0 - \zeta I)^{-1}\|_{{\mathcal H}(\O)\to L_2(\O)}
\\
&\leqslant {\mathfrak C}_{11} \left(\eps^{1/2} \rho_\flat(\zeta)^{1/2} +\eps |\zeta +1| \rho_\flat(\zeta)\right)
\end{split}
\end{equation*}
for $0< \eps \le \eps_1$.
 Together with \eqref{88.46}, this implies \eqref{88.43}.
\end{proof}

\begin{remark}
Estimates \eqref{88.40}--\eqref{88.43} are useful for bounded values of $|\zeta|$ and small $\eps \rho_\flat(\zeta)$.
In this case, the value $\eps^{1/2} \rho_\flat(\zeta)^{1/2} +\eps |\zeta +1| \rho_\flat(\zeta)$ is majorated by
$C \eps^{1/2} \rho_\flat(\zeta)^{1/2}$. For large $|\zeta|$ estimates of Theorems~\emph{\ref{th3.1}}
and~\emph{\ref{th3.2}} may be preferable.
\end{remark}

The following result is deduced from Theorem~\ref{th8.4}.

\begin{theorem}\label{th8.8}
Suppose that the assumptions of Theorem~\emph{\ref{th8.7}} and Condition~\emph{\ref{cond_Lambda}} are satisfied.
Let $K_N^0(\zeta;\eps)$ be the operator~\eqref{6.1}, and let ${\mathbf v}_\eps^0$ be given by~\eqref{6.2}.
Then for $\zeta \in \C \setminus [c_\flat,\infty)$, $\zeta \ne 0$, and $0< \eps \le \eps_1$ we have
\begin{equation*}
\| \u_\eps - {\mathbf v}^0_\eps \|_{H^p(\O)} \le \wt{\mathfrak C}_{10}
\left(\eps^{1/2} \rho_\flat(\zeta)^{1/2} +\eps |\zeta +1| \rho_\flat(\zeta)\right) \| \FF \|_{L_2(\O)},
\end{equation*}
or, in operator terms,
\begin{align}
\label{88.48}
\begin{split}
&\| (A_{N,\eps} - \zeta I)^{-1} - (A_{N}^0 - \zeta I)^{-1} - \eps^p {K}^0_N(\zeta;\eps) \|_{L_2(\O) \to H^p(\O)}
\\
&\le
\wt{\mathfrak C}_{10} \left(\eps^{1/2} \rho_\flat(\zeta)^{1/2} +\eps |\zeta +1| \rho_\flat(\zeta)\right).
\end{split}
\end{align}
The flux $\p_\eps = g^\eps b(\D) \u_\eps$ satisfies
\begin{equation}
\label{88.49}
\| \p_\eps - \wt{g}^\eps b(\D) {\u}_0 \|_{L_2(\O)} \le
\wt{\mathfrak C}_{11} \left(\eps^{1/2} \rho_\flat(\zeta)^{1/2} +\eps |\zeta +1| \rho_\flat(\zeta)\right) \| \FF \|_{L_2(\O)}
\end{equation}
for $0< \eps \le \eps_1$.
The constants $\wt{\mathfrak C}_{10}$ and $\wt{\mathfrak C}_{11}$ are the same as in Theorem~\emph{\ref{th8.4}}.
\end{theorem}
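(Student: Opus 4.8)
The plan is to deduce Theorem~\ref{th8.8} from Theorem~\ref{th8.4} exactly as Theorem~\ref{th8.7} was deduced from Theorem~\ref{th8.3}: the only change is that the smoothed corrector ${\mathcal K}_N(\zeta;\eps)$ of~\eqref{8.4} is everywhere replaced by the standard corrector ${\mathcal K}_N^0(\zeta;\eps)$ of~\eqref{8.31}, which under Condition~\ref{cond_Lambda} maps $L_2(\O;\C^n)$ continuously into $H^p(\O;\C^n)$. I would start from the orthogonal decomposition $L_2(\O;\C^n) = {\mathcal H}(\O)\oplus Z$, which reduces both $A_{N,\eps}$ and $A_N^0$ and satisfies $\operatorname{Ker} A_{N,\eps} = \operatorname{Ker} A_N^0 = Z$. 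For $\zeta\in\C\setminus[c_\flat,\infty)$, $\zeta\neq 0$, this gives the identities $(A_{N,\eps}-\zeta I)^{-1} = (B_{N,\eps}-\zeta I)^{-1}{\mathcal P} - \zeta^{-1}{\mathcal P}_Z$ and $(A_N^0-\zeta I)^{-1} = (B_N^0-\zeta I)^{-1}{\mathcal P} - \zeta^{-1}{\mathcal P}_Z$ (as in the proof of Theorem~\ref{th8.7}), so that the ${\mathcal P}_Z$-parts cancel in the resolvent difference.

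Next, I would use the crucial observation that $Z = \operatorname{Ker} b(\D)$, so $b(\D)$ annihilates the range of ${\mathcal P}_Z$. Hence $b(\D)(A_N^0-\zeta I)^{-1} = b(\D)(B_N^0-\zeta I)^{-1}{\mathcal P}$, which yields $K_N^0(\zeta;\eps) = {\mathcal K}_N^0(\zeta;\eps){\mathcal P}$ and $b(\D)\u_0 = b(\D)(B_N^0-\zeta I)^{-1}{\mathcal P}\FF$. Combining these with the resolvent identity above, one obtains $\u_\eps - {\mathbf v}_\eps^0 = \boldsymbol{\varphi}_\eps - \boldsymbol{\psi}_\eps^0$ and
\[
(A_{N,\eps}-\zeta I)^{-1} - (A_N^0-\zeta I)^{-1} - \eps^p K_N^0(\zeta;\eps) = \bigl((B_{N,\eps}-\zeta I)^{-1} - (B_N^0-\zeta I)^{-1} - \eps^p {\mathcal K}_N^0(\zeta;\eps)\bigr){\mathcal P},
\]
where on the right-hand side $\boldsymbol{\varphi}_\eps$, $\boldsymbol{\psi}_\eps^0$ and the operators act on the data ${\mathcal P}\FF\in{\mathcal H}(\O)$. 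The $H^p$-bound of Theorem~\ref{th8.4} (estimate~\eqref{8.33}, or in operator form~\eqref{8.34}), applied with ${\mathcal P}\FF$ and combined with $\|{\mathcal P}\FF\|_{L_2(\O)}\le\|\FF\|_{L_2(\O)}$ and $\|{\mathcal P}\|_{L_2(\O)\to L_2(\O)}\le 1$, then gives the first two displayed estimates of Theorem~\ref{th8.8}, with the same constant $\wt{\mathfrak C}_{10}$.

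For the flux estimate~\eqref{88.49}, the argument is identical: since $b(\D)(A_{N,\eps}-\zeta I)^{-1}{\mathcal P}_Z = 0$ and $b(\D)(A_N^0-\zeta I)^{-1}{\mathcal P}_Z = 0$, we have $\p_\eps = g^\eps b(\D)(B_{N,\eps}-\zeta I)^{-1}{\mathcal P}\FF$ and $\wt{g}^\eps b(\D)\u_0 = \wt{g}^\eps b(\D)(B_N^0-\zeta I)^{-1}{\mathcal P}\FF$, so that $\p_\eps - \wt{g}^\eps b(\D)\u_0$ coincides with $g^\eps b(\D)\boldsymbol{\varphi}_\eps - \wt{g}^\eps b(\D)\boldsymbol{\varphi}_0$ evaluated at the data ${\mathcal P}\FF$; then~\eqref{8.35} gives the bound with $\wt{\mathfrak C}_{11}$ as in Theorem~\ref{th8.4}.

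I do not expect a genuine obstacle here: the analytic content is already contained in Theorems~\ref{th8.3} and~\ref{th8.4} (whose proofs in turn rest on the results at $\zeta=-1$ and the resolvent identities), and the present deduction is bookkeeping with the reducing decomposition. The only point requiring care is to check that every term involving ${\mathcal P}_Z$ either cancels in a resolvent difference or is annihilated by $b(\D)$ because $Z = \operatorname{Ker} b(\D)$; this is exactly what allows the standard corrector ${\mathcal K}_N^0$ — whose continuity into $H^p$ is guaranteed by Condition~\ref{cond_Lambda} — to be inserted without any loss in the error.
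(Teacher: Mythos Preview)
Your proposal is correct and follows essentially the same route as the paper: you reduce to Theorem~\ref{th8.4} via the identity $K_N^0(\zeta;\eps) = {\mathcal K}_N^0(\zeta;\eps){\mathcal P}$ (which holds because $b(\D){\mathcal P}_Z = 0$) combined with the resolvent-difference identity~\eqref{88.44}, and you handle the flux in the same way using $b(\D){\mathcal P}_Z = 0$ on both sides. The paper's proof is exactly this bookkeeping, with the same constants inherited from Theorem~\ref{th8.4}.
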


\begin{proof}
By \eqref{6.1}, \eqref{8.31}, and the identity $b(\D) {\mathcal P}_Z =0$, we have
 $K_N^0(\zeta;\eps) = {\mathcal K}_N^0(\zeta;\eps) {\mathcal P}$. Together with \eqref{88.44}, this yields
\begin{equation}
\label{88.50}
(A_{N,\eps} - \zeta I)^{-1} - (A_{N}^0 - \zeta I)^{-1} - \eps^p {K}_N^0(\zeta;\eps) =
\left((B_{N,\eps} - \zeta I)^{-1}  - (B_{N}^0 - \zeta I)^{-1} - \eps^p \mathcal{K}_N^0(\zeta;\eps) \right) {\mathcal P}.
\end{equation}
Relations \eqref{8.34} and \eqref{88.50} directly imply \eqref{88.48}.

Next, since $b(\D) {\mathcal P}_Z =0$, then
\begin{equation}
\label{88.51}
\begin{split}
&g^\eps b(\D)(A_{N,\eps} - \zeta I)^{-1} - \wt{g}^\eps b(\D) (A_{N}^0 - \zeta I)^{-1}
\\
& =
\left(g^\eps b(\D)(B_{N,\eps} - \zeta I)^{-1} - \wt{g}^\eps b(\D) (B_{N}^0 - \zeta I)^{-1}\right) {\mathcal P}.
\end{split}
\end{equation}
Relations \eqref{8.35} and \eqref{88.51} imply \eqref{88.49}.
\end{proof}

The case where the corrector is equal to zero is distinguished by the next statement
which follows from Theorem~\ref{th8.7}.

\begin{proposition}
\label{prop8.9}
Suppose that the assumptions of Theorem~\emph{\ref{th8.7}} are satisfied. Let $g^0 = \overline{g}$, i.~e., relations \eqref{1.18} hold.
Then for $\zeta \in \C \setminus [c_\flat, \infty)$ and $0< \eps \le \eps_1$ we have
\begin{equation*}
\begin{split}
\| {\u}_\eps  -   {\u}_0 \|_{H^p(\O)} \leqslant
 {\mathfrak C}_{10} \left( \eps^{1/2} \rho_\flat(\zeta)^{1/2} + \eps |\zeta +1| \rho_\flat(\zeta) \right) \|\FF\|_{L_2(\O)}.
\end{split}
\end{equation*}
\end{proposition}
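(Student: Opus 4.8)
The plan is to deduce the statement directly from Theorem~\ref{th8.7} by observing that, under the hypothesis $g^0 = \overline g$, the corrector is trivial, so that the approximating function of Theorem~\ref{th8.7} collapses to $\u_0$. First I would invoke Proposition~\ref{prop1.2}: the identity $g^0 = \overline g$ is equivalent to the relations \eqref{1.18}, i.e.\ $b(\D)^* {\mathbf g}_k(\x) = 0$ for $k = 1,\dots,m$, where ${\mathbf g}_k$ denote the columns of $g(\x)$. Since the $k$-th column of $b(\D)^* g(\x){\mathbf 1}_m = b(\D)^* g(\x)$ is precisely $b(\D)^* {\mathbf g}_k(\x)$, it follows that $b(\D)^* g(\x){\mathbf 1}_m = 0$. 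Hence the constant matrix-valued function $\Lambda \equiv 0$ satisfies both $b(\D)^* g(\x)\bigl(b(\D)\Lambda(\x)+{\mathbf 1}_m\bigr)=0$ and $\int_\Omega \Lambda\,d\x = 0$; by uniqueness of the periodic solution of problem \eqref{1.10} we conclude that the solution $\Lambda(\x)$ vanishes identically.

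Next, with $\Lambda = 0$ the operator $\widehat K_N(\zeta;\eps) = R_\O [\Lambda^\eps] S_\eps b(\D) P_\O (A_N^0 - \zeta I)^{-1}{\mathcal P}$ appearing in Theorem~\ref{th8.7} is the zero operator, and the approximating function $\widehat{\mathbf v}_\eps = \u_0 + \eps^p \Lambda^\eps S_\eps b(\D)\widehat{\u}_0$ reduces to $\widehat{\mathbf v}_\eps = \u_0$. Therefore the $H^p(\O;\C^n)$-bound of Theorem~\ref{th8.7} for $\|\u_\eps - \widehat{\mathbf v}_\eps\|_{H^p(\O)}$ becomes exactly the asserted estimate for $\|\u_\eps - \u_0\|_{H^p(\O)}$, with the very same constant ${\mathfrak C}_{10}$ and the same quantity $\rho_\flat(\zeta)$ defined in \eqref{8.6}.

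There is essentially no obstacle here: the only nontrivial point is the vanishing of $\Lambda$, which is immediate from Proposition~\ref{prop1.2} together with the uniqueness of the solution of \eqref{1.10}; everything else is a direct specialization of Theorem~\ref{th8.7}. This is completely parallel to the way Proposition~\ref{prop6.7} is obtained from Theorem~\ref{th3.2} and Proposition~\ref{prop8.5} from Theorem~\ref{th8.3}, so the write-up can simply record the reduction and cite Theorem~\ref{th8.7}.
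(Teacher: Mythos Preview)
Your proposal is correct and matches the paper's approach exactly: the paper states that Proposition~\ref{prop8.9} ``follows from Theorem~\ref{th8.7}'' (because the corrector vanishes), and you have simply spelled out why $\Lambda\equiv 0$ under the hypothesis $g^0=\overline g$ and then specialized Theorem~\ref{th8.7}. There is nothing to add.
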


The following statement is deduced from Proposition \ref{prop8.6} with the help of identities \eqref{88.50} and \eqref{88.51}.

\begin{proposition}
\label{prop8.10}
Suppose that the assumptions of Theorem~\emph{\ref{th8.7}} are satisfied. Let $g^0 = \underline{g}$, i.~e., representations \eqref{1.19} hold.
Let ${\mathbf v}^0_\eps$ be given by \eqref{6.2}.
Then for $\zeta \in \C \setminus [c_\flat, \infty)$, $\zeta \ne 0$, and $0< \eps \le \eps_1$ we have
\begin{align*}
&\| {\u}_\eps  -   {\mathbf v}_\eps^0 \|_{H^p(\O)} \leqslant
\widehat{\mathfrak C}_{10} \eps \left( \rho_\flat(\zeta)^{1/2} + |\zeta +1| \rho_\flat(\zeta) \right) \|\FF\|_{L_2(\O)},
\\
&\| \p_\eps - g^0 b(\D) {\u}_0 \|_{L_2(\O)}
 \le \widehat{\mathfrak C}_{11} \eps \left( \rho_\flat(\zeta)^{1/2} + |\zeta +1| \rho_\flat(\zeta) \right) \|\FF\|_{L_2(\O)}.
\end{align*}
The constants $\widehat{\mathfrak C}_{10}$ and $\widehat{\mathfrak C}_{11}$ are the same as in Proposition~\emph{\ref{prop8.6}}.
\end{proposition}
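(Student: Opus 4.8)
The plan is to reduce the statement to Proposition~\ref{prop8.6} by the same orthogonal-decomposition argument used in the proof of Theorem~\ref{th8.8}. First I would record the two consequences of the hypothesis $g^0=\underline g$: by Proposition~\ref{prop1.3} the representations~\eqref{1.19} hold, and hence, by Remark~\ref{rem1.4}, the matrix-valued function~\eqref{1.12} is the constant matrix $\wt g(\x)=g^0$; moreover, by Proposition~\ref{prop_MLambda}($2^\circ$) Condition~\ref{cond_Lambda} is satisfied, so that the correctors $K_N^0(\zeta;\eps)$ and $\mathcal K_N^0(\zeta;\eps)$ of~\eqref{6.1} and~\eqref{8.31} are continuous operators into $H^p(\O;\C^n)$, while $\|\Lambda\|_{L_\infty}$ and $M_\Lambda$ are controlled in terms of the parameters appearing in the statement (this is why the constants $\widehat{\mathfrak C}_{10}$, $\widehat{\mathfrak C}_{11}$ can be taken equal to those of Proposition~\ref{prop8.6}).

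Next I would invoke the operator identities~\eqref{88.50} and~\eqref{88.51}, already obtained in the proof of Theorem~\ref{th8.8}; they rest on the reduction $L_2(\O;\C^n)={\mathcal H}(\O)\oplus Z$, on~\eqref{88.44}, on $b(\D){\mathcal P}_Z=0$, and on the relation $K_N^0(\zeta;\eps)=\mathcal K_N^0(\zeta;\eps){\mathcal P}$ (which holds because $(A_N^0-\zeta I)^{-1}=(B_N^0-\zeta I)^{-1}{\mathcal P}-\zeta^{-1}{\mathcal P}_Z$ and $b(\D){\mathcal P}_Z=0$). They give
\begin{equation*}
(A_{N,\eps}-\zeta I)^{-1}-(A_N^0-\zeta I)^{-1}-\eps^p K_N^0(\zeta;\eps)=\bigl((B_{N,\eps}-\zeta I)^{-1}-(B_N^0-\zeta I)^{-1}-\eps^p\mathcal K_N^0(\zeta;\eps)\bigr){\mathcal P},
\end{equation*}
together with the analogous identity for the flux, with $g^0 b(\D)$ in place of $\wt g^\eps b(\D)$ (here I use $\wt g(\x)=g^0$). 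Since $\|{\mathcal P}\|_{L_2(\O)\to L_2(\O)}\le 1$, composing on the right by ${\mathcal P}$ does not enlarge the operator norms, so the estimates~\eqref{8.40} and~\eqref{8.41} of Proposition~\ref{prop8.6}, rewritten in operator form on ${\mathcal H}(\O)$, pass over verbatim to $A_{N,\eps}$. It then remains only to translate these operator bounds back into the asserted function-norm bounds, using ${\mathbf v}_\eps^0=(A_N^0-\zeta I)^{-1}\FF+\eps^p K_N^0(\zeta;\eps)\FF$ (formula~\eqref{6.2}), $\u_0=(A_N^0-\zeta I)^{-1}\FF$, and $\p_\eps=g^\eps b(\D)\u_\eps$.

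I do not expect a genuine obstacle: all the substance sits in Proposition~\ref{prop8.6}, Proposition~\ref{prop_MLambda}, Remark~\ref{rem1.4}, and the proof of Theorem~\ref{th8.8}. The two points that need a moment's care are the verification that the corrector~\eqref{6.1}, written with $(A_N^0-\zeta I)^{-1}$ rather than with $(B_N^0-\zeta I)^{-1}{\mathcal P}$, really equals $\mathcal K_N^0(\zeta;\eps){\mathcal P}$, and the identification $\wt g^\eps b(\D)\u_0=g^0 b(\D)\u_0$ provided by Remark~\ref{rem1.4}, which is precisely what makes the flux estimate of Proposition~\ref{prop8.6} coincide with the target $g^0 b(\D)\u_0$ of the present proposition.
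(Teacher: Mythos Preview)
Your proposal is correct and follows essentially the same approach as the paper: the paper states that Proposition~\ref{prop8.10} ``is deduced from Proposition~\ref{prop8.6} with the help of identities~\eqref{88.50} and~\eqref{88.51},'' which is precisely the route you outline. Your additional remarks (invoking Remark~\ref{rem1.4} for $\wt g=g^0$, Proposition~\ref{prop_MLambda}($2^\circ$) for Condition~\ref{cond_Lambda}, and the check that $K_N^0(\zeta;\eps)=\mathcal K_N^0(\zeta;\eps){\mathcal P}$) are exactly the details that make the transfer work.
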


\end{document}